\def\A{\mathbb{A}}
\def\B{\mathbb{B}}
\def\I{\mathbb{I}}
\def\R{\mathbb{R}}
\def\C{\mathbb{C}}
\def\F{\mathbb{F}}
\def\P{\mathbb{P}}
\def\S{\mathcal{S}}
\let\myacute=\'
\def\emptyset{\varnothing}
\def\<{\langle}
\def\>{\rangle}
\def\N{\mathbb{N}}
\def\Z{\mathbb{Z}}
\def \Qp {\mathbb{Q}_p}
\def\cL{\mathcal{L}}
\def \begindm {\begin{displaymath}}
\def \enddm {\end{displaymath}}
\def\C{\mathbb{C}}
\def\R{\mathbb{R}}
\def\Q{\mathbb{Q}}
\def\F{\mathbb{F}}
\def\cL{\mathcal{L}}
\def\cP{\mathcal{P}}
\def\cM{\mathcal M}
\def\cO{\mathcal{O}}
\newtheorem{thm}{Theorem}[section]
\newtheorem{cor}{Corollary}[section]
\newtheorem{prop}{Proposition}[section]
\newtheorem{Conjecture}{Conjecture}[section]
\newtheorem{Def}{Definition}[section]
\newtheorem{ex}{Example}[section]
\newtheorem{remark}{Remark}[section]
\newtheorem{remarks}{Remarks}[section]
\numberwithin{equation}{section}
\newtheorem{prob}{Problem}[section]
\newtheorem{note}{Note}[section]
\long\def\symbolfootnote[#1]#2{\begingroup\def\thefootnote{\fnsymbol{footnote}}\footnote[#1]{#2}\endgroup}
\title{Model Theory of Adeles and Number Theory}
\author[J. Derakhshan]{Jamshid Derakhshan}
\address{St Hilda's College, University of Oxford, Cowley Place, Oxford OX4 1DY, UK}
\email{derakhsh@maths.ox.ac.uk}
\begin{document}

\keywords{}

\subjclass[2000]{Primary 03C10,03C60,11R56,11R42,11U05,11U09, Secondary 11S40,03C90}

\begin{abstract} 
This paper is a survey on model theory of adeles and applications to model theory, algebra, and 
number theory. Sections 1-12 concern model theory of adeles and the results are joint works with Angus Macintyre. The topics covered include quantifier elimination in enriched Boolean algebras, quantifier elimination in restricted products and in adeles and adele spaces of algebraic varieties in natural languages, definable subsets of adeles and their measures, solution to a problem of Ax from 1968 on decidability of the rings $\Z/m\Z$ for all $m>1$, definable sets of minimal idempotents (or  "primes of the number field" ) in the adeles, stability-theoretic notions of stable embedding and tree property of the second kind, elementary equivalence and isomorphism for adele rings, axioms for rings elementarily equivalent to restricted products and for the adeles, converse to Feferman-Vaught theorems, a language for adeles relevant for Hilbert symbols in number theory, imaginaries in adeles, and the space adele classes. 

Sections 13-18 are concerned with connections to number theory around zeta integrals and 
$L$-functions. Inspired by our model theory of adeles, I propose a model-theoretic approach to automorphic forms on $GL_1$ (Tate's thesis) and 
$GL_2$ (work of Jacquet-Langlands), and formulate several notions, problems and questions. The main idea is to formulate notions of constructible adelic integrals  and observe that the integrals of Tate and Jacquet-Langlands are constructible. These constructible integrals are related to the $p$-adic and motivic integrals in model theory. 

This is a first step in a model-theoretic approach to the Langlands conjectures. I also formulate questions on 
Artin's reciprocity for ideles, $p$-adic fields and adeles with additive and multiplicative characters in continuous logic, adelic Poisson summation and transfer principles, identities of adelic integrals, and questions related to model-theoretic aspects of Archimedean integrals (which relate to O-minimality).

\end{abstract}

\maketitle

\tableofcontents

\section{\bf Introduction}\label{sec-introduction}

\medskip

\subsection{\bf History of adeles and role in number theory} 

\

\medskip

Given an algebraic number field $K$ (i.e. a finite extension of $\Q$) and valuation $v$ of $K$, 
let $K_v$ denote the completion of $K$ with respect to the absolute value corresponding to $v$. 
The adeles of $K$, denoted $\A_K$ is the 
subset of the direct product of $K_v$, over all $v$, consisting of elements $(a(v))\in \prod_v K_v$ such that $a(v)$ lies in the ring of integers $\mathcal{O}_{v}$ of $K_v$ (i.e. has non-negative valuation) for all but finitely many $v$. $\A_K$ is a locally compact 
ring (with componentwise addition and multiplication) and the image of $K$ diagonally embedded is a discrete subspace with compact quotient $\A_K/K$. Being locally compact, $\A_K$ admits an invariant measure. 

The ring of adeles can be defined for any global field, i.e. a number field of function field in one variable over a finite field, see \cite{CF}. In this paper we only consider the case of number fields. It is expected that similar results hold also for global fields of positive characteristic. 

The ring of adeles was first introduced by Weil in a letter to Hasse on 3 November 1937. It was for the case of function fields and as "a method to establish (a new proof of) the theorem of Riemann-Roch". Around this time, Chevalley defined the notion of ideles, also in a letter to Hasse on 20 June 1935, and initially called them "\'{e}lements id\'{e}aux" (Ideles are invertible adeles, and form a group denoted $\I_K$). For more details on the early history of adeles see Peter Roquette's book \cite{roquette-book}, page 191 and Subsection 11.3.1.

The adeles were later independently introduced by Artin and Whaples in 1945 based on the ideles. The name adele was invented by Weil as "additive adele" and already used by him in a 1958 Bourbaki talk on "adeles and algebraic groups " 
on results that later appeared in \cite{weil-adeles-gps} systematically studying Tamagawa measures of adelic algebraic groups 
and proving for many classical groups what is now called Weil conjecture on Tamagawa numbers, a highly influential work and conjecture. 

Weil's main motivation for defining adeles was a proof of the Riemann hypothesis for function fields of curves over finite fields. It was clear from the beginning that adeles have rich connections with $L$-functions and zeta functions of number fields involving characters. Interestingly most of the developments and applications of adeles have been in continuation of this path. I shall give a short account of of some key developments below, and later a model-theoretic approach to these problems. 

Dedekind had defined a generalization of the Riemann zeta function for any number field and Hecke proved analytic continuation and functional equation for these zeta functions. 

At the suggestion of Artin, in 1950, in \cite{tate-thesis} Tate used harmonic analysis on the adeles to give a new proof of Hecke's results. Tate's proof gave more information. In 1952 independently Iwasawa obtained the same results. 

Tate's work has greatly influenced several works in number theory. This includes the Langlands program, starting with work of  Jacquet and Langlands \cite{jac-lang} on $GL_2$ and then Godement and Jacquet \cite{jaq-good} on $GL_n$, and later work by Langlands and others, see \cite{lang-prob}. These have given applications of adeles to automorphic forms. The classical theory of modular forms and work of Hecke was generalized adelically. These have given rise to a large number of results within the Langlands program on automorphic representations of adelic groups $G(\A_K)$, where $G$ is an algebraic group over $K$. The case of $GL_1$ being Tate's thesis, $GL_2$ the Jacquet-Langlands work which extends results on classical modular forms and Maass forms, and the case of $GL_n$ being work of Godement and Jacquet \cite{jaq-good}. 
The case of a general reductive group involves the concept of Langlands dual group. For later developments, especially relating to $l$-adic representations of elliptic curves and algebraic varieties see \cite{manin-book}. 

Another important development was 
Weil's adelic interpretation of work of Siegel on quadratic forms (called Siegel Mass formula) as the adelic volume of $SO_{n}(\A_{\Q})/SO_{n}(\Q)$ with respect to the Tamagawa measure being $2$ for all $n$.  
Weil's conjecture on Tamagawa numbers states that the volume of $G(\A_K)/G(K)$ is $1$ for any semi-simple simply connected linear algebraic group $G$ over a number field $K$. This was proved by Weil for many classical groups, Langlands for split semi-simple groups, and Kottwitz in general, (see Kneser's article in the volume \cite{CF} and the book \cite{manin-book}). 
The conjecture of Birch and Swinnerton-Dyer emerges from an analogue of Weil's conjecture for elliptic curves. The results in Subsection 4.4 and Section 10 relate to these issues. 

An insight of Paul Cohen (unpublished notes with Peter Sarnak) was that properties of $\A_K/K^*$, defined as the quotient of the space of adele classes $\A_K$ by the action of 
$K^*$ by multiplication, are relevant for the the solution of the Riemann Hypothesis. Much work in this direction has been carried out by Alain Connes and co-authors. See \cite{connes-selecta} where results from Tate's thesis are generalized to adele classes to obtain results on zeros of zeta functions. The results in Section 12 and Sections 18-20 are related to these ideas and concepts.

\medskip

\subsection{\bf Model theory} 

\

\medskip

The first work on the model theory of adeles was by Weispfenning \cite{weisp-hab}. He proved decidability of the ring of adeles of number fields via his work on model theory for lattice products. In 2005 Angus Macintyre told me of his ideas and insights on a model theory for adeles. This was the start of our long term collaboration. I shall give a survey of many of our results in Sections 2-12. 

Like Weispfenning, we use the celebrated work of Feferman and Vaught \cite{FV} on products of structures but our approach is  different. For us the Boolean algebra of subsets of the index set of the product is replaced by the Boolean algebra of idempotents in $\A_K$. We also prove analogues of results of Feferman-Vaught on products for the case of restricted products. The interpretation of the Boolean algebra in the ring of adeles enables studying model theory of $\A_K$ as an $\cL_{rings}$-structure which has been needed to understand issues related to the topology, measure, and analytic structure on the adeles. It enables one to internalize the Feferman-Vaught theorems and obtain quantifier elimination results in the language of rings and prove that the definable subsets of $\A_K^m$, for any $m\geq 1$, are measurable (with respect to a Haar measure on $\A_K$) and study their measures. We also present the Feferman-Vaught theorems for restricted products in a more general case of many-sorted languages with relation and function symbols. We can also combine the ring-theoretic approach to $\A_K$ with the Feferman-Vaught type structures involving the Boolean algebra of subsets of the index set of normalized valuations 
and consider expansions of the Boolean language and expansions of the language of rings. Any choice for these expansions yields a language for the adeles which is an example of what we call a language for restricted products.

The approach to $\A_K$ as a ring via its idempotents has enabled much further results. These include solution to a problem on Ax on decidability of $\Z/m\Z$ for all $m$, study of elementary equivalence of adele rings, and converse of Feferman-Vaught theorems which amount to Feferman-Vaught theorems for rings. In this connection we have given axioms for adeles and 
general restricted products. 

\medskip

{\it Outline of the Paper}

\medskip

The material in Sections 2-12 are {\it all joint works with Angus Macintyre}.

In Section 2, I state the basic results and notation on adeles and idempotents. In Section 3, I state results on enriched Boolean algebras and enriched valued fields that are needed. 

Section 4 contains the model-theoretic formalism for restricted products and their languages and the 
quantifier elimination results for restricted products and adeles. 

Section 5 contains results on definable sets and their measurability, and connections to values of zeta functions at integers, and a description of the definable subsets of the set of minimal idempotents in the adeles (analogue of primes) in terms of Ax's Boolean algebra of primes in \cite{ax}.

Section 6 contains a sketch the solution to a problem posed in 1968 by Ax \cite{ax} on decidability of the rings $\Z/m\Z$ for all $m>1$, via decidability of $\A_{\Q}$, of which a short proof is also sketched. 

Section 7 is concerned with results on elementary equivalence of adele rings and their isomorphism, and its relation to 
arithmetical equivalence of number fields and their zeta functions.

Section 8 is concerned with axioms for restricted products, and a converse to Feferman-Vaught. 
The main result states that any commutative unital ring satisfying the axioms is elementarily equivalent to a restricted product of connected rings. This is proved via a Feferman-Vaught theorem for general commutative rings.

Section 9 is concerned with the stability-theoretic notions of stable embedding and tree property of the second kind for the adeles.

Section 10 contains results on adele spaces of algebraic varieties introduced by Weil. The results state that these spaces can be put into the formalism of Section 4 
and are model-theoretic restricted products and admit quantifier elimination in a certain geometric language. Some results are obtained on Tamagawa volumes and uniform definability of convergence factors for adelic measures on adele spaces for certain varieties.

In Section 11 an expansion of the language of rings is considered which is suitable for studying 
reciprocity laws in number theory and definability results on Hilbert symbols using results on certain enrichments of Boolean algebras introduced in \cite{DM-bool}.

Section 12 concerns imaginaries in adeles, the space of adele classes $\A_{\Q}/\Q^*$, and the quotient $\hat{\Z}^*\setminus\A_{\Q}/\Q^*$, where $\hat{\Z}^*$ the maximal compact subgroup of the idele class group $\I_K/K^*$ which 
appears in various works by Alain Connes and Connes-Consani on the distribution of zeros of the Riemann zeta function and on the arithmetic site topos, see \cite{connes-selecta},\cite{CC2},\cite{connes-c-site}. 
The result stated proves that the double quotient is interpretable in $\A_{\Q}$ answering a question of Zilber.

In Sections 12-18, I introduce a model theoretic approach to certain topics in number theory and formulate several questions and problems.

Section 13 concerns Artin reciprocity from class field theory, the idele class group, and a number of problems.

In Section 14 I state a general result of myself \cite{zeta1} on analytic properties of Euler products, over all $p$, of $p$-adic integrals (suitably normalized) introduced and studied by Denef, and later Denef, Loeser and others in motivic integration. This result shows that it is possible to analytically continue the Euler product beyond its abscissa of convergence and give some information on its poles which is enough to give an asymptotic formula for the partial sums of the coefficients of the Dirichlet series the represents the Euler product. 

This result naturally applies to several global zeta functions that have the form of an Euler product of $p$-adic integrals. An application is stated in \cite{zeta1} and \cite{zeta-surv} to a question of Uri Onn on counting 
counting conjugacy classes in congruence quotients of algebraic groups over number fields. 

I state a special case of this result for $SL_n$, $n\geq 2$. Let $c_m$ denote the number of conjugacy classes in the congruence quotient $SL_n(\Z/m\Z)$. 
Then the global conjugacy class zeta function $\sum_{i\geq 1} c_m m^{-s}$ admits meromorphic continuation beyond its abscissa of convergence, and consequently 
$$c_1+\dots+c_m \sim cN^{\alpha}$$ for some $c\in \R_{>0}$ as $N\rightarrow \infty$. This is proved by writing the global conjugacy zeta function as an Euler product of local conjugacy class zeta functions which count conjugacy classes in congruence quotients of the group of $p$-adic points and by my joint 
work with Mark Berman, Uri Onn, and Pirita Paajanen \cite{BDOP} the local conjugacy class zeta functions can be written as
$p$-adic integrals of Denef type over definable sets.

I also state a connection of the result on meromorphic continuation of Euler products 
to counting rational points in orbits of group actions and a problem of Gorodnik and Oh in \cite{GO} that is studied in \cite{zeta2}.

Section 15 is concerned with finite fields with additive character in continuous logic and a question of Hrushovski is stated.

Section 16 is concerned with $p$-adic fields with additive character in continuous logic and consequences for the model theory of adeles with additive characters in usual first-order and in continuous logic.

Section 17 concerns ideles with multiplicative character and associated $L$-functions. 

In Section 18 I discuss Tate's thesis on analytic continuation and functional equation for a large class of zeta integrals generalizing zeta functions of number fields, and formulate model-theoretic questions.

Section 19 is concerned with automorphic representations and the Langlands conjectures. Having considered the basic case of $GL_1$ in Section 18 on Tate's thesis, I focus on automorphic representations of $GL_2$. I propose a framework to model-theoretically study these issues and formulate a number of notions, problems, and questions in relation to model-theoretic aspects of the adelic integrals involved. The ideas on model theory of adeles presented in Sections 2-12 naturally connect with these problems, and it is hoped that the notions and questions can be a first step towards a systematic investigation on Langlands correspondence. 

In this regard, I define notions of $\cL$-constructible adelic integrals and $\C$-valued $\cL$-constructible functions on the adeles, where $\cL$ is a suitable language. These integrals are mostly Euler products over primes, and their local $p$-adic factors have some similarities to the integrals of motivic constructible functions of Cluckers-Denef-Loeser in \cite{CL2}. 

A proof of Jacquet and Langlands shows that the 
Jacquet-Langlands global zeta integrals of cuspidal automorphic representations of $GL_2$ are $\cL$-constructible of a certain kind (I call it of Whittaker type). This is stated and several related problems are formulated.

These are hoped to be a starting point for systematic model-theoretic investigations on the Langlands program beginning with the works of Jacquet-Langlands and Godement-Jacquet on $GL_n$. 
A model theory for modular forms and automorphic forms could follow. 

The Archimedean completions of number fields need to be considered as well in relation to the $p$-adic completions. Related to this, I have stated questions involving the expansion of $(\R,+,.,0,1)$ by 
restricted real analytic functions with exponentiation introduced by 
van den Dries-Macintyre-Marker \cite{VMM}, and O-minimality in connection to Hodge theory a la Bakker-Klingler-Tsimerman \cite{BKT}. 

In Section 20, I  formulate questions around Tate's adelic Poisson summation formula, adelic transfer principles guided by the Ax-Kochen transfer principle for truth of sentences and Cluckers-Denef-Loeser-Macintyre transfer principles 
for identities of local integrals across 
families of local fields, and a completeness problem for identities between adelic integrals that relates to axioms for adeles.

\medskip

\subsection{\bf Acknowledgments}

\

\medskip

I am very grateful and indebted to Angus Macintyre for introducing me to the subject, and for generously and patiently teaching me and sharing with me his ideas and insights that have led to the results in this paper, and for our collaborations and his continuous inspiration and support.

Many thanks to Ehud Hrushovski and Boris Zilber for many discussions and advice, from which I have learned much. Udi introduced me to some of his questions and results that are stated in Section 16, which influenced Sections 15-16.

I am grateful to Peter Sarnak and Nicolas Templier for discussions and asking some questions that I have stated in Section 13, and for their advice.

I am grateful to Ramin Takloo-Bighash for helpful discussions on topics in number theory, and on work of Jacquet-Langlands that were helpful in my understanding of the topics.

Many thanks to Laurent Lafforgue for many discussions on aspects of the Langlands conjectures, and for teaching me some of his insights and recent works on functoriality, especially during a visit to IHES. These have influenced Sections 18-20.

\section{\bf Adeles, idempotents, and Boolean values}\label{sec-basic}

\medskip

\subsection{\bf Global fields}\label{}

\

\medskip

A global field $K$ is either a finite extension of $\Q$ (called a number field) or a finite separable extension of $\F_q(t)$ where $t$ is transcendental over $\F_q$ (called a function field). 

In this paper, we shall only be concerned with number fields. See Cassels' article \cite{Cassels} for basic results and notions around absolute values and valuations on a field. Here we only state that two absolute values $|.|_1$ and 
$|.|_2$ are called equivalent if $|.|_1=|.|_2^c$ for some $c\in \R>0$, and this holds exactly when they define the same topology. We consider absolute values on a number field $K$ up to this equivalence. These absolute values are then of only the following kinds:

(1) Discrete non-Archimedean with residue field finite of cardinality $q$,

(2) Completion of $|.|$ is $\R$,

(3) Completion of $|.|$ is $\C$.

Among these absolute values we chose a distinguished one in each kind, that is called {\it normalized}.  In Case (1) $|.|$ is normalized if $|\pi|=1/q$ where $\pi$ is a uniformizing element (i.e. $v(\pi)$ is a minimal positive element) of the value group of $K$ (see below as well). In Case (2) (resp. Case (3)), $|.|$ is normalized if it is the usual absolute value (resp. square of the usual absolute value).

We denote by $K_v$ the completion of $K$ with respect to $v$. For details on global fields see Cassels' paper \cite{Cassels}. 

In the non-Archimedean cases, the absolute values are given by valuations $v(x)$ which are maps from $K^*$ into an 
ordered abelian group $\Gamma$ called the value group. One extends $v(x)$ to $K$ by putting $v(0)=\infty$. 

The non-Archimedean absolute values $|.|_p$ on $K=\Q$ arise from the standard $p$-adic valuations $v_p$ as 
$|a/b|_p=p^{-v_p(a/b)}$, where $v_p(z)=k$ if $k$ is the largest power of $p$ that divides $z$ when $z\in \Z$, and 
$v_p(a/b)=v_p(a)-v_p(b)$, for $a,b\in \Z$. Thus the absolute values of $\Q$ correspond to primes of $\Z$ and 
the absolute value given by the embedding of $\Q$ in $\R$.

For a general number field $K$, one replaces the prime numbers $p$ by prime ideals $\frak{p}$ of the ring of integers $\cO_K$ of $K$, and similarly the absolute values correspond to these and the real as well as complex embeddings of $K$. Similarly one has a valuation $v_{\frak{p}}$ of $K$ (which we again normalize) which restricts to $(p)$, and any $(p)$ extends to finitely many $\frak{p}$. Furthermore, the real absolute value extends to finitely many real and complex absolute values.

We denote by $V_K^{fin}$ the set of all normalized valuations on $K$ which give a non-Archimedean absolute value 
up to equivalence. We denote by $\mathrm{Arch}(K)$ the finite set of Archimedean normalized absolute values, and will call them {\it Archimedean valuations}. We put  
$V_K=V_K^{fin} \cup \mathrm{Arch(K)}$, and call it the set of {\it normalized valuations} of $K$.

$\Gamma$ (or $\Gamma_K$) shall denote the value group, $\cO_K:=\{x: |x|\leq 1\}$  
the valuation ring, and $\cM_K:=\{x: |x|< 1\}$ the maximal ideal of a valued field $K$.

For a non-Archimedean completion $K_v$ of $K$, we also denote 
the absolute value of $K_v$ by $|.|_v$, the valuation ring by $\mathcal{O}_v$, the maximal ideal by $\cM_v$, and 
the residue field by $k_v$.

\subsection{\bf Restricted direct products and measures}\label{ssec-rest-prod}
Let $\Lambda$ be an index set and $\Lambda_{\infty}$ a fixed finite 
subset. Suppose we are given, for each $\lambda\in \Lambda$, a topological space  
$G_{\lambda}$, and for all $\lambda\notin \Lambda_{\infty}$, 
a fixed open subset $H_{\lambda}$ of $G_{\lambda}$. 
Let $\prod_{\lambda \in \Lambda} G_{\lambda}$ denote the Cartesian product of the $G_{\lambda}$. We denote an element of this set by $x=(x(\lambda))_{\lambda}$, where 
$x(\lambda)$ denotes the $\lambda$-component of $x$ which is an element of $G_{\lambda}$.

The 
restricted direct product $G$ of $G_{\lambda}$ with respect to $H_{\lambda}$ is defined to be the set of all elements 
$(x(\lambda))_{\lambda}\in \prod_{\lambda \in \Lambda} G_{\lambda}$ such that $x(\lambda)\in H_{\lambda}$ for 
all but finitely many $\lambda$, and denoted $\prod'_{\lambda\in \Lambda} G_{\lambda}$. 
It carries the restricted product topology with a basis of open sets consisting of the products $\prod_{\lambda \in \Lambda} \Gamma_{\lambda}$,
where $\Gamma_{\lambda}\subseteq G_{\lambda}$ is open for all $\lambda\in \Lambda$, and 
$\Gamma_{\lambda}=H_{\lambda}$ for all but finitely many $\lambda\in \Lambda$.

Suppose for each $\lambda\in \Lambda$, $G_{\lambda}$ carries a measure $\mu_{\lambda}$ such that $\mu_{\lambda}(H_{\lambda})=1$ for all $\lambda\notin \Lambda_{\infty}$. 
The measure $\mu$ on 
$G$ induced by the $\mu_{\lambda}$ is the measure with a basis of measurable sets consisting of the sets 
$\prod_{\lambda\in \Lambda} M_{\lambda}$, where $M_{\lambda}\subseteq G_{\lambda}$ is $\mu_{\lambda}$-measurable and 
$M_{\lambda}=H_{\lambda}$ for all but finitely many $\lambda\in \Lambda$, and where $$\mu(\prod_{\lambda\in \Lambda} M_{\lambda})=\prod_{\lambda\in \Lambda} 
\mu_{\lambda}(M_{\lambda}).$$
It is denoted by $\prod_{\lambda} \mu_{\lambda}$.

Suppose $G_{\lambda}$ is a locally compact group for all $\lambda \in \Lambda$, and $H_{\lambda}$ a compact open subgroup 
for all $\lambda \notin \Lambda_{\infty}$. Then $G_{\lambda}$ carries a Haar measure $\mu_{\lambda}$ such that 
$\mu_{\lambda}(H_{\lambda})=1$ (for all $\lambda\notin \Lambda_{\infty}$). For any finite subset $S\subseteq \Lambda$ containing $\Lambda_{\infty}$, let 
$$G_S:=\prod_{\lambda\in S} G_{\lambda}\times \prod_{\lambda\notin S} H_{\lambda}.$$ 
This set is locally compact and open in $G$, and 
$G$ is the union of the $G_S$ over all finite subsets $S$ of $\Lambda$ containing $\Lambda_{\infty}$. In particular, $G$ is locally compact.

\medskip

\subsection{\bf Ring of adeles}\label{ssec-adeles} 

\

\medskip

The ring of adeles of a number field $K$ 
is the restricted direct product $\Bbb A_K=\prod'_{v\in V_K} K_v$
of the additive groups of $K_v$ with respect to the subgroups $\mathcal{O}_v$, with 
addition and multiplication defined componentwise. We write an adele $a$ as $(a(v))_{v}$. 

The ring of finite adeles of $K$ is the restricted direct product $\A_{K}^{fin}=\prod'_{v\in V_K^{fin}} K_v$
of all non-Archimedean $K_v$ with respect to $\mathcal{O}_v$. One has 
$$\Bbb A_K=(\prod_{v\in \mathrm{Arch(K)}}K_v) \times \Bbb A_{K}^{fin},$$
an algebraic and topological isomorphism.

Let $S$ be a finite subset of $V_K$ containing all the Archimedean valuations. We put 
$$\A_{K,S}=\prod_{v\in S} K_v \times \prod_{v\notin S} \mathcal{O}_v.$$
Then $\A_K=\bigcup_{S} \A_{K,S}$, over all finite subsets $S\subseteq V_K$ containing $V_K^{Arch}$.

There is an embedding of $K$ into $\Bbb A_K$ sending $a\in K$ to $(a,a,\cdots)$. The image is the ring of principal 
adeles, which we identify with $K$. It is a discrete subspace of $\Bbb A_K$ with compact quotient $\A_K/K$. 
If $K\subseteq L$ are number fields, then 
$\A_L$ is isomorphic to $\A_K \otimes_{K} L$ algebraically and topologically. See \cite{CF}, page 64.

The group of ideles $\Bbb I_K$ is the group of units of $\Bbb A_K$ and coincides with the restricted direct product of the multiplicative groups $K_v^*$ with respect to 
the unit groups $\cO_v^*$ of $\mathcal{O}_v$. It is given the restricted direct product topology. For any $a\in K^*$, $|a|_v=1$ for all except finitely many $v$. Thus 
$K^*$ is (diagonally) embedded in $\I_K$. One also has the product formula $\prod_{v\in V_K} |a|_v=1$. See \cite[pp.60]{CF}.

Each $K_v$ is a locally compact field, and carries an additive Haar measure $dx_v$. We make the following choice for these Haar measures:

(1) The measure on $K_v$ such that $\mathcal{O}_v$ has volume $1$ if $K_v$ is non-Archimedean

(2) The usual Lebesgue measure if $K_v$ is $\R$,

(3) The measure $2dxdy$ if $K_v$ is $\C$.

These give measures $\prod_{v\in V_K} dx_v$ and $\prod_{v\in V_K^{fin}} dx_v$ 
on $\A_K$ and $\A_K^{fin}$ respectively. The multiplicative groups  
$K_v^*$ carry a Haar 
measure $d^*x_v=dx_v/|x_v|_v$ invariant under multiplication, which 
give the measure $\prod_{v\in V_K} d^*x_v$ on 
$\Bbb I_K$.

As in Weil \cite{weil-adeles-gps}, we call 
the induced measure $\prod_{v\in V_K} dx_v$ on $\A_K$ the canonical measure and denote it by $\omega_{\A_K}$. 

\medskip

\subsection{\bf Idempotents and Boolean values}\label{ssec-idem} 

\

\medskip

Let $\cL_{\rm{rings}}=\{+,-,.,0,1\}$ denote the language of rings and $\cL_{Boolean}=\{\wedge,\vee,0,1,\neg\}$ the language of Boolean algebras. The set 
$$\B_K=\{a\in \A_K: a^2=a\}$$
of idempotents in $\A_K$ is a Boolean algebra 
with the Boolean operations
$$e\wedge f=ef,$$ 
$$e\vee f=1-(1-e)(1-f)=e+f-ef,$$ 
$$\neg e=1-e.$$ 
$\B_K$ is $\cL_{rings}$-definable in $\Bbb A_K$. It carries an order defined by $e\leq f$ if and only if $e=ef$, which is $\cL_{rings}$-definable.
An idempotent $e$ is minimal if it is non-zero and minimal with respect to this order. 

There is a correspondence between subsets of $V_K$ and 
idempotents $e$ in $\Bbb A_K$ defined by 
$$X \longmapsto e_X,$$
where given $X$,  
$$e_X=\begin{cases}
1 & \text{if~$v\in X$}\\
0 & \text{if~$v\notin X$}
\end{cases}$$
It is clear that $e_X\in \Bbb A_K$. Conversely, 
if $e\in \Bbb A_K$ is idempotent, then we let 
$$X=\{v: e(v)=1\}.$$ We have $e=e_{X}.$

Under this correspondence a minimal idempotent $e$ corresponds to a normalized valuation $\{v\}$ which we denote by $v_e$. Conversely,
$v$ corresponds to $e_{\{v\}}$. 

Note that $e\A_K$ is an ideal in $\A_K$ but not a unital subring of $\A_K$. It is a unital ring with induced addition and multiplication, and $e$ as the unit element. 

The map
$$a+(1-e)\A_K \mapsto ea$$
gives an isomorphism 
$$\Bbb A_K/(1-e)\Bbb A_K \cong e\Bbb A_K.$$
This follows from the isomorphism $\A_K\cong e\A_K \oplus (1-e)\A_K$ since $e$ is idempotent. 

If $e$ is a minimal idempotent, and corresponds to $\{v_e\}$, we have an isomorphism 
$$e\Bbb A_K\cong K_{v_e}$$
given by $$ea \mapsto a(v_{e}).$$

Let $\Phi(x_1,\dots,x_n)$ be an $\cL_{rings}$-formula. It is easily seen (see \cite{DM-ad}) that there is an associated $\cL_{rings}$-formula $\Phi^{Glob}(y,x_1,\dots,x_n)$ 
constructed from $\Phi$ and independent of $K$ such that for all $a_1,\dots,a_n \in \A_K$ and idempotents $e$
$$e\A_K \models \Phi(ea_1,\dots,ea_n) \Leftrightarrow \A_K\models \Phi^{Glob}(e,a_1,\dots,a_n).$$

For $a_1,\dots,a_n\in \Bbb A_K$, we define $[[\Phi(a_1,\dots,a_n)]]$ to be the supremum of all the minimal idempotents $e$ in $\Bbb B_K$ such that 
$$e\Bbb A_K \models \Phi(ea_1,\dots,ea_n).$$
Note that the set of such minimal idempotents is empty precisely when   
$$[[\Phi(a_1,\dots,a_n)]]=0,$$
and the set of such idempotents coincides the set of all minimal idempotents precisely when 
$$[[\Phi(a_1,\dots,a_n)]]=1.$$
We remark that $[[...]]$ is an internal version of the Boolean values of Feferman-Vaught (see \cite{FV}) defined by 
$$[[\Phi(x_1,\dots,x_n)]]=\{v\in V_K: K_v\models \Phi(a_1(v),\dots,a_n(v))\}.$$
Note that $e\Bbb A_K$ is $\cL_{rings}$-definable with the parameter $e$, and the functions 
$$\Bbb A_K^n\rightarrow \Bbb A_K$$
defined by 
$$(a_1,\dots,a_n)\rightarrow [[\Phi(a_1,\dots,a_n)]]$$ 
are $\cL_{rings}$-definable independently of $K$, for any $\Phi$. 

The idempotent-support of $a\in \Bbb A_K$ is defined as 
the Boolean value $[[a\neq 0]]$, and we denote it by $\mathrm{supp}(a)$

\medskip

\subsection{\bf Real, complex, and non-Archimedean Boolean values}\label{ssec-arch}

\

\medskip

We define the following formulas and idempotents.

. $\Psi^{\mathrm{Arch}}$ denotes a sentence that holds in $\R$ and $\C$ but does not hold in any 
non-Archimedean local field, for example 
$$\forall x \exists y (x=y^2 \vee -x=y^2).$$ 

. $\Psi_{\R}$ denotes a sentence that holds precisely in the $K_v$ which are isomorphic to $\R$, 
for example 
$$\Psi^{\mathrm{Arch}}\wedge \neg \exists y (y^2=-1).$$ 

. $\Psi_{\C}$ denotes a sentence that holds precisely in the $K_v$ which are isomorphic to $\C$, 
for example $$\Psi^{\mathrm{Arch}}\wedge \exists y (y^2=-1).$$ 

. A minimal idempotent $e$ is Archimedean if $$e\A_K\models \Psi^{\mathrm{Arch}},$$
and non-Archimedean otherwise. 

. A minimal idempotent $e$ is real if $e\A_K\models \Psi_{\R}$, and complex if $e\A_K\models \Psi_{\C}$.

. $e_{\R} $(resp.\ $e_{\C}$) denotes the supremum of all the real (resp.~complex) minimal idempotents. It is supported precisely on the set of $v$ such that $K_v$ is real  (resp. complex).

. $e_{\infty}$ denotes the supremum of all the Archimedean minimal idempotents. It is supported precisely on the set $\mathrm{Arch}(K)$.

. $e_{na}$ denotes $1-e_{\infty}$. It is supported precisely on the set of non-Archimedean valuations. 

Let $\Phi(x_1,\dots,x_n)$ be an $\cL_{\rm{rings}}$-formula and $a_1,\dots,a_n\in \Bbb A_K$. Then 

. $[[\Phi(a_1,\dots,a_n)]]^{real}$ denotes the supremum of all the minimal idempotents $e$ 
such that 
$$e\Bbb A_K\models \Psi_{\R} \wedge \Phi(ea_1,\dots,ea_n).$$

. $[[\Phi(a_1,\dots,a_n)]]^{complex}$ denotes the supremum of all the minimal idempotents $e$ 
such that $$e\Bbb A_K\models \Psi_{\C} \wedge \Phi(ea_1,\dots,ea_n).$$

. $[[\Phi(a_1,\dots,a_n)]]^{na}$ denotes the supremum of all the minimal idempotents $e$ such that 
$$e\Bbb A_K\models \neg \Psi^{\mathrm{Arch}} \wedge \Phi(ea_1,\cdots,ea_n).$$ 

The functions $$(a_1,\dots,a_n)\rightarrow [[\Phi(a_1,\dots,a_n)]]^{real}$$ 
$$(a_1,\dots,a_n)\rightarrow [[\Phi(a_1,\dots,a_n)]]^{complex}$$ 
and $$(a_1,\dots,a_n)\rightarrow [[\Phi(a_1,\dots,a_n)]]^{na}$$ 
are $\cL_{\rm{rings}}$-definable from $\A_K^n$ to $\A_K$, uniformly in $K$.

\medskip

\subsection{\bf Finite adeles}\label{sec-fin} 

\

\medskip

We can identify the ring of finite adeles $\A_K^{fin}$ with the ideal in $\A_K$ consisting of all adeles $a$ such that $a(v)=0$ for all $v\in Arch(K)$ (this will be the case in the results 
on von Neumann regularity and definition of valuation in Subsection 
\ref{ssec-deffin}). However in the results of Section \ref{sec-rest} on quantifier elimination and definable sets it is preferable to work with $\A_K^{fin}$ as a restricted direct product and $\A_K$ as the product of $\A_K^{fin}$ with $\prod_{v\in Arch(K)} K_v$. 

We let $\B_K^{fin}$ denote the Boolean algebra of idempotents in $\A_K^{fin}$ (considered either as a restricted product or as an ideal in $\A_K$) 
with the same Boolean operations as $\B_K$. It is definable in $\B_K$, $\A_K^{fin}$, and $\A_K$.

Given $a_1,\cdots,a_n\in \A_{K}^{fin}$, and an $\cL_{\rm{rings}}$-formula $\Phi(x_1,\cdots,x_n)$, the idempotent 
$[[\Phi(a_1,\cdots,a_n)]]^{na}$ can be defined inside $\A_K^{fin}$ as the supremum of all the 
minimal idempotents $e$ from $\B_K^{fin}$ such that 
$$e\A_{K}^{fin} \models \Phi(ea_1,\dots,ea_n),$$ 
which is again definable in $\cL_{rings}$, uniformly in $K$.

\medskip

\subsection{\bf Defining finite support idempotents}\label{ssec-deffin} 

\

\medskip

An idempotent $e$ has finite support if $\{v\in V_K: e(v)\neq 0\}$ is a finite set. 

The following theorem is of great importance in our works on the model theory of adeles.

\begin{thm}[Derakhshan-Macintyre {\cite{DM-ad}}] 
\label{defid}The set of finite support idempotents in $\A_K$ is $\cL_{rings}$-definable, uniformly in $K$.\end{thm}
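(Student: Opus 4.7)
The plan is to exhibit a single $\cL_{\mathrm{rings}}$-formula $\phi(e)$, free of any parameters depending on $K$, which defines the set of finite-support idempotents in $\A_K$. The algebraic observation behind the definition is that for an idempotent $e\in\B_K$ corresponding to $X\subseteq V_K$, the ring $e\A_K$ is a finite direct product of fields precisely when $X$ is finite, and this ``finite product of fields'' character is captured in first-order ring theory by von Neumann regularity. Accordingly I propose
$$\phi(e) \;:\; (e\cdot e = e)\;\wedge\;\forall a\,\exists b\,\bigl((ea)(eb)(ea)=ea\bigr).$$
Since $\phi$ uses only $\cL_{\mathrm{rings}}$-symbols, proving that $\phi(e)$ holds iff $e$ has finite support immediately yields definability uniformly in $K$.

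\medskip

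\noindent\emph{Forward direction.} If $e$ has finite support $\{v_1,\dots,v_n\}$, then by Subsection~\ref{ssec-idem} we have $e\A_K\cong\prod_{i=1}^n K_{v_i}$, a finite product of fields. Componentwise pseudo-inversion (inverting nonzero coordinates, zero otherwise) yields a weak inverse for every element, so $\phi(e)$ holds.

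\medskip

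\noindent\emph{Reverse direction.} Suppose $X=\mathrm{supp}(e)$ is infinite. As $\mathrm{Arch}(K)$ is finite, $X\cap V_K^{fin}$ is infinite. For each non-Archimedean $v$ fix a uniformizer $\pi_v\in\mathcal{O}_v$, and define $a\in\A_K$ by
$$a(v)=\begin{cases}\pi_v & \text{if } v\in X\cap V_K^{fin},\\ 0 & \text{otherwise.}\end{cases}$$
Each $a(v)$ lies in $\mathcal{O}_v$, so $a$ is a valid adele, and visibly $a=ea$. If some $b\in\A_K$ satisfied $(ea)(eb)(ea)=ea$, then componentwise at each $v\in X\cap V_K^{fin}$ we would need $\pi_v^2\,b(v)=\pi_v$, forcing $b(v)=\pi_v^{-1}\notin\mathcal{O}_v$ at infinitely many places. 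This contradicts the restricted-product condition on $b$, so no such $b$ exists and $\phi(e)$ fails.

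\medskip

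The main conceptual step is identifying von Neumann regularity as the first-order invariant that distinguishes finite from infinite support; with that in hand the verification reduces to a short componentwise calculation. The only subtlety to watch is that the Archimedean coordinates of the witness $a$ must be set to $0$, so that the finitely many Archimedean places (which carry no uniformizer) cannot accidentally rescue the existence of $b$; the genuine obstruction lives at the infinitely many non-Archimedean primes in $X$, where the adele condition forbids $b$ from having negative valuation almost everywhere.
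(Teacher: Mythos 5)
Your proposal is correct and follows essentially the same route as the paper: the paper's first proof also characterizes finite-support idempotents $e$ by the condition that $ae$ is von Neumann regular for every $a\in\A_K$, the only cosmetic difference being that you express regularity via a weak inverse $xyx=x$ while the paper uses the equivalent formulation that the principal ideal $(x)$ is generated by an idempotent. Your componentwise verification (pseudo-inversion for the forward direction, the uniformizer adele for the reverse) matches the argument indicated in the paper.
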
 
We denote the set of finite support idempotents by $Fin_K$. It is an ideal in the Boolean algebra $\Bbb B_K$. In 
\cite{DM-ad} we give two proofs of this result. The first uses the concept of a von Neumann regular ring defined as follows. 

We call an element $a$ of a commutative ring $R$ von Neumann regular if $(a)$  the ideal generated by 
$a$ is generated by an idempotent. $R$ is von Neumann regular if every element is 
von Neumann regular. Examples are direct products of fields. In contrast, $\A_K$ is not von Neumann regular, however it is shown in \cite{DM-ad} that an element $a\in \Bbb A_K$ is von Neumann regular if and only if the set 
$$\{v\in V_K: v(a(v))>0 \wedge a(v)\neq 0\}$$ is finite. This implies that an idempotent $e\in \A_K$  
has finite support if and only if for all $a\in \Bbb A_K$, the element $ae$ is von Neumann regular.

So we let $\phi(x)$ denote the $\cL_{rings}$-formula 
$$\exists e \exists y \exists z (e^2=e \wedge x=ey \wedge e=xz).$$ 
Then the set $Fin_K$ of finite support idempotents in $\A_K$ is defined by the $\forall \exists$-formula
$$x=x^2 \wedge \forall y (\phi(yx)).$$
This definition is independent of $K$. It follows that the set of finite support idempotents in $\A_K^{fin}$ is also definable. 

The second proof uses uniform definition of valuation rings in the language of rings which used quite often in the works of Macintyre and myself on the model theory of adeles. 
Explicit definitions previous to our work (e.g. for $\Q_p$ for all $p$) used a constant in the language for a 
uniformizing element. Non-explicit definitions had existed as well, using Beth definability type arguments because of the uniqueness of the Henselian valuations of the $K_v$. These results were mostly 
applied in connection to decidability and undecidability results on Hilbert 10th problem (see \cite{Koenigsmann}). 

In joint work with Cluckers-Leenknegt-Macintyre \cite{CDLM} the following is proved.
\begin{thm}[Cluckers-Derakhshan-Leenknegt-Macintyre {\cite[Theorem 2]{CDLM}}]
\label{CDLM-th} 
There is an existential-universal formula in the language of rings that uniformly defines the valuation ring of all Henselian valued fields with finite or pseudo-finite residue field.
\end{thm}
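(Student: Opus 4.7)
The strategy is to combine an explicit $\exists\forall$ formula that encodes a Hensel-lifting-plus-norm-form test for membership in $\mathcal{O}_K$ with a model-theoretic transfer that carries the finite residue field case to the pseudo-finite one. Throughout, one aims for a formula $\phi(x) \equiv \exists \bar y\, \forall \bar z\, \theta(x, \bar y, \bar z)$ with $\theta$ quantifier-free in $\cL_{\rm rings}$, such that in any Henselian valued field $K$ with residue field $k$ finite or pseudo-finite, the witnesses $\bar y$ supply an approximate root of a test polynomial that Hensel's lemma refines into an honest root lying in $\mathcal{O}_K$, while the $\forall \bar z$ block rules out the analogous lift when $v(x) < 0$.

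First, I would treat the finite residue field case by adapting the uniform-in-$p$ definitions of the valuation ring on $\Qp$ (in the spirit of earlier work of Cluckers--Leenknegt and Anscombe--Koenigsmann) to general Henselian valued fields with finite residue field. The test polynomial is guided by the $C_1$-property of finite fields (Chevalley--Warning): a sufficiently general norm form in enough variables acquires a nontrivial residual zero, and Hensel's lemma lifts this zero to $\mathcal{O}_K$ exactly when $v(x) \geq 0$. The universal block then encodes the incompatibility between such a lift and the assumption $v(x) < 0$, which would force the candidate solution to acquire a pole at the residue level and hence violate a polynomial identity asserted by $\theta$.

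Second, for pseudo-finite residue fields I would invoke Ax's characterization: every pseudo-finite field is a perfect PAC field with absolute Galois group $\hat{\Z}$, equivalently elementarily equivalent in $\cL_{\rm rings}$ to a non-principal ultraproduct of finite fields. Since the algebraic properties used in the first step---solvability of the relevant norm equations and uniqueness of degree-$n$ extensions---are first-order in $\cL_{\rm rings}$ and hold in every finite field, \L o\'s's theorem propagates them to any pseudo-finite field. Combined with an Ax--Kochen--Ershov transfer (or, in positive residue characteristic, a direct degree-by-degree Hensel lift), this guarantees that the same formula $\phi$ defines $\mathcal{O}_K$ in the Henselian valued field over a pseudo-finite residue field.

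The main obstacle is to fit all residue characteristics into a single formula---in particular, the degeneration of quadratic norm forms in residue characteristic $2$, and the replacement of Kummer by Artin--Schreier theory in equicharacteristic $p$. I would resolve this by using a fixed finite family of norm forms of mixed degrees chosen to be coprime to all small residue characteristics simultaneously, absorbing the case distinction into the $\forall$-block of $\phi$. This is precisely where the $\exists\forall$ complexity is essential: purely existential or purely universal formulas are known to be insufficient for such uniformity, and Hensel's lemma naturally fits the $\exists\forall$ template, producing a witness $\bar y$ followed by a universal verification over $\bar z$ that the lift is not accidentally poisoned by a pole.
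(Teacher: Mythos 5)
Your proposal has the right silhouette --- an $\exists\forall$ formula implementing a Hensel-lifting test that depends only on first-order properties of the residue field, with the pseudo-finite case imported from the finite one via Ax's characterization --- but the algebraic engine you put at its centre has an internal contradiction, and a lifting step would fail. Chevalley--Warning produces a nontrivial \emph{residual zero} of a form of degree less than the number of variables, whereas a norm form $N_{L/k}(\bar y)$ of a separable field extension is \emph{anisotropic}: its only zero is $\bar y=0$. These facts pull in opposite directions, so ``a sufficiently general norm form in enough variables acquires a nontrivial residual zero'' asks for something that cannot happen while $L$ stays a field. The fact you actually want is complementary to $C_1$: over a finite or pseudo-finite residue field the norm of the unique degree-$n$ extension is \emph{surjective}, so the inhomogeneous equation $N(\bar y)=a$ is solvable for every nonzero $a$, and such a solution is automatically a smooth point of the fibre (away from $\bar y=0$ the norm hypersurface is a torsor under the norm-one torus), so Hensel's lemma lifts it. Without this replacement the lifting stalls: a raw Chevalley--Warning zero need not be a simple zero of the reduced system, and Hensel's lemma then gives you nothing.

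Two further issues. The appeal to ``Ax--Kochen--Ershov transfer'' does not carry the weight you place on it: AKE concerns elementary equivalence, not the invariance of a formula defining a subset, and in any case does not apply across the full class in the theorem, which includes Henselian fields of mixed and equal positive characteristic with arbitrary (non-discrete, wildly ramified) value groups where no AKE-style reduction to the residue field is available; ``degree-by-degree Hensel lift'' names the gap rather than filling it. The remedy is to argue entirely inside the valued field, using Henselianity together with first-order facts about finite and pseudo-finite residue fields --- unique extension of each degree, surjectivity of the norm of that extension, the interaction between $n$th powers and the value group --- and to handle the finitely many residue characteristics dividing the chosen exponent by an explicit case split folded into the quantifier-free matrix (your phrase ``coprime to all small residue characteristics simultaneously'' is ambiguous and, on the most natural reading, impossible). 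That internal argument is also what makes both the $\exists\forall$ shape and the sharpness statement (no $\exists$ or $\forall$ formula works, which you correctly cite) fall out of the same circle of ideas rather than being imposed from outside.
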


This theorem is best possible in terms of quantifier complexity as we show in \cite{CDLM} that there is no existential or universal $\cL_{rings}$-formula that uniformly defines the valuation rings of $\Q_p$, or $\F_p((t))$ for almost all $p$, or the valuation rings of 
all the finite extensions of a given $\F_p((t))$ or $\Q_p$. 

We denote by $\Phi_{val}(x)$ the formula given by Theorem \ref{CDLM-th}. For our work on adeles we apply Theorem \ref{CDLM-th} to all the completions $K_v$, where $v$ is non-Archimedean.
\begin{note} Note that Theorem \ref{CDLM-th} applies not only to all finite extensions of $\Q_p$ for all $p$, but to all 
local fields of positive characteristic $\F_q((t))$ and their ultraproducts. In fact, the formula $\Phi_v(x)$ is defining the valuation ring of all these fields as well as the valuation rings of all Henselian valued fields with higher rank value groups and pseudofinite residue field. 
\end{note}

In \cite{DM-ad} that an idempotent $e$ has finite support if and only if 
$$\Bbb A_K\models\exists x (e_{na}e=[[\neg\Phi_{val}(x)]]^{na})$$ 
which is also independent of $K$.

\section{\bf Enriched theories of Boolean algebras and valued fields}\label{sec-bool}

Our analysis of the model theory of the adeles and restricted products depends crucially on the 
theory of an associated Boolean algebra and the theories of the local fields which are the factors or "stalks" of the adeles. 

In this analysis, various enrichments are of fundamental importance. The Boolean algebra should be enriched by at least adding a predicate $Fin(x)$ to the language 
with intended interpretation in a Boolean algebras of sets as "finite", and in the Boolean algebra of idempotents in the adeles or in rings 
as "finite unions of atoms". We should also enrich the language for the factors. We give some details below.

\medskip

\subsection{\bf Enrichments of Boolean algebras}\label{ssec-benrich} 

\

\medskip

As before $\mathcal{L}_{Boolean}$ denotes the language of Boolean algebras $\{\vee,\wedge,\neg,0,1\}$. Tarski proved the classical result that if we enrich $\mathcal{L}_{Boolean}$ by natural predicates $C_n(x)$ for all $n\geq 1$, with the interpretation that there are at least $n$ distinct atoms $\alpha$ with $\alpha\leq x$, then in the enriched language the 
theory of all infinite atomic Boolean algebras is complete, has quantifier elimination and is decidable. This theory, that we denote by $T^{Bool}$, 
is axiomatized by sentences stating that the models are infinite Boolean algebras and every nonzero element has an atom below it.
(See \cite[Theorem 16,pp.70]{KK}, a new proof is given by Macintyre and myself \cite{DM-bool} that is uniform for the other enrichments stated below). 

The main examples are Boolean algebras of subsets of an infinite set $I$, namely $\P(I)$ (which denotes the powerset of $I$) with the usual set-theoretic Boolean operations. 
These are clearly not the only models, since no countable model can be a powerset algebra. 

Feferman-Vaught \cite{FV} and Mostowski added a unary predicate $Fin(x)$ to the 
language of Tarski 
with the intended interpretation in a Boolean algebra of subsets $\P(I)$ as " $x$ is finite". Feferman-Vaught \cite{FV} proved that this theory is complete, decidable, and has quantifier elimination. A new proof was given Macintyre and myself 
in \cite{DM-bool}. Note that there are other models where $Fin(x)$ holds and $x$ is not finite.

Let $\mathcal{L}_{Boolean}^{fin}$ denote the enrichment of $\cL_{Boolean}$ 
got by adding all the unary predicates $C_j(x)$ and $Fin(x)$, for all $j\geq 1$. 
Let $\sharp(x)$ denote the number of atoms $\alpha$ such that $\alpha \leq x$. Note that the $C_j$ are definable in 
$\cL_{Boolean}$ but $Fin(x)$ is not.                                                                                                                                                                                                                                                                                                                                                                                                                                                                                                                                                                                                                                                                                                                                                                                                                                                                                                                                                                                                                                                                                                                                                                                                                                                                                                                                                                                                                                                                                                                                                                                                                                                                                                                                                                                                                                                                                                                                                                                                                                                                                                                                                                                                                                                                                                                                                                                                                                                                                                                                                                                                                                                                                                                                                                                                                                                                                                                                                                                                                                                                                                                                                                                                                                                                                                                                                                                                                                                                                                                                                                                                                                                                                                                                                                                                                                                                                                                                                                                                                         

Let $T^{fin}$ of infinite atomic Boolean algebras in the language $\mathcal{L}_{Boolean}^{fin}$.

\begin{thm}[Feferman-Vaught {\cite{FV}, new proof by Derakhshan-Macintyre \cite{DM-bool}}] 
\label{bool1}The theory $T^{fin}$ of infinite atomic Boolean algebras with the 
set of finite sets distinguished is complete, decidable and 
has quantifier elimination with respect to all the $C_n$, $(n\geq 1)$, and $Fin$ (i.e. in the language 
$\mathcal{L}_{Boolean}^{fin}$). The axioms required for completeness and quantifier elimination are the 
axioms of $T^{Bool}$ together with 
sentences expressing that $Fin$ is a proper (Boolean) ideal, the sentence
$$
\forall x (\neg Fin(x) \Rightarrow (\exists y)(y<x \wedge \neg Fin(y) \wedge \neg Fin(x\wedge \neg y))).
$$
and, for each $n<\omega$, the sentence $\forall x (\sharp(x)\leq n \Rightarrow Fin(x))$.
\end{thm}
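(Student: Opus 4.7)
The plan is to prove quantifier elimination (QE) by a back-and-forth argument that builds on Tarski's QE for $T^{Bool}$ in the sublanguage $\{+,\wedge,\vee,\neg,0,1,C_n : n \geq 1\}$; completeness and decidability will then follow by standard arguments. First I would analyze the shape of quantifier-free formulas in $\mathcal{L}_{Boolean}^{fin}$: a QF type over a tuple $\bar{a}$ is determined by specifying, for each \emph{constituent} $\alpha_I = \bigwedge_{i\in I}a_i \wedge \bigwedge_{j\notin I}\neg a_j$ of the Boolean subalgebra generated by $\bar{a}$, (i) the value $\sharp(\alpha_I)$ (a natural number or $\infty$, encoded by the $C_n$), and (ii) the truth value of $Fin(\alpha_I)$. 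The axiom $\sharp(x) \leq n \Rightarrow Fin(x)$ ensures $Fin$ values are forced when $\sharp$ is finite, so the only genuinely independent $Fin$ data are attached to constituents on which every $C_n$ holds.

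Next I would reduce QE to the single-step extension lemma: given $\omega$-saturated models $M, N \models T^{fin}$, tuples $\bar{a}$ in $M$ and $\bar{b}$ in $N$ realizing the same QF type, and an element $c \in M$, produce $d \in N$ such that $\bar{a}c$ and $\bar{b}d$ realize the same QF type. The QF type of $\bar{a}c$ is determined constituent-by-constituent: for each $\alpha_I$ one must record the QF type of the pair $(c \wedge \alpha_I,\; \neg c \wedge \alpha_I)$, i.e.\ the $\sharp$ and $Fin$ values on both halves. Thus the whole task reduces, independently on each constituent, to the following \textbf{splitting realization claim}: if $\alpha' \in N$ has prescribed $\sharp$ and $Fin$ values (matching those of $\alpha_I$), then every consistent specification of $(\sharp, Fin)$ values on a pair $(\beta', \gamma')$ with $\beta' \vee \gamma' = \alpha'$ and $\beta' \wedge \gamma' = 0$ can be realized in $N$.

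The splitting claim is where the new axioms do work, and handling it is the main obstacle. The finite case ($\alpha'$ has $\sharp(\alpha')=n$) is immediate from atomicity in $T^{Bool}$: one distributes the $n$ atoms below $\alpha'$ according to the prescription. When $\neg Fin(\alpha')$ holds, there are three sub-cases: both halves infinite (apply the axiom $\neg Fin(x) \Rightarrow \exists y(y<x \wedge \neg Fin(y) \wedge \neg Fin(x\wedge\neg y))$ directly to $\alpha'$); one half finite with a prescribed $\sharp$-value $k$ and the other infinite (pick $k$ atoms $\alpha_1,\dots,\alpha_k \leq \alpha'$ using atomicity, then show $\alpha' \wedge \neg(\alpha_1 \vee \cdots \vee \alpha_k)$ is infinite, which follows because $\alpha'$ is infinite and removing finitely many atoms from an infinite element leaves something infinite, using that $Fin$ is a Boolean ideal and the axiom $\sharp(x)\leq k\Rightarrow Fin(x)$); both halves finite (impossible because $Fin$ is an ideal, so the split type in $M$ would already be inconsistent). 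Verifying that the ideal axioms, the infinite-splitting axiom, and the $\sharp\leq n \Rightarrow Fin$ axioms together make every \emph{consistent} split realizable is the delicate combinatorial point.

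Finally, QE in $\mathcal{L}_{Boolean}^{fin}$ implies completeness because every model contains the $\mathcal{L}_{Boolean}^{fin}$-substructure generated by $0$ and $1$ and QF $\mathcal{L}_{Boolean}^{fin}$-sentences in this substructure are decided by the axioms (both $0$ and $1$ have determined $\sharp$ and $Fin$ values, with $1$ forced to be infinite and non-$Fin$ by the infinite-splitting axiom together with infiniteness of the model). Decidability follows from completeness and the manifest recursiveness of the axiom scheme. The cleanest presentation would state the splitting realization claim as a separate lemma, then obtain QE by the standard back-and-forth criterion, and finally derive completeness and decidability as corollaries.
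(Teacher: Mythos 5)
Your overall strategy---back-and-forth between $\omega$-saturated models, reducing to a constituent-wise splitting realization lemma, then completeness via the prime substructure $\{0,1\}$ and decidability via a recursive axiomatization---is the standard and correct route to this theorem, and your preliminary analysis of quantifier-free types in $\mathcal{L}_{Boolean}^{fin}$ (determined by the $\sharp$ and $Fin$ data on the constituents, with $Fin$ forced whenever $\sharp$ is finite) is exactly right. The problem is that the case analysis at the heart of the splitting realization claim is not exhaustive, and the missing cases are precisely those the paper itself flags: as noted just before the theorem, ``there are other models where $Fin(x)$ holds and $x$ is not finite,'' so a constituent $\alpha'$ can satisfy $Fin(\alpha')$ while every $C_n(\alpha')$ holds. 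Your analysis treats only ``$\sharp(\alpha')$ finite'' and ``$\neg Fin(\alpha')$,'' omitting the entire case $Fin(\alpha')$ with $\sharp(\alpha')=\infty$. Moreover, inside the $\neg Fin(\alpha')$ case, your ``one half finite, one half infinite'' sub-case handles only the situation where the $Fin$ half has finite $\sharp$; it does not treat the situation where the $Fin$ half has $\sharp=\infty$ and the other half is $\neg Fin$.

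Both omissions are repairable by exactly the $\omega$-saturation device you already invoke elsewhere, so the gap is one of casework, not of strategy. For $Fin(\alpha')$ with $\sharp(\alpha')=\infty$ and both halves prescribed $\sharp=\infty$, the partial type $\{\gamma\leq\alpha'\}\cup\{C_n(\gamma)\wedge C_n(\alpha'\wedge\neg\gamma):n\geq 1\}$ is finitely satisfiable (choose $n$ of the $\geq 2n$ atoms below $\alpha'$), hence realized, and $Fin$ on both halves is then automatic from downward closure of the ideal. For $\neg Fin(\alpha')$ with one half prescribed $(Fin,\ \sharp=\infty)$ and the other $\neg Fin$, the type $\{\gamma\leq\alpha',\ Fin(\gamma),\ \neg Fin(\alpha'\wedge\neg\gamma)\}\cup\{C_n(\gamma):n\geq 1\}$ is finitely satisfiable: take $\gamma$ to be a join of $n$ atoms below $\alpha'$, so $Fin(\gamma)$ holds by the axiom $\sharp(x)\leq n\Rightarrow Fin(x)$, and $\neg Fin(\alpha'\wedge\neg\gamma)$ holds since $Fin$ is an ideal and $Fin(\alpha')$ would otherwise follow. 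You should state the splitting lemma with an explicitly exhaustive list of the admissible $(Fin,\sharp)$-patterns on the pair $(\beta',\gamma')$ given the pattern on $\alpha'$---organized by whether $Fin(\alpha')$ holds and whether each piece has finite $\sharp$, this is about six configurations up to symmetry, and each must be discharged. As written, the argument silently drops two of them.
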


This theorem plays a fundamental role in our works on the model theory of adeles and restricted products. 
It also applies to a commutative ring via the Boolean algebra of its idempotents (see Section \ref{sec-axioms}). In this way it has been used in the work in \cite{DM-axioms} of Macintyre and myself 
on axioms for rings elementarily equivalent to restricted products (see Section \ref{sec-axioms}).

Motivated by questions in number theory, around reciprocity laws, in \cite{DM-bool} we expanded the language 
$\mathcal{L}_{Boolean}^{fin}$ by unary predicates $Res(n,r)(x)$ for all $n,r\in \Z$, $n>0$, with the intended interpretation, in $\P(I)$, 
that $Fin(x)$ holds and the cardinal of $x$ is congruent to $r$ modulo $n$. 

Let $\mathcal{L}_{Boolean}^{fin,res}$ denote the enrichment of $\mathcal{L}_{Boolean}^{fin}$ by these predicates and $T^{fin,res}$ the theory of all infinite atomic Boolean algebras in the language $\mathcal{L}_{Boolean}^{fin,res}$. 

In \cite{DM-bool}, Macintyre and myself gave axioms for $T^{fin,res}$ and prove the following.

\begin{thm}[Derakhshan-Macintyre {\cite{DM-bool}\label{bool2}}] The theory $T^{fin,res}$ of infinite atomic 
Boolean algebras in the enriched language with all the $C_n$, $(n\geq 1)$, $Fin$, and all $Res(r,n)$, $(n,r\in \Z, n>0)$, 
is complete, decidable, and has 
quantifier elimination. The axioms needed to get the completeness and 
quantifier elimination are the axioms of $T^{fin}$ together with the Boolean-Presburger axioms 
as follows: 
$$\forall x (Res(n,r)(x)\Rightarrow Fin(x)),$$
$$\forall x (Fin(x) \wedge \sharp(x)=m \wedge m\equiv r (\mathrm{mod}~n) \Rightarrow Res(n,r)(x)),$$
$$\forall x(Res(n,r)(x) \wedge r\equiv s (\mathrm{mod}~n) \Rightarrow Res(n,s)(x)),$$
$$\forall x(Res(n,r)(x)\wedge r \not\equiv s (\mathrm{mod}~n) \Rightarrow \neg Res(n,s)(x),$$ 
$$\forall x(Res(m,r)(x)\wedge n|m \Rightarrow Res(n,r)(x)),$$
$$\forall x(Fin(x)\Rightarrow \bigvee_{0\leq r< n} Res(n,r)(x)),$$
for all $n,r,s,m$,
$$\forall x \forall y(x\wedge y=0 \wedge Res(n,r)(x) \wedge Res(n,s)(y) \Rightarrow Res(n,r+s)(x\cup y)),$$
for all $n,r,s$; and
$$\forall x \forall y(x\cap y=0 \wedge Res(n,r)(x\vee y) \Rightarrow \bigvee_{\substack{0\leq s<n\\
0\leq t<n\\s+t\equiv r (\mathrm{mod}~n)}} Res(n,s)(x) \wedge Res(n,t)(y)),$$
for all $n,r$.
\end{thm}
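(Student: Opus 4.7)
The plan is to reduce Theorem \ref{bool2} to the quantifier elimination statement; completeness is then automatic because every quantifier-free $\cL_{Boolean}^{fin,res}$-sentence becomes a Boolean combination of atomic facts about the constants $0$ and $1$, all of which are decided by the axioms (in particular $\neg Fin(1)$ follows from the infinite atomic hypothesis), and decidability is then immediate from the recursive axiomatization. So I would focus the proof on quantifier elimination.

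By the standard induction it suffices to handle $\exists x\,\phi(x,\bar{y})$ where $\phi$ is a conjunction of literals. Write $\bar{y}=(y_1,\ldots,y_m)$ and, for $\delta\in\{0,1\}^m$, set $c_\delta(\bar{y})=\bigwedge_{i\le m} y_i^{\delta_i}$ with $y_i^1=y_i$ and $y_i^0=\neg y_i$. The $2^m$ cells $c_\delta$ partition $1$, and the subalgebra generated by $x,\bar{y}$ has atoms $c_\delta\wedge x$ and $c_\delta\wedge\neg x$. Using additivity of each predicate over disjoint joins (encoded for $Res(n,r)$ by the Boolean-Presburger additivity axioms, and by standard clauses for $C_n$ and $Fin$), each literal of $\phi$ rewrites as a conjunction of conditions of the form $P(c_\delta\wedge x)$, $P(c_\delta\wedge\neg x)$, $c_\delta\wedge x=0$, or $c_\delta\wedge\neg x=0$, where $P$ is a positive or negated instance of $C_n$, $Fin$, or $Res(n,r)$.

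Because the cells are pairwise disjoint and can be split independently, $\exists x\,\phi(x,\bar{y})$ holds iff for every $\delta$ the cell $c_\delta$ admits a disjoint decomposition $c_\delta=u_\delta\vee v_\delta$ meeting its prescribed literal constraints; $x=\bigvee_\delta u_\delta$ then witnesses the full formula. The question collapses to the following local problem: given a single element $c$ and a pair of quantifier-free types $(\tau_1,\tau_2)$, when can $c$ be written as $u\vee v$ disjointly with $u\models\tau_1$ and $v\models\tau_2$? The axioms pin down the answer: since $Fin$ is a proper ideal, a finite $c$ forces both parts finite and the $\sharp$-values to add, which by the Boolean-Presburger additivity axioms translates into constraints $r+s\equiv t\pmod n$ whenever $Res(n,r)(u)$, $Res(n,s)(v)$, and $Res(n,t)(c)$ are asserted. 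Conversely, atomicity combined with the infiniteness hypothesis supplies realizability: any prescribed finite cardinality (hence any residue class) can be carved out of an infinite cell as a finite join of atoms, while the axiom permitting the splitting of an infinite element into two infinite pieces provides infinite-infinite splits.

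The main obstacle is showing that the listed Boolean-Presburger axioms, on top of those of $T^{fin}$ (Theorem \ref{bool1}), are already strong enough to both \emph{express} and \emph{force} the realizability conditions — the analogue of confirming that Presburger arithmetic eliminates quantifiers once the congruence predicates are added. Since $\phi$ mentions only finitely many moduli $n$ and counters $C_j$, the local analysis on each cell $c_\delta$ is finite: one enumerates the finitely many consistent assignments of $C_j$, $Fin$, and $Res(n,r)$ data to $(u_\delta,v_\delta)$, checks each for compatibility with $c_\delta$ via the axioms, and records the result as a Boolean combination of $\cL_{Boolean}^{fin,res}$-literals on the term $c_\delta(\bar{y})$. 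Conjoining these over all $\delta$ yields the quantifier-free equivalent, completing the elimination.
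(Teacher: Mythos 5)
Your cell-decomposition argument (partition by the $c_\delta$, rewrite literals using the distributivity of each predicate over disjoint joins, reduce $\exists x$ to a finite local realizability check on each cell, using the splitting axiom for the infinite case and the Boolean-Presburger axioms for the finite case) is the standard Boolean-algebra QE method and is essentially what the cited proof in \cite{DM-bool} does, consistent with the paper's remark that this proof is ``uniform for the other enrichments.'' One small wording correction: the rewriting of $C_j$-, $Res$- and negated $Fin$-literals over the cells produces disjunctions over compatible cell assignments rather than conjunctions, so one must first pass to disjunctive normal form before distributing $\exists x$; your later step of enumerating the finitely many consistent assignments shows this was intended, so the outline is sound.
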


We can apply this theorem to the expansion of the Boolean algebra of idempotents of $\A_K$ or of more general commutative rings 
with adding the $Fin$ and $Res$-predicates and have quantifier elimination and decidability. 
In Section \ref{sec-recip} we give an application of this on Hilbert reciprocity in number theory from \cite{DM-ad}.

\medskip

\subsection{\bf Enrichments of valued fields}\label{ssec-venrich} 

\

\medskip

As before, for a valued field $K$, we denote the valuation by $v: K\rightarrow \Gamma \cup \{\infty\}$,  the value group by $\Gamma$ (with top element $\infty$), the valuation ring by $\cO_K$, 
 the maximal ideal by $\cM_K$, and the residue field by $k$ . 

The quantifier elimination for adeles or generally restricted products takes place in two steps. The first step is a general quantifier elimination that works for all restricted products, and would depend on a chosen enrichment of infinite atomic Boolean algebras. This is stated in Subsection 4.2. The second step is a finer result that requires a quantifier elimination in the factors or "stalks" of the restricted product. This depends on a chosen enrichment of Henselian valued fields. We shall state some convenient languages in this Subsection for this purpose. The resulting results are presented in Subsections 4.2 and 5.1.

Let $\cL$ be a language for the factors $K_v$. This means that $K_v$ are $\cL$-structures. If $v$ is non-Archimedean, then 
$K_v$ is non-trivially valued Henselian whose value group is $\Z$. One can have a symbol for the valuation in both $1$-sorted and many-sorted  situations. Even though the valuation of the non-Archimedean $K_v$ is uniformly definable by an $\cL_{rings}$-formula that is $\exists \forall$ (see Theorem 2.2), the sorting helps with some of the finer results and enables us to keep track of residue fields or residue rings where uniformities depend on, as $v$ or $K$ vary.

On the other hand for us it is important to keep the basic analysis within the language of rings and regard $\A_K$ as a ring, which we do in all cases.

In the interpretation of $K_v$ as an $\cL$-structure, we have to take into account the Archimedean $K_v$ as well. In this case, in the situations that we add a predicate for the valuation $v$ of the non-Archimedean $K_v$, we adopt the convention that $v$ is interpreted in the Archimedean $K_v$ as the trivial valuation. This means that in the Archimedean $K_v$, 
$v(a)=0$ if and only if $a$ is not zero. This means that $K_v$ is the valuation ring and $\{0\}$ the maximal ideal. For the real $K_v$ we take the language of ordered rings $\cL_{rings} \cup \{<\}$ and for the $K_v$ that are complex, we take the language of rings. Then the  Archimedean $K_v$ admit quantifier elimination by results of Tarski for real closed and algebraically closed fields (see \cite{KK}).

Note that while the valuation rings of all the non-Archimedean $K_v$ and their unit balls are $\cL_{rings}$-definable, the unit ball in $\R$ is also $\cL_{rings}$-definable, but the unit ball in $\C$ is not $\cL_{rings}$-definable.

The required quantifier elimination for the $K_v$ is the following.

\medskip

{\bf Q.E. for stalks:} {\it For an $\cL$-formula $\varphi(x)$, where $x$ is a tuple of variables, 
there exits an $\cL$-formula $\psi(x)$ which is quantifier-free in a distinguished sort of $\cL$, such that for all but finitely many $K_v$ we have
$$K_v\models \forall x (\varphi(x) \Leftrightarrow \psi(x)),$$
and in each of the exceptional $K_v$, $\phi$(x) is equivalent to a quantifier-free formula relative to a finite set of sorts.}

\medskip

Recall the following basic result.

\begin{thm}[Ax-Kochen-Ershov {\cite{cherlin}, see also \cite{DM-ad}}]\label{ake} For each sentence $\phi$ of the language of valued fields effectively there exist a positive integer $n$ and a sentence $\psi$ of the language of rings so that for any Henselian valued field $K$ of characteristic $0$ with residue field $k$ of characteristic not dividing $n$ and value group a $\Z$-group, i.e. elementarily equivalent to $(\Z,+,0,<)$ in the language of ordered abelian groups, we have
$$K\models \phi \Leftrightarrow k \models\psi $$
\end{thm}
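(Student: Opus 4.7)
The plan is to derive this from the Ax--Kochen--Ershov relative completeness principle: for Henselian valued fields of characteristic $0$ with residue characteristic not dividing a suitable $n$ and $\Z$-group value group, elementary equivalence is controlled entirely by the residue field, because the value group theory is already fixed as Presburger arithmetic (complete and decidable). Once that principle is in hand, the statement is essentially immediate: for each $\phi$, declare $\psi$ to be any sentence characterizing the residue-field theory of one (hence every) Henselian valued field of the allowed type satisfying $\phi$, and take $n$ to be a product of primes forced to be excluded by the proof of the principle applied to $\phi$.

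Concretely, I would first fix the language: work in a multi-sorted setup with a valued-field sort, a residue-field sort (carrying $\cL_{\rm rings}$), and a value-group sort (carrying $\{+,0,<\}$), equipped with $v$ and the residue map on $\cO_K$. The goal is a relative quantifier elimination stating that every formula $\phi(x)$ of the valued-field language is, modulo the theory $T_n$ of Henselian characteristic-$0$ valued fields with residue characteristic coprime to $n$ and $\Z$-group value group, equivalent to a Boolean combination of formulas of the residue-field sort and formulas of the value-group sort involving only the values and residues of the variables $x$. This I would obtain either by citing Pas's theorem (choosing an angular component is harmless at the level of sentences), or by a direct back-and-forth: pass to $\aleph_1$-saturated extensions of two models of $T_n$ with elementarily equivalent residue fields, and inductively extend partial isomorphisms between valued subfields by (i) Hensel's lemma to lift residue-field elements and handle algebraic extensions, (ii) completeness of Presburger arithmetic to handle value-group behaviour, and (iii) treating immediate transcendental extensions via pseudo-Cauchy sequences and the uniqueness of their limits in Henselian fields.

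Applying this to a sentence $\phi$, no free variables survive, so the equivalent Boolean combination consists of sentences purely about the residue field and purely about the value group. Since the value group is a $\Z$-group, Presburger arithmetic decides each value-group sentence outright, and substituting in true/false yields a Boolean combination of residue-field sentences, which is the desired $\psi$. The integer $n$ is read off from the proof: the quantifier elimination introduces only finitely many algebraic conditions whose Hensel lifting requires the residue characteristic to miss a finite set of primes, and their product can be taken as $n$. Effectiveness follows because both Pas's procedure and the back-and-forth construction are effective once $\phi$ is fixed, so both $\psi$ and $n$ can be produced from $\phi$ by a computable procedure.

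The main obstacle is the relative quantifier elimination itself, specifically step (iii) above: handling immediate transcendental extensions and ensuring that the only obstruction to the back-and-forth comes from the residue field. Kaplansky's analysis of pseudo-Cauchy sequences is what makes this go through in residue characteristic $0$, and its failure in wildly ramified positive residue characteristic is precisely what forces the exclusion of the finitely many primes encoded by $n$; keeping track of which primes must be excluded, and verifying that a single $n$ depending only on $\phi$ suffices, is the delicate bookkeeping one must do to obtain the uniform, effective statement rather than a mere residue-characteristic-$0$ version.
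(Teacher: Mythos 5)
The paper does not prove this theorem; it is quoted as a known result with citations to the literature, so there is no internal proof to compare against. Your sketch is a reasonable account of the standard route, and the skeleton — relative quantifier elimination in a three-sorted language, completeness of Presburger arithmetic to discharge value-group sentences, extraction of $\psi$ as the residual residue-field sentence, and effectiveness from the effectivity of the elimination — is sound.

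The one place where the argument is thinner than it should be is the passage from the equicharacteristic-zero case to ``residue characteristic coprime to $n$.'' Pas's theorem and the back-and-forth as you describe it (Hensel lifting, pseudo-Cauchy sequences a la Kaplansky, Presburger completeness) give relative QE for Henselian fields of \emph{residue characteristic zero}. To extract an $n$ that works for all but finitely many $p$, you frame it as bookkeeping of which Hensel applications need $p$ invertible, but the clean and standard argument is a compactness/ultraproduct transfer: if the biconditional $K \models \phi \Leftrightarrow k \models \psi$ failed for infinitely many residue characteristics, a nonprincipal ultraproduct along the failing primes would yield an equicharacteristic-zero Henselian field with $\Z$-group value group violating the equicharacteristic-zero statement — a contradiction; hence only finitely many primes are bad, and tracking constants through the effective QE then supplies a computable $n$. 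Your bookkeeping idea can be made to work but really amounts to re-deriving this; you should say explicitly that the back-and-forth as written proves only the residue-characteristic-zero case and that the uniform-in-$p$ statement requires this separate transfer step. Also, the phrase ``declare $\psi$ to be any sentence characterizing the residue-field theory of one (hence every) Henselian valued field of the allowed type satisfying $\phi$'' is loose — no single sentence axiomatizes a theory; what you mean, and what your subsequent QE discussion supplies, is the particular Boolean combination of residue-field sentences produced by eliminating quantifiers from $\phi$ and evaluating the value-group disjuncts in Presburger arithmetic.
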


It is convenient to use the following languages for the non-Archimedean $K_v$.

\

{\it 3.2.1. The Macintyre language \cite{Macintyre1}.} $\mathcal{L}_{Mac}=\{+,.,0,1,P_n(x)\}$ is the expansion of $\cL_{rings}$ by predicates $P_n(x)$ interpreted in a field as the set of non-zero $n$th powers, for all $n\geq 2$. Macintyre \cite{Macintyre1} proved that 
the $\cL_{Mac}$-theory of $\Q_p$ admits elimination of quantifiers. It follows that a definable subset of                                                                                                                                                                                                                                                                                                                                                                                                                                                                                                                                                                                                                                                                                                                                                                                                                                                                                                                                                                                                                                                                                                                                                      $\Q_p^m$, for any $m$, (in $\cL_{rings}$ or $\cL_Mac$) is a finite union of locally closed sets (i.e. an intersection of an open and a closed set) in $p$-adic topology, and is thus measurable. 

\medskip

{\it 3.2.2.} In \cite{PR-book}, Prestel and Roquette defined an extension $\cL_{PR}$ 
of $\cL_{Mac}$ for the theory of $p$-adically closed fields of 
rank $d$, which are defined by the condition that $\mathcal{O}_{K}/(p)$ has dimension $d$ over $\F_p$, by 
adding of constant symbols for an $\F_p$-basis of this quotient. They proved that the theory of $p$-adically closed fields of $p$-rank $d$ admits elimination of quantifiers in this language. 

Remark that if $K$ is a finite 
extension of $\Q_p$ of degree $d$, then $K$ is $p$-adically closed of $p$-rank $d$.  

It follows that an $\cL_{PR}$-definable subset of each non-Archimedean ${K_v}^m$, for any $m$, is a finite union of locally closed sets, and thus measurable, and an infinite definable subset of $K_v$ 
has non-empty interior. 

\begin{note} For the Archimedean $K_v$, it follows from Tarski's quantifier elimination theorems for $\R$ and $\C$ that a definable subset of $K_v^m$ for any $m$, is a finite union of locally closed sets and is measurable.\end{note} 

\medskip

{\it 3.3.3.} In \cite{Belair}, Belair defined an extension of the Macintyre language in which the theory of $\Q_p$, for all $p$, admits uniform elimination of quantifiers. He added constants for an element of least positive value and for coset representative for the group of non-zero $n$th powers and solvability predicate of Ax for the residue fields
$Sol_m(x_1,\dots,x_m)$, $m\geq 2$, interpreted in a valued field $K$ by
$$(\bigwedge_{1\leq i\leq m} v(x_i)\geq 0) \wedge \exists y (v(y)\geq 0 \wedge v(y^m+x_1y^{m-1}+\dots+x_m)>0).$$
By a result of Kiefe \cite{kiefe} the theory of pseudofinite fields admits quantifier elimination in the language of rings augmented by $Sol_k$, for all $k$, defined by
$$Sol_k(y_1,\dots,y_n) \leftrightarrow \exists z (z^m+y_1z+\dots+y_m=0).$$ 
This elimination of quantifiers holds uniformly for all $\F_q$, where $q$ is large enough 
(either fixed or unbounded characteristic).

\

{\it 3.2.4. The languages of Denef-Pas and Pas} (\cite{pas},\cite{pas2}). The Denef-Pas language 
\[
\cL_{Denef-Pas}=(\cL_{field},\cL_{residue},\cL_{group},v,\bar{ac})
\]
is a 3-sorted language with the language of rings for the field sort $\cL_{field}$ and for the residue field sort $\cL_{residue}$, and the language of ordered abelian groups $\{+,0,\leq,\infty\}$ with a top element $\infty$ for the value group sort $\cL_{group}$. 

There is a function symbol $v$ from the field sort to the value group sort interpreted in a valued field $K$ as the valuation, and a function symbol from the field sort to the residue field sort interpreted in $K$ as the angular component map modulo $\cM_K$. This map is defined by the following conditions 

1) $\bar{ac}(0)=0$, 

2) The restriction of $\bar{ac}$ to $K^*$ is a multiplicative map into $k^*$, 

3) The restriction of $\bar{ac}$ to the group of units of $\mathcal{O}_K$ coincides with the restriction of the residue map to the group of units. See \cite{pas}.

After choosing a uniformizing element $\pi_v$ in a valued field $K$, we can define $\bar{ac}(x)=Res(x\pi_v^{-v(x)})$, where $Res$ denotes the residue map $\cO\rightarrow k$ modulo $\cM_K$ (extended to $K$ by zero).

Each non-Archimedean $K_v$ has an $\bar{ac}$-map defined as above. We can also get an $ac$-map from a cross section which exists in an $\aleph_1$-saturated valued field (see \cite{cherlin-book}). 

By Pas' Theorem \cite[Theorem 4.1, pp.155]{pas}, the theory of Henselian valued fields of equicharacteristic zero admits quantifier elimination in the language $\cL_{Denef-Pas}$ for the field sort relative to the other sorts.  Since this quantifier elimination holds for all ultraproducts of $K_v$ of unbounded residue characteristic with respect to a non-principal ultrafilter, 
it follows that the quantifier elimination holds uniformly for all $K_v$ of residue characteristic $p>N$ for some $N$. This elimination is effective, as in Theorem \ref{ake}, i.e. $N$ and the quantifier free formulas can be effectively given.

In Pas \cite{pas2}, Pas defined an extension $\cL_{Pas}$ of the Denef-Pas language got by adding the higher ac-maps $\bar{ac}_n$, for all $n\geq 1$, and infinitely many sorts
$\cL_{Res(n)}$ equipped with the language of rings, and interpreted in a valued field $K$ as $\cO_K/\cM^n$, with maps $Res_n$ interpreted as the residue maps $\cO_K\rightarrow \cO_K/\cM_K^n$ extended to $K$ be zero 
(together with connecting maps between the residue sorts). An interpretation of $\bar{ac}_n$ is $Res_n(x \pi^{-v(x)})$. 

By \cite{pas2}, each non-Archimedean $K_v$ has quantifier elimination for the field sort relative to the other sorts in $\cL_{Pas}$.

Combining these results we can deduce.

\begin{cor}[Follows from Pas \cite{pas} and \cite{pas2}]\label{pas} Let $K$ be a number field with non-Archimedean  completions $K_v$, $v\in V_K^{fin}$. 
Given a formula $\phi(x_1,\dots,x_n)$ from the ring language or the language of valued fields, there is by an effective procedure
\begin{itemize}
\item an integer $N\geq 1$,
\item an $\cL_{Denef-Pas}$-formula $\psi(x_1,\dots,x_n)$ that has no quantifiers over the field sort and has its quantifiers over the value group $\Gamma$ and residue field $k$,
\item for each $K_v$ of residue characteristic $p<N$, an $\cL_{Pas}$-formula $\psi_p(x_1,\dots,x_n)$ and finitely many positive integers $p_1,\dots,p_l$ 
such that $\psi_p(x_1,\dots,x_n)$ is quantifier-free in the field sort and has its quantifiers ranging from the sorts $\cL_{Res(p_1)},\dots,\cL_{Res(p_l)}$ involving 
the maps $\bar{ac}_{p_1},\dots,\bar{ac}_{p_l}$,
\end{itemize}
such that
\begin{itemize}
\item if $K_v$ has residue characteristic greater than $N$, then $\phi$ and $\psi$ are equivalent in $K_v$,
\item if $K_v$ has residue characteristic $p<N$, then $\psi$ and $\psi_p$ are equivalent in $K_v$.
\end{itemize}
\end{cor}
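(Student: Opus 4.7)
The strategy is to split the set of places $V_K^{fin}$ into a cofinite part on which the pure equicharacteristic-zero quantifier elimination of \cite{pas} applies, and a finite part of small residue characteristics on which the refined version with residue rings and higher angular components from \cite{pas2} is needed.

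The first step is to observe that since $K$ is a number field, each rational prime $p$ has only finitely many $v \in V_K^{fin}$ above it; hence for any fixed $N$ only finitely many $K_v$ have residue characteristic $\le N$. Consequently, for any non-principal ultrafilter $\mathcal{U}$ on $V_K^{fin}$, the ultraproduct $\prod_v K_v / \mathcal{U}$ is a Henselian valued field of equicharacteristic zero with value group a $\Z$-group and residue field a pseudofinite field of characteristic zero. Pas's theorem \cite[Theorem 4.1]{pas} then applies: given $\phi(x_1,\dots,x_n)$, one obtains an $\cL_{Denef-Pas}$-formula $\psi(x_1,\dots,x_n)$ without field-sort quantifiers and equivalent to $\phi$ in every such ultraproduct. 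By \L os's theorem the equivalence $\phi \leftrightarrow \psi$ holds in $K_v$ for $\mathcal{U}$-almost all $v$, and since $\mathcal{U}$ was an arbitrary non-principal ultrafilter the equivalence holds in all but finitely many $K_v$. Choose $N$ large enough that all exceptional places have residue characteristic less than $N$.

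The second step is to deal with the finitely many exceptional places. For each residue characteristic $p < N$ that arises, apply Pas's refined quantifier elimination from \cite{pas2}: there the theory of each $K_v$ admits elimination of field-sort quantifiers in $\cL_{Pas}$, relative to the residue sorts $\cL_{Res(n)}$ and the higher angular component maps $\bar{ac}_n$. Applying this to $\phi$ inside the specific $K_v$ yields an $\cL_{Pas}$-formula $\psi_p(x_1,\dots,x_n)$ which is quantifier-free in the field sort and whose remaining quantifiers range over finitely many residue sorts $\cL_{Res(p_1)},\dots,\cL_{Res(p_l)}$; the list $p_1,\dots,p_l$ is extracted from the particular formula produced by the elimination procedure. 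Since only finitely many $K_v$ are exceptional, this supplies all the $\psi_p$'s needed.

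The effectivity assertion follows because Pas's quantifier elimination algorithm is itself effective, and an effective bound for $N$ can be extracted by an Ax-Kochen-Ershov style transfer as in Theorem \ref{ake}: a sentence of the valued-field language decides large residue characteristics uniformly in terms of a residue-field sentence, and one applies this to the sentence $\forall x\,(\phi(x)\leftrightarrow \psi(x))$. The main obstacle in the argument is the uniformity: Pas's theorems are stated for individual Henselian valued fields or whole classes of them, whereas the corollary asks for a single formula $\psi$ that works simultaneously in all but finitely many $K_v$. This uniformity is delivered precisely by the ultraproduct/\L os argument in Step 1, which leverages the fact that residue characteristics of the $K_v$ escape every finite bound.
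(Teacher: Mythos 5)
Your proposal is correct and follows essentially the same route as the paper: the uniform $\cL_{Denef-Pas}$ formula $\psi$ for residue characteristic $p>N$ is obtained from Pas's equicharacteristic-zero quantifier elimination applied to non-principal ultraproducts of the $K_v$ together with \L o\'s's theorem, and the finitely many exceptional places are handled one at a time by the refined elimination of \cite{pas2} in $\cL_{Pas}$, with effectivity coming from the effectivity of Pas's procedure and an Ax--Kochen--Ershov style bound as in Theorem \ref{ake}.
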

 
\

{\it 3.2.5. The language of Basarab \cite{basarab}} $\cL_{Basarab}=(L_{field},L_r: r\geq 1)$ has infinitely many sorts 
where
$$L_r=(L_{rings,r},L_{group,r},L_{group},v,\theta_r,v_r)$$
with the language of valued fields for the field sort $L_{field}$, the language of rings for the sorts $L_{rings,r}$ for all $r$, the language of groups for the sorts $L_{group,r}$, and the language of ordered abelian groups with top element $\infty$ for the sort $L_{group}$. 

The sort $L_{field}$ is for the field, the sorts $L_{rings,r}$ are for the residue rings $\mathcal{O}_{K,r}:=\mathcal{O}_K/\mathcal{M}_{K,r}$, where 
$$\mathcal{M}_{K,r}=\{a\in \mathcal{O}_K: v(a)>rv(p)\},$$ the sorts $L_{group,r}$ are for the quotients $K^{\times}/1+\mathcal{M}_{K,r}$, and the sort $L_{group}$ is for the value group $\Gamma$. The symbol $v$ is interpreted as the valuation and $\theta_r$ is interpreted as the map 
$$\theta_r(a+\mathcal{M}_{K,2r})=a(1+\mathcal{M}_{K,r})$$
defined on the subset 
$\mathcal{O}_{K,2r}\setminus (\mathcal{M}_{K,r}/\mathcal{M}_{K,2r})$ of $\mathcal{O}_{K,2r}$ with values in $K^{\times}/1+\mathcal{M}_{K,r}$. $v_r$ is interpreted as the map induced from the valuation on the disjoint union 
$$\mathcal{O}_{K}/\mathcal{M}_{K,2r} \cup K^{\times}/1+\mathcal{M}_{k}$$ into $\Gamma\cup \{\infty\}$. The structure 
$$\mathcal{K}_r=(\mathcal{O}_{K,2r},K^{\times}/1+\mathcal{M}_{K,r},\Gamma,\theta_r,v_r)$$
is called the mixed $r$-structure assigned to $K$. Note that $\mathcal{M}_{K,0}=\mathcal{M}_K$ is the maximal ideal of 
$\mathcal{O}_{K}$. 

If $K$ has residue characteristic zero, then $\mathcal{O}_{K}/\mathcal{M}_{K,r}$ is the residue field $k$ of $K$ and $$K^{\times}/1+\mathcal{M}_{K,r}=K^{\times}/1+\mathcal{M}_{K}$$ for all $r$. So all the mixed $r$-structures assigned to $K$ become the triple 
$$(k,K^{\times}/1+\mathcal{M}_K,v)$$ with the exact sequence 
$$1\rightarrow {k_K}^{\times}\rightarrow K^{\times}/1+\mathcal{M}_{K} \Gamma \rightarrow 0.$$

By \cite{basarab} (Theorem B, page 57), the theory of Henselian valued fields of characteristic zero with large residue field of fixed characteristic $p$ admits quantifier elimination in $\cL_{Basarab}$ for the field sort relative to the sorts $L_r$, $r\geq 1$.

Basarab's Theorem B in \cite{basarab} also applies to residue characteristic zero Henselian fields, hence to ultraproducts of Henselian valued fields of unbounded residue characteristic. It follows that given a formula $\phi(x_1,\dots,x_n)$ there is a formula $\psi(x_1,\dots,x_n)$ which is quantifier-free in the field sort and has its quantifiers from the sorted language $(k,K^{\times}/1+\mathcal{M}_K,v)$ such that $\phi$ and $\psi$ are equivalent in any Henselian valued field $K$ of residue characteristic greater than some $N$ depending on $\phi$ only, and $N$ can be found effectively.

Combining the residue characteristic zero and and fixed residue characteristic $p>0$ results of Basarab, we deduce the following.

\begin{cor}[Follows from Basarab {\cite[Theorem B]{basarab}}]\label{basarab} Let $K$ be a number field with non-Archimedean completions $K_v$, $v\in V_K^{fin}$. Given an $\cL_{Basarab}$-formula (or in particular a formula from the ring language or the language of valued fields) $\phi(x_1,\dots,x_n)$ there is (by an effective procedure)
\begin{itemize}
\item an integer $N\geq 1$,
\item an $\cL_{Basarab}$-formula $\psi(x_1,\dots,x_n)$ that has no quantifiers over the field sort and has its quantifiers from the sorts $(k,K^{\times}/1+\mathcal{M}_K,v)$,
\item for each $K_v$ of residue characteristic $p<N$, an $\cL_{Basarab}$-formula $\psi_p(x_1,\dots,x_n)$ and an integer $r_p\geq 1$ such that $\psi_p(x_1,\dots,x_n)$ is quantifier free in the field sort and has its quantifiers from the sorted language 
$L_{r_p}$,\end{itemize}
such that 
\begin{itemize}
\item if $K_v$ has residue characteristic greater than $N$, then $\phi$ and $\psi$ are equivalent in $K_v$,
\item if $K_v$ has residue characteristic $p<N$, then $\psi$ and $\psi_p$ are equivalent in $K_v$.
\end{itemize}
\end{cor}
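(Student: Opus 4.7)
The plan is to derive the corollary by combining two applications of Basarab's Theorem B --- one in its residue-characteristic-zero form, applied via a compactness/ultraproduct argument to extract the uniform formula $\psi$, and one in its fixed positive residue characteristic form, applied separately for each small prime $p < N$ --- together with the finiteness of the set of completions lying over any given rational prime.

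First I would handle the residue-characteristic-zero case. Let $\mathcal{U}$ be any non-principal ultrafilter on $V_K^{fin}$ along which the residue characteristics tend to infinity; such ultrafilters exist since for each fixed $p_0$ the set of $v$ with residue characteristic exceeding $p_0$ is cofinite in $V_K^{fin}$. The ultraproduct $\prod_v K_v/\mathcal{U}$ is then a Henselian valued field of characteristic zero and residue characteristic zero, with value group a $\Z$-group. Basarab's Theorem B in its residue-characteristic-zero form produces an $\cL_{Basarab}$-formula $\psi(x_1,\dots,x_n)$, quantifier-free in the field sort with remaining quantifiers drawn from the reduced sorted structure $(k, K^{\times}/1+\mathcal{M}_K, v)$, that is equivalent to $\phi$ in every such ultraproduct. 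By {\L}o\'s' theorem, the equivalence $\phi \Leftrightarrow \psi$ holds in a $\mathcal{U}$-large set of $K_v$. A standard compactness argument --- arguing by contradiction, one otherwise builds an ultrafilter concentrating on the exceptional set while forcing the residue characteristic to diverge --- then yields an integer $N \geq 1$ such that $K_v \models \forall \bar x (\phi \Leftrightarrow \psi)$ whenever $K_v$ has residue characteristic greater than $N$.

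Next I would dispose of the finitely many residue characteristics $p < N$. For each such $p$, the completions $K_v$ with $v \mid p$ are finitely many finite extensions of $\Q_p$, each a Henselian valued field of fixed residue characteristic $p$. Basarab's Theorem B in its fixed-residue-characteristic-$p$ form, applied uniformly to the theory of Henselian valued fields of residue characteristic $p$ in $\cL_{Basarab}$, yields an integer $r_p \geq 1$ and an $\cL_{Basarab}$-formula $\psi_p(x_1,\dots,x_n)$ quantifier-free in the field sort with remaining quantifiers from $L_{r_p}$, equivalent to $\phi$ in every such $K_v$. If one prefers to treat each $K_v \mid p$ one at a time and combine afterwards, the finitely many resulting formulas are assembled into a single $\psi_p$ by a finite Boolean combination and $r_p$ is taken to be the maximum of the individual indices.

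The main obstacle, and essentially the only nontrivial technical point, is the effectiveness claim. This reduces to the effectiveness of Basarab's Theorem B in both of its forms, together with an effective bound on the threshold $N$. An effective $N$ can be extracted either directly from the effective content of Basarab's proof, or via the Ax-Kochen-Ershov transfer (Theorem \ref{ake}) applied to the sentence expressing $\phi \Leftrightarrow \psi$: this transfer reduces the question of whether $K_v \models \forall \bar x (\phi \Leftrightarrow \psi)$ to a question about the residue field and value group of $K_v$, whose answer stabilizes once the residue characteristic is sufficiently large, with the threshold effectively bounded in terms of $\phi$. Once $N$ is determined, the data $(\psi_p, r_p)$ for each $p < N$ is obtained by the explicit, finite application of the fixed-characteristic form of Basarab's theorem.
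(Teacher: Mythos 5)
Your proposal is correct and takes essentially the same route as the paper: the paper also applies Basarab's Theorem~B in its residue-characteristic-zero form to ultraproducts of $K_v$ of unbounded residue characteristic to extract the uniform formula $\psi$ valid for residue characteristic $>N$, and then invokes the fixed-residue-characteristic-$p$ form of Theorem~B for each of the finitely many small primes to get $\psi_p$. Your write-up simply makes the {\L}o\'s/compactness step and the effectiveness discussion more explicit than the paper's terse ``Combining the residue characteristic zero and fixed residue characteristic $p>0$ results of Basarab, we deduce the following.''
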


{\bf Note:}  Other many-sorted languages for quantifier elimination in Henselian valued fields have been introduced by Weispfenning  \cite{Weisp2} and Kuhlmann \cite{kuhlmann}. These are closely related to $\cL_{Basarab}$. 

\

{\it 3.2.6. Adeles with product valuation \cite{DM-supp}.} This language has three sorts $$(\cL_{adeles,ring},\cL_{adeles,value},\cL_{adeles,residue},v^*,\bar{ac}^*),$$
with the language of rings for the sort $\cL_{adeles,ring}$, the language of ordered abelian groups together with a top element $\infty$ for the sort $\cL_{adeles,value}$, and the language of rings for the sort $\cL_{adeles,residue}$. 

$\cL_{adeles,ring}$ is for the ring of adeles, 
$\cL_{adeles,value}$ for the restricted direct product of the value groups $\prod'_{v\in V_{K}^f} (\Gamma_v \cup \infty)$, where $\Gamma_v$ is the value group of $K_v$ and $\infty$ a top element (a restricted product is with respect to the formula $x\geq 0$, cf. Subsection \ref{ssec-lang}); and $\cL_{adeles,residue}$ is for the direct product of the residue fields of $K_v$ over all $v\in V_{K}^f$. 

The function symbol 
$v^*$ is interpreted as the product valuation
$$\A_K^{fin} \rightarrow \prod'_{v\in V_{K}^f} (\Gamma_v\cup \infty)$$
onto the lattice-ordered group $\prod'_{v\in V_{K}^f} (\Gamma_v\cup \infty)$ defined by $v^*(a)=(v(a(v))_v$, for $a\in \A_K$. 

The function symbol $\bar{ac}^*$ is interpreted as the map from $\A_K\rightarrow \prod_{v\in V_K} k_v$ onto the product defined by $ac^*(a)=(ac(a(v)))_v$. 

For more details and model-theoretic results on the product valuation, 
see \cite{DM-supp}.

\section{\bf Generalized products and restricted products}\label{sec-rest}

The model theoretic notions of generalized product of $\cL$-structures, for a language $\cL$, were introduced and studied in the works of Feferman-Vaught and Mostowski (see \cite{FV}). 
What we call restricted product of $\cL$-structures appears in \cite{FV} under the name of weak product. It is a substructure of the generalized product. Most of the analysis of Feferman-Vaught is for the generalized product, however the results can also be proved for restricted products as well. 

In \cite{DM-ad} and \cite{DM-supp}, Macintyre and myself do this in a more general case of having a many-sorted language with function symbols and relation symbols. 
We give an outline in this Section, especially aimed at results on quantifier elimination for restricted products.

We remark that in \cite{DM-axioms} Macintyre and myself proved an analogue of the main theorem of Feferman-Vaught for rings. Interestingly,  
this gives both a converse to Feferman-Vaught and at the same time axioms for rings elementarily equivalent to restricted products and adeles. The proof is a modification and a ring-theoretic analogue of \cite{FV}. This shall be discussed in Section \ref{sec-axioms} of this paper.

\medskip

\subsection{\bf Language for restricted products}\label{ssec-lang} 

\

\medskip

Let $L$ denote a many-sorted first-order language with a set of sorts $Sort$ and signature $\Sigma$ with relation and function symbols and equality in each sort. See \cite[Section 4.3]{enderton} for the basic definitions and results on many-sorted languages on well-formed formulas, substructures. We give a few definitions.

An $L$-embedding $F:N\rightarrow M$ is a collection of maps
$$F_{\sigma}: N_{\sigma} \rightarrow M_{\sigma}$$
indexed by the sorts $\sigma$, such that for any relation symbol $R$ of sort $(\sigma_1,\dots,\sigma_k)$, 
$$N \models R(f_1,\dots,f_k)\Leftrightarrow 
M \models R(F_{\sigma_1}(f_1),\dots,F_{\sigma_k}(f_k)),$$
and for any function symbol $G$ of sort $(\sigma_1,\dots,\sigma_k,\sigma_{k+1})$, 
$$G(F_{\sigma_1}(f_1),\dots,F_{\sigma_k}(f_k))=F_{\sigma_{k+1}}(G(f_1,\dots,f_k)),$$
where $f_1,\dots,f_k,f_{k+1}$ range over elements of sorts $\sigma_1,\dots,\sigma_k,\sigma_{k+1}$ respectively. 

Note that each $F_{\sigma}$ is injective since we have equality as a binary relation on each sort. 

$N$ is said to be an $L$-substructure of $M$ if $N_{\sigma} \subseteq M_{\sigma}$ for all $\sigma$ and the identity maps are $L$-embeddings. 

Now suppose that $(M_i)_{i\in I}$ is a family of $L$-structures. Let $\Pi:=\prod_{i\in I} M_i$. We give $\Pi$ an $L$-structure and make it sorted by the set 
$Sort$, and give an interpretation of the signature $\Sigma$ as follows.

If $\sigma \in Sort$, then the $\sigma$-sort of 
$\prod_{i\in I} M_i$ is the product $\prod_{i\in I} (M_i)_{\sigma}$, where $(M_i)_{\sigma}$ is the $\sigma$-sort of $M_i$. 

The interpretation in $\Pi$ of a relation symbol $R$ of sort $(\sigma_1,\dots,\sigma_r)$ is $\prod_{i\in I} R^{M_i}$, where 
$R^{M_i}$ is the interpretation of $R$ in $M_i$ (a subset of $(M_i)_{\sigma_1}\times \dots \times (M_i)_{\sigma_r}$). 

The interpretation in $\Pi$ of a function symbol of sort $(\sigma_1,\dots,\sigma_r,\sigma_{r+1})$ is given by 
$$\tau^{(\Pi)}(f_{1},\dots,f_{r})(i)=
\tau^{(M_i)}(f_{1}(i),\dots,f_{r}(i))$$
for all $i\in I$, where $f_1,\dots,f_r$ range over elements of sorts $\sigma_1,\dots,\sigma_r$ respectively.

Let $\Phi(x_{1},\dots,x_{r})$ be an $L$-formula. Define 
$$[[\Phi(f_{1},\dots,f_{r})]]:=
\{i: M_i \models \Phi(f_{1}(i),\dots,f_{r}(i))\},$$
where $f_1,\dots,f_r$ range over elements of sorts $\sigma_1,\dots,\sigma_r$ respectively. This is 
a many-sorted generalization of Feferman-Vaught's Boolean values. 


Let $\cL_{Boolean}^{+}$ denote a given enrichment of the language of Boolean algebras $\cL_{Boolean}$ that contains the predicate $Fin(x)$ (e.g. $\cL_{Boolean}^{fin}$ and $\cL_{Boolean}^{fin,res}$ from Subsection \ref{ssec-benrich}). 

Let $\P(I)$ denote the Boolean algebra of subsets of $I$ and $\P(I)^+$ its expansion to an $\cL_{Boolean}^+$-structure. 
\begin{Def} For any $\mathcal{L}_{Boolean}^{+}$-formula $\Psi(z_1,\dots,z_m)$ and $L$-formulas $\Phi_1,\dots,\Phi_m$ in the free 
variables $x_{1},\dots,x_{r}$ of sorts $\sigma_1,\dots,\sigma_r$ respectively, 
let 
$$\Psi \circ < \Phi_1,\dots,\Phi_m>$$ denote the relation defined by 
$$\Pi \models \Psi \circ<\Phi_1,\dots, \Phi_m>(f_{1},\dots,f_{r}) \Leftrightarrow$$ 
$$\P(I)^+\models \Psi([[\Phi_1(f_{1},\dots,f_{r})]],\dots,
[[\Phi_m(a_{1},\dots,a_{r})]]),$$
where $a_1,\dots,a_r$ range over elements of sorts $\sigma_1,\dots,\sigma_r$ respectively.\end{Def}

Expand $L$ by adding a new relation symbol for each of these relations. Let $\cL_{Boolean}^{+}(L)$ denote the resulting language. This gives 
$\Pi$ an $\cL_{Boolean}^{+}(L)$-structure, generalizing the 1-sorted case in Feferman-Vaught \cite{FV}. See also \cite{DM-supp}.

We now define a many-sorted generalization of the Feferman-Vaught notion of a generalized product. 

Suppose for each sort $\sigma$ we have a formula $\Phi_{\sigma}(x)$ in a 
single free variable $x$ of sort $\sigma$. Suppose that for all $\sigma$ and $i$ the set 
$$S_{\sigma,i}=\{a\in Sort_{\sigma}(M_i): M_i\models \Phi_{\sigma}(a)\}$$
is an $L$-substructure of $M_i$.  In particular, for any function symbol $F$ of sort $(\sigma,\tau)$, if 
$a\in S_{\sigma,i}$, then $F(a)\in S_{\tau,i}$
for all $i$.

\begin{Def} With the above assumptions and notation, define the restricted product of $M_i$ with respect to the formulas $\Phi_{\sigma}(x)$, denoted by $\prod_{i\in I}^{(\Phi_{\sigma})} M_i$, 
to be the structure sorted by $Sort$, such that for $\sigma\in Sort$, its $\sigma$-sort 
is the set of all $a\in \prod_{i\in I} (M_i)_{\sigma}$ such that $[[\neg \Phi_{\sigma}(a)]]$ is finite.\end{Def}

$\prod_{i\in I}^{(\Phi_{\sigma})} M_i$ is an $\cL_{Boolean}^{+}(L)$-substructure of $\Pi$. Indeed, if $F$ is a function symbol of sort $(\sigma,\tau)$,
and $a$ is in the $\sigma$-sort of $\prod_{i\in I}^{(\Phi_{\sigma})} M_i$, then since the sets $S_{\sigma,i}$ are 
$L$-substructures of $M_i$ for all 
$i$, we have $Fin([[\neg \Phi_{\tau}(F(a)]])$, so $F(a)$ is in $\tau$-sort of $\prod_{i\in I}^{(\Phi_{\sigma})} M_i$. Clearly $\prod_{i\in I}^{(\Phi_{\sigma})} M_i$ is $\cL_{Boolean}^{+}(L)$-definable. 

\medskip

\subsection{\bf Quantifier elimination in restricted products}\label{ssec-qe} 

\

\medskip

The following theorem, originally proved by Feferman-Vaught and extended to many-sorted case by Macintyre and myself 
in \cite{DM-supp} gives quantifier elimination for $\prod^{(\Phi)}_{i\in I} M_i$ in the language $\cL_{Boolean}^{+}(L)$.
\begin{thm}[Feferman-Vaught \cite{FV}, Derakhshan-Macintyre\cite{DM-supp}]\label{restricted-qe} 
For any $\cL_{Boolean}^{+}(L)$-formula $\Psi(x_1,\dots,x_n)$, where $x_1,\dots,x_n$ are free 
variables of sorts $\sigma_1,\dots,\sigma_n$ respectively, one can effectively construct 
$L$-formulas ree 
$$\Psi_1(x_1,\dots,x_n),\dots,\Psi_m(x_1,\dots,x_n)$$
with the same free variables, 
and an $\cL_{Boolean}^+$-formula $\Theta(X_1,\dots,X_m)$ 
such that for any indexed family $(M_i: i\in I)$ of $L$-structures and any $a_1,\dots,a_n\in \prod^{(\Phi)}_{i\in I} M_i$, 
$$\prod^{(\Phi)}_{i\in I} M_i\models \Psi(a_1,\dots,a_n)$$ if and only if 
$$\P(I)^+\models \Theta([[\Psi_1(a_1,\dots,a_n)]],\dots,[[\Psi_m(a_1,\dots,a_n)]]).$$
\end{thm}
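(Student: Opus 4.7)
The plan is to proceed by induction on the complexity of the $\cL_{Boolean}^{+}(L)$-formula $\Psi$, following the strategy of Feferman--Vaught \cite{FV}, adapted to accommodate many sorts, function symbols, and the restricted product (as opposed to the full product originally treated in \cite{FV}). Throughout, ``effectively constructible'' is automatic because every manipulation below is purely syntactic in $\Psi$.

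For the base case, an atomic $L$-formula $\Psi(\bar x)$ is handled by setting $m = 1$, $\Psi_1 := \Psi$, and $\Theta(X_1) := (X_1 = 1)$, since $\prod^{(\Phi)}_{i\in I} M_i \models \Psi(\bar a)$ iff $[[\Psi(\bar a)]] = I$. For atomic relations of the form $\Psi \circ \langle \Phi_1, \ldots, \Phi_k \rangle$ that were added in Subsection 4.1 to form $\cL_{Boolean}^{+}(L)$, the defining equivalence immediately gives $\Psi_j := \Phi_j$ and $\Theta := \Psi$. Boolean connectives $\neg$ and $\wedge$ are handled by negating $\Theta$ and, respectively, by concatenating the lists of $L$-formulas and combining the Boolean matrices by conjunction, so the inductive data closes under Boolean combinations without difficulty.

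The crux is the quantifier step. Suppose $\Psi(\bar y) = \exists x \, \Psi'(x, \bar y)$ where $x$ has sort $\sigma$, and by induction $\Psi'$ is reduced to $L$-formulas $\Psi_1(x, \bar y), \ldots, \Psi_m(x, \bar y)$ together with $\Theta(X_1, \ldots, X_m)$. For each $S \subseteq \{1, \ldots, m\}$, introduce the $L$-formulas in $\bar y$
$$\alpha_S(\bar y) := \exists x \Bigl(\bigwedge_{j \in S} \Psi_j(x, \bar y) \wedge \bigwedge_{j \notin S} \neg \Psi_j(x, \bar y)\Bigr),$$
$$\alpha_S^{\Phi}(\bar y) := \exists x \Bigl(\Phi_\sigma(x) \wedge \bigwedge_{j \in S} \Psi_j(x, \bar y) \wedge \bigwedge_{j \notin S} \neg \Psi_j(x, \bar y)\Bigr).$$
The key observation is that a global witness $a$ in the $\sigma$-sort of the restricted product corresponds exactly to a partition $I = \bigsqcup_{S} Y_S$ such that (i) $Y_S \subseteq [[\alpha_S(\bar y)]]$ for every $S$, (ii) $Y_S \setminus [[\alpha_S^{\Phi}(\bar y)]]$ is finite for every $S$, ensuring $[[\neg \Phi_\sigma(a)]]$ is finite, and (iii) setting $X_j := \bigcup_{S \ni j} Y_S$ satisfies $\Theta$. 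The existence of such a partition is itself uniformly expressible by an $\cL_{Boolean}^{+}$-formula $\Theta'$ in the Boolean values $[[\alpha_S(\bar y)]]$ and $[[\alpha_S^{\Phi}(\bar y)]]$: inside $\P(I)^{+}$ one existentially quantifies over candidate sets $Y_S$ and imposes the containment, $Fin$, and $\Theta$-valuation constraints. The new list of $L$-formulas in $\bar y$ is then $\{\alpha_S, \alpha_S^{\Phi} : S \subseteq \{1, \ldots, m\}\}$ with Boolean matrix $\Theta'$.

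The main obstacle I anticipate is precisely this quantifier step in the restricted (rather than full) product setting: one must carefully track that the pointwise witness $a(i)$ is permitted to leave the substructure $S_{\sigma, i}$ (i.e.\ fail $\Phi_\sigma$) only on a finite set of indices, and this is exactly the role of the auxiliary formula $\alpha_S^{\Phi}$ together with the $Fin$ predicate supplied by the enrichment $\cL_{Boolean}^{+}$; the original full-product Feferman--Vaught argument uses only $\alpha_S$. A secondary point is that the restricted product must be closed under the function symbols of $L$, which is guaranteed by the hypothesis that each $S_{\sigma, i}$ is an $L$-substructure of $M_i$ (so that values of function symbols of mixed sort $(\sigma, \tau)$ applied to elements of $\bigoplus_{\sigma} S_{\sigma, i}$ land in $S_{\tau, i}$ coordinatewise); the many-sorted bookkeeping otherwise adds no essential difficulty.
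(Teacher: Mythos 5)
Your proposal is correct and follows essentially the same route as the Feferman--Vaught argument that the paper cites (induction on formula complexity, with the existential step handled by the type-formulas $\alpha_S$ and a Boolean-quantified partition condition, and the restricted-product constraint enforced via the auxiliary $\alpha_S^{\Phi}$ together with the $Fin$ predicate). The survey itself gives no proof, but your treatment of the quantifier step — including the finiteness bookkeeping that distinguishes the restricted product from the full product — is exactly the intended adaptation of \cite{FV} carried out in \cite{DM-supp}.
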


Theorem \ref{restricted-qe} applies to many restricted products. To apply it to 
the ring of adeles $\A_K$ and the ring of finite adeles $\A_K^{fin}$ represent 
$\A_K$ (resp. $\A_K^{fin}$) as the restricted product of the $K_v$, where $v\in V_K$ (resp. $v\in V_K^{fin}$) with respect to the formula $\Phi_{val}(x)$ from Theorem \ref{CDLM-th} that uniformly defines the valuation rings of $K_v$ for all $v$. This is uniform for all number fields $K$.

\begin{note} In Section \ref{ssec-adsp} we show that a variant of this theorem holds for the adele spaces of varieties 
$V(\A_K)=\prod^{'}_{v\in V_K} V(K_v)$, where $V$ is an algebraic variety 
and the restricted product is with respect to $V(\mathcal{O}_v)$. The space $V(\A_K)$ coincides with the set of solutions of the defining equations of $V$ in the adeles $\A_K$.\end{note}

In applying Theorem \ref{restricted-qe}, we choose a language $L$ extending $\cL_{rings}$ such that 
all the completions $K_v$ are $L$-structures. If $L$ is a definitional extension of $\cL_{rings}$, then 
$\A_K$ is an $\cL_{rings}$-structure. If $L$ is not a definitional extension of $\cL_{rings}$, then we  
consider $\A_K$ as an $\cL_{Boolean}^+(L)$-structure. Taking $L$ to be $\cL_{Mac}$, $\cL_{Belair}$, $\cL_{Denef-Pas}$, or $\cL_{Basarab}$ and applying Theorem \ref{restricted-qe} we get quantifier eliminations for $\A_K$ in $\cL_{Boolean}^+(L)$. 

Taking $L$ to be $\cL_{rings}$ and applying Theorems \ref{bool1} and \ref{restricted-qe} we get.
\begin{cor}[Derakhshan-Macintyre \cite{DM-ad}]\label{qe-fin}Let $\varphi(x_1,\dots,x_n)$ be an $\cL_{rings}$-formula. Then there are $\cL_{rings}$-formulas 
$$\psi_1(x_1,\dots,x_n),\dots,\psi_l(x_1,\dots,x_n),$$
where $l\geq 1$, and a Boolean combination $\Psi(x_1,\dots,x_n)$ 
of $Fin([[\psi_k(x_1,\dots,x_n)]])$ and 
$C_j(\psi_s([[x_1,\dots,x_n]]))$, where $k,s \in \{1,\dots,l\}$, such that 
$$\A_K \models \forall x_1 \dots \forall x_n (\varphi(x_1,\dots,x_n) \Leftrightarrow \Psi(x_1,\dots,x_n)).$$
\end{cor}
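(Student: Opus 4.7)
The plan is to combine Theorem \ref{restricted-qe} with Theorem \ref{bool1} in a two-step reduction. First, I represent $\A_K$ as the restricted product $\prod^{(\Phi_{val})}_{v\in V_K} K_v$, using the uniform definition $\Phi_{val}(x)$ of the valuation rings provided by Theorem \ref{CDLM-th} (with the convention that $\Phi_{val}$ holds everywhere in the Archimedean factors). I take the Boolean enrichment to be $\cL_{Boolean}^{fin}$, so that the Boolean algebra of subsets of $V_K$ is treated as a $T^{fin}$-model. Applying Theorem \ref{restricted-qe} to the given $\cL_{rings}$-formula $\varphi(x_1,\dots,x_n)$ produces $\cL_{rings}$-formulas $\psi_1'(\bar x),\dots,\psi_m'(\bar x)$ and an $\cL_{Boolean}^{fin}$-formula $\Theta(X_1,\dots,X_m)$ such that
\[
\A_K\models\varphi(\bar a)\iff \P(V_K)^+\models \Theta\bigl([[\psi_1'(\bar a)]],\dots,[[\psi_m'(\bar a)]]\bigr)
\]
for all $\bar a$. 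At this stage $\Theta$ may still carry quantifiers in the Boolean sort.

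Second, I invoke the quantifier-elimination part of Theorem \ref{bool1}: the theory $T^{fin}$ of infinite atomic Boolean algebras admits elimination of quantifiers in $\cL_{Boolean}^{fin}$. Since $\P(V_K)^+$ is a model of $T^{fin}$ (it is infinite atomic with the standard interpretation of $Fin$ and of each $C_j$), we may replace $\Theta$ by an equivalent quantifier-free $\cL_{Boolean}^{fin}$-formula $\Theta'(X_1,\dots,X_m)$. The atomic components of $\Theta'$ are of the forms $C_j(t)$, $Fin(t)$, $t=t'$, $t\leq t'$, where $t,t'$ are Boolean terms built from the variables $X_1,\dots,X_m$ and the constants $0,1$ via $\wedge,\vee,\neg$.

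Third, I absorb Boolean terms into the ring side by closing the list $\psi_1',\dots,\psi_m'$ under the operations induced by $\wedge,\vee,\neg$; this is legitimate because
\[
[[\psi\wedge\psi']]=[[\psi]]\cap[[\psi']],\qquad [[\psi\vee\psi']]=[[\psi]]\cup[[\psi']],\qquad [[\neg\psi]]=V_K\setminus[[\psi]],
\]
so each Boolean term $t(X_1,\dots,X_m)$ evaluated at $X_i=[[\psi_i'(\bar a)]]$ equals $[[\psi_t(\bar a)]]$ for a corresponding $\cL_{rings}$-formula $\psi_t$. Enumerate the finite collection of such $\psi_t$'s arising in $\Theta'$ as $\psi_1,\dots,\psi_l$. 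An equality $t=t'$ then becomes $\neg C_1([[(\psi_t\wedge\neg\psi_{t'})\vee(\psi_{t'}\wedge\neg\psi_t)]])$, and an inequality $t\leq t'$ becomes $\neg C_1([[\psi_t\wedge\neg\psi_{t'}]])$, both of which are $C_j$-style atomic formulas on our expanded list. Putting these substitutions into $\Theta'$ gives the desired Boolean combination $\Psi(\bar x)$ of the predicates $Fin([[\psi_k(\bar x)]])$ and $C_j([[\psi_s(\bar x)]])$.

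The main step to be careful about is the bookkeeping in the third paragraph: one has to verify that, after the Boolean quantifier elimination, every atomic subformula of $\Theta'$ can indeed be pulled back to a $Fin$ or $C_j$ condition on some $[[\psi_k(\bar x)]]$ rather than on a formal Boolean combination of these. The identity $[[\,\cdot\,]]$ is a Boolean homomorphism from the Lindenbaum algebra of $\cL_{rings}$-formulas (modulo equivalence in $\A_K$) into $\P(V_K)$, which is exactly what makes this pullback work and ensures that the procedure is effective given effectiveness of Theorem \ref{restricted-qe} and of QE for $T^{fin}$.
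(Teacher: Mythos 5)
Your proof is correct and follows exactly the route the paper indicates: apply the many-sorted Feferman--Vaught theorem (Theorem~\ref{restricted-qe}) with $L=\cL_{\rm rings}$ and restricting formula $\Phi_{val}$ to reduce $\varphi$ to a Boolean-sort formula $\Theta$ in the values $[[\psi'_i(\bar a)]]$, then apply quantifier elimination for $T^{fin}$ (Theorem~\ref{bool1}) and pull the resulting quantifier-free Boolean formula back through the homomorphism $[[\,\cdot\,]]$. Your third paragraph simply makes explicit the bookkeeping the paper leaves implicit, and the rewriting of equalities of Boolean terms as $\neg C_1$ conditions is the right way to land in the stated shape.
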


\begin{cor}[Derakhshan-Macintyre \cite{DM-ad}]\label{def-ring} Let $n\geq 1$. A definable subset of $\A_K^n$ in the language of rings is a Boolean combination of sets defined by the $\cL_{rings}$-formulas
\begin{enumerate}
\item $Fin([[\psi(x_1,\dots,x_n)]])$,
\item $C_j([[\phi(x_1,\dots,x_n)]])$, 
\end{enumerate}
where $j\geq 1$, and $\psi$ and $\phi$ are $\cL_{rings}$-formulas.
\end{cor}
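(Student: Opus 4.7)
The plan is to derive Corollary \ref{def-ring} as an immediate consequence of Corollary \ref{qe-fin} via the standard passage from formulas to the sets they define; the substantive content is entirely contained in Corollary \ref{qe-fin}, and Corollary \ref{def-ring} is essentially its set-theoretic reformulation.

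First, I would start with an arbitrary $\cL_{rings}$-definable subset $D \subseteq \A_K^n$, which by definition is of the form $D = \{\bar a \in \A_K^n : \A_K \models \varphi(\bar a)\}$ for some $\cL_{rings}$-formula $\varphi(x_1,\dots,x_n)$. If parameters from $\A_K$ are allowed in the notion of definability under consideration, I would substitute them in and treat them as additional constants throughout; the argument is unaffected since Corollary \ref{qe-fin} applies uniformly to formulas with parameters.

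Next, I would apply Corollary \ref{qe-fin} to $\varphi$ to obtain $\cL_{rings}$-formulas $\psi_1,\dots,\psi_l$ and a Boolean combination $\Psi(x_1,\dots,x_n)$ of the atomic formulas $Fin([[\psi_k(x_1,\dots,x_n)]])$ and $C_j([[\psi_s(x_1,\dots,x_n)]])$, for $k,s \in \{1,\dots,l\}$ and $j \geq 1$, such that $\varphi$ and $\Psi$ are equivalent in $\A_K$. Since $\wedge$, $\vee$, $\neg$ at the level of $\cL_{rings}$-formulas translate to intersection, union and complementation at the level of subsets of $\A_K^n$, the set $D$ is realized as the same Boolean combination of the subsets of $\A_K^n$ defined by the atomic constituents of $\Psi$. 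Each such constituent is precisely a formula of type (1) or (2) in the statement, completing the proof.

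There is essentially no obstacle at the level of Corollary \ref{def-ring} itself; all the real work lies upstream in Corollary \ref{qe-fin}. That corollary is in turn fed by Theorem \ref{restricted-qe}, the many-sorted Feferman--Vaught quantifier elimination for restricted products, together with Theorem \ref{bool1}, the quantifier elimination for infinite atomic Boolean algebras with the predicate $Fin$ in $\cL_{Boolean}^{fin}$: these combine to eliminate quantifiers over $\A_K$ in favour of operations on Boolean values $[[\cdot]]$ expressible through $Fin$ and the cardinality predicates $C_j$. Modulo those inputs, Corollary \ref{def-ring} is nothing more than a change of vocabulary from the formula side to the set side.
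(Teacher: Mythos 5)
Your proposal is correct and matches the paper's (implicit) derivation: both Corollary \ref{qe-fin} and Corollary \ref{def-ring} are obtained by combining Theorem \ref{restricted-qe} with Theorem \ref{bool1}, and the second corollary is just the set-level restatement of the first, exactly as you describe. Your added remark about handling parameters is a harmless and reasonable precaution.
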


To see that $Fin([[\psi(x_1,\dots,x_n)]])$ and $C_j(\phi([[x_1,\dots,x_n]]))$ are $\cL_{rings}$-formulas, we use Theorem \ref{defid}. For example, $Fin([[\psi(x_1,\dots,x_n)]])$ can be expressed as "there exists an idempotent $e$ such that $Fin(e)$ holds and $e$ is the supremum of all the minimal idempotents $e$ such that $e\A_K \models \psi(x_1(e),\dots,x_n(e))$. 

\begin{note} By Theorem \ref{restricted-qe}, the formulas $\Psi_j$ do not depend on the choice of the family of structures $M_i$, hence the quantifier eliminations in Theorem \ref{restricted-qe} and Corollaries 
\ref{qe-fin} and \ref{def-ring} are independent of the number field $K$.\end{note}

\medskip

\subsection{\bf The case of finite index set}\label{ssec-finindex}

\

\medskip

 If the index set $I$ is finite, then 
Theorem \ref{restricted-qe} becomes the following statement, which is of independent interest and extends results going back to Mostowski for the 1-sorted case.

\begin{thm}\cite{DM-ad}\label{fv-fin-prod} Consider a finite  index set $I=\{1,\dots,s\}$. 
Let $\psi(x_1,\dots,x_n)$ be an $\cL$-formula. Then there are finitely many 
$t$-tuples of formulas 
$$(\psi_1(x_1,\dots,x_n),\dots,\psi_t(x_1,\dots,x_n))$$
for some $t\in \N$, 
and elements $\S_1,\dots,\S_k$, for some $k\in \N$, where each $\S_j$
is in $\P(I)^{t}$ (where $\P(I)$ denotes the powerset of $I$)
such that for arbitrary $\cL$-structures $M_1,\dots,M_s$, and any 
$a_1,\dots,a_n$ in $M_1\times \dots \times M_s$
$$M_1\times \dots \times M_s\models \psi(a_1,\dots,a_n)$$
if and only if for some $j$ the sequence 
$$[[\psi_1(a_1,\dots,a_n)]],\dots,[[\psi_t(a_1,\dots,a_n)]]$$ is equal to $\S_j$.

\end{thm}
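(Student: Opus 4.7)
I would prove this by induction on the complexity of the $\mathcal{L}$-formula $\psi$, exploiting the key simplification that for a finite index set $I$ of size $s$ the Boolean algebra $\mathcal{P}(I)$ is finite, so the set $\mathcal{P}(I)^t$ of possible Boolean-value tuples is a finite set that can be enumerated explicitly. This turns the result into essentially a book-keeping version of Feferman--Vaught, with the Boolean-algebra quantifier elimination of Theorem \ref{bool1} replaced by a finite case analysis.

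\textbf{Base case and propositional connectives.} For an atomic formula $\psi$, I would take $t=1$, $\psi_1 := \psi$, and the single good tuple $\mathcal{S}_1 := (I) \in \mathcal{P}(I)^1$: product semantics gives $M_1 \times \cdots \times M_s \models \psi(a)$ iff $M_i \models \psi(a(i))$ for every $i$, i.e.\ iff $[[\psi(a)]] = I$. Suppose inductively that the claim holds for $\phi$ with witness formulas $\phi_1,\ldots,\phi_t$ and good tuples $\mathcal{T}_1,\ldots,\mathcal{T}_l$, and analogously for $\chi$ with witness formulas $\chi_1,\ldots,\chi_{t'}$ and good tuples $\mathcal{U}_1,\ldots,\mathcal{U}_{l'}$. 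For $\neg\phi$ I reuse $\phi_1,\ldots,\phi_t$ and take as good tuples the complementary list inside the finite set $\mathcal{P}(I)^t$. For $\phi \wedge \chi$ I concatenate the two lists of formulas and let the good tuples be those in $\mathcal{P}(I)^{t+t'}$ whose first $t$ coordinates lie in $\{\mathcal{T}_j\}$ and whose last $t'$ coordinates lie in $\{\mathcal{U}_k\}$.

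\textbf{The quantifier step.} This is the main obstacle. Given $\psi = \exists y\, \phi(x,y)$, by induction there are formulas $\phi_1, \ldots, \phi_t(x, y)$ and good tuples $\mathcal{T}_1, \ldots, \mathcal{T}_l \in \mathcal{P}(I)^t$ such that $\prod M_i \models \phi(a, b)$ iff $([[\phi_j(a, b)]])_{j=1}^t \in \{\mathcal{T}_1, \ldots, \mathcal{T}_l\}$. For each pattern $\sigma \in \{0,1\}^t$ I set
$$\chi_\sigma(x) := \exists y \Bigl( \bigwedge_{j:\, \sigma_j = 1} \phi_j(x, y) \wedge \bigwedge_{j:\, \sigma_j = 0} \neg \phi_j(x, y)\Bigr).$$
The crucial observation is that a witness $b \in \prod M_i$ for $\psi$ can be chosen coordinatewise, so $\psi(a)$ holds in the product iff there is an assignment $i \mapsto \sigma(i) \in \{0,1\}^t$ with $M_i \models \chi_{\sigma(i)}(a(i))$ for every $i$, such that the resulting sets $B_j := \{i : \sigma(i)_j = 1\}$ form a tuple $(B_1, \ldots, B_t) \in \{\mathcal{T}_1, \ldots, \mathcal{T}_l\}$. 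I therefore take the new list of formulas to be $(\chi_\sigma)_{\sigma \in \{0,1\}^t}$, of length $2^t$, and declare a tuple $(A_\sigma)_\sigma \in \mathcal{P}(I)^{2^t}$ to be good iff there exists a partition $I = \bigsqcup_\sigma I_\sigma$ with $I_\sigma \subseteq A_\sigma$ for each $\sigma$ and $\bigl(\bigcup_{\sigma:\, \sigma_j = 1} I_\sigma\bigr)_{j=1}^t \in \{\mathcal{T}_1, \ldots, \mathcal{T}_l\}$. Since $I$ is finite there are only finitely many partitions, hence only finitely many such good tuples.

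\textbf{Closure and remarks.} Universal quantification reduces to the existential case via negation; function symbols and equality are dealt with coordinatewise as in the definition of the product; and the many-sorted setting only requires tracking the sorts of $x$ and $y$ when forming the $\chi_\sigma$, without causing any essential difficulty. The main obstacle is the quantifier step, and more specifically formulating the partition condition on $I$ so that it faithfully encodes coordinatewise choice of witnesses together with the induction hypothesis on $\phi$. This is precisely where the finiteness of $I$ replaces the Boolean-algebra analysis needed in the general Feferman--Vaught theorem (Theorem \ref{restricted-qe}) and simplifies the output to a finite enumeration of patterns $\mathcal{S}_j \in \mathcal{P}(I)^t$.
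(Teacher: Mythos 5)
Your proof is correct, but it takes a genuinely different route from the paper. The paper's entire proof of this result is a single line: it observes that the statement ``[f]ollows immediately by Theorem~\ref{restricted-qe}''. That is, the paper specializes the general many-sorted Feferman--Vaught theorem for restricted products (Theorem~\ref{restricted-qe}): for a finite index set $I$, the restricted product is the full product, and $\P(I)^+$ is a finite Boolean algebra, so the Boolean formula $\Theta$ produced by Theorem~\ref{restricted-qe} simply cuts out a finite list of admissible Boolean-value tuples $\S_1,\dots,\S_k \in \P(I)^t$. You instead re-prove the statement from scratch by induction on formula complexity, handling atomic formulas, propositional connectives, and the existential quantifier directly, with the key step being the coordinatewise-witness analysis via the pattern formulas $\chi_\sigma$ and the partition condition. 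Both arguments are sound; your quantifier step is exactly the combinatorial heart of the Mostowski/Feferman--Vaught construction, correctly encoded (a choice of witness $b$ in the product corresponds to a partition $I = \bigsqcup_\sigma I_\sigma$ with $I_\sigma \subseteq [[\chi_\sigma(a)]]$, and the induced sets $B_j = \bigcup_{\sigma:\sigma_j=1} I_\sigma$ must form one of the inductively-given good tuples). What your version buys is self-containedness and a concrete picture of why finiteness of $I$ trivializes the Boolean-algebra side; what the paper's version buys is brevity, uniformity with the infinite-index case, and the fact that the formulas $\Psi_j$ and the finite list $\S_1,\dots,\S_k$ are automatically inherited from the already-established Theorem~\ref{restricted-qe} rather than rebuilt. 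One small remark on your argument: the finiteness of the list of good tuples at the quantifier step does not actually need the partition count --- it is automatic because $\P(I)^{2^t}$ is itself finite; but your partition criterion is exactly the correct membership condition, and it does matter that it is expressed purely in terms of the tuple $(A_\sigma)_\sigma$ so the output is uniform over all choices of structures $M_1,\dots,M_s$.
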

\begin{proof} Follows immediately by Theorem \ref{restricted-qe}.\end{proof}

\begin{cor}\cite{DM-ad}\label{mostowski} Let $A\subset M_1 \times \dots \times M_s$ be an $\cL$-definable set.
Then $A$ is a finite union of rectangles
 $ B_1 \times \dots \times B_s$ , where $B_i$ is a definable subset of $M_i$.
\end{cor}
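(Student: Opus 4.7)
The plan is to reduce the corollary directly to Theorem \ref{fv-fin-prod}. Since $A\subset M_1\times\dots\times M_s$ is $\cL$-definable in the product structure, there is an $\cL$-formula $\psi(x)$ (with a single free variable) such that $A=\{a:M_1\times\dots\times M_s\models\psi(a)\}$. I would apply Theorem \ref{fv-fin-prod} with $n=1$ and $I=\{1,\dots,s\}$ to obtain finitely many $\cL$-formulas $\psi_1(x),\dots,\psi_t(x)$ and finitely many tuples $\S_1,\dots,\S_k\in\P(I)^t$ such that $M_1\times\dots\times M_s\models\psi(a)$ if and only if the sequence $([[\psi_1(a)]],\dots,[[\psi_t(a)]])$ equals some $\S_j$.

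Next, I would unpack what each condition $([[\psi_1(a)]],\dots,[[\psi_t(a)]])=\S_j$ actually says. Writing $\S_j=(S_{j,1},\dots,S_{j,t})$ with $S_{j,l}\subseteq I$, the equality amounts to asserting, for every pair $(l,i)$, that $M_i\models\psi_l(a(i))$ holds if and only if $i\in S_{j,l}$. The key observation is that this joint condition factors componentwise over $i\in I$: for each $i$, the component $a(i)\in M_i$ must satisfy the Boolean combination
\[
\chi_{j,i}(x)\;:=\;\bigwedge_{l:\,i\in S_{j,l}}\psi_l(x)\;\wedge\;\bigwedge_{l:\,i\notin S_{j,l}}\neg\psi_l(x).
\]
Consequently the set of tuples $a$ whose sequence of Boolean values is exactly $\S_j$ is the rectangle $B_{j,1}\times\dots\times B_{j,s}$, where $B_{j,i}:=\{b\in M_i:M_i\models\chi_{j,i}(b)\}$ is an $\cL$-definable subset of $M_i$.

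Finally, taking the union over $j=1,\dots,k$ exhibits $A$ as a finite union of such rectangles, proving the corollary. The argument is essentially bookkeeping on top of Theorem \ref{fv-fin-prod}: no real obstacle arises, the only thing to take care of is the correct interpretation of the Boolean value $[[\psi_l(a)]]\subseteq I$ as a set of indices and the observation that equality of two subsets of the finite set $I$ is a conjunction of $|I|$ pointwise conditions. If $\psi$ has several free variables (so that $A$ is a definable subset of $(M_1\times\dots\times M_s)^n$), the same argument applies by invoking Theorem \ref{fv-fin-prod} with the relevant $n$ and interpreting each variable componentwise; the resulting rectangles then naturally live in $\prod_i M_i^n$, which coincides with $(M_1\times\dots\times M_s)^n$ after reordering the factors.
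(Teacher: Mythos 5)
Your proof is correct and follows the same route the paper intends: the paper's proof is simply ``follows immediately by Theorem~\ref{fv-fin-prod},'' and what you have done is supply the bookkeeping that makes that immediacy explicit. In particular, the key observation that the condition $\bigl([[\psi_1(a)]],\dots,[[\psi_t(a)]]\bigr)=\S_j$ factors componentwise over $i\in I$ into the formulas $\chi_{j,i}$, yielding a rectangle, is exactly the right way to see it.
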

\begin{proof} Follows immediately by Theorem \ref{fv-fin-prod}.
\end{proof}

\begin{remark} For any finite subset $S=\{v_1,\dots,v_l\}$ of $V_K$ containing the all Archimedean valuations, we can view $\A_K$ as the finite direct product 
$$K_{v_1} \times \dots \times K_{v_l} \times \A_K^S,$$ where 
$\A_K^S$ is the restricted direct product of $K_v$ over all $v\notin S$ with respect to the rings $\cO_v$. 
\end{remark}

\begin{remark} For any subset $T=\{i_1,\dots,i_l\}$ of the index set $I$, we can view $\prod_{i\in I}^{(\Phi)} M_i$ as the finite direct product 
$$M_{i_1} \times \dots \times M_{i_l} \times \prod_{i\notin T}^{(\Phi)} M_i,$$ 
where $\prod_{i\notin T}^{(\Phi)} M_i$ is the restricted direct product of the $M_i$ with respect to the formula $\Phi(x)$. In this way Theorem \ref{fv-fin-prod} and Corollary \ref{mostowski} can be applied to $\A_K$ and $\prod_{i\in I}^{(\Phi)} M_i$. 

By Corollary 
\ref{mostowski}, the definable subsets of $$\prod_{v\in S} K_v \times \A_K^S$$ 
(resp. $$\prod_{t\in T} M_t \times \prod_{t\notin T}^{(\Phi)} M_t),$$
are finite unions of sets of the form 
$$X_1 \times \dots \times X_l \times Y$$
where $X_j$ is a definable subset of $K_{v}$ (resp. $M_{i_j}$) for $v\in S$ (resp. for $j=1,\dots,l$), and $Y$ is a definable subset of 
$\A_K^S$ (resp. $\prod_{i\notin S}^{(\Phi)} M_i$). 

As $Y$ is a restricted product, Theorem \ref{restricted-qe} applies to it (and for example gives results on its definable subsets).
\end{remark}

This direct product decomposition can be specially useful in calculating measures of definable sets in $\A_K$ and 
$\prod_{i\in I}^{(\Phi)} M_i$. 

Taking $S$ to be the set of all
Archimedean valuations, this way one can compare the measures got from the Archimedean factors (a finite product) with those got from the non-Archimedean factors (an infinite restricted direct product). 

\medskip

\subsection{\bf An example from algebraic groups: Weil's conjecture on Tamagawa numbers}

\

\medskip

For the definition of the adele space of a variety $V(\A_K)$ and of Tamagawa number see Subsection \ref{ssec-adsp}. These are naturally definable subsets of $\A_K^m$ for some $m$. They also have the structure of a model-theoretic
restricted product (see  Subsection \ref{ssec-adsp}), and so the results of Subsection \ref{ssec-qe} are applicable to them.

Let $G$ be an algebraic group over a number field $K$. Tamagawa proved that the volume of 
$SO_n(f)(\A_K)/SO_n(f)(K)$ with respect to the Tamagawa measure is equal to $2$, where $SO_n(f)$ denotes the special orthogonal group of a non-degenerate 
quadratic form $f$ in $n$-variables with rational coefficients, and proved that this is equivalent to Siegel's famous formula for the Mass of a quadratic form (called Siegel's Mass formula), 
thereby giving also a new volume-theoretic proof of Siegel's formula. 

Weil gave a more general conjecture that the Tamagawa volume of $G(\A_K)/G(K)$ is equal to $1$ for all simply connected semi-simple groups and proved it for many classical groups. Langlands proved it for all Chevalley groups. Kottwitz proved the general case. The story of Weil's conjecture and related results has been quite interesting. Eskin-Rudnick-Sarnak gave a new proof of Siegel's Maas formula using ergodic theory. For details and references see \cite[Chapter 5]{Platonov-R-book} (and Kneser's article in \cite{CF} for the early results). See also Subsection \ref{ssec-tam}.

In each of these, the volume is calculated after first computing the volume of the points over the finite adeles $\A_K^{fin}$, then computing the product of the volumes of the set of points over the Archimedean factors, and then finally comparing the two quantities. Mysteriously in all these cases, the product is an integer. 

As stated above, this method of calculating volumes can be carried out for definable sets by 
Theorems \ref{restricted-qe} and \ref{fv-fin-prod} and Corollaries \ref{def-ring}, \ref{qe-fin}, and \ref{mostowski}. It is a powerful method for calculating adelic volumes.

\section{\bf Definability in adeles}\label{sec-def}

\medskip

\subsection{\bf Definable subsets of $\A_K^m$}\label{ssec-defset} 

\

\medskip

Let $L$ be a language for the $K_v$. 
Given a subset $I$ of $V_K^{fin}$, a formula $\phi(x_1,\dots,x_n)$ from $L$, and $a_1,\dots,a_n \in \A_K$,
we denote 
$$[[\psi(a_1,\dots,a_n)]]^I=\{v\in I: K_v \models \phi(a_1(v),\dots,a_n(v))\}.$$

\begin{thm}[Derakhshan-Macintyre {\cite{DM-ad}}]
\label{th-def-sets2} 
Let $K$ be a number field and $n\geq 1$. Let $X$ be a definable subset of 
$\A_K^n$ defined by a formula $\phi(x_1,\dots,x_n)$ that is any of the following
\begin{itemize} 
\item an $\cL_{rings}$-formula or a formula of the language of valued field,
\item an $\cL_{Boolean}^{fin}(\cL_{Basarab})$-formula (resp. an $\cL_{Boolean}^{fin}(\cL_{Denef-Pas})$-formula or an $\cL_{Boolean}^{fin}(\cL_{Pas})$-formula),
\end{itemize}
Then there is a finite set $S=\{v_1,\dots,v_t\}$ of non-Archimedean valuations effectively computable from $X$, an integer $N\geq 1$, and $\cL_{Basarab}$ (resp. $\cL_{Pas}$) formulas $\psi_{v_1},\dots,\psi_{v_t}$, such that 
$X$ is a Boolean combination of the following sets:
\begin{enumerate}

\item $\{(a_1,\dots,a_n)\in \A_K^n: \P(V_K)^+\models \Theta(a_1,\dots,a_n) \newline \wedge \bigwedge_{v_j\in T} K_{v_j}\models \psi_{v_j}(a_1(v_j),\dots,a_n(v_j))\}$, 
\item $\{(a_1,\dots,a_n)\in \Bbb A_K^n: \P(V_K)^+\models Fin([[\psi(a_1,\dots,a_n)]]\}$,

\end{enumerate}
where $T\subseteq S$, and $\Theta(a_1,\dots,a_n)$ is a conjunction from the following set of conditions
\begin{itemize}
\item $C_j([[\varphi_1(a_1,\dots,a_n)]]^{real})$
\item $C_k([[\varphi_2(a_1,\dots,a_n)]]^{complex})$
\item $C_s([[\varphi_3(a_1,\dots,a_n)]]^{V_K^{fin}\setminus S})$

\end{itemize}
such that  $j,k,s\geq 1$, and the following hold:
\begin{itemize}
\item $\varphi_1$ is quantifier-free in the language of ordered rings, 
\item $\varphi_2$ is quantifier-free in $\cL_{rings}$, 
\item $\varphi_3$ and $\psi$ are $\cL_{Basarab}$-formulas (resp. $\cL_{Denef-Pas}$-formulas) that are quantifier-free in the field sort and have their quantifiers from the residue field sort,   
\item for every $j$, $\psi_{v_j}$ is an $\cL_{Basarab}$-formula (resp. an $\cL_{Pas}$-formula) that is quantifier-free in the field sort and 
has its quantifiers from the sort $\cL_{r(v_j)}$ (resp. the sort $\cL_{Res_{r(v_j)}}$) for some $r(v_j)\geq 1$ depending on $v_j$.
\end{itemize}
A definable subset of the truncated restricted product $\prod_{v\in V_K^{fin}\setminus S} K_v$ defined by a formula $\phi$ as above 
is a Boolean combinations of sets of the types
\begin{enumerate}
\item $\{(a_1,\dots,a_n)\in \A_K^n: C_j([[\varphi(a_1,\dots,a_n)]])\}$,
\item $\{(a_1,\dots,a_n)\in \Bbb A_K^n: Fin([[\psi(a_1,\dots,a_n)]]\}$,
\end{enumerate}
where $\psi$ and $\varphi$ are $\cL_{Basarab}$ or $\cL_{Denef-Pas}$ formulas that are quantifier-free in the field sort and 
have all their quantifiers from the residue field sort.
\end{thm}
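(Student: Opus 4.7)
The plan is to combine three layers of quantifier elimination. First, I represent $\A_K^{fin}$ as the restricted product of the non-Archimedean completions $K_v$ with respect to the formula $\Phi_{val}$ of Theorem \ref{CDLM-th} that uniformly defines the valuation rings, and apply Theorem \ref{restricted-qe} with base language $L = \cL_{Basarab}$ (respectively $\cL_{Denef-Pas}$ or $\cL_{Pas}$). Combined with the quantifier elimination for $T^{fin}$ in $\cL_{Boolean}^{fin}$ (Theorem \ref{bool1}), this reduces any defining formula $\phi$ over $\A_K^{fin}$ to a Boolean combination of predicates of the form $Fin([[\psi_i]])$ and $C_j([[\psi_i]])$, where each $\psi_i$ is an $L$-formula. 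The Archimedean part is handled by the finite direct product decomposition of Subsection \ref{ssec-finindex}: writing $\A_K = \prod_{v\in\mathrm{Arch}(K)} K_v \times \A_K^{fin}$, Corollary \ref{mostowski} expresses $X$ as a finite union of rectangles $X_1\times\cdots\times X_r\times Y$ with $Y\subseteq \A_K^{fin}$ definable, and the classical quantifier eliminations of Tarski for $\R$ and $\C$ rephrase each $X_i$ as a quantifier-free condition in the ordered ring language or in $\cL_{rings}$, producing the $C_j([[\varphi_1]]^{real})$ and $C_k([[\varphi_2]]^{complex})$ pieces.

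Next, I apply Corollary \ref{basarab} (respectively Corollary \ref{pas}) to each $\psi_i$. This yields effectively an integer $N \geq 1$, a formula $\varphi_3$ quantifier-free in the field sort with quantifiers only in the residue and value group sorts that is equivalent to $\psi_i$ in every $K_v$ of residue characteristic $>N$, and, for each residue prime $p<N$, a formula $\psi_p$ quantifier-free in the field sort with quantifiers from the mixed $r_p$-sort $L_{r_p}$ (respectively from finitely many $\cL_{Res(p_l)}$). Take $S=\{v_1,\dots,v_t\}\subseteq V_K^{fin}$ to be the finite, effectively computable set of places of residue characteristic $<N$, and let $\psi_{v_j}$ in the statement be $\psi_{p(v_j)}$ interpreted at $v_j$.

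The final step is to split each Boolean value $[[\psi_i(a)]]$ along the partition $V_K = \mathrm{Arch}(K) \sqcup S \sqcup (V_K^{fin}\setminus S)$, and use the Presburger-like additivity of the $C_j$ predicates on disjoint supports together with the fact that $Fin$ is a Boolean ideal (both consequences of the axioms of $T^{fin}$ in Theorem \ref{bool1}) to rewrite each $C_j([[\psi_i]])$ as a disjunction over subsets $T \subseteq S$. On $T$ the conjunction $\bigwedge_{v_j\in T} K_{v_j}\models \psi_{v_j}(a(v_j))$ records which exceptional primes contribute as atoms below the Boolean value, while on $V_K^{fin}\setminus S$ the residual count is captured by $C_s([[\varphi_3(a)]]^{V_K^{fin}\setminus S})$; the $Fin$ predicate is automatic on the finite set $S$ and reduces to $Fin([[\psi(a)]])$ on the generic part. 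The statement for the truncated restricted product $\prod_{v\in V_K^{fin}\setminus S} K_v$ is obtained by the same argument with the $S$-piece removed from the outset. The main obstacle I expect is precisely this last combinatorial reindexing: one must verify that a single $C_j$-predicate applied to a Boolean value that straddles the three qualitatively different zones (Archimedean, exceptional non-Archimedean, generic non-Archimedean) decomposes uniformly into the advertised normal form, and that $\varphi_1, \varphi_2, \varphi_3$ and the $\psi_{v_j}$ can simultaneously be chosen with the exact quantifier complexity and residue-sort restrictions prescribed by the statement.
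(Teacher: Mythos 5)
Your proposal takes essentially the same route the paper's proof follows: first apply the many-sorted Feferman-Vaught theorem (Theorem~\ref{restricted-qe}) to the restricted product with respect to $\Phi_{val}$, then eliminate Boolean quantifiers via Theorem~\ref{bool1} to obtain a Boolean combination of $C_j$ and $Fin$ conditions on Boolean values, then apply Corollary~\ref{basarab} (resp.\ Corollary~\ref{pas}) to reduce the stalk formulas to the prescribed quantifier complexity uniformly outside the effectively computable set $S$ of small residue characteristic, and finally decompose each Boolean value along $\mathrm{Arch}(K)\sqcup S\sqcup(V_K^{fin}\setminus S)$ using the Presburger-type additivity of $\sharp$ over disjoint supports. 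The one place where you detour is the Archimedean part: rather than invoking Corollary~\ref{mostowski} to produce a rectangle decomposition of $\prod_{v\in\mathrm{Arch}(K)}K_v$ and then converting the rectangles to $C_j([[\varphi_1]]^{real})\wedge C_k([[\varphi_2]]^{complex})$ conditions, the cleaner route (and the one the statement's normal form reflects) is to split the $C_j([[\psi]])$ conditions directly into real, complex, exceptional, and generic pieces using the idempotents $e_\R$, $e_\C$, $e_{na}$ of Subsection~\ref{ssec-arch}. Your rectangle approach still works because any parameter-free definable set is invariant under ring automorphisms of $\A_K$ permuting the real (resp.\ complex) places, so the union of rectangles is necessarily symmetric and can be re-encoded via cardinality predicates after atomizing the rectangles' sides; but that symmetrization step is an extra lemma you leave implicit, and avoiding Mostowski altogether removes the need for it.
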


If $K=\Q$ since we have uniform quantifier elimination for $\Q_p$ for all
$p$ in $\cL_{Belair}$, we can get a simpler description.

\begin{thm}[Derakhshan-Macintyre {\cite{DM-ad}}]\label{th-def-sets} Let 
$X$ be a subset of 
$\Bbb A_{\Q}^n$ in the language of rings or the language of Belair, where $n\geq 1$. Then $X$ is a Boolean 
combination of sets of the following types:
\begin{enumerate}
\item $\{(a_1,\dots,a_n)\in \A_K^n: \B_K^+\models \Theta(a_1,\dots,a_n)\}$
\item $\{(a_1,\dots,a_n)\in \Bbb A_K^n: \B_K^+\models Fin([[\psi(a_1,\dots,a_n)]])\}$,
\end{enumerate}
where $\Theta(a_1,\dots,a_n)$ is a conjunction from the following statements
\begin{enumerate}
\item $C_j([[\varphi_1(a_1,\dots,a_n)]]^{real})$
\item $C_k([[\varphi_2(a_1,\dots,a_n)]]^{complex})$
\item $C_s([[\varphi_3(a_1,\dots,a_n)]]^{na})$,
\end{enumerate}
and $j,k,s\geq 1$, $\varphi_1$ is quantifier-free in the language or ordered rings, 
$\varphi_2$ is quantifier-free in $\cL_{rings}$, and
$\varphi_3$ and $\psi$ are quantifier-free in $\cL_{Belair}$.
\end{thm}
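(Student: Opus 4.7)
The plan is to deduce Theorem \ref{th-def-sets} as a clean specialization of Theorem \ref{th-def-sets2} to $K=\Q$, exploiting the fact that Belair's language eliminates quantifiers uniformly in all $\Q_p$ so that no exceptional finite set of primes is needed. First I would write $\A_\Q$ as the finite direct product $\R \times \A_\Q^{fin}$, where $\A_\Q^{fin}$ is the restricted direct product of the $\Q_p$ with respect to the $\Z_p$, and then apply Corollary \ref{mostowski} to reduce any definable subset of $\A_\Q^n$ to a finite union of rectangles $X \times Y$ with $X \subseteq \R^n$ definable in the ordered ring language and $Y \subseteq (\A_\Q^{fin})^n$ definable in $\cL_{rings}$ or $\cL_{Belair}$. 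Tarski's real quantifier elimination makes $X$ definable by a quantifier-free formula $\varphi_1$ in the language of ordered rings, contributing the $[[\varphi_1]]^{real}$ term. The complex term $[[\varphi_2]]^{complex}$ appears in the statement only for uniformity; for $\Q$ the idempotent $e_{\C}$ is zero and the term is vacuous.

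Next I would apply Theorem \ref{restricted-qe} to $Y$ with ambient language $L=\cL_{Belair}$ and Boolean enrichment $\cL_{Boolean}^{fin}$. By Belair's uniform quantifier elimination for all $\Q_p$, the auxiliary $L$-formulas $\Psi_j(x_1,\dots,x_n)$ produced by Theorem \ref{restricted-qe} can be chosen quantifier-free in $\cL_{Belair}$, uniformly in $p$ and with no exceptional set of small primes. This is precisely the gain over the general case of Theorem \ref{th-def-sets2}, where Basarab/Denef--Pas formulas and a finite exceptional set $S$ were needed to cover small residue characteristics. The outcome is that $Y$ is definable by a Boolean combination of conditions in $\P(V_\Q^{fin})^+$ applied to Boolean values $[[\Psi_j(a)]]$ of quantifier-free $\cL_{Belair}$-formulas, which supplies the $[[\varphi_3]]^{na}$ and $Fin([[\psi]])$ ingredients.

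I would then internalize these Feferman--Vaught Boolean values as idempotents in $\A_\Q$. Via the canonical bijection between $\P(V_\Q^{fin})$ and the Boolean algebra $\B_\Q^{fin}$ of idempotents of $\A_\Q^{fin}$, together with the quantifier elimination for $T^{fin}$ (Theorem \ref{bool1}) and the $\cL_{rings}$-definability of finite support idempotents (Theorem \ref{defid}), the Boolean combination from Theorem \ref{restricted-qe} reduces to a Boolean combination of the predicates $Fin(e)$ and $C_j(e)$ applied to internal Boolean values of quantifier-free $\cL_{Belair}$-formulas. Splitting along the decomposition $1 = e_\R \vee e_\C \vee e_{na}$ (with $e_\C = 0$ for $\Q$) separates real, complex, and non-Archimedean contributions and produces the superscripted Boolean values $[[\,\cdot\,]]^{real}$, $[[\,\cdot\,]]^{complex}$, $[[\,\cdot\,]]^{na}$ that appear in the statement.

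The main technical obstacle I expect is bookkeeping: rigorously passing between the \emph{external} Feferman--Vaught Boolean algebra $\P(V_\Q^{fin})^+$ and the \emph{internal} Boolean algebra $\B_\Q^{fin}$ inside $\A_\Q$, while correctly tracking the Archimedean/non-Archimedean split and verifying that the resulting normal form genuinely consists of Boolean combinations of the three advertised atomic shapes $C_j([[\varphi_i]]^{\bullet})$ and $Fin([[\psi]])$ with the prescribed quantifier complexities of $\varphi_1,\varphi_2,\varphi_3,\psi$. A secondary point is ensuring that the absence of exceptional primes, which is the whole reason $K=\Q$ admits the cleaner statement, is used consistently: the uniform quantifier elimination of \cite{Belair} must be invoked not only for the terminal quantifier-free $\varphi_3,\psi$ but also in the intermediate formulas created by Theorem \ref{restricted-qe}, so that no implicit finite set of bad primes is reintroduced in the final Boolean combination.
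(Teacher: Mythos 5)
Your proposal is correct and captures the paper's intended route: Theorem~\ref{th-def-sets} is precisely the specialization of Theorem~\ref{th-def-sets2} to $K=\Q$, where the uniform quantifier elimination of Belair for all $\Q_p$ removes the need for the finite exceptional set $S$ and the $\cL_{Pas}$-formulas $\psi_{v_j}$ that appear in the general statement, and you identify and use this correctly. The one organizational difference is that you peel off the Archimedean factor first via the decomposition $\A_\Q \cong \R \times \A_\Q^{fin}$ and Corollary~\ref{mostowski}, then apply Theorem~\ref{restricted-qe} only to the restricted product of the $\Q_p$, whereas the paper's framing treats all of $\A_\Q$ as one restricted product and separates real, complex, and non-Archimedean contributions afterwards via the filtered Boolean values $[[\,\cdot\,]]^{real}$, $[[\,\cdot\,]]^{complex}$, $[[\,\cdot\,]]^{na}$ from Subsection~2.5; the two organizations are interchangeable here and lead to the same normal form. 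Your observations that $e_\C = 0$ for $\Q$ (so the complex clause is vacuous), that the internalization from $\P(V_\Q)^+$ to $\B_\Q^+$ goes through Theorems~\ref{defid} and~\ref{bool1}, and that the uniformity of Belair's QE must be applied to every auxiliary formula $\Psi_j$ produced by Theorem~\ref{restricted-qe} (not merely the final ones) so as not to reintroduce a bad finite set of primes, are all the right points to attend to.
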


\begin{remarks}\label{typeI}\noindent\begin{enumerate}
\item In Theorems \ref{th-def-sets2} and \ref{th-def-sets} a special case of the sets in the clause (1) are sets of the form
$$\{(a_1,\dots,a_n)\in \A_K^n: \P(V_K)^+\models [[\Theta(a_1,\dots,a_n)]]=1\}.$$
In this case we call $X$ a definable set of Type I.
\item If $\phi$ is from $\cL_{rings}$ or the language of valued fields, then $\P(V_K)^+$ can be replaced by the Boolean algebra of idempotents $\B_K^+$, thus obtaining a quantifier-elimination 
that takes place within the ring of adeles.
\item The plus $+$ in $\P(V_K)^+$ and $\B_K^+$ indicate that the Boolean algebras $\P(K)$ and $\B_K$ are enriched with the predicates of the expanded language.
\end{enumerate}
\end{remarks}

\medskip

\subsection{\bf Measurability of definable subsets}\label{ssec-defmeas} 

\

\medskip

\begin{thm}[Derakhshan-Macintyre {\cite{DM-ad}}]\label{th-def-meas} A definable subset of $\Bbb A_K^n$, $n\geq 1$, in the language of rings is measurable.\end{thm}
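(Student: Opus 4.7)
The strategy is a two-level reduction: first use the quantifier-elimination results already in hand to write an arbitrary $\cL_{\mathrm{rings}}$-definable subset of $\A_K^n$ as a finite Boolean combination of ``basic'' sets, and then verify that each basic set is Haar-measurable. Since the Haar-measurable subsets of $\A_K^n$ form a $\sigma$-algebra (with Haar measure $\omega_{\A_K}^{\,n}$ constructed as in Subsections \ref{ssec-rest-prod}--\ref{ssec-adeles}), measurability is preserved by arbitrary countable Boolean operations, and in particular by the finite combinations that arise here.

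\emph{Step 1 (global reduction).} By Corollary~\ref{def-ring}, every $\cL_{\mathrm{rings}}$-definable subset of $\A_K^n$ is a finite Boolean combination of sets of the two forms
\[
\{\bar a\in\A_K^n : Fin([[\psi(\bar a)]])\} \quad\text{and}\quad \{\bar a\in\A_K^n : C_j([[\phi(\bar a)]])\},
\]
with $\psi,\phi$ ranging over $\cL_{\mathrm{rings}}$-formulas and $j\geq 1$. It therefore suffices to establish the measurability of these two basic types.

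\emph{Step 2 (local measurability).} For each $v\in V_K$ set
\[
Y_{v,\phi} := \{x\in K_v^n : K_v\models \phi(x)\}.
\]
For Archimedean $v$, Tarski's quantifier elimination for $\R$ and $\C$ exhibits $Y_{v,\phi}$ as semialgebraic, hence Borel. For non-Archimedean $v$, Macintyre's quantifier elimination in $\cL_{\mathrm{Mac}}$ and its extension to finite extensions of $\Q_p$ by Prestel--Roquette (Subsections 3.2.1--3.2.2) present $Y_{v,\phi}$ as a finite Boolean combination of zero sets of polynomials and of the open ``power'' sets $P_n(K_v)$, hence as a finite Boolean combination of locally closed sets, again Borel. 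Pulling back along the continuous coordinate projection $\pi_v^n:\A_K^n\to K_v^n$, the ``cylinder''
\[
X_{v,\phi}:=(\pi_v^n)^{-1}(Y_{v,\phi})=\{\bar a\in\A_K^n : K_v\models\phi(\bar a(v))\}
\]
is Borel, and in particular Haar-measurable in $\A_K^n$.

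\emph{Step 3 (global assembly).} The basic sets now assemble from the cylinders $X_{v,\phi}$ by countable Boolean operations. Indeed,
\[
\{\bar a : C_j([[\phi(\bar a)]])\} \;=\; \bigcup_{\substack{T\subseteq V_K\\ |T|=j}}\ \bigcap_{v\in T} X_{v,\phi},
\]
a countable union of finite intersections of measurable sets, while
\[
\{\bar a : Fin([[\psi(\bar a)]])\} \;=\; \bigcup_{k\geq 0}\Bigl(\{\bar a : C_k([[\psi(\bar a)]])\}\setminus\{\bar a : C_{k+1}([[\psi(\bar a)]])\}\Bigr),
\]
a countable union of differences of sets of the previous shape. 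Both basic types are therefore measurable, and a finite Boolean combination of measurable sets is measurable, proving the theorem.

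\emph{Main difficulty.} The only non-formal input is Step~2: one must verify, case by case on the completion type, that $\cL_{\mathrm{rings}}$-definable subsets of $K_v^n$ are Borel. This hinges on choosing, for each non-Archimedean $v$, an expansion of $\cL_{\mathrm{rings}}$ in which $K_v$ admits quantifier elimination by sets that are manifestly locally closed in the $v$-adic topology---precisely what Macintyre's theorem and its Prestel--Roquette refinement provide. Once local measurability is in hand, the passage to $\A_K^n$ via continuous projections and the $\sigma$-algebra structure of Haar-measurable sets is immediate from the restricted-product construction of $\omega_{\A_K}$.
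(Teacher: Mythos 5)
Your proof is correct and follows essentially the same route as the paper: reduce via Corollary~\ref{def-ring} to Boolean combinations of $Fin$- and $C_j$-type sets, observe that local definable sets in each $K_v$ are finite unions of locally closed sets by the quantifier-elimination results of Tarski, Macintyre and Prestel--Roquette, and then assemble the global sets from measurable cylinders by countable Boolean operations. The only cosmetic difference is that the paper decomposes the $Fin$-type set directly as a countable union over finite $\mathcal{F}\subseteq V_K$ of the events ``$[[\psi(\bar a)]]=\mathcal{F}$'' (each a countable intersection of cylinders and their complements), whereas you first reduce $Fin$ to the $C_j$ predicates and take set differences; both are valid countable decompositions.
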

Remark that measurability for a subset of $\A_K^n$, $n\geq 1$, is with respect to the product measure induced from a measure 
on $\A_K$ (cf. \ref{ssec-rest-prod}).

To give an idea of the proof suppose $X$ is defined by $Fin([[\Psi(x_1,\dots,x_n)]])$. 
Then
$$X=\{(a_1,\dots,a_n)\in \A_K^n: \B_K^+\models Fin([[\Psi(a_1,\dots,a_n)]])$$
$$=\bigcup_{\mathcal{F}} (\bigcap_{v\in \mathcal{F}} \{(a_1,\dots,a_n): K_v\models \Psi(a_1(v),\dots,a_n(v))\}$$
$$\cap \bigcap_{w\notin \mathcal{F}} \{(a_1,\dots,a_n)\in \A_K^n: K_w\models \neg \Psi(a_1(w),\dots,a_n(w))\}),$$
where $\mathcal{F}$ ranges over all the finite subsets of $V_K$.
Then one uses the fact that definable subsets in $K_v$ are finite unions of locally closed sets (cf. \ref{ssec-venrich}) hence measurable. 

\begin{note} We also have the following strengthening in \cite{DM-ad}. 
Let $L$ be any expansion of the language of rings with the property that the $L$-definable subsets of $K_v^n$, for any $n\geq 1$ and $v\in V_K$, are measurable. 
Then any $\cL^{fin,res}(L)$-definable subset of $\A_K^n$, where $n\geq 1$, is measurable.
\end{note}

\begin{note}The language $\cL^{fin,res}(L)$ has more expressive power than $\cL_{rings}$ for the adeles.\end{note}

\medskip

\subsection{\bf Countable unions and intersections of locally closed sets} \label{ssec-unions}

\

\medskip

 The proof of Theorem \ref{th-def-meas} shows the following.

\begin{cor}[Derakhshan-Macintyre {\cite{DM-ad}}]\label{cor-def-loc-cl} A definable subset of $\A_K^n$ in the language of rings or the language $\cL_{Boolean}^{fin}(\cL)$, where $\cL$ is $\cL_{Denef-Pas}$ or $\cL_{Belair}$, is a countable union or countable intersection of locally closed sets (in adelic topology).
\end{cor}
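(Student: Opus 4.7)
The plan is to re-run the argument outlined for Theorem~\ref{th-def-meas}, but to retain the topological structure of the intermediate pieces rather than merely their measurability. By Theorems~\ref{th-def-sets2} and~\ref{th-def-sets}, any definable $X\subseteq\A_K^n$ in the relevant language is a Boolean combination of Type~I sets of the form $\{a:C_j([[\varphi(a)]]^{\star})\}$ and Fin-sets of the form $\{a:Fin([[\psi(a)]])\}$, where $\varphi$ and $\psi$ are quantifier-free in the field sort of the chosen language.

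The key fibrewise input is the quantifier-elimination results recalled in Subsection~\ref{ssec-venrich}: for each $v\in V_K$, every subset of $K_v^m$ cut out by a field-sort quantifier-free formula is a finite union of locally closed sets in the $v$-adic topology. This covers the non-Archimedean $v$ via Macintyre's, Belair's, Prestel-Roquette's, Denef-Pas', or Basarab's theorem, and the Archimedean $v$ via Tarski's eliminations for $\R$ and $\C$. Since the coordinate projection $\pi_v:\A_K^n\to K_v^n$ is continuous for the adelic topology, each slice $\{a\in\A_K^n:K_v\models\varphi(a(v))\}$ is itself a finite union of locally closed sets in $\A_K^n$.

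For a Type~I generator one writes
$$\{a:|[[\varphi(a)]]^{\star}|\geq j\}=\bigcup_{S}\bigcap_{v\in S}\{a:K_v\models\varphi(a(v))\},$$
where $S$ ranges over the countable family of $j$-element subsets of the appropriate subset of $V_K$; since a finite intersection of locally closed sets is locally closed, a finite intersection of finite unions of locally closed sets is again a finite union of locally closed sets, so Type~I sets lie in the class of countable unions of locally closed sets. For a Fin-set I would use the decomposition recalled in the sketch of Theorem~\ref{th-def-meas},
$$\{a:Fin([[\psi(a)]])\}=\bigcup_{\mathcal{F}}\Bigl(\bigcap_{v\in\mathcal{F}}\{a:K_v\models\psi(a(v))\}\cap\bigcap_{w\notin\mathcal{F}}\{a:K_w\models\neg\psi(a(w))\}\Bigr),$$
together with the dual presentation $\{a:|[[\psi(a)]]|\leq n\}=\bigcap_{|T|=n+1}\bigcup_{v\in T}\{a:K_v\models\neg\psi(a(v))\}$, to place Fin-sets in the class of countable intersections of locally closed sets.

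The main obstacle is a bookkeeping one: ensuring that the Boolean combinations of Type~I and Fin generators produced by the normal form of Theorems~\ref{th-def-sets2} and~\ref{th-def-sets} still land in one of the two simple classes named in the Corollary. This I would handle by de~Morgan together with the observation that the complement of a locally closed subset of $\A_K^n$ is the union of an open and a closed set, so that passing to complements swaps countable unions of locally closed sets with countable intersections of locally closed sets up to finitely many locally closed summands. The alternation depth coming out of the cited normal form is low enough that each resulting definable set can be rearranged into either a countable union or a countable intersection of locally closed sets in the adelic topology, as asserted.
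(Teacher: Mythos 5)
Your strategy is the same as the paper's: pass through the quantifier-elimination normal form to reduce to the generators $C_j([[\varphi]])$ and $Fin([[\psi]])$, use the fibrewise quantifier eliminations together with continuity of the coordinate projections to see that each slice $\{a: K_v\models\varphi(a(v))\}$ is a finite union of locally closed sets, and then present the generators as countable unions or intersections. However, your conclusion for the $Fin$-sets is backwards. The decomposition you display, $\{a: Fin([[\psi(a)]])\}=\bigcup_{\mathcal{F}}(\cdots)$, runs over the countably many finite subsets $\mathcal{F}$ of $V_K$ and so exhibits the $Fin$-set as a countable \emph{union}; it is the complement $\{a:\neg Fin([[\psi(a)]])\}=\bigcap_{n}\bigcup_{|S|=n}\bigcap_{v\in S}\{a:K_v\models\psi(a(v))\}$ that is the countable intersection. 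This is exactly how the paper records it: $Fin$-sets are countable unions of locally closed sets, $\neg Fin$-sets are countable intersections. Your ``dual presentation'' $\{a:|[[\psi(a)]]|\le n\}=\bigcap_{|T|=n+1}\bigcup_{v\in T}\{a:K_v\models\neg\psi(a(v))\}$ only describes the individual layers of the union over $n$, not the $Fin$-set itself, so it cannot place the $Fin$-set in the intersection class.

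Beyond that slip, the one genuinely topological point is left unaddressed: each term of the $\bigcup_{\mathcal{F}}$ decomposition contains the infinite intersection $\bigcap_{w\notin\mathcal{F}}\{a: K_w\models\neg\psi(a(w))\}$, and you need this to be locally closed (or at worst a countable union of locally closed sets) in the adelic topology. An infinite intersection of constructible conditions, one per coordinate, is not locally closed in a restricted product in general; one has to work on the basic pieces $\A_{K,S}$, where the conditions at $w\notin S$ live on the compact sets $\cO_w^n$, and refine the fibrewise constructible sets into closed (resp.\ open) pieces before taking the product. Finally, the closing paragraph asserts rather than proves that the Boolean combinations coming out of the normal form can be rearranged into a single countable union or a single countable intersection of locally closed sets: an intersection of a countable union of locally closed sets with a countable intersection of locally closed sets is a priori neither, so this step requires the explicit case-by-case bookkeeping that the paper's three bullet points (and their $C_j$ analogues) are standing in for, not an appeal to ``low alternation depth.''
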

Indeed, let $\phi$ be formula from $\cL_{rings}$ or $\cL_{Boolean}^{fin}(\cL)$ as in Corollary\ref{cor-def-loc-cl} . Then it is easily seen that,
\begin{itemize}
\item sets of the form $\{\bar a: [[\phi(\bar a)]]=0\}$ and $\{\bar a: [[\phi(\bar a)]]=1\}$ are finite unions of locally closed sets,

\item sets of the form $\{\bar a: Fin([[\phi(\bar a)]])\}$ are countable unions of locally closed sets,

\item  set of the form $\{\bar a: \neg Fin([[\phi(\bar a)]])\}$ are countable intersections of locally closed sets,
\end{itemize}
Similarly for the $C_j(x)$.

This description of definable sets is optimal and can not be improved. Even though the definable subsets of 
$K_v^m$, for any $v$ (Archimedean or non-Archimedean) and $m\geq 1$, are finite unions of locally closed sets 
(by quantifier elimination, cf. \ref{ssec-venrich}), this does not hold for $\A_{\Q}$ as the following example shows.

\begin{ex}\cite{DM-ad} Let $X=\{a\in \A_K: Fin([[a\neq a^2]])\}$. Then $X$ 
is not a finite union of locally closed sets in adelic topology, equivalently, $X$ is not 
a Boolean combination of open sets (cf. \cite{DM-ad} for details).
\end{ex}

\medskip

\subsection{\bf Euler products and zeta values at integers}\label{ssec-intval} 

\

\medskip

Measures of definable sets in $\A_K^m$ are closely related to values of zeta functions at integers. The following is proved in \cite{DM-ad}.

\begin{thm}[Derakhshan-Macintyre {\cite{DM-ad}}]\label{zeta-measure} Let $n\geq 1$ be an integer.
\noindent
\begin{itemize}
\item $\zeta(n)^{-1}$, the Euler product $\prod_{p\equiv 1(mod~4)}(1-p^{-n})$, and Euler products of the form $\prod_{p\in S} (1-p^{-n})$, where $S$ is a set of primes of the form $\{p: \F_p \models \sigma\}$ 
and $\sigma$ is a sentence of the language of rings, are measures of $\cL_{rings}$-definable subsets of $\A_{\Q}$.
\item $\zeta(n)$ is the measure of an $\mathcal{L}_{Boolean}^{fin}(\cL_{Denef-Pas})$-definable subset of $\A_{\Q}$.
\end{itemize}
\end{thm}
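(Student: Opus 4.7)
For the first bullet, I would construct an $\cL_{rings}$-formula $\alpha_n(x)$ expressing ``$x \in \mathcal{O}_v$ and $v(x) < n$'' uniformly across non-Archimedean completions. Using Theorem 2.2, the condition $v(x) \geq n$ is captured by
$$\exists y_1,\ldots,y_n, z\,\Bigl[\bigwedge_i\bigl(\Phi_{val}(y_i) \wedge \neg\exists w(\Phi_{val}(w) \wedge w y_i = 1)\bigr) \wedge \Phi_{val}(z) \wedge x = y_1\cdots y_n z\Bigr],$$
which in a DVR describes membership in $\mathfrak{p}_v^n$; negating and conjoining with $\Phi_{val}(x)$ yields $\alpha_n$. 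For $\zeta(n)^{-1}$ I then take
$$X := \{a \in \A_\Q : 0 \leq a(\infty) \leq 1 \wedge \forall v \in V_\Q^{fin}\,\alpha_n(a(v))\},$$
definable in $\cL_{rings}$ via the idempotent formalism of Subsections 2.4--2.5 (the Archimedean clause becomes $[[\exists y\,x = y^2]]^{real} \wedge [[\exists y\,1-x = y^2]]^{real} = e_\R$ and the non-Archimedean one $[[\alpha_n(a)]]^{na} = e_{na}$). The product measure yields $\mu(X) = \prod_p (1 - p^{-n}) = \zeta(n)^{-1}$. For $\prod_{p \in S}(1 - p^{-n})$ with $S = \{p : \F_p \models \sigma\}$ I apply Ax--Kochen--Ershov (Theorem 3.2) to lift $\sigma$ to an $\cL_{rings}$-sentence $\sigma^*$ with $K_v \models \sigma^*$ iff $\F_{p_v} \models \sigma$ for $v$ outside a finite exceptional set (handled by explicit case analysis at small primes), and replace $\alpha_n(a(v))$ by $\Phi_{val}(a(v)) \wedge (\sigma^*_v \to \alpha_n(a(v)))$; the measure becomes $\prod_{p \in S}(1 - p^{-n})$. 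The case $p \equiv 1 \pmod 4$ corresponds to $\sigma := \exists y\,(y^2 = -1)$.

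For the second bullet I exploit
$$\zeta(n) = \prod_p \bigl(1 + \tfrac{1}{p^n-1}\bigr) = \sum_{S \text{ finite}}\prod_{p \in S}\tfrac{1}{p^n-1},$$
the shape of a Feferman--Vaught measure. I seek an $\cL_{Denef-Pas}$-formula $\phi(x)$ with $\phi_v \subset K_v \setminus \mathcal{O}_v$ and $\mu_v(\phi_v) = 1/(p_v^n - 1)$ uniformly; a natural candidate is $\phi_v = \bigsqcup_{k \geq 1}(\pi^{-k} + \pi^{kn}\mathcal{O}_v)$, equivalently $\{x : \exists k \geq 1,\ v(x) = -k \wedge v(\pi^k x - 1) \geq k(n+1)\}$, whose pieces are disjoint (since $v(\pi^{-k} - \pi^{-j}) = -\max(j,k) < \min(jn,kn)$ for $k \neq j$), each of measure $p^{-kn}$, summing to $1/(p_v^n - 1)$. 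I then take
$$Y := \{a \in \A_\Q : 0 \leq a(\infty) \leq 1,\ Fin([[\phi(a)]]^{na}),\ \forall v \in V_\Q^{fin}\,(\neg\phi(a(v)) \to \Phi_{val}(a(v)))\}.$$
The third clause forces $\{v : a(v) \notin \mathcal{O}_v\} \subseteq [[\phi(a)]]^{na}$, which is finite by the $Fin$ clause, so $a$ is an adele. Stratifying over the finite set $S = [[\phi(a)]]^{na}$ and applying the measurability result of Subsection 5.2,
$$\mu(Y) = \sum_{S \text{ finite}} \prod_{v \in S}\tfrac{1}{p_v^n-1} = \prod_p\bigl(1 + \tfrac{1}{p^n-1}\bigr) = \zeta(n).$$

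\emph{Main obstacle.} The chief difficulty is that the clause $v(\pi^k x - 1) \geq k(n+1)$ refers to a canonical uniformizer $\pi$, which is not a constant of $\cL_{Denef-Pas}$; quantifying existentially over uniformizers enlarges $\phi_v$, because the center $\pi^{-k}$ gets twisted by the unit group, destroying the precise measure $1/(p^n-1)$. Resolving this requires either augmenting $\cL_{Denef-Pas}$ with a uniformizer constant (as in the Belair or Prestel--Roquette languages of Subsection 3.2) or reformulating the ball decomposition via the higher angular components $\bar{ac}_m$ of $\cL_{Pas}$ and quantifying the modulus $m$ compatibly with the sort structure. Once such a $\phi$ is in place, Theorem 4.1 (quantifier elimination for restricted products) and the measurability results of Subsection 5.2 deliver $\mu(Y) = \zeta(n)$.
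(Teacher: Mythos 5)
Your first bullet is correct and is, I expect, essentially the paper's approach: $\alpha_n$ carves out $\cO_v\setminus\cM_v^n$ of measure $1-p^{-n}$, the idempotent formalism of Subsections 2.4--2.5 renders the everywhere-non-Archimedean condition $\cL_{rings}$-definable uniformly in $K$, and the Ax--Kochen--Ershov transfer of $\sigma$ to $\sigma^*$ with explicit patching at the finitely many exceptional residue characteristics delivers the sets $\prod_{p\in S}(1-p^{-n})$.

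The obstacle you flag in the second bullet is real and cannot be repaired within $\cL_{Denef-Pas}$, as you half-hope. By Pas quantifier elimination (Corollary 3.1), for almost all $p$ a parameter-free $\cL_{Denef-Pas}$-definable subset of $\Q_p$ restricted to $\{x : v(x)<0\}$ is cut out, at each level $v(x)=-k$, by Presburger conditions on $v(x)$ together with conditions on $\bar{ac}(x)$ alone, since integer-coefficient polynomial terms are dominated by their leading monomial at negative valuation and the language carries only the depth-one angular component. Its measure at each occupied level is therefore a positive integer multiple of $p^{k-1}\geq 1$; hence no such set has measure $\frac{1}{p^n-1}<1$, and your $\phi_v$ is simply not available. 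Your two escape routes are genuine changes of language, not reformulations (a uniformizer constant as in Belair; or a higher angular component $\bar{ac}_m$ with modulus $m=k(n+1)$ growing with $k$, which does not even conform to the fixed-modulus sort structure of $\cL_{Pas}$). The local ingredient the paper uses is different in kind: a one-variable $\cL_{Denef-Pas}$-definable subset of $\Q_p$ of measure $(1-p^{-n})^{-1}$ with $n$ as its only parameter, for instance $\{x:\bar{ac}(x)=1,\ v(x)\geq -1,\ v(x)+1\equiv0\ (\mathrm{mod}\ n)\}$, which sidesteps your obstruction by overlapping $\Z_p$; but since its intersection with $\Z_p$ has measure $\tfrac{p^{-n}}{1-p^{-n}}\to 0$, the $Fin$-stratified restricted product that closes your argument would then have measure zero, and the adelic assembly step must also be reworked.
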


In the proof we show that, for any $n\geq1$, the number $(1-p^{-n})^{-1}$ is the measure of a subset of $\Q_p$ that is $\cL_{Denef-Pas}$-definable independently of $p$ (with $n$ as its only parameter). 
The Euler product $\prod_{p\equiv 1(mod~4)}(1-p^{-n})$ relates to the zeta function of the quadratic field $\Q(i)$ (see \cite{HW}).
\begin{prob}{\cite{DM-ad}} Generalize Theorem \ref{zeta-measure} to number fields.\end{prob}
\begin{prob}{\cite{DM-ad}} What can one say about measures of definable subsets of $\A_K^n$, where $n\geq 1$?\end{prob}

\medskip

\subsection{\bf Definable subsets of the set of minimal idempotents}\label{ssec-defmin} 

\

\medskip

Recall the correspondence between minimal idempotents in $\A_K$ and valuations of $K$. 
The following question naturally arises.  What are the $\emptyset$-definable subsets 
of the set of minimal idempotents in $\A_K$? Note that the reason to have definability without parameters in the question 
is that if we allow parameters then every subset of the set of minimal idempotents is definable as is easily seen by taking sup and inf of idempotents. 

Let $g(x)$ be a polynomial over $\Z$ in a single variable $x$. Let $P(g)$ denote the set of primes $p$ such that the reduction of $g(x)$ modulo $p$ has a root in $\F_p$. 

In \cite{ax}, Ax proved that if $\sigma$ is an 
$\cL_{rings}$-sentence, then there are $g_1(x),\dots,g_n(x)\in \Z[x]$ such that $\{p: \F_p \models \sigma\}$ is a Boolean combination of the sets $P(g_1), \dots, P(g_n)$. 

The following gives an answer to the question above in terms of Ax's Boolean algebra.
\begin{thm}[Derakhshan-Macintyre {\cite{DM-ad}}]\label{th-def-pr} Let $K$ be a number field. 
Let $X$ be a parameter-free $\mathcal{L}_{rings}$-definable subset of the set of minimal idempotents in $\A_{K}$. Then the following hold.
\begin{itemize}
\item $X$ is a union of 
sets of the form $\{v: K_v \models \sigma\}$, where $\sigma$ is an $\cL_{rings}$-sentence, together with one of the following:

i) all Archimedean $v$,

ii) all real $v$,

iii) all complex $v$,

\item There is a finite subset $\mathcal{F}$ of $X$ containing all the minimal idempotents supported on the Archimedean valuations such that $X\setminus \mathcal{F}$ consists of minimal idempotents corresponding to 
valuations from a union of sets of the form $\{v: k_v\models \sigma\}$, where $\sigma$ is an $\cL_{rings}$-sentence and $k_v$ is the residue field of $K_v$. 

If $K=\Q$, then there is a finite Boolean combination $\frak B$ of $P(g_1).\dots,P(g_n)$ for some  $g_1(x),\dots,g_n(x)\in \Z[x]$ such that $X\setminus \mathcal{F}$ consists of the minimal idempotents corresponding to the primes from $\frak{B}$. 
\end{itemize}
\end{thm}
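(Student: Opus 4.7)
The plan is to combine the structural description of adelic definable sets from Corollary \ref{def-ring} with the specific arithmetic of evaluating adelic formulas at minimal idempotents, and then to descend to residue fields via Ax--Kochen--Ershov (Theorem \ref{ake}). Let $\phi(x)$ be the parameter-free $\cL_{rings}$-formula defining $X$ inside the set of minimal idempotents of $\A_K$. By Corollary \ref{def-ring}, $\phi(x)$ is equivalent over $\A_K$ to a Boolean combination of formulas of the form $Fin([[\psi_i(x)]])$ and $C_{j_i}([[\psi_i(x)]])$ where each $\psi_i$ is an $\cL_{rings}$-formula in one free variable and each $j_i \geq 1$; by the uniformity (in $K$) of the quantifier elimination, the $\psi_i$ do not depend on $K$.

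For a minimal idempotent $e_v$ with support $\{v\}$, we have $e_v(v)=1$ and $e_v(w)=0$ for $w\neq v$. Setting $A_i := \{w \in V_K : K_w \models \psi_i(0)\}$, a direct case analysis gives
$$[[\psi_i(e_v)]] \, \triangle \, A_i \; \subseteq \; \{v\},$$
where the discrepancy at $v$ is determined entirely by the truth values of the two $\cL_{rings}$-sentences $\psi_i(0)$ and $\psi_i(1)$ in $K_v$. Thus $[[\psi_i(e_v)]]$ agrees with the $K$-fixed set $A_i$ up to at most the one point $v$, and any deviation is controlled by purely local $\cL_{rings}$-conditions on $K_v$.

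Processing each clause: the predicate $Fin([[\psi_i(e_v)]])$ holds iff $Fin(A_i)$ holds, which is independent of $v$; the predicate $C_j([[\psi_i(e_v)]])$ is automatic when $A_i$ is infinite and, when $A_i$ is finite, reduces to an inequality $|A_i|+\epsilon_v \geq j$ with $\epsilon_v \in \{-1,0,1\}$ determined by $\psi_i(0)$ and $\psi_i(1)$ in $K_v$. Collecting the finitely many $v\in V_K$ that lie in some finite $A_i$, together with all Archimedean valuations, into a finite set $\mathcal{F}$, one sees that outside $\mathcal{F}$ the membership of $e_v$ in $X$ is a finite Boolean combination of conditions $K_v\models\sigma$ for $\cL_{rings}$-sentences $\sigma$. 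This gives the first bullet, with the three alternative Archimedean pieces corresponding to whether $\phi$ picks out the sentences $\Psi^{\mathrm{Arch}}$, $\Psi_{\R}$ or $\Psi_{\C}$.

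For the residue-field description of the second bullet, I would apply Theorem \ref{ake}: for every $\cL_{rings}$-sentence $\sigma$ there is an effectively computable residue-field sentence $\sigma'$ and a bound $N(\sigma)$ such that $K_v \models \sigma$ iff $k_v \models \sigma'$ for all non-Archimedean $v$ of residue characteristic greater than $N(\sigma)$. The finitely many $v$ of residue characteristic at most $N(\sigma)$ are absorbed into $\mathcal{F}$, giving the claimed description of $X\setminus \mathcal{F}$ as a union of sets $\{v : k_v \models \sigma'\}$. For $K=\Q$, the residue fields are the $\F_p$, and Ax's theorem from \cite{ax} rewrites $\{p : \F_p \models \sigma'\}$ as a Boolean combination $\mathfrak{B}$ of sets $P(g_1),\ldots,P(g_n)$ with $g_i \in \Z[x]$. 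The main obstacle will be the careful bookkeeping of $\mathcal{F}$: it must simultaneously absorb the Archimedean valuations, every $v$ lying in one of the finite sets $A_i$, and the small-residue-characteristic exceptions from Theorem \ref{ake}; since the original $\phi$ is a finite Boolean combination, only finitely many $\psi_i$ and $\sigma$ arise, each contributing only finitely many exceptions, so a single finite $\mathcal{F}$ suffices.
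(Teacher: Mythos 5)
Your proposal is correct, and the approach (quantifier elimination from Corollary~\ref{def-ring}, the observation that $[[\psi_i(e_v)]]$ differs from the $v$-independent set $A_i = \{w : K_w \models \psi_i(0)\}$ in at most the single point $v$, followed by descent to residue fields via Ax--Kochen--Ershov and then Ax's theorem for $K=\Q$) is the natural one and almost certainly matches the paper's own argument. The key lemma --- that the $Fin$-clauses become $v$-independent constants and the $C_j$-clauses reduce, for $v$ outside the finite union of the finite $A_i$'s, to a single local condition $K_v \models \psi_i(1)$ --- is correctly identified, and the bookkeeping of the finite exceptional set $\mathcal{F}$ (Archimedean places, the finite $A_i$'s, and the small-residue-characteristic exceptions from AKE) is handled soundly.
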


\section{\bf A question of Ax on decidability of all the rings $\Z/m\Z$}\label{sec-ax}

\medskip

\subsection{\bf Ax's question}\label{ssec-axq} 

\

\medskip

In his fundamental paper \cite{ax} on the model theory of finite and pseudofinite fields, Ax asked (Problem 5, page 270) if the elementary theory of all the rings $\Z/m\Z$, for all $m>1$, is decidable. In other terms, given a sentence $\phi$ of the language of rings, whether it is possible to decide that $\phi$ holds in $\Z/m\Z$, for all $m>1$. 

If we take the $m$ to range over the primes $p$, then decidability of a sentence in all the $\F_p$ is proved in 
\cite{ax} as a consequence of the axiomatization and decidability of the theory of pseudofinite fields. Such methods do not give decidability results for finite rings beyond fields which is necessary for solving Ax's problem. 

In \cite{DM-ad} Macintyre and myself gave a positive solution to Ax's problem by reduction to the $\cL_{rings}$-decidability of $\A_{\Q}$ using the definability of $Fin$ and of the Boolean algebra of idempotents. We give a sketch of this below. 
We also give a sketch of a proof of decidability of the rational adeles $\A_{\Q}$ which uses besides our formalism and machinery, only Ax's main result in \cite{ax}. 

This shows the usefulness of adelic methods (here on finite rings) where previous techniques would not suffice. We hope that adelic methods can be used in other decision problems too. 

Remark that 
similar decidability proofs via adeles, are given in works on D'Aquino and Macintyre related to a question of Zilber and on a model-theoretic analysis of quotients of non-standard models of Peano arithmetic, see \cite{PDAJM}.

\medskip

\subsection{\bf Reducing Ax's problem to adelic decidability}

\

\medskip

Let $\phi$ be an $\cL_{rings}$-sentence in prenex normal form
$$Q_1 x_1 \dots Q_m x_m \psi(x_1,\dots,x_n),$$
where  
$Q_i$ is either $\forall$ or $\exists$, and $\psi$ is a disjunction of conjunctions of the form 
$$f_1(x_1,\dots,x_n)=0\wedge \dots \wedge f_k(x_1,\dots,x_n)=0$$
$$\wedge ~ g_1(x_1,\dots,x_n)\neq 0 \wedge \dots \wedge g_r(x_1,\dots,x_n)\neq 0,$$
where $f_i$ and $g_j$ are polynomials over $\Z$.

Let $atom(x)$ denote the statement that $x$ is a non-zero minimal idempotent. 

Then $\phi$ holds in $\Z/m\Z$ for all $m>1$ if and only if the following holds:

for any $z\in \A_K$ if $[[z]]^{Arch}=0 \wedge Fin(supp(z))$ 
and 
$$\forall e (atom(e) \wedge supp(e) \subseteq supp(z))\Rightarrow \Phi_{val}(ez) \wedge \neg \Phi_{val}((ez)^{-1}),$$
then 
$$supp(z)\A_{\Q} \models Q_1 x_1 \dots Q_m x_m \exists y 
(f_1(x_1,\dots,x_n)=zy\wedge \dots \wedge f_k(x_1,\dots,x_n)=zy)$$
$$\wedge \neg \exists y (g_1(x_1,\dots,x_n)=zy \wedge \dots \wedge g_r(x_1,\dots,x_n)=zy).$$

Indeed, 
$supp(z)\A_{\Q}$ is the product of the $\Z_p$ where $p$ ranges over the finitely many primes $p_1,\dots,p_r$ 
corresponding the minimal idempotents in the support of $z$, and for every such $p$, $z(p)$ has positive $p$-adic valuation. So $supp(z)\A_{\Q}/z\A_{\Q}$ is isomorphic to the product 
$$\Z_{p_1}/p_1^{k_1}\Z_p\times \dots \times \Z_{p_r}/p_r^{k_r}\Z_p,$$
where 
$k_j$ is the $p_j$-adic valuation of $z(p_j)$ for each $1\leq j\leq r$. 
Thus  $\phi$ holds in all the rings $\Z/m\Z$ for all $m>1$ if and only if 
$\phi$ holds in $supp(z)\A_{\Q}/z\A_{\Q}$ for all such $z$, which is expressed by the above formula.

\medskip

\subsection{\bf Decidability of $\A_{\Q}$}\label{ssec-dec} 

\

\medskip

We give a sketch of the proof of decidability of the $\cL_{rings}$-theory of $\A_{\Q}$ 
due to Macintyre and myself in \cite{DM-ad} using only a theorem of Ax \cite{ax} (see also \cite{fs}, or Theorem 31.2.4 (a) in \cite{FJ}) and 
Corollary \ref{qe-fin}. The first proof of decidability of $\A_K$ (in the language of generalized products of Feferman-Vaught) is due to Weispfenning \cite{weisp-hab}. Our proof is simpler.

The following fundamental theorem about model theory of finite fields is what we need.

\begin{thm}[Ax \cite{ax}]\label{ax-almost-all} Let $\phi$ be an $\cL_{rings}$-sentence. It is possible to decide if $\phi$ is true in $\F_p$ for almost all $p$, 
and if so to list the exceptional primes.\end{thm}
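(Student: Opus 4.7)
The plan is to reduce the statement to the decidability of the theory of pseudofinite fields, which is the main result of Ax in \cite{ax}.

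First, by {\L}o\'s's theorem, the statement \emph{$\phi$ holds in $\F_p$ for almost all $p$} is equivalent to \emph{$\phi$ holds in every non-principal ultraproduct $\prod_{\mathcal U}\F_p$ of the family $\{\F_p\}_p$}. Any such ultraproduct has characteristic $0$ and, by {\L}o\'s applied to the Ax axioms (perfect, pseudo-algebraically closed, a unique extension of each finite degree), is a pseudofinite field. Conversely, every pseudofinite field is elementarily equivalent to such an ultraproduct. Hence the condition in the statement is equivalent to: $\phi$ is a consequence of the theory $T_{PF}^{0}$ of pseudofinite fields of characteristic $0$.

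Next, the main step is Ax's quantifier elimination result for pseudofinite fields in the language $\cL_{\mathrm{rings}}\cup\{Sol_k:k\geq 2\}$ (where $Sol_k$ is as in Section 3.2.3): for every $\cL_{\mathrm{rings}}$-sentence $\phi$ one effectively constructs polynomials $g_1,\dots,g_n\in\Z[x]$ and a Boolean function $B$ such that for every pseudofinite field $F$
\[
F\models\phi\iff B\bigl([\![\exists y\,g_1(y)=0]\!]_F,\dots,[\![\exists y\,g_n(y)=0]\!]_F\bigr)=1.
\]
Thus the question reduces to deciding whether this Boolean combination holds in every characteristic $0$ pseudofinite field. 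In a characteristic $0$ pseudofinite field $F$, the truth of ``$g_i$ has a root in $F$'' is controlled by the conjugacy class of the distinguished Frobenius-like element in $\mathrm{Gal}(L/\Q)$, where $L$ is the splitting field of $\prod_i g_i$ over $\Q$. Since $L$ and its Galois group are effectively computable from $\phi$, the decision of whether the Boolean combination is forced by $T_{PF}^{0}$ reduces to a finite combinatorial check on subsets of $\mathrm{Gal}(L/\Q)$, which is plainly effective. This gives part~(a): the decidability of ``$\phi$ holds in almost all $\F_p$''.

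For part~(b), suppose this condition holds. We must effectively list the finite exceptional set $E=\{p:\F_p\not\models\phi\}$. Fix the effective data $g_1,\dots,g_n$ from the previous paragraph and the splitting field $L$, with discriminant $\Delta_L$. For every prime $p\nmid \Delta_L$, whether $g_i$ has a root in $\F_p$ is determined by the Frobenius conjugacy class at $p$ in $\mathrm{Gal}(L/\Q)$ via the Dedekind reduction theorem, and this agrees with the behaviour in an ultraproduct of $\F_p$'s where $p\to\infty$; hence such $p$ lie in $E$ only if the Boolean combination fails at the corresponding conjugacy class, a case excluded by our assumption. So $E$ is contained in the effectively computable finite set of primes dividing $\Delta_L$, together with an explicit finite set of small primes bounded in terms of the Lang--Weil estimates applied to the varieties defined by the $g_i$ (needed to pass from the pseudofinite equivalence to an honest equivalence in $\F_p$ for $p$ larger than an effective $N=N(\phi)$). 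For each $p$ in this effective finite set, we test $\F_p\models\phi$ by the brute-force finite computation in the finite structure $\F_p$, and $E$ is the set of $p$ for which the test fails.

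The main obstacle is making the ``almost all'' threshold effective: one needs an explicit bound $N(\phi)$ beyond which the pseudofinite reduction is faithful in $\F_p$ itself. This is supplied by the effective content of Ax's quantifier elimination together with Lang--Weil/Chebotarev bounds; the crude bounds sufficient here are standard, but the careful bookkeeping of uniformities (in the degrees and heights of the $g_i$) is the technically delicate point.
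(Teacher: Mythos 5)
The paper gives no proof of this theorem: it is stated purely as a citation of Ax (the text also points to Fried--Jarden, Theorem 31.2.4(a)) and then used as a black box in the decision procedure for $\A_{\Q}$. Your sketch is a correct reconstruction of the argument that lies behind that citation, so there is no ``different route from the paper'' to compare against.

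A few points in your write-up deserve tightening, though none is a fatal gap. First, the equivalence between ``$\phi$ holds in $\F_p$ for almost all $p$'' and ``$T^0_{PF}\vdash\phi$'' requires the nontrivial half that \emph{every} characteristic-zero pseudofinite field is elementarily equivalent to a non-principal ultraproduct of prime fields $\F_p$ (not merely of finite fields $\F_q$). You assert this without justification; it is exactly Ax's analysis of the subfield of absolute numbers $K=F\cap\bar\Q$ via a procyclic closed subgroup $\overline{\langle\sigma\rangle}\le\mathrm{Gal}(\bar\Q/\Q)$, combined with a Chebotarev filter argument to manufacture an ultrafilter on the primes realizing the prescribed conjugacy classes of $\sigma$ in each finite quotient. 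Second, your phrase ``distinguished Frobenius-like element'' should be made precise as the restriction $\sigma|_L$ of a topological generator $\sigma$ of $\mathrm{Gal}(\bar\Q/K)$ to the splitting field $L$: a root of $g_i$ in $F$ is automatically algebraic, hence lies in $K$, and $g_i$ has a root in $K$ iff $\sigma|_L$ fixes a root of $g_i$. Third, in part (b) the varieties to which Lang--Weil is applied are \emph{not} those cut out by the $g_i$ but the ones arising in the recursive quantifier-elimination procedure itself; the $g_i$ only enter the Dedekind/Chebotarev step for $p\nmid\Delta_L$. The effective threshold $N(\phi)$ therefore has two independent sources (faithfulness of the QE in $\F_p$ via effective Lang--Weil, and avoidance of ramification via $\Delta_L$), and you are right that the careful bookkeeping of this is the only genuinely delicate part.
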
 

Now we give the adelic decision procedure. Let $\varphi$ be an $\cL_{rings}$-sentence. 
By Corollary \ref{qe-fin}, it suffices to decide the following statements:

\

$(I)$ ~ $Fin([[\psi]])$,

\

$(II)$ ~ $C_j([[\phi]])$,

\

\noindent where $\psi, \phi$ are $\cL_{rings}$-sentence.

\

{\it The decision procedure for $(I)$}. Since the number of Archimedean normalized valuations is finite, it suffices to decide $Fin([[\psi]]^{na})$ (which says $\psi$ holds in finitely many $\Q_p$). 
By Theorem \ref{ake} (Ax-Kochen-Ershov) there is an $\cL_{rings}$-sentence $\tau$ and an effectively computable $C>0$ such that for any prime $p\geq C$ 
$$\Q_p \models \psi \Leftrightarrow \F_p\models \tau.$$
Thus it suffices to decide whether $\tau$ holds in finitely many $\F_p$, which follows from \ref{ax-almost-all}.

\

{\it The decision procedure for $(II)$}. We want to decide if $\phi$ holds in at least $j$ many $K_v$, where $j\geq 1$ is given. By $(I)$ we can decide if $Fin([[\phi]])$ holds or not. If it does not hold, then $C_j([[\phi]])$ holds for all $j$. If $Fin([\phi]])$ holds, then consider 
$\psi:=\neg \phi$, which holds in almost all $\Q_p$, and the exceptional primes are exactly the primes $p$ 
where $\phi$ holds in $\Q_p$. By Theorem \ref{ax-almost-all}, we can list this finite set of primes and decide if this set has cardinality at least $j$ or not. 

This concludes the proof of decidability of $\A_{\Q}$.

\begin{remark} In \cite{DM-ad} we show that this proof can modified to prove decidability in the languages $\cL^+(L)$, where $\cL^+$ is $\cL_{Boolean}^{fin}$ or $\cL_{Boolean}^{fin,res}$ and $L$ is $\cL_{rings}, \cL_{Denef-Pas}$ or $\cL_{Basarab}$.
\end{remark}

\begin{note} We could not directly apply Theorem \ref{restricted-qe} or Feferman-Vaught \cite{FV} to the rings $\Z/m\Z$ to solve the problem of Ax. We needed to reduce to the decidability of adeles $\A_{\Q}$. For the decidability, to apply Ax, we needed the quantifier-elimination for generalized products given by \cite{FV} or Theorem \ref{restricted-qe}, and quantifier-elimination of the Boolean theory $T^{fin}$.\end{note}

\medskip

\subsection{\bf Decidability of $\A_K$}\label{ssec-decK}

In \cite{DM-ad2}, decidability is proved by Macintyre and myself for $\A_K$ for any number field $K$, and for 
$\A_K$ for all $K$ of bounded degree over $\Q$. 

The following question arises.

\begin{prob}\cite{DM-ad} Is the theory of $\A_K$ for all number fields $K$ (i.e. the set of sentences in some given language that hold in $\A_K$ for all $K$) decidable?\end{prob}

We can show the following.

\begin{thm}[Derakhshan-Macintyre {\cite{DM-ad}\cite{DM-ad2}}] The set of all $\cL_{rings}$-sentences that hold in $\A_K$ for all number fields $K$  is decidable if and only if for a given $p$, the theory of all finite extensions of $\Q_p$ is decidable.\end{thm}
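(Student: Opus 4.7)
The strategy is to combine the uniform quantifier elimination of Corollary \ref{qe-fin}, the uniform definition of the valuation ring (Theorem \ref{CDLM-th}), and the Boolean-valued calculus of Section \ref{ssec-idem}.

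\textbf{($\Rightarrow$).} Given a prime $p$ and an $\cL_{\mathrm{rings}}$-sentence $\phi$, I would construct an $\cL_{\mathrm{rings}}$-sentence $\sigma_{\phi,p}$ asserting: \emph{for every minimal non-Archimedean idempotent $e$ with $v_{e}(p)>0$, $e\A_K \models \phi$}. The predicate ``$e$ is a minimal non-Archimedean idempotent of residue characteristic $p$'' is $\cL_{\mathrm{rings}}$-definable using $atom(e)$, $e \leq e_{na}$, and the condition that $pe$ lies in $\Phi_{val}$ but its inverse in $e\A_K$ does not (Theorem \ref{CDLM-th}); and ``$e\A_K \models \phi$'' translates to an $\cL_{\mathrm{rings}}$-formula $\phi^{\mathrm{Glob}}(e)$ by Section \ref{ssec-idem}. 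Then $\A_K \models \sigma_{\phi,p}$ iff $K_v \models \phi$ for every place $v \mid p$ of $K$. The equivalence ``$\A_K \models \sigma_{\phi,p}$ for all number fields $K$'' $\Leftrightarrow$ ``$\phi$ holds in every finite extension of $\Q_p$'' then holds because every finite $L/\Q_p$ is realized as a completion $K_v$ of some $K$: given a generator $\alpha$ of $L/\Q_p$ with minimal polynomial $f \in \Z_p[x]$, any sufficiently $p$-adically close $\tilde f \in \Z[x]$ has, by Krasner's lemma, a root $\tilde\alpha$ with $\Q_p(\tilde\alpha) = L$, and $K = \Q(\tilde\alpha)$ has $K_v = L$ at the corresponding place.

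\textbf{($\Leftarrow$).} Assume the theory of all finite extensions of $\Q_p$ is decidable, for each prime $p$. Given $\sigma \in \cL_{\mathrm{rings}}$, apply Corollary \ref{qe-fin} to obtain a Boolean combination $\Psi(B_1,\ldots,B_n)$, with each $B_i$ of the form $Fin([[\psi_i]])$ or $C_{j_i}([[\phi_i]])$, such that $\sigma \leftrightarrow \Psi$ uniformly in $K$. To decide whether $\Psi$ holds in $\A_K$ for all $K$, I would enumerate the truth-value vectors $(\A_K \models B_1, \ldots, \A_K \models B_n) \in \{0,1\}^n$ realized by some number field $K$, and check that every realized vector satisfies $\Psi$. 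Realizability of a prescribed vector is the satisfiability of a conjunction of constraints of the form ``the number of places $v$ of $K$ with $K_v \models \psi$ is finite / infinite / at least $j$ / less than $j$''. Splitting such counts by residue characteristic: for the Archimedean part, the constraint reduces to a finite case analysis on the signature $(r_1,r_2)$; for all but finitely many primes $p$ (residue characteristic large relative to $\sigma$) Ax-Kochen-Ershov and Ax's theorem from \cite{ax} reduce the condition to a decidable residue-field condition; and for the finitely many exceptional small $p$, the decidability is supplied exactly by the hypothesis on finite extensions of $\Q_p$.

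\textbf{Main obstacle.} The essential difficulty is in the $(\Leftarrow)$ direction: effectively assembling local decidability at each prime into a global decision about realizability over all number fields $K$. One must control simultaneously the splitting patterns $(e_i, f_i)$ of all relevant primes in $K$, the Archimedean signature of $K$, and the coupling of these data imposed by the conjunction of $Fin$ and $C_j$ constraints. My plan is to carry out this assembly via the Feferman-Vaught formalism of Section \ref{sec-rest} together with the enriched Boolean-algebra quantifier elimination of Theorems \ref{bool1} and \ref{bool2}, which should reduce the problem to finitely many independent decidable queries, one per exceptional residue characteristic appearing in the quantifier-free reduction of $\sigma$. Verifying that this reduction genuinely exhausts the realizability question, rather than only furnishing necessary conditions, is the delicate step.
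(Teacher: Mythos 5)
Your $(\Rightarrow)$ direction is a correct and natural use of the machinery of Section 2: the sentence $\sigma_{\phi,p}$ constructed from $\phi$ via $\Phi^{\mathrm{Glob}}$, $\Phi_{val}$, and the definable predicate for minimal non-Archimedean idempotents of residue characteristic $p$ is indeed an $\cL_{rings}$-sentence computable from $\phi$ and $p$, and the Krasner-lemma argument that every finite $L/\Q_p$ appears as some completion $K_v$ closes the equivalence, so $\phi \in T_p$ iff $\sigma_{\phi,p}$ lies in the adelic theory.

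The $(\Leftarrow)$ direction has two gaps. First, you silently upgrade the hypothesis: you assume decidability of the theory of finite extensions of $\Q_p$ \emph{for every prime} $p$, whereas the statement grants it only for a single given $p$. After the Feferman--Vaught reduction and the effective Ax--Kochen--Ershov bound $N=N(\sigma)$, your procedure must decide residue-characteristic queries at every prime $q\le N$, not just $q=p$; under the single-prime hypothesis this requires an additional (and far from obvious) reduction of $T_q$ to $T_p$ for $q\neq p$, or some bypass, which you do not supply. Second---and you flag this yourself---the enumeration of realizable truth-value vectors is the technical heart of the proof and is left open. Once $\sigma$ is replaced by a Boolean combination $\Psi(B_1,\dots,B_n)$ of $Fin$ and $C_j$ conditions, the assertion ``$\A_K\models\Psi$ for all $K$'' is a priori only co-r.e.\ relative to the individual decision procedures; to conclude decidability you must show the set of realized vectors in $\{0,1\}^n$ is recursive, i.e.\ that the coupling of Archimedean signature, splitting data of finitely many exceptional primes in $K$, and the cardinality constraints imposed by $Fin$/$C_j$ admits an effective characterization of realizability (via Chebotarev density and Krasner-type realization of prescribed local behaviour), not merely a necessary-condition test. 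Appealing to Theorems \ref{bool1} and \ref{bool2} and ``finitely many independent decidable queries'' is the right instinct but stops short of the crucial verification that those queries exhaust realizability; until that is done the $(\Leftarrow)$ direction is not a proof.
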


This raises the question.

\begin{prob}\cite{DM-ad} \label{prob-allfin} Is there a suitable language $\cL$ such that given an $\cL$-sentence 
we can decide if it holds in all finite extensions of $\Q_p$?\end{prob}

Kochen \cite{kochen-local} proved decidability for the maximal unramified extension $\Q_p^{ur}$ of $\Q_p$. In \cite{DM-MC}, Macintyre and myself prove model-completeness in the language of rings for $\Q_p^{ur}$ and finitely ramified extensions of it, more generally for any Henselian valued field with finite ramification whose value group is a $\Z$-group, and we characterize model-complete perfect fields with procyclic Galois group. 

The essence of Problem \ref{prob-allfin} concerns model theory of infinitely ramified extensions of $\Q_p$. Even the abelian case this is out of reach, i.e. 
we do not know whether the maximal abelian extension $\Q_p^{ab}$ of $\Q_p$ is decidable, or any model theory for it.

\begin{prob} Use adelic methods (in the spirit of our solution to the Ax problem) combined with suitable Galois theory to approach model theory of infinitely ramified extensions of $\Q_p$.\end{prob}

\section{\bf Elementary equivalence and isomorphism for adele rings}\label{sec-elem}

In \cite{DM-ad2} Macintyre and myself consider the question of how the $\A_K$, as $K$ varies, are divided into elementary equivalence classes. 
The main tool used is Theorem \ref{CDLM-th} on uniform definition of valuation rings in the non-Archimedean completions of number fields. 

Given an adele ring $\A_K$, the completions $K_v$, are recoverable as the "stalks" $\A_K/(1-e)\A_K$ where $e$ is a minimal idempotent, and for any minimal idempotent, the quotient above is isomorphic to some $K_v$ (Archimedean or non-Archimedean). 

By Theorem \ref{CDLM-th} the valuation, maximal ideal, and residue field of the non-Archimedean stalks can be uniformly defined or interpreted using sentences from the language of rings. 
Furthermore, we have a uniform definition, independent of $K$ but depending on $p$, of the collection of stalks with residue characteristic $p$ for any given $p$. 

Let $p$ be a prime in $\Z$. Then $p$ lifts to finitely many primes $\cP_1,\dots,\cP_r$ in $\cO_K$, and we have the decomposition
$$p\cO_K=\cP_1^{e_1}\dots \cP_r^{e_r}.$$
$e_i=e_i(\cP_i/p)$ is called the ramification index of $\cP_i$ over $p$. 
$\cO_K/\cP_i$ is a finite extension of $\F_p$ of dimension $f_i=f(\cP_i/p)$ over $\F_p$ which is called the residue degree of $\cP_i$ over $p$. 

If $K_{\cP_i}$ is the completion of $K$ at $\cP_i$, 
then $\Q_p\subseteq K_{\cP_i}$, as valued fields, and $e_i$ and $f_i$ are respectively the ramification index and residue field degree of $K_{\cP_i}$. We have the fundamental 
inequality 
$$\sum_{i=1}^{r} e_if_i=[K:\Q].$$ 
Note that 
$$e_i,f_i\leq [K:\Q].$$
The prime $p$ is said to be unramified if $e_i=1$ for all $i$, and ramified otherwise. $p$ splits completely if in addition all $f_i=1$.

\medskip

\subsection{\bf The number field degree}
 
\

\medskip

\begin{thm}[Derakhshan-Macintyre {\cite{DM-ad2}}]\label{deg} $\A_{K_1}\equiv \A_{K_2}$ implies that $[K_1:\Q]=[K_2:\Q]$.\end{thm}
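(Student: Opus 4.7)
The plan is to extract $[K:\mathbb{Q}]$ from the first-order theory of $\mathbb{A}_K$ in the language of rings using only the Archimedean idempotents introduced in Subsection \ref{ssec-arch}. First, recall that $e_\R$ and $e_\C$ (the suprema of all real, resp.\ complex, minimal idempotents) are $\emptyset$-definable in $\cL_{\mathrm{rings}}$: each is pinned down as the unique idempotent whose atoms below it are exactly the minimal idempotents $e'$ satisfying $e'\mathbb{A}_K \models \Psi_\R$ (resp.\ $\Psi_\C$). Under the correspondence between minimal idempotents in $\mathbb{B}_K$ and normalized valuations of $K$, the minimal idempotents below $e_\R$ (resp.\ $e_\C$) are in bijection with the real (resp.\ complex) places of $K$; there are $r_1(K)$ of the former and $r_2(K)$ of the latter.

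Next, I would observe that the counting predicate $C_j(x) := $ ``there exist at least $j$ pairwise distinct atoms $\alpha \leq x$'' on idempotents is $\cL_{\mathrm{rings}}$-definable in $\mathbb{A}_K$: atomhood (i.e.\ minimality of an idempotent) is $\cL_{\mathrm{rings}}$-definable, and the Boolean order is just $e \leq f \iff ef = e$. Hence the sentence
\[
C_j(e_\R) \wedge \neg C_{j+1}(e_\R)
\]
holds in $\mathbb{A}_K$ exactly when $r_1(K) = j$, and analogously $C_k(e_\C) \wedge \neg C_{k+1}(e_\C)$ picks out $r_2(K) = k$. Because $r_1(K)$ and $r_2(K)$ are finite integers, each is determined by a single $\cL_{\mathrm{rings}}$-sentence true in $\mathbb{A}_K$.

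Consequently, if $\mathbb{A}_{K_1} \equiv \mathbb{A}_{K_2}$ then both the sentences for $r_1$ and those for $r_2$ transfer, yielding $r_1(K_1) = r_1(K_2)$ and $r_2(K_1) = r_2(K_2)$. The classical identity
\[
[K:\mathbb{Q}] = r_1(K) + 2\, r_2(K)
\]
then gives $[K_1:\mathbb{Q}] = [K_2:\mathbb{Q}]$, as required.

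There is essentially no obstacle in this approach; the argument relies only on the fact that the Archimedean part of $\mathbb{A}_K$ is the finite product $\mathbb{R}^{r_1} \times \mathbb{C}^{r_2}$, together with Tarski's distinguishability of $\mathbb{R}$ from $\mathbb{C}$ inside each stalk, which is already internalized by the sentences $\Psi_\R, \Psi_\C$. In particular, the deeper uniform definability of non-Archimedean valuation rings supplied by Theorem \ref{CDLM-th}, flagged as the main tool of this section, is not needed for the degree statement itself; it would become essential only when one tries to separate number fields of the same degree by their non-Archimedean splitting data.
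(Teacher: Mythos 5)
Your proof is correct, and it takes a genuinely different route from the paper. The paper's argument works entirely on the non-Archimedean side: by the Chebotarev density theorem one can find a rational prime $p$ that splits completely in $K$, and then $[K:\Q]$ is recovered as the number of minimal idempotents $e$ for which the stalk $e\A_K$ has residue field $\F_p$ and $v(p)=1$ (uniformization). The crucial technical input is Theorem~\ref{CDLM-th}, which makes the conditions ``residue field $\F_p$'' and ``$v(p)$ minimal positive'' into $\cL_{\rm rings}$-sentences uniformly across number fields; a symmetric application across $K_1$ and $K_2$ (together with the fundamental identity $\sum_i e_if_i=[K:\Q]$) then pins down the degree. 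You instead read off the degree from the Archimedean part via the finitely many atoms under $e_\R$ and $e_\C$ and the elementary identity $[K:\Q]=r_1(K)+2r_2(K)$. Both are valid, and your route is the more economical one for this particular theorem: it avoids Chebotarev and, as you correctly observe, avoids Theorem~\ref{CDLM-th} entirely, since $\Psi_\R$, $\Psi_\C$, atomhood, the Boolean order, and the counting predicates $C_j$ are all plainly $\cL_{\rm rings}$-definable in $\A_K$ without any valuation-ring uniformity. The paper's non-Archimedean approach is nonetheless natural in context: the same machinery (CDLM, splitting types of primes) is reused immediately afterward to prove the finer results of Section~\ref{sec-elem} on arithmetical equivalence, splitting types, and the rigidity theorem, and it is the kind of argument that has a chance of surviving in the positive-characteristic global field setting, where there are no Archimedean places and your $r_1+2r_2$ identity has no analogue.
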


We give an idea of the proof. Let $K$ be a number field $K$. To detect the dimension of $K$ over $\Q$ inside $\A_K$ in a first-order way we first find a 
prime $p$ that splits completely in $K$. Then by the fundamental inequality we must have 
$$r=n=[K:\Q].$$
So we can define $[K:\Q]$ as the number of minimal idempotents $e$ such that $\A_K/(1-e)\A_K$ has 
residue field $\F_p$ and $v(p)=1$, i.e. $v(p)$ is the minimal positive element of the value group of $e\A_K \cong K_{v_e}$, where $v$ denotes the valuation of $K_{v_e}$. This 
can be expressed by an $\cL_{rings}$-sentence independently of $K$ (but depending on $p$) by Theorem \ref{CDLM-th}. 

To get a prime $p$ that splits completely in $K$, take the normal closure $L$ of $K$. By the  
Chebotarev density theorem (see \cite{manin-book},\cite{nkrch}) there are infinitely many primes $p$ that split 
completely in $L$. It follows that $p$ splits completely in $K$.  

\begin{ex} Note that the converse of Theorem \ref{deg} does not hold e.g. $K_1=\Q(\sqrt{2}),~ K_2=\Q(\sqrt{3})$. \end{ex}

\medskip

\subsection{\bf The case of normal extensions}

\

\medskip

\begin{thm}[Derakhshan-Macintyre {\cite{DM-ad2}}] Suppose that $K$ is normal over $\Q$. If $L$ is a number field such that $\A_L$ is elementarily equivalent to 
$\A_K$, then $L=K$.\end{thm}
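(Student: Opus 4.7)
The plan is to show that the set of rational primes splitting completely in a number field $F$ is extractable from the $\cL_{rings}$-theory of $\A_F$, and then to appeal to Bauer's theorem from classical algebraic number theory, combined with the degree equality provided by Theorem~\ref{deg}.

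First, by Theorem~\ref{deg}, the hypothesis $\A_K\equiv \A_L$ gives $[K:\Q]=[L:\Q]=:n$. Next, for each rational prime $p$, I would construct an $\cL_{rings}$-sentence $\sigma_p$, in which $p$ is named by $1+\dots+1$, such that for any number field $F$ of degree $n$ one has $\A_F\models \sigma_p$ if and only if $p$ splits completely in $F$. The sentence $\sigma_p$ asserts the existence of $n$ pairwise orthogonal minimal idempotents $e_1,\dots,e_n$ such that for every $i$ the stalk $e_i\A_F\cong F_{v_i}$ is non-Archimedean, its valuation ring is picked out by the formula $\Phi_{val}$ of Theorem~\ref{CDLM-th}, its residue field has exactly $p$ elements, and $p\cdot e_i$ is a uniformizer. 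By the fundamental inequality $\sum_i e(\cP_i/p)f(\cP_i/p)=n$, the existence of $n$ such idempotents forces $e(\cP_i/p)=f(\cP_i/p)=1$ for every $i$, i.e.\ complete splitting of $p$ in $F$; conversely, complete splitting exhibits exactly $n$ such minimal idempotents. The construction is first-order and uniform in $F$, and is a close relative of the sentence already used in the proof of Theorem~\ref{deg}.

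From $\A_K\equiv \A_L$ I then deduce $\mathrm{Spl}(K)=\mathrm{Spl}(L)$, where $\mathrm{Spl}(F)$ denotes the set of rational primes splitting completely in $F$. The final step is to invoke Bauer's theorem: if $K/\Q$ is Galois and $L/\Q$ is any finite extension with $\mathrm{Spl}(L)\subseteq \mathrm{Spl}(K)$ up to finitely many exceptions, then $K\subseteq L$ (a standard consequence of Chebotarev density applied to the compositum $KL$). Applied here, this gives $K\subseteq L$, and combined with the equality of degrees forces $K=L$.

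The main obstacle is the uniform encoding of complete splitting as an $\cL_{rings}$-sentence about $\A_F$, independent of $F$; but the needed uniform definability of the valuation ring and residue field in the non-Archimedean stalks is exactly the content of Theorem~\ref{CDLM-th}, and the same device already appears in the proof of Theorem~\ref{deg}. The normality hypothesis on $K$ is used only in the appeal to Bauer's theorem: without it one would merely recover that $K$ and $L$ have the same Galois closure over $\Q$, i.e.\ are arithmetically equivalent in the sense of Gassmann.
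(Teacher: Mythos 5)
Your proposal is correct and follows essentially the same route as the paper: extract the set of rational primes splitting completely in each field via a uniform $\cL_{\rm rings}$-sentence built from the idempotent/stalk machinery and Theorem~\ref{CDLM-th}, then conclude by the Chebotarev-density consequence (Bauer's theorem, which is the Corollary~13.10 of Neukirch that the paper cites). The only cosmetic difference is that you combine Bauer's containment $K\subseteq L$ with the degree equality from Theorem~\ref{deg}, whereas the paper appeals directly to the statement that a Galois extension is determined by its split primes; these are interchangeable.
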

The proof uses a corollary of the Chebotarev density theorem that states that 
if $K$ is a Galois extension of $\Q$, then $K$ is completely determined by the rational primes that split completely in $K$ (see \cite{nkrch}, Corollary 13.10 page 548). 

\subsection{\bf Splitting types and arithmetical equivalence}

\

\medskip

Let $p$ be a prime. We do not assume that $p$ is unramified in $K$. 
 The splitting type of 
$p$ in $K$ is a sequence 
$$\Sigma_{p,K}=(f_1,\dots,f_r),$$
where $f_1\leq \dots \leq f_r$ is such that  
$p\cO_K=\cP_1^{e_1}\dots \cP_r^{e_r}$ and $f_j$ is the residue degree of $\cP_j$. Note that there can be 
repetitions and that the ramification indices $e_i$ are not present. 

For a splitting type $A$, define 
$$P_K(A)=\{p: \Sigma_{p,K}=A\}.$$ 
Note that 
$P_K(A)$ is empty for all but finitely many $A$ (since $\sum_{j=1}^{r} f_j \leq [K:\Q]$).

Let $K$ be a number field. The (Dedekind) zeta function of $K$ is defined by $\zeta_K(s)=\sum_{\frak a\in Spec(\cO_K)} N(\frak a)^{-s}$, where $N(\frak a)=[\cO_K:\frak a]$. 

In \cite[Theorem 1]{perlis} Perlis proves 
that if $K_1$ and $K_2$ are number fields, then $\zeta_{K_1}(s)=\zeta_{K_2}(s)$ if and only 
$P_{K_1}(A)=P_{K_2}(A)$ for all $A$. In this case 
$K_1$ and $K_2$ are said to be arithmetically equivalent. 

By \cite[Theorem 1]{perlis}, if $K_1$ and $K_2$ are arithmetically equivalent, then they have the same discriminant, the same number of real (resp. complex) absolute values, the same normal closure and unit groups.

It follows from Theorem \ref{CDLM-th} that if 
$\A_K\equiv \A_L$, then for each $p$, 
$$\Sigma_{p,K_1}=\Sigma_{p,K_2}.$$
Applying Hermit's theorem that there are only finitely many number fields with discriminant bounded by any given positive integer (see \cite{nkrch}), one can deduce the following.

\begin{thm}[Derakhshan-Macintyre {\cite{DM-ad2}}] For any given number field $K$, there are only finitely many number fields $L$ such that are $\A_K$ and $\A_L$ are elementarily equivalent.\end{thm}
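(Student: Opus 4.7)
The plan is to combine three ingredients already flagged in the text: the uniform first-order definability of the splitting behavior at each prime (via Theorem \ref{CDLM-th}), Perlis's characterization of arithmetic equivalence through splitting types, and Hermite's classical finiteness theorem for number fields of bounded discriminant.

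First I would show that elementary equivalence $\A_K \equiv \A_L$ forces $\Sigma_{p,K} = \Sigma_{p,L}$ for every rational prime $p$. Given $p$, fix any tuple of positive integers $(f_1,\dots,f_r)$ with $\sum f_j \le [K:\Q]$; the statement ``$p$ has splitting type $(f_1,\dots,f_r)$'' can be encoded as an $\cL_{rings}$-sentence in $\A_K$ as follows. Using Theorem \ref{CDLM-th}, the minimal idempotents $e$ for which $e\A_K$ is a completion $K_v$ above $p$ are picked out by saying that $e$ is a minimal idempotent, $e\A_K\models\neg\Psi^{\mathrm{Arch}}$, and the image of $p$ in $e\A_K$ lies in the maximal ideal defined by $\Phi_{val}$. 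For each such stalk, its residue degree over $\F_p$ is first-order expressible since $\Phi_{val}$ defines the valuation ring uniformly and hence the residue field is interpretable uniformly in $K_v$. Counting these minimal idempotents (with prescribed residue degrees) is then expressible by an $\cL_{rings}$-sentence independent of the number field, so elementary equivalence transports the splitting type from $\A_K$ to $\A_L$.

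Next, by Perlis's theorem (\cite{perlis}, Theorem 1), the equality $P_K(A) = P_L(A)$ for all splitting types $A$ implies $\zeta_K(s) = \zeta_L(s)$, i.e.\ $K$ and $L$ are arithmetically equivalent. Again by Perlis, arithmetically equivalent number fields share the same discriminant $d_K = d_L$. Hermite's theorem then guarantees that there are only finitely many number fields of any given discriminant, so the class of $L$ with $\A_L \equiv \A_K$ is finite.

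The principal obstacle, and the place where the work of the paper is actually used, is the first step: establishing that the splitting type at each $p$ is recoverable by a single $\cL_{rings}$-sentence true in $\A_K$, \emph{uniformly in the number field}. Without Theorem \ref{CDLM-th} one does not have a uniform way to detect the valuation ring of each non-Archimedean stalk, and without that one cannot count minimal idempotents above $p$ with a prescribed residue degree by a sentence that is the same for $K$ and $L$. Once this uniform definability is in hand, the reduction to Perlis plus Hermite is essentially formal. A minor care is needed because $p$ can be ramified (ramification indices are not recorded in $\Sigma_{p,K}$); but this is harmless since Perlis's theorem only requires the $f_j$'s, and the $f_j$'s are exactly what is read off by the residue-field part of the uniform interpretation.
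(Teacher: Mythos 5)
Your proposal is correct and follows essentially the same route as the paper's sketch: use Theorem~\ref{CDLM-th} to show $\A_K\equiv\A_L$ forces $\Sigma_{p,K}=\Sigma_{p,L}$ for every $p$, invoke Perlis's theorem to conclude arithmetic equivalence and hence equality of discriminants, and finish with Hermite's theorem. Your added remark about why ramification indices are harmless (Perlis only needs the residue degrees) is a nice explicit check that matches the paper's definition of splitting type.
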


This raises the question.
\begin{prob} Given a number field $K$, classify the number fields $L$ such that $\A_L$ is elementarily equivalent to $\A_K$. What are the elementary invariants?\end{prob}

\medskip

\subsection{\bf Elementary equivalence of adele rings - a rigidity theorem}

\

\medskip

The question asking to what extent a number field is determined by its zeta function has a long history. 

A number field $K$ that is isomorphic to any number field $L$ such that $\zeta_K(s)=\zeta_L(s)$ is called arithmetically solitary. Examples are any normal extension of $\Q$. The first nonsolitary field was discovered by Gassman in 1925 who gave two fields of degree 180 over $\Q$ which are arithmetically equivalent but not isomorphic (cf. \cite{perlis}).

By a theorem of Uchida \cite{uchida}, two number fields $L$ and $K$ are isomorphic if and only if their absolute Galois groups $G_{K_1}$ and $G_{K_2}$ are isomorphic, a theorem in the realm of Grothendieck's anabelian conjectures.

Iwasawa \cite{iwasawa} proved that for number fields $K$ and $L$, if $\A_K$ is isomorphic to $\A_L$, then 
$\zeta_K(s)=\zeta_L(s)$. The converse to Iwasawa's theorem relates to interesting questions. The converse is not true in general, but is true if the extensions are Galois, see \cite{perlis}.

From Perlis' \cite[Theorem 1]{perlis} and Theorem \ref{CDLM-th} it follows that if $\A_K$ and $\A_L$ are elementarily equivalent, then $\zeta_K(s)=\zeta_L(s)$. 

In \cite{DM-ad2} Macintyre and myself prove that elementary equivalence does determine the adele rings
up to isomorphism, giving a converse to Iwasawa's theorem under a stronger hypothesis. This is a first-order "rigidity theorem " for adeles.
\begin{thm}[Derakhshan-Macintyre {\cite{DM-ad2}}]\label{isom} Let $K$ an $L$ be number fields. If $\A_K$ and $\A_L$ are elementarily equivalent (as rings), then they are isomorphic. 
\end{thm}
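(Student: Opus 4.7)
The plan is to assemble a global isomorphism $\A_K \cong \A_L$ out of local isomorphisms at each place, after using the elementary equivalence to pair the places up. Under the correspondence of Subsection \ref{ssec-idem} between minimal idempotents of $\A_K$ and normalized valuations of $K$, a ring isomorphism $\A_K \cong \A_L$ is the same data as a bijection $\sigma : V_K \to V_L$ together with ring isomorphisms $\tau_v : K_v \cong L_{\sigma(v)}$ for every $v$. At each non-Archimedean place, any such $\tau_v$ automatically sends $\cO_{K_v}$ to $\cO_{L_{\sigma(v)}}$, since the valuation ring is cut out by the uniform $\cL_{rings}$-formula $\Phi_{val}$ of Theorem \ref{CDLM-th}; so the local pieces $\tau_v$ will glue into an isomorphism of the restricted products.

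First I would show that $\A_K \equiv \A_L$ forces, for every rational prime $p$ and at the Archimedean symbols, the multi-set of $\cL_{rings}$-elementary theories of completions above $p$ to agree in $K$ and in $L$. The ingredients are the $\cL_{rings}$-definability of the Boolean algebra of idempotents with its counting and $Fin$ predicates (Theorem \ref{defid}) and the uniform definition $\Phi_{val}$ (Theorem \ref{CDLM-th}). For each $p$ the idempotent $\sum_{v|p} e_v$ is $\cL_{rings}$-definable as the supremum of minimal idempotents $e$ satisfying $\Phi_{val}(p) \wedge \neg \Phi_{val}(1/p)$ in the stalk $e\A_K$, and for each $\cL_{rings}$-sentence $\phi$ the number of places $v | p$ at which $K_v \models \phi$ is $\cL_{rings}$-expressible in $\A_K$ via the stalk-to-global translation $\phi \mapsto \phi^{Glob}$ of Subsection \ref{ssec-idem} combined with the counting predicates $C_n$. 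Matching these counts across all $\phi$ and all $p$ equates the local elementary-type multi-sets on the two sides, and in particular recovers the arithmetic equivalence of $K$ and $L$ mentioned in the paragraph preceding the theorem.

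The main obstacle is then local rigidity: two finite extensions of $\Q_p$ which are $\cL_{rings}$-elementarily equivalent must already be isomorphic as fields. I expect to deduce this from the Prestel-Roquette quantifier elimination recalled in Subsection \ref{ssec-venrich}, by noting that the constants of $\cL_{PR}$ encoding an $\F_p$-basis of $\cO/(p)$ can be chosen to be algebraic integers whose minimal polynomials over $\Q$ are $\cL_{rings}$-visible in the field, so that no information is lost in passing from $\cL_{PR}$ back to $\cL_{rings}$ at the level of complete theories. The Archimedean analogue is trivial, since $\R \not\equiv \C$ in $\cL_{rings}$ and each is determined up to isomorphism within its elementary class. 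Once this rigidity is in place, the multi-set matching of the previous paragraph produces the bijection $\sigma : V_K \to V_L$ together with ring isomorphisms $\tau_v : K_v \cong L_{\sigma(v)}$, and assembling them componentwise gives the required isomorphism $\A_K \cong \A_L$.
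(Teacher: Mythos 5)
Your strategy is the same as the paper's: reduce $\A_K \equiv \A_L$ to a bijection $V_K \to V_L$ pairing places with isomorphic completions (extracted via the uniform definitions of Theorems \ref{CDLM-th} and \ref{defid} and the counting predicates), then pass from the local isomorphisms to a global one. Where the paper cites Iwasawa's theorem for the latter step, you re-derive the direction actually needed in-line — the componentwise map respects the restricted product condition because any ring isomorphism of $p$-adic fields preserves the $\cL_{rings}$-definable valuation ring — and that is correct; it is the easy half of Iwasawa.

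The one genuine gap is in your justification of local rigidity. The fact you need — that $\cL_{rings}$-elementarily equivalent finite extensions of a fixed $\Q_p$ are isomorphic — is true, but it does not follow from Prestel-Roquette quantifier elimination in the way you sketch. $\cL_{PR}$-QE concerns the (model-complete) theory of $p$-adically closed fields of a given $p$-rank, which has many non-isomorphic models, so QE by itself cannot single out an isomorphism class; nor do the PR constants for an $\F_p$-basis of $\cO/(p)$ determine the field, since for instance any two totally ramified degree-$n$ extensions of $\Q_p$ have $\cO/(p) \cong \F_p[t]/(t^n)$ but need not be isomorphic. What actually works is the arithmetic argument: there are only finitely many extensions of $\Q_p$ of each degree; the degree $ef$ is recovered from the $\cL_{rings}$-theory via $\Phi_{val}$; and by Krasner's lemma each such extension $F$ equals $\Q_p(\alpha)$ for $\alpha$ a root of some $g \in \Z[x]$ irreducible over $\Q_p$, so that the sentence $\exists x\,(g(x)=0)$, together with the degree, characterizes the isomorphism class of $F$ among finite extensions of $\Q_p$. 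Patch rigidity this way and your proof goes through.
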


The proof uses a theorem of Iwasawa in \cite[pages 331-356]{iwasawa} that for number fields $K$ and $L$, the adele rings 
$\A_K$ and $\A_L$ are isomorphic if and only there there is a bijection 
$\phi: V_{K}^{fin} \rightarrow V_{L}^{fin}$ such that the completions $K_v$ and 
$L_{\phi(v)}$ are isomorphic for all $v\in V_{K}^{fin}$. This condition is also equivalent to the condition that the finite adeles $\A_K^{fin}$ and $\A_L^{fin}$ are isomorphic (cf. \cite{DM-ad2}).

\begin{prob} Find conditions under which adele rings are isomorphic.\end{prob}

We also pose.

\begin{prob}\label{prob-isom} Does Theorem \ref{isom} extend to algebraic groups $G$? Find algebraic groups $G$ over $\Q$ such that if $G(\A_K)$ and $G(\A_L)$ are elementarily equivalent in the language of groups, then 
they are isomorphic. Is this true when $G$ is a $\Q$-split semi-simple algebraic group over $\Q$?\end{prob}
We note that one believes that for a $\Q$-split semi-simple algebraic group $G$, the field $\Q_p$ is definable in the group $G(\Q_p)$. It would be interesting to investigate adelic versions of this and use it to approach Problem \ref{prob-isom}.

\section{\bf Axioms for rings elementarily equivalent to restricted direct products and converse to Feferman-Vaught}\label{sec-axioms}

\medskip

\subsection{\bf The question and connection to nonstandard models of Peano arithmetic}

\

\medskip

The question of finding axioms for the theory of $\A_K$ is part of the general question of finding 
axioms under which any commutative unital ring is elementarily equivalent to a restricted direct product of
connected rings (a ring is connected if $0,1$ are the only idempotents). 

This problem is solved in joint work with 
Macintyre in \cite{DM-axioms} and is based on the work of D'Aquino and Macintyre in \cite{elem-prod} solving the case of products, which was in turn used by D'Aquino and Macintyre to answer a question of Zilber's on models of Peano arithmetic. The problem asks for a non-standard model $\cM$ of PA and $k\in \cM$, whether $\cM/k\cM$ interprets arithmetic. The solution in \cite{PDAJM} is that it does not interpret arithmetic and much more is proved around its model-theoretic tameness. 

Another ingredient in the Macintyre-D'Aquino solution to Zilber's problem is the work D'Aquino-Macintyre and myself in \cite{dpm} on truncated ordered abelian groups. 

In this work we provide axioms for a class of linear orders with addition called truncated ordered abelian groups, and prove that any model of these axioms is an initial segment of an ordered abelian group, thus has a semi-group structure arising from a process of truncation. 
This work applies to quotients of valuation rings with truncated valuations. We remark that Zilber's question 
was inspired by model-theoretic insights into quantum mechanics.

\medskip

\subsection{\bf Axioms for the rings}

\

\medskip

We now discuss the axioms of Macintyre and myself from \cite{DM-axioms}. We shall then 
prove an analogue of the results of Feferman-Vaught \cite{FV} and the results in Section \ref{sec-rest} for commutative unital rings. So we develop the analogue of the required notions (e.g. Boolean values) in the case of rings. 

Let $R$ be a commutative unital ring. The set 
$$\B=\{x\in R: x=x^2\}$$ 
of idempotents 
is a Boolean algebra with operations  
$$e \wedge f=ef,$$
$$\neg e=1-e,$$
$$e\vee f=1-(1-e)(1-f)=e+f-ef.$$
$\B$ carries an ordering defined by 
$e\leq f\Leftrightarrow ef=e$, which is $\cL_{rings}$-definable. The {\it atoms} of $\B$ are by definition the minimal idempotents that are not equal to $0,1$. 

For any $e$ in $\B$, 
$$R/(1-e)R \cong eR \cong R_e,$$
where $R_e$ is the localization of $R$ at $\{e^n: n\geq 0\}$. The first isomorphism is straightforward and the second isomorphism is shown in Lemma 1 in \cite{elem-prod}. $R_e$ is the stalk of $R$ at $e$. Of special important are the $R_e$ for {\it atoms} $e$.

We define Boolean values in the case of rings as follows. 
\begin{Def} Let $\Theta(x_1,\dots,x_n)$ be a formula of the language of rings, and $f_1,\dots,f_n\in R$. Then 
$[[\Theta(f_1,\dots,f_n)]]$ is defined to be 
$$\bigvee_{e} \{e: e~\text{an~atom},~R_e\models \Theta((f_1)_e,\dots,(f_n)_e)\}$$
provided $\bigvee$ exists in $\B$, where $f_e$ is the image of $f$ in $R_e$.\end{Def}
Note that $f_e$ can be identified with $f+(1-e)R$ using the above isomorphism.

We augment the language of rings $\cL_{rings}$ by a unary predicate symbol 
$Fin(x)$ that is interpreted in $R$ as a finite support element, i.e. a finite union of atoms. Let $\mathcal{F}in$ denote the ideal of finite support elements in $R$. 

Let $\cL_{rings}^{fin}=\cL_{rings}\cup \{Fin(x)\}$. We fix an $\cL_{rings}$-formula $\varphi(x)$ in the single variable $x$. 

Let $\mathcal{A}_{\varphi}$ denote the following axioms expressed as $\cL_{rings}^{fin}$-sentences. As in Subsection \ref{ssec-benrich}, $T^{fin}$ denotes the theory of infinite atomic Boolean
algebras in the language $\cL_{Boolean}^{fin}$.

\

{\bf Axiom 1.} $\B$ is atomic.

\

{\bf Axiom 2.} $[[\Theta(f_1,\dots,f_n)]]$ exists (an an element of $\B$).

\

{\bf Axiom 3.} For any atomic formula $\Theta(x_1,\dots,x_n)$ of the language of rings,
$$R\models \Theta(f_1,\dots,f_n) \Leftrightarrow \B\models [[\Theta(f_1,\dots,f_n)]]=1.$$

\

{\bf Axiom 4.} $(\B,\mathcal{F}in)\models T^{fin}$, and for all $\cL_{rings}$-formulas 
$\Theta(x_1,\dots,x_n,w)$ and $f_1,\dots,f_n\in R$ there is a $g\in R$ such that if 
$$[[\exists w \Theta(f_1,\dots,f_n,w)]]  \cap \neg [[\exists w (\varphi(w) \wedge \Theta(f_1,\dots,f_n,w))]]\in \mathcal{F}in,$$
then
$$[[\exists w \Theta(f_1,\dots,f_n,w)]]\cap \neg [[\Theta(f_1,\dots,f_n,g)]]\in \mathcal{F}in.$$

\

\begin{note} A special case of Axiom 4 is the following. 

\

{\bf Axioms 4'.} For all $\Theta(x_1,\dots,x_n,w)$ and $f_1,\dots,f_n\in R$, there is a $g\in R$ such that if 
$$[[\exists w (\varphi(w) \wedge \Theta(f_1,\dots,f_n,w))]]$$ 
is cofinite in 
$$[[\exists w \Theta(f_1,\dots,f_n,w)]],$$ 
then $[[\exists w \Theta(f_1,\dots,f_n,w)]]$ is cofinite in 
$[[\Theta(f_1,\dots,f_n,g)]]$. Here "cofinite" really means cofinite.\end{note}

\

{\bf Axiom 5.} $\forall x (Fin([[\neg \varphi(x)]]))$.

\

Let $(\cM_i)_{i\in I}$ be a family of $\cL_{rings}$-structures. Axioms 1-5 hold $\prod_{i\in I}^{(\varphi)} \cM_i$, the restricted product of $\cM_i$ 
with respect to $\varphi(x)$ (for Axiom 4 use Axiom of Choice).

\medskip

\subsection{\bf The ring-theoretic Feferman-Vaught and converse to Feferman-Vaught}


\begin{thm}[Derakhshan-Macintyre {\cite{DM-axioms}}]\label{main-th} Let $\varphi(\bar x)$ be an $\cL_{rings}$-formula. Let $R$ a commutative unital ring satisfying the axioms $\mathcal{A}_{\varphi}$. Then 
for each $\cL_{rings}$-formula $\Theta(x_1,\dots,x_m)$ there is, by an effective procedure, $\cL_{rings}$-formulas
$$\Theta_1(x_1,\dots,x_m),\dots,\Theta_k(x_1,\dots,x_m)$$ and an 
$\cL_{Boolean}^{fin}$-formula 
$\psi(y_1,\dots,y_k)$ such that for all $f_1,\dots,f_m$ in $R$
$$R\models \Theta(f_1,\dots,f_m) \Leftrightarrow $$
$$(\B,\mathcal{F}in)\models \psi([[\Theta_1(f_1,\dots,f_m)]],\dots,[[\Theta_k(f_1,\dots,f_m)]]).$$\end{thm}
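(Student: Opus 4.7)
My plan is to proceed by induction on the quantifier complexity of $\Theta(x_1,\dots,x_m)$, building the auxiliary $\cL_{rings}$-formulas $\Theta_i$ and the $\cL_{Boolean}^{fin}$-formula $\psi$ effectively at each step. For atomic $\Theta$, Axiom 3 yields the reduction immediately: one takes $\Theta_1=\Theta$ and $\psi(y_1):=(y_1=1)$. Negation and conjunction will be handled by the obvious Boolean combinations of the inductive $\psi$'s, operations internal to $\cL_{Boolean}^{fin}$, leaving only the existential step nontrivial.

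The main inductive step to carry out is $\Theta(x)=\exists w\,\Theta'(x,w)$, where inductively $\Theta'$ reduces via formulas $\Theta'_1,\dots,\Theta'_k$ and some $\psi'$. For each $S\subseteq\{1,\dots,k\}$ I would introduce the ``type formula''
\[
\Theta'_S(x,w):=\bigwedge_{i\in S}\Theta'_i(x,w)\wedge\bigwedge_{i\notin S}\neg\Theta'_i(x,w).
\]
Since at each atom of $\B$ exactly one $\Theta'_S$ holds, for any $g\in R$ the idempotents $a_S(g):=[[\Theta'_S(f,g)]]$ form a partition of $1$ in $\B$, and $[[\Theta'_i(f,g)]]=\bigvee_{S\ni i}a_S(g)$. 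Rewriting $\psi'$ in these partition pieces gives a Boolean formula $\hat\psi$ in $2^k$ variables such that $R\models\Theta'(f,g)\Leftrightarrow(\B,\mathcal{F}in)\models\hat\psi((a_S(g))_S)$. Consequently $R\models\exists w\,\Theta'(f,w)$ will be equivalent to the existence of a partition $(a_S)_S$ of $1$ that satisfies $\hat\psi((a_S)_S)$ and is \emph{realizable} by some single $g\in R$ as $a_S=[[\Theta'_S(f,g)]]$.

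The crux will be to internalize realizability as an $\cL_{Boolean}^{fin}$-condition. I would set $b_S:=[[\exists w\,\Theta'_S(f,w)]]$ and $c_S:=[[\exists w(\varphi(w)\wedge\Theta'_S(f,w))]]$, and claim that $(a_S)_S$ is realizable iff $a_S\leq b_S$ and $Fin(a_S\wedge\neg c_S)$ for every $S$. Necessity is immediate: local satisfiability forces $a_S\leq b_S$, while Axiom 5 applied to any realizing $g$ bounds $a_S\wedge\neg c_S$ inside the finite set $[[\neg\varphi(g)]]\cap a_S$. Sufficiency is where Axiom 4 does its work: applied to each $\Theta'_S$ on the localization at $e_S:=\sup a_S$ (which inherits the same axiomatic setup) it produces a $g_S$ whose $\Theta'_S$-profile covers $a_S$ modulo a finite exceptional set; one patches $g:=\sum_S e_S g_S$ via the decomposition $R\cong\bigoplus_S e_S R$, and corrects at each remaining exceptional atom $e$ using $e\leq a_S\leq b_S$ together with the split $R\cong eR\oplus(1-e)R$. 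The resulting $\psi$ for $\Theta$ will then take the shape
\[
\exists(X_S)_S\Bigl(\mathrm{partition}((X_S)_S)\wedge\bigwedge_S X_S\leq Y^{(1)}_S\wedge\bigwedge_S Fin(X_S\wedge\neg Y^{(2)}_S)\wedge\hat\psi((X_S)_S)\Bigr),
\]
which is a bona fide $\cL_{Boolean}^{fin}$-formula (using quantifier elimination for $T^{fin}$ from Theorem \ref{bool1}, available because Axiom 4 asserts $(\B,\mathcal{F}in)\models T^{fin}$), with the new auxiliary $\cL_{rings}$-formulas $\exists w\,\Theta'_S(x,w)$ and $\exists w(\varphi(w)\wedge\Theta'_S(x,w))$ supplying $Y^{(1)}_S$ and $Y^{(2)}_S$ respectively.

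The hard part will be the sufficiency direction: given a Boolean partition meeting the necessary conditions, to produce a \emph{single} element $g\in R$ realizing all $2^k$ types simultaneously. Axiom 4 is designed precisely for this, but its joint deployment over all $S$ and the assembly of local witnesses into one ring element via the idempotents $e_S$---together with the finite correction to match the exact partition---is the ring-theoretic heart of the argument, replacing the componentwise product construction used in the classical Feferman--Vaught theorem for direct products of structures.
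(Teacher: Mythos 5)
Your proof plan has the right overall shape: the induction on quantifier complexity, the use of Axiom~3 for atomic formulas, the passage to the $2^k$ type formulas $\Theta'_S$, the identification of $b_S=[[\exists w\,\Theta'_S(f,w)]]$ and $c_S=[[\exists w(\varphi(w)\wedge\Theta'_S(f,w))]]$ as the relevant quantities, the use of Axiom~5 to establish $Fin(a_S\wedge\neg c_S)$ as a necessary condition, and the appeal to Axiom~4 for the existential step. This mirrors the Feferman--Vaught argument, which is what the paper says its proof modifies. The necessity/forward direction is correct as you describe it.

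The gap is in the sufficiency/backward direction, and it is concentrated in two places. First, the ``finite correction'' step. Axiom~4 only delivers a $g_S$ whose exceptional set $a_S\wedge\neg[[\Theta'_S(f,g_S)]]$ lies in $\mathcal{F}in$, and $\mathcal{F}in$ is only an ideal of the Boolean algebra satisfying the $T^{fin}$ axioms --- it is \emph{not} required to consist of literal finite unions of atoms. In a nonstandard model of $T^{fin}$ the exceptional set may have infinitely many atoms even though $Fin$ holds of it, so ``correct at each remaining exceptional atom $e$'' is an infinitary process and does not produce a ring element. Worse, one cannot iterate Axiom~4 on the exceptional piece to shrink it: once restricted to a $\mathcal{F}in$-idempotent $\delta$, every Boolean value inside $\delta\B$ lies in $\mathcal{F}in$, so the conclusion of Axiom~4 becomes vacuous there. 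Since the inductive $\psi'$, after $T^{fin}$-quantifier elimination, is a Boolean combination of $C_n$ and $Fin$ conditions and the $C_n$ predicates are not stable under $\mathcal{F}in$-perturbation, you cannot simply replace the target partition $(a_S)_S$ by the $\mathcal{F}in$-close partition $([[\Theta'_S(f,g)]])_S$ that Axiom~4 actually gives you. Exact realization is therefore not justified by the axioms as you have deployed them, and the implication ``$(\B,\mathcal{F}in)\models\psi\Rightarrow R\models\exists w\,\Theta'$'' is left open.

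Second, a smaller point: the claim that the localization $e_S R$ ``inherits the same axiomatic setup'' is not automatic for Axiom~4. If you restrict $\Theta'_S$ to $e_S R$, the hypothesis of Axiom~4 there is $(b_S\wedge e_S)\setminus(c_S\wedge e_S)\in\mathcal{F}in$, which does not follow from, and is not implied by, the hypothesis $b_S\setminus c_S\in\mathcal{F}in$ in $R$. The correct move is to fold the idempotent $e_S$ into the formula as an extra parameter (e.g.\ work with $\Theta''(x,w,y):=\bigvee_S(y_S=1\wedge\Theta'_S(x,w))$ and plug in $y:=(a_S)_S$), apply Axiom~4 once in $R$, and only then interpret componentwise; this recovers the localized form of Axiom~4, but it still only yields approximate realization, so it does not close the first gap. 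A correct proof must either strengthen the realizability analysis so that approximate realization already forces the Boolean condition $\psi'$, or construct the formula $\psi$ in a form that is insensitive to $\mathcal{F}in$-perturbations; as written, your plan requires exact realization and does not have the tools to produce it.
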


Since $R$ and the restricted product $\prod_{e\text{~atom~of~}\B}^{(\varphi)} R_e$ have the same idempotents, the same ideal $\mathcal{F}in$, and same localization $R_e$ for all atoms $e$, the same restricting formula $\varphi$, and satisfy the axioms $\mathcal{A}_{\varphi}$, Theorem \ref{main-th} implies the following.
\begin{cor}[Derakhshan-Macintyre {\cite{DM-axioms}}] Let $\varphi(\bar x)$ be an $\cL$-formula and $R$ a commutative unital ring satisfying the axioms $\mathcal{A}_{\varphi}$. Then 
$$R \equiv \prod_{e\text{~atom~of~}\B}^{(\varphi)} R_e,$$
the restricted direct product with respect to $\varphi$.\end{cor}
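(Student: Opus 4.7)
The plan is to apply Theorem \ref{main-th} to both $R$ and to the restricted product $R' := \prod^{(\varphi)}_{e \text{ atom of } \B} R_e$, reducing the target elementary equivalence to a comparison of enriched Boolean data via the quantifier elimination for $T^{fin}$ recorded in Theorem \ref{bool1}. First I would verify that $R'$ itself satisfies $\mathcal{A}_\varphi$; this was observed to hold for any restricted product immediately after the axioms were stated. Notice that Axiom 5 applied to $x=0$ and $x=1$ gives that both $\varphi(0)$ and $\varphi(1)$ hold in all but finitely many $R_e$, so every tuple of componentwise idempotents lies in $R'$; hence the Boolean algebra $\B'$ of idempotents of $R'$ is the full power set of the atom set of $\B$.

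Next I would construct a canonical comparison map $h:\B \to \B'$ by $h(e) = (e_f)_{f\text{ atom of }\B}$. Atomicity of $\B$ (Axiom 1) makes $h$ injective; it is a Boolean homomorphism, it maps atoms of $\B$ bijectively onto atoms of $\B'$, and consequently preserves the predicates $Fin$ and all $C_n$. In particular $h$ preserves the truth of every quantifier-free $\cL_{Boolean}^{fin}$-formula. Because the localization of $R'$ at the atom $h(e)$ is canonically the component $R_e$, one has for every $\cL_{rings}$-sentence $\Theta$
$$h\bigl([[\Theta]]\bigr) = [[\Theta]]',$$
both sides being the join in $\B'$ of those atoms $h(e)$ for which $R_e \models \Theta$.

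Now given any $\cL_{rings}$-sentence $\Theta$, Theorem \ref{main-th} specialized to $m=0$ produces $\cL_{rings}$-sentences $\Theta_1,\dots,\Theta_k$ and an $\cL_{Boolean}^{fin}$-formula $\psi(y_1,\dots,y_k)$ such that the truth of $\Theta$ in $R$ (resp.\ $R'$) is equivalent to the truth of $\psi$ applied to $[[\Theta_1]],\dots,[[\Theta_k]]$ in $(\B,\mathcal{F}in)$ (resp.\ to $[[\Theta_1]]',\dots,[[\Theta_k]]'$ in $(\B',\mathcal{F}in')$). By Axiom 4 both enriched Boolean algebras satisfy $T^{fin}$, so by Theorem \ref{bool1} the formula $\psi$ is equivalent in each to a quantifier-free $\cL_{Boolean}^{fin}$-formula. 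Since $h$ preserves such formulas and the Boolean values correspond under $h$, the two equivalences have the same truth value, yielding $R \equiv R'$.

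The main obstacle is justifying the identity $h([[\Theta_j]]) = [[\Theta_j]]'$ rigorously: one must argue both that every atom of $\B'$ arises from an atom of $\B$ (rather than from some exotic idempotent of the restricted product) and that the localization of $R'$ at that atom recovers the original stalk $R_e$. Both points depend on the axioms $\mathcal{A}_\varphi$ holding in $R'$, and especially on Axiom 5, which is precisely what guarantees that the idempotent and $\mathcal{F}in$-structure of $R$ is transported faithfully to $R'$. Once this correspondence is in place, the conclusion is forced by Theorem \ref{main-th} with no further computation.
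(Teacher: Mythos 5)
Your argument is correct and follows essentially the same route as the paper: apply Theorem \ref{main-th} to both $R$ and the restricted product $R'$, compare the resulting Boolean data, and conclude via the completeness and quantifier elimination of $T^{fin}$ (Theorem \ref{bool1}). You are in fact more careful than the paper's one-line justification, which asserts that $R$ and $R'$ ``have the same idempotents''; since the idempotent algebra of $R'$ is the full power set of the atoms while $\B$ need not be, the atom-preserving $\cL_{Boolean}^{fin}$-embedding $h:\B\to\B'$ you construct, carrying each Boolean value $[[\Theta_j]]$ to $[[\Theta_j]]'$, is exactly what is needed to make the step rigorous.
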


This result can be regarded as a converse to Theorem \ref{restricted-qe} and the theorems of Feferman-Vaught \cite{FV}).

These axioms for restricted products connect well with the issues on elementary invariants for adele rings discussed in Section \ref{sec-elem} and in \cite{DM-ad2}. 

\begin{remark} Consider the $\cL_{rings}$-formula $\Phi_{val}(x)$ that defines the valuation ring of all non-Archimedean $K_v$ from Subsection \ref{ssec-deffin}, and the associated axiom system $\mathcal{A}_{\Phi_{val}(x)}$. If we 
augment $\mathcal{A}_{\Phi_{val}(x)}$ by the axioms for $p$-adically closed fields (in \cite{AK2} or \cite{PR-book}) in all the stalks $e\A_{\Q}$ where $e$ is non-Archimedean, and the axioms for 
real closed fields in all the $e\A_{\Q}$ where $e$ is real, then we get a complete system of axioms for adeles $\A_{\Q}$. See \cite{DM-axioms}.\end{remark}

\section{\bf Some stability theory}\label{sec-stab}

\medskip

\subsection{\bf Stable embedding}\label{ssec-stemb} 

\

\medskip

It is known that for many Henselian valued fields, the value group and the residue field are stably embedded. See \cite{HHM}. In \cite{DM-supp}, we show that the local fields $K_v$ are stably embedded in the adeles $\A_K$ (via the identification of 
$K_v$ with $e_{\{v\}}\A_K$). 
\begin{thm}[Derakhshan-Macintyre {\cite{DM-supp}}] Let $X$ be a definable subset of $\A_K^n$ with parameters from $\A_K$, where $n\geq 1$. 
Let $e$ be a minimal idempotent. 
Then $X\cap (e\A_K)^n$ is definable with parameters from $e\A_K$.
\end{thm}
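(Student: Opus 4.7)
The plan is to reduce the general definable set to the quantifier-elimination normal form given by Corollary \ref{def-ring}, and then analyze each atomic piece by exploiting that elements of $(e\A_K)^n$ are supported only at $v_e$. First I would invoke Corollary \ref{def-ring} (with parameters $\bar a \in \A_K^m$ allowed) to write $X$ as a Boolean combination of sets of the two basic forms
\[
\{\bar x \in \A_K^n : Fin([[\psi(\bar x, \bar a)]])\} \quad \text{and} \quad \{\bar x \in \A_K^n : C_j([[\phi(\bar x, \bar a)]])\},
\]
where $\psi,\phi$ are $\cL_{rings}$-formulas. Since Boolean operations commute with intersection against $(e\A_K)^n$, it suffices to show that the intersection of each such atomic set with $(e\A_K)^n$ is definable in $e\A_K$ with parameters from $e\A_K$.

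Next, I would exploit the structure of $(e\A_K)^n$ for a minimal idempotent $e$. If $\bar x \in (e\A_K)^n$, then $x_i(v) = 0$ for every $v \ne v_e$ and $x_i(v_e)$ is arbitrary in $K_{v_e}$. Consequently the Boolean value decomposes as a disjoint union
\[
[[\psi(\bar x, \bar a)]] = A_\psi(\bar x) \sqcup B_\psi,
\]
where $B_\psi = \{v \in V_K \setminus \{v_e\} : K_v \models \psi(0,\bar a(v))\}$ depends only on $\bar a$ (hence is a fixed subset of $V_K$ once $\bar a$ is chosen), and $A_\psi(\bar x) \subseteq \{v_e\}$ is either $\{v_e\}$ or $\emptyset$ according as $K_{v_e} \models \psi(\bar x(v_e), \bar a(v_e))$ or not.

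From this decomposition the atomic conditions simplify drastically on $(e\A_K)^n$. The condition $Fin([[\psi(\bar x, \bar a)]])$ is equivalent to $Fin(B_\psi)$, which is a truth value depending only on $\bar a$ and not on $\bar x$. The condition $C_j([[\phi(\bar x, \bar a)]])$, with analogous decomposition $A_\phi(\bar x) \sqcup B_\phi$, is identically true if $|B_\phi| \ge j$, identically false if $B_\phi$ is finite with $|B_\phi| < j-1$, and equivalent to $K_{v_e} \models \phi(\bar x(v_e), \bar a(v_e))$ precisely in the boundary case $|B_\phi| = j-1$. In every case the condition is either constant or equivalent to the single local statement about $K_{v_e}$ with parameters $\bar a(v_e)$.

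Finally, I would transport the answer back to $e\A_K$ via the ring isomorphism $e\A_K \cong K_{v_e}$ given by $ea \mapsto a(v_e)$. Under this identification the condition $K_{v_e} \models \phi(\bar x(v_e), \bar a(v_e))$ becomes $e\A_K \models \phi(e\bar x, e\bar a)$, an $\cL_{rings}$-formula in the structure $e\A_K$ with parameters $e\bar a \in (e\A_K)^m$. Assembling the Boolean combination yields $X \cap (e\A_K)^n$ as an $\cL_{rings}$-definable subset of $(e\A_K)^n$ over $e\A_K$. The main subtlety is not really a difficulty but rather the verification that the decomposition $[[\psi]] = A_\psi(\bar x) \sqcup B_\psi$ is legitimately valid, i.e.\ that the "off-$v_e$" part $B_\psi$ is genuinely frozen once $\bar a$ is fixed; this is exactly where the hypothesis that $e$ is a \emph{minimal} (rather than arbitrary) idempotent is used, since otherwise the components of $\bar x$ would not all vanish outside a single valuation and the Boolean value would continue to depend on $\bar x$ at multiple places.
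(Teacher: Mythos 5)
Your argument is correct and uses exactly the tools the paper builds up for this purpose: the quantifier-elimination normal form of Corollary~\ref{def-ring} (applied with the parameter tuple treated as extra free variables and then specialised), the decomposition of Boolean values into the $v_e$-contribution plus a frozen off-$v_e$ part once $\bar x$ is restricted to $(e\A_K)^n$, the resulting collapse of $Fin$ to a constant and of $C_j$ to either a constant or a single local condition at $v_e$, and the transport along $e\A_K\cong K_{v_e}$. This is the natural proof in the paper's framework (the survey states the theorem without proof, citing \cite{DM-supp}), and your final remark about where minimality enters is also the right diagnosis: for a non-minimal idempotent the set $A(\bar x)$ would live on a larger support and the $Fin$ and counting conditions would genuinely depend on $\bar x$ at several places.
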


\begin{prob}\cite{DM-supp} Prove a general stable embedding theorem for the factors of a restricted product of structures with respect to a formula (defined in Section \ref{sec-rest}).\end{prob}

In Subsection \ref{ssec-venrich} we defined the product valuation $\prod v$ from the finite adeles $\A_K^{fin}$ into the 
restricted product $\Gamma$ of the lattice-ordered monoids $\Z\cup\{\infty\}$ indexed by the non-Archimedean valuations.
$\Gamma$ is interpretable in the ring $\A_K^{fin}$, cf. \cite{DM-supp}.

\begin{thm} [Derakhshan-Macintyre {\cite{DM-supp}}]The value monoid $\Gamma$ of $\A_{\Q}^{fin}$ is not stably interpreted via the product valuation map.
\end{thm}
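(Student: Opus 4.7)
The plan is to exhibit a subset of $\Gamma^n$ that is definable in $\A_\Q^{fin}$ via the product valuation $v^*$ but is not definable in $\Gamma$ in its natural lattice-ordered monoid language, even with parameters from $\Gamma$. The underlying asymmetry is that $\A_\Q^{fin}$ retains the full ring-theoretic structure of each stalk $e\A_\Q^{fin}\cong \Q_{v_e}$, in particular the residue field $k_{v_e}$, whereas $v^*$ remembers only the coordinate-wise element of $\Z\cup\{\infty\}$.

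First I would identify the atoms of $\Gamma$ with primes via $v^*(p)=\gamma_p$, the atom supported at coordinate $p$ with value $1$; the set of atoms is $\emptyset$-definable in both structures. Next I would define in $\A_\Q^{fin}$ a binary relation $R(\gamma_1,\gamma_2)$ on atoms that detects, for distinct primes $p_1,p_2$, whether $p_1$ is a quadratic residue modulo $p_2$. Using Theorem \ref{CDLM-th} to uniformly define the valuation ring, maximal ideal, and residue field in every non-Archimedean stalk, together with the $\cL_{rings}$-definable residue-characteristic condition that singles out the integer $p_1$ inside $\A_\Q^{fin}$ from the atom $\gamma_1$, one writes $R$ as an $\cL_{rings}$-formula in $\A_\Q^{fin}$.

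To show $R$ is not definable in $\Gamma$ with parameters from $\Gamma$, I would apply the Feferman--Vaught decomposition for the restricted product $\Gamma$: any formula $\varphi(\gamma_1,\gamma_2;\delta_1,\ldots,\delta_k)$ in the natural language of $\Gamma$ reduces to a Boolean combination of global cardinality and finiteness conditions on sets cut out by coordinate-wise Presburger formulas in the tuples $(\gamma_i(p),\delta_j(p))$. Restricted to atoms, the finiteness of these sets is controlled by the generic behavior of the $\delta_j$, and the truth value of $\varphi(\gamma_1,\gamma_2)$ ends up depending only on the ``colors'' $(\delta_j(p_i))_j\in(\Z\cup\{\infty\})^k$ of the two primes and on global data independent of $(p_1,p_2)$. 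Given any finite tuple $\delta_1,\ldots,\delta_k\in\Gamma$, a combination of Dirichlet's theorem on primes in arithmetic progressions and quadratic reciprocity produces a prime $p_1$ together with two primes $q,q'$ of the same color but with $(p_1/q)\neq (p_1/q')$, contradicting the supposed factorization of $R$ through the coloring.

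The hard part will be making this last step fully rigorous, in particular verifying that the coordinate-wise fragment of the $\Gamma$-language (Presburger in each coordinate augmented by the lattice operations) cannot recover the Legendre symbol even when a parameter is chosen to encode the prime itself, say $\delta(p)=p$: Presburger in a single coordinate only defines finite unions of arithmetic progressions, and by reciprocity the ``modulus'' $4p_1$ for which $\{p_2:(p_1/p_2)=1\}$ becomes a union of arithmetic progressions depends on $p_1$, so no bounded-modulus coordinate-wise formula can uniformly capture the relation. Combining this with the density argument ranging over all finite parameter tuples gives the non-definability in $\Gamma$, and hence the failure of stable interpretation of $\Gamma$ via $v^*$.
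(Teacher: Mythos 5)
Your proposal reaches for the same underlying asymmetry as the paper (the stalks $\Q_p$ of $\A_{\Q}^{fin}$ carry arithmetic that varies with $p$, while the stalks $\Z\cup\{\infty\}$ of $\Gamma$ are pairwise indistinguishable and Presburger-eliminable), but implements it very differently. The paper's proof is unary and short: the sentence $\Psi$ (``$-1$ is a square'') picks out the primes $\equiv 1\pmod 4$, the idempotent $e=[[\Psi\wedge\Psi']]$ is $\emptyset$-definable in $\A_{\Q}^{fin}$, and its image under $v^*$ cannot be recovered in $\Gamma$ because, after applying Theorem~\ref{restricted-qe} to $\Gamma$ and Presburger QE to each factor, no condition internal to $\Gamma$ can separate these primes from the rest. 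You instead propose a \emph{binary} Legendre-symbol relation on atoms and rule out its $\Gamma$-definability even with parameters via a Feferman--Vaught ``color'' analysis plus Dirichlet and quadratic reciprocity. That last part is a sensible strengthening --- the sketch in the survey is silent about $\Gamma$-parameters, and indeed a single unary parameter $\delta\in\Gamma$ encoding the congruence class of each prime would defeat the unary example --- and your density-plus-reciprocity argument looks workable in outline.

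The problem is step 2, which is a genuine gap: you have not shown that $R(\gamma_{p_1},\gamma_{p_2})$, ``$p_1$ is a quadratic residue mod $p_2$,'' is definable on the $\A_{\Q}^{fin}$ side. From $\gamma_{p_1}$ you do get, uniformly and $\cL_{rings}$-definably, the minimal idempotent $e_1$, the stalk $e_1\A_{\Q}^{fin}\cong\Q_{p_1}$, its valuation ring, maximal ideal, and residue field (Theorem~\ref{CDLM-th}); so ``the residue characteristic of the stalk at $\gamma_1$'' is meaningful \emph{inside that stalk}. But to evaluate the Legendre symbol you must place the integer $p_1$ into the \emph{other} residue field $k_{v_{e_2}}\cong\F_{p_2}$, and there is no definable mechanism in $\A_{\Q}^{fin}$ for transporting the residue characteristic of one stalk into another. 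Doing so would amount to defining the diagonal image of the rational primes (or of $\Z$, or $\Q$) inside $\A_{\Q}^{fin}$, which is impossible: $\Q$ is undecidable (Julia Robinson) while $\A_{\Q}^{fin}$ is decidable (Section~\ref{sec-ax}), so $\Q$ cannot be a definable subring --- the survey itself makes exactly this point after Proposition~\ref{hilb}. Quantifying ``there exists $a$ with $v^*(a)=\gamma_1$ whose image in $k_{v_{e_2}}$ is a square'' does not fix this, because $a(p_2)$ then ranges over \emph{all} $p_2$-adic units, so the existential holds or fails independently of whether $p_1$ is a QR mod $p_2$. Until $R$ is actually exhibited as definable in the two-sorted structure, the argument does not establish the theorem. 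The published proof avoids the obstruction entirely by using a fixed unary sentence and never needing to move an integer between stalks.
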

In the proof we define the subset $X$ of $\A_{\Q}$ consisting of 
idempotents which are supported exactly on the primes 
$p$ that are congruent to $1$ modulo $4$. For this, let $\Psi$ be a sentence that holds in $\Q_p$ for exactly 
the primes $p$ that are congruent to $1$ modulo $4$, and let $\Psi'$ be a sentence that holds in all 
non-Archimedean local fields and fails in all the Archimedean local fields. Then 
$$X=\{x\in \A_{\Q}: supp(x)=[[\Psi \wedge \Psi']]\}.$$

The image of $X$ under the product valuation $\prod v$ 
is the set $Y$ of all $g$ in 
$$\prod_p (\Z\cup \{\infty\})$$ which are $0$ at $p$ 
and $\infty$ elsewhere. Applying the Feferman-Vaught Theorem or Theorem \ref{restricted-qe} to $\Gamma$ and using the Presburger quantifier elimination for the factors (cf. \cite{enderton-book}), it follows
that $X$ is not definable in the value monoid

\medskip

\subsection{\bf The tree property of second kind}\label{ssec-tree} 

\

\medskip

The property of not having the tree property of the second kind $NTP_2$ is a generalization of the properties of being 
simple and $NIP$ (the negation of the independence property). 

It is known 
that ultraproducts of $\Q_p$ and certain valued difference fields 
have $NTP_2$ (cf.\ \cite{CH}).  

The theory of $\A_K$, for $K$ a number field, has the independence property in two different ways, firstly via the 
residue fields by Duret \cite{Duret} and \cite{FJ}, and secondly because the definable Boolean algebra $\B_K$. 
\begin{thm}[Derakhshan-Macintyre {\cite{DM-supp}}]\label{thm-ntp} The theory of finite adeles $\A_K^{fin}$ and the theory of adeles 
$\A_K$ do not have the property $NTP_2$.
\end{thm}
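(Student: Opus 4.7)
The plan is to exhibit a single $\cL_{rings}$-formula that witnesses the tree property of the second kind, exploiting the wealth of the Boolean algebra of idempotents $\B_K$ inside $\A_K$. I would take
\[
\phi(x;y)\;\equiv\; x^2=x \;\wedge\; xy=x \;\wedge\; x\neq 0,
\]
which in $\A_K$ (and indeed in any commutative ring) says that $x$ is a nonzero idempotent lying below $y$ in the Boolean order of idempotents. Row $2$-inconsistency and path consistency will both be arranged by choosing the parameter idempotents with carefully controlled finite supports.

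The finitary combinatorial core of the construction lives in $\A_K$ itself. For each $n<\omega$ I would pick $n^n$ pairwise distinct non-Archimedean valuations of $K$, labeled $v_f$ as $f$ ranges over functions $n\to n$, and define $a_{i,j}\in \B_K$ to be the finite-support idempotent whose support is $\{v_f : f(i)=j\}$. In each row $i$ the idempotents $a_{i,j}$ are pairwise orthogonal ($a_{i,j}a_{i,j'}=0$ for $j\neq j'$) because their supports are disjoint, while for every $g\colon n\to n$ the product $\prod_{i<n}a_{i,g(i)}$ is the idempotent supported on the single valuation $v_g$, and is in particular nonzero. This gives the finite array, of arbitrary size, inside $\A_K$.

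Finally I would invoke compactness and saturation. Consider the partial type in variables $(y_{i,j})_{i,j<\omega}$ asserting that each $y_{i,j}$ is idempotent, that $y_{i,j}y_{i,j'}=0$ for $j\neq j'$ within each row, and that $\prod_{i\in F}y_{i,f(i)}\neq 0$ for every finite $F\subseteq \omega$ and every $f\colon F\to \omega$. The finitary construction above shows the type is finitely satisfiable in $\A_K$, so it is realized by an array $(a_{i,j})_{i,j<\omega}$ in a sufficiently saturated elementary extension $M^*\succ \A_K$. Each row is then $2$-inconsistent for $\phi$, since any realizer of $\phi(x;a_{i,j})\wedge \phi(x;a_{i,j'})$ would satisfy $x=x\cdot a_{i,j}\cdot a_{i,j'}=0$ contradicting $x\neq 0$; and every path $\{\phi(x;a_{i,f(i)})\}_{i<\omega}$ is consistent, because each finite subproduct $\prod_{i\in F}a_{i,f(i)}$ is itself a nonzero idempotent satisfying $\phi(x;a_{i,f(i)})$ for every $i\in F$. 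Hence $\phi$ witnesses $TP_2$ in the theory of $\A_K$.

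The same formula and construction apply to $\A_K^{fin}$ verbatim, since $V_K^{fin}$ is already infinite and the finitary array was built entirely from finite-support idempotents concentrated at non-Archimedean places. The argument is essentially pure compactness once the witnessing formula and the ``independent partition'' array are identified; I do not anticipate any serious obstacle beyond the isolation of these two ingredients, and no machinery beyond the basic set-up of Section 2 on idempotents and supports is needed.
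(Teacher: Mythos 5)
Your proof is correct: the formula $\phi(x;y)\equiv x^2=x\wedge xy=x\wedge x\neq 0$, the finite $n\times n$ arrays of idempotents indexed by functions $n\to n$ via their supports, and the compactness/saturation step together give a clean $TP_2$ pattern, with row $2$-inconsistency from orthogonality and path consistency from nonvanishing finite subproducts. This exploits the definable infinite atomic Boolean algebra of idempotents, which the paper's surrounding discussion flags as the source of wildness (the stalks $K_v$ themselves being $NTP_2$), and is in all likelihood the same mechanism used in the cited reference; the remark that the whole construction already lives inside $\A_K^{fin}$ correctly handles both cases of the theorem.
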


\medskip

\subsection{\bf Stable formulas and definable groups}

\

\medskip

Local stability theory is the study of stability properties of a formula. There is much literature on this. Here we only mention that Hrushovski and Pillay in \cite{udi-pillay-groups} develop a 
unified approach to local stability for $\Q_p$, $\R$, and pseudo-finite fields. The central notion being that of {\it a geometric field}. 
\begin{prob} Develop local stability theory for adeles or adele spaces of algebraic varieties (cf. Section \ref{sec-adgeom}) using local stability for the fields $K_v$ using the notion of geometric fields in the sense of Hrushovski-Pillay. In particular, what can one say about 
restricted products of geometric fields? To what extent the model-theoretic properties of geometric fields are preserved under restricted products?\end{prob}
In \cite{udi-pillay-groups}, theorems are proven about groups definable in $\R$ and $\Q_p$ showing they are related to algebraic groups. 
\begin{prob} What can one say about a group that is definable in $\A_K$? Is it related (e.g. virtually isogenous) to $G(\A_K)$ for an algebraic group $G$?\end{prob}

\section{\bf Adele geometry}\label{sec-adgeom}

\medskip

\subsection{\bf Adele spaces of varieties}\label{ssec-adsp} 

\

\medskip

Adele spaes of algebraic varieties were defined by Weil (cf. \cite{weil-adeles-gps}) and are important in number theory and arithmetic geometry. 
In joint work with Macintyre \cite{DM-ad} we show that the results of Section \ref{sec-rest} apply to these spaces and they admit an internal quantifier elimination and Feferman-Vaught theorem in a natural geometric language.

For simplicity, let $V$ be an affine variety over a number field $K$. Noncanonically 
choose $m$ and polynomials $f_1,\dots,f_e \in \Q[x_1,\dots,x_n]$ whose zero set is $V$. For convenience we assume 
$K=\Q$. 

For any valuation $v$, consider the sets $V(K_v)$ and $V(\cO_v)$. The adele space $V(\A_K)$ is defined as 
the restricted direct product of the $V(K_v)$ with respect to the $V(\cO_v)$. This is the union $\bigcup_{S} V_S$, where
$$V_S=\prod_{v\in S} V(K_v) \times \prod_{v\notin S} V(\mathcal{O}_v),$$
as $S$ ranges over all finite subsets of $V_K$ containing all the Archimedean valuations. 

Remark that the adele space can be defined for any abstract variety $V$ and is defined independently of the choice of an affine covering, see Weil's \cite{weil-adeles-gps}, Chapter 1.2. 

Note that $V(\A_K)$ coincides with the set of solutions of the polynomials $f_1,\dots,f_e$ in $\A_K^n$, and so is naturally a definable set in $\A_K^n$ in the language of rings. Nevertheless, it is important to show that 
$V(\A_K)$ can be represented as a restricted direct product in the sense of Section \ref{sec-rest} as that will give an internal connection between definability and the measure theory of $V(\A_K)$, and also yields an internal 
quantifier-elimination and Feferman-Vaught theorem for $V(\A_K)$.

This is done in \cite{DM-ad} as follows. We consider $V(K_v)$ as a subvariety of $K_v^n$, uniformly in $v$. Define the relational 
language $L^V$ to consist of predicates $R_W$ corresponding to all $\Q$-subvarieties $W$ of $V^l$  for all $l=1,2,3,\dots$. 
Suppose $R_W$ has arity $l$. We interpret $R_W$ in $V(K_v)$ as $W(K_v)$. Note that this is a subset of $V(K_v)^l$.

It is easy to see that every subset of $V(K_v)^l$ defined by an $\cL_{rings}$-formula $\psi(x_,\dots,x_l)$ is $L_V$-definable, uniformly in $v$. 
By Theorem \ref{CDLM-th} on the 
uniform $\cL_{rings}$-definability of $\cO_v$ in $K_v$, the rings $\cO_v$ are uniformly $L_V$-definable. 
So there is an $L^V$-formula $\Phi_V(x)$ that uniformly defines $V(\cO_v)$ in $V(K_v)$, for all $v$. 

Thus $V(\A_K)$ can be represented as the restricted direct product $\prod_{v\in V_K}^{(\Phi_V)} V(K_v)$ relative to $\Phi_V(x)$. Theorem \ref{restricted-qe} then gives quantifier elimination and 
a Feferman-Vaught theorem. 


\begin{thm}[Derakhshan-Macintyre {\cite{DM-ad}}]\label{adele-space-th} Let $\cL_{Boolean}^{+}$ be an extension of $\cL_{Boolean}$ containing $Fin(x)$. 
For any $\cL_{Boolean}^{+}(L^V)$-formula $\Psi(x_1,\dots,x_n)$, one can effectively construct 
$L^V$-formulas 
$$\Psi_1(x_1,\dots,x_n),\dots,\Psi_m(x_1,\dots,x_n)$$ 
with the same free variables as $\Psi$, 
and an $\cL_{Boolean}^+$-formula $\Theta(X_1,\dots,X_m)$ 
such that for any $a_1,\dots,a_n\in V(\A_K)$, 
$$V(\A_K)\models \Psi(a_1,\dots,a_n)$$ if and only if 
$$\P(I)^+\models \Theta([[\Psi_1(a_1,\dots,a_n)]],\dots,[[\Psi_m(a_1,\dots,a_n)]]).$$
\end{thm}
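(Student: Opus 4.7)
The plan is to reduce the statement directly to Theorem \ref{restricted-qe}, the general quantifier elimination theorem for restricted products. The essential content is to verify that $V(\A_K)$, equipped with its natural $L^V$-structure (interpreting $R_W$ componentwise), is a restricted product in the formal sense of Subsection \ref{ssec-lang}, with respect to the uniformly definable formula $\Phi_V(x)$ cutting out $V(\cO_v)$ inside $V(K_v)$.

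First I would fix the setup. Regard each $V(K_v)$ as a 1-sorted $L^V$-structure by interpreting the arity-$l$ predicate $R_W$ as $W(K_v)\subseteq V(K_v)^l$. Using Theorem \ref{CDLM-th} applied to $V$ (i.e., defining $V(\cO_v)$ inside $V(K_v)$ as the intersection with $\cO_v^n$ under the chosen affine embedding), there is a single $L^V$-formula $\Phi_V(x)$ such that for every non-Archimedean $v$, $V(\cO_v)=\{a\in V(K_v):V(K_v)\models \Phi_V(a)\}$; at Archimedean $v$ the convention of Subsection \ref{ssec-venrich} makes $\cO_v=K_v$, so $\Phi_V$ is trivially satisfied there. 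Since $L^V$ is purely relational, the set $V(\cO_v)$ is automatically an $L^V$-substructure of $V(K_v)$, so the substructure hypothesis needed to form $\prod_{v}^{(\Phi_V)} V(K_v)$ is free. The definition of the adele space $V(\A_K)$ then literally matches the restricted product: the finite-support condition "$a(v)\in V(\cO_v)$ for almost all $v$" is exactly $Fin\bigl([[\neg\Phi_V(a)]]\bigr)$.

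Next I would check that this identification of $V(\A_K)$ with $\prod_{v\in V_K}^{(\Phi_V)} V(K_v)$ is an identification of $\cL_{Boolean}^+(L^V)$-structures. On the one hand, the componentwise interpretation of every $R_W$ in $\prod V(K_v)$ is, by definition, $R_W^{V(\A_K)}=\{(a_1,\ldots,a_l)\in V(\A_K)^l : (a_1(v),\ldots,a_l(v))\in W(K_v)\text{ for all }v\}=W(\A_K)\cap V(\A_K)^l$. On the other hand, the relation symbols added in forming $\cL_{Boolean}^+(L^V)$ are interpreted via Boolean values over the index set $V_K$, so their interpretation on $V(\A_K)$ is determined tuple-by-tuple from the pointwise truth in the $V(K_v)$, which is exactly the formalism of Subsection \ref{ssec-lang}.

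Having placed $V(\A_K)$ inside the framework of Section \ref{sec-rest}, the conclusion is now a direct invocation of Theorem \ref{restricted-qe}: given any $\cL_{Boolean}^+(L^V)$-formula $\Psi(x_1,\ldots,x_n)$, the theorem produces $L^V$-formulas $\Psi_1,\ldots,\Psi_m$ with the same free variables and an $\cL_{Boolean}^+$-formula $\Theta(X_1,\ldots,X_m)$ such that truth of $\Psi(a_1,\ldots,a_n)$ in $\prod^{(\Phi_V)} V(K_v)$ is equivalent to $\P(V_K)^+\models \Theta([[\Psi_1(a_1,\ldots,a_n)]],\ldots,[[\Psi_m(a_1,\ldots,a_n)]])$, with $[[\cdot]]$ computed componentwise as in Subsection \ref{ssec-lang}. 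Transporting via the identification above gives the displayed equivalence for $V(\A_K)$.

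I expect the main step requiring care (rather than an outright obstacle) to be the independence-from-affine-presentation point, namely that the language $L^V$ and the formula $\Phi_V$ do not genuinely depend on the chosen embedding $V\hookrightarrow \A^n$. This is handled by noting that $L^V$ was defined intrinsically in terms of $\Q$-subvarieties of powers of $V$ and that Theorem \ref{CDLM-th} gives the uniform definition of $\cO_v$ in $K_v$ in $\cL_{\rm rings}$, which is interpretable in $L^V$ via the coordinate functions on the ambient affine space. Once this is acknowledged, the proof is a mechanical application of Theorem \ref{restricted-qe}, and the argument extends to any abstract variety $V$ by gluing over an affine covering in the standard way of \cite{weil-adeles-gps}.
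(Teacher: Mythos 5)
Your proposal is correct and follows essentially the same route as the paper: represent $V(\A_K)$ as the restricted product $\prod_{v\in V_K}^{(\Phi_V)} V(K_v)$ via the uniformly $L^V$-definable formula $\Phi_V$ supplied by Theorem~\ref{CDLM-th}, and then apply Theorem~\ref{restricted-qe}. The one small addition beyond the paper's exposition — the observation that $L^V$ being purely relational makes the substructure hypothesis for forming the restricted product automatic — is a nice clarification, and the closing remark about independence from the affine presentation matches the paper's appeal to Weil's covering argument.
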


\begin{cor}[Derakhshan-Macintyre {\cite{DM-ad}}]\label{adele-space-cor} 
An $\cL_{Boolean}^{fin}(L^V)$-definable subset of $V(\A_K)$ is a Boolean combination of sets defined by the formulas 
\begin{itemize}
\item $Fin([[\psi(x_1,\dots,x_n)]])$,
\item $C_j(\phi([[x_1,\dots,x_n]]))$,
\end{itemize} 
where $j\geq 1$, and $\psi$ and $\phi$ are $L^V$-formulas.
\end{cor}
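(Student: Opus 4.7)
The plan is to chain together Theorem~\ref{adele-space-th} (the Feferman--Vaught style reduction for $V(\A_K)$) with the quantifier elimination of Theorem~\ref{bool1} for the enriched Boolean theory $T^{fin}$, and then collapse the resulting Boolean terms back into single Boolean values of $L^V$-formulas.

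First, let $X \subseteq V(\A_K)^n$ be $\cL_{Boolean}^{fin}(L^V)$-definable by a formula $\Psi(x_1,\dots,x_n)$. Applying Theorem~\ref{adele-space-th} with $\cL_{Boolean}^{+}=\cL_{Boolean}^{fin}$ produces $L^V$-formulas $\Psi_1,\dots,\Psi_m$ and an $\cL_{Boolean}^{fin}$-formula $\Theta(Y_1,\dots,Y_m)$ such that membership in $X$ is equivalent to
\[
\P(V_K)^+ \models \Theta([[\Psi_1(\bar a)]],\dots,[[\Psi_m(\bar a)]]).
\]
Since $\P(V_K)^+$ is an infinite atomic Boolean algebra with the finite ideal distinguished, Theorem~\ref{bool1} replaces $\Theta$ by a quantifier-free $\cL_{Boolean}^{fin}$-formula $\Theta'$, i.e.\ a Boolean combination of the basic predicates $C_j(t)$, $Fin(t)$, and equations $s=t$ of Boolean terms in $Y_1,\dots,Y_m$.

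Second, I would eliminate the Boolean terms. The key observation is that the Boolean value operator commutes with Boolean connectives:
\[
[[\alpha]]\wedge[[\beta]]=[[\alpha\wedge\beta]],\qquad [[\alpha]]\vee[[\beta]]=[[\alpha\vee\beta]],\qquad \neg[[\alpha]]=[[\neg\alpha]].
\]
Consequently, for every Boolean term $t(Y_1,\dots,Y_m)$ there is a single $L^V$-formula $\chi_t(x_1,\dots,x_n)$ with
$t([[\Psi_1(\bar a)]],\dots,[[\Psi_m(\bar a)]])=[[\chi_t(\bar a)]]$. Applying this to every Boolean term occurring in $\Theta'$, each atomic subformula of $\Theta'$ takes one of the forms $C_j([[\chi]])$, $Fin([[\chi]])$, or $[[\chi_1]]=[[\chi_2]]$ for suitable $L^V$-formulas $\chi,\chi_1,\chi_2$.

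Third, the residual equalities must be rewritten. Since $[[\chi_1]]=[[\chi_2]]$ iff $[[\chi_1\wedge\neg\chi_2]]=0$ and $[[\neg\chi_1\wedge\chi_2]]=0$, and $[[\eta]]=0$ is equivalent to $\neg C_1([[\eta]])$ in an atomic Boolean algebra, every equality becomes a Boolean combination of $C_1([[\cdot]])$ clauses applied to $L^V$-formulas. Combining with the previous step, $\Theta'$ becomes a Boolean combination purely of sets of the form $C_j([[\phi]])$ and $Fin([[\psi]])$ with $\phi,\psi$ in $L^V$, which is exactly the form required by the corollary.

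There is no genuine obstacle here; the two heavy inputs (Theorems~\ref{adele-space-th} and~\ref{bool1}) do all of the work, and the remainder is the straightforward bookkeeping that pushes Boolean connectives inside $[[\cdot]]$ and rewrites equations using $C_1$. The only point demanding a little care is making sure that the commutation identities for $[[\cdot]]$ with Boolean connectives are legitimate in the enriched Boolean algebra $\P(V_K)^+$, which is immediate from the set-theoretic definition of $[[\Phi(\bar a)]]$ as $\{v: V(K_v)\models \Phi(\bar a(v))\}$ together with the fact that satisfaction of Boolean combinations of $L^V$-formulas in each stalk distributes over the corresponding set-theoretic operations.
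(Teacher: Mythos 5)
Your proposal is correct and takes essentially the same route as the paper, which states simply that the corollary is immediate from Theorem~\ref{adele-space-th} and Theorem~\ref{bool1}. The bookkeeping you supply (pushing connectives inside $[[\cdot]]$, rewriting Boolean equalities via $\neg C_1([[\cdot]])$) is exactly what the paper's ``immediate'' is compressing, and it is sound.
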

\begin{proof} Immediate by Theorem \ref{adele-space-th} and Theorm \ref{bool1}.\qed\end{proof}

\begin{note} If $W$ is a subvariety of $V$, both defined over $\Q$. Then $W(\A_K)$ is an $\cL_{Boolean}^{fin}(L^V)$-definable subset of $V(\A_K)$.
using the Boolean condition $[[..]]=1$.\end{note}

\begin{note} We do not have idempotents in $V(\A_K)$ as we have for the case of adeles $\A_K$ as $V(\A_K)$ is merely a locally compact topological space and not a ring.\end{note}

Theorem \ref{adele-space-th} and Corollary \ref{adele-space-cor} are suitable for proving results on definable subsets of $V(\A_K)$ and their measures.


\



\medskip

\subsection{\bf Tamagawa measures on adele spaces}\label{ssec-tam} 

\

\medskip

Let $V$ be a smooth algebraic variety defined over $K$. 
Let $\omega$ an algebraic differential form on $V$ defined over $K$ of top degree $n=dim(V)$. 
For any valuation $v$ of $K$, the form $\omega$ induces a measure $\omega_v$ on the topological space $V(K_v)$. See Weil's book \cite{weil-adeles-gps}, Chapter 2.2.
 
The measure on $V(\A_K)$ may or may not converge. It converges when $V$ is a semisimpe algebraic group (see \cite{weil-adeles-gps}). In many other cases it diverges, and one has to use convergence factors. 
Let $$\mu_v(V)=\int_{V(\mathcal{O}_v)} \omega_v.$$
A set of convergence factors for $V$ is defined to be a collection $(\lambda_v)_v$ of strictly positive real numbers indexed by the valuations $v\in V_K$ such that the 
product $$\prod_{v\in V_K^{fin}} \lambda_v^{-1} \mu_v(V)$$ is absolutely convergent.

The Tamagawa measure $\tau_V$ on $V(\A_K)$ derived from the form $\omega$ by means of the convergence factors $(\lambda_v)_v$ is defined to be the measure on $V(\A_K)$ inducing in each product
$$\prod_{v\in S} V(K_v) \times \prod_{v\notin S} V(\A_K)$$
the product measure 
$$\mu_K^{-dim(V)} \prod_{v\in V_K} (\lambda_v^{-1}\omega_v),$$
for any finite subset $S$ of $V_K$ containing all the 
Archimedean valuations, where $\mu_k=|\Delta_K|^{1/2}$ and $\Delta_K$ is the discriminant of $K$. One usually puts $\lambda_v=1$ for all Archimedean $v$.

\

(i) {\bf $GL_n$ and $SL_n$.} The Tamagawa measure on the additive group $\Bbb G_a(\A_K)$ and on $SL_m(\A_K)$ are convergent as 
$\int_{\Bbb G_a(\mathcal{O}_v)} \omega_v=1$, and 
$$\int_{SL_m(\mathcal{O}_v)} \omega_v=(1-q_v^{-2})\dots (1-q_v^{-m}).$$
The Tamagawa measure on the multiplicative group $\Bbb G_m(\A_K)$ and on $GL_m(\A_K)$ are divergent since
$\int_{\Bbb G_m(\mathcal{O}_v)} \omega_v=1-q_v^{-1}.$ and 
$$\int_{GL_m(\mathcal{O}_v)} \omega_v=(1-q_v^{-1})\dots (1-q_v^{-m}).$$
In these divergent cases, we can use the convergence factors $\lambda_v=(1-q_v^{-1})$ for $v\in V_K^{fin}$, 
and $\lambda_v=1$ for Archimedean $v$, to get a convergent adelic measure. Here, as before, $q_v$ is the cardinality of the residue field of $K_v$.

\

(ii) {\bf Hypersurfaces.} Generalizing work of Weil and Tamagawa (see \cite{weil-adeles-gps})
around an adelic interpretation and proof of the Siegal mass formula on quadratic forms, 
Ono \cite{ono-int} studied adele spaces of hypersurfaces. 

Let $f(x_1,\dots,x_n)$ be a non-constant absolutely irreducible polynomial over $K$. 
Let $\Omega$ be a universal domain containing $K$. Let 
$$W_f=\{(x_1,\dots,x_n)\in \Omega^n: F(x_1,\dots,x_n)\neq 0\}.$$
Identify this $K$-open set with the non-singular hypersurface 
$$\{(x_1,\dots,x_n,y)\in \Omega^{n+1}: F(x_1,\dots,x_n)y=1\}.$$
Then 
$$W_f(\A_K)=\{(x_1,\dots,x_n) \in \A_K^n: f(x_1,\dots,x_n)\in \I_K\}.$$

The $K$-open subset $W_f$ of $\Omega^n$, carries a gauge form $\omega$ induced from the 
form $dx_1\dots dx_n$ on $\Omega^n$. Ono \cite{ono-int} proved that the numbers defined by $\lambda_v=1-q_v^{-1}$ when $v$ is non-Archimedean, and $\lambda_v=1$ if $v$ is Archimedean, form convergence factors for $W_f$. Let $\tau_{W_f}$ denote  the Tamagawa measure on $W_f(\A_K)$ derived from $\omega$ by means of $\lambda_v$. 

\

(iii) {\bf Zeta integrals.} In Tate's thesis \cite{tate-thesis} on the analytic continuation and functional equation for the zeta-function of a number field and more general zeta integrals associated to characters of the idele class group $\I_K/K^*$, the measure that is used on $\I_K$ has its local factors of the form $(1-q_v^{-1})^{-1}d^*x_v$ at each non-Archimedean $v$ where $q_v$ is the cardinality of the residue field of $K_v$ (with certain normalizations for the Archimedean factors). Here the convergence factors are $(1-q_v^{-1})$. 

See Subsection \ref{ssec-char} for a description of the zeta integrals in Tate's thesis and model-theoretic approach and questions.

In \cite{DM-ad}, we prove  that the above normalization factors are uniformly definable. 

\begin{thm}[Derakhshan-Macintyre {\cite{DM-ad}}]\label{conv-fac} Let $K$ be a number field. There is an $\cL_{Denef-Pas}$-definable set of convergence factors (defined independently of $p$ and without parameters) for the following measures on adelic points:
\begin{itemize}
\item The Tamagawa measures on $\Bbb G_m(\A_K)$ and $GL_m(\A_K)$,
\item The Tamagawa measure on $W_f(\A_K)$, where $f$ is an absolutely irreducible polynomial over $K$.
\item The measure used by Tate on the ideles $\I_K$ for analytic continuation and functional equation for zeta functions and zeta integrals of characters.\end{itemize}\end{thm}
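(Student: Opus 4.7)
The plan is to exhibit a single $\cL_{Denef\text{-}Pas}$-formula, without parameters, whose local measure at each non-Archimedean completion equals the required convergence factor, and then to verify that this factor works for each of the three adelic measures on the list. Since inspection of the normalizations in (i)--(iii) shows that in each of the three cases one may take the non-Archimedean convergence factors to be
$$\lambda_v = 1 - q_v^{-1},\qquad v\in V_K^{fin},$$
with $\lambda_v = 1$ at Archimedean $v$, it suffices to realise this particular sequence as a uniformly definable object.

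First I would take the one-variable formula $\varphi(x)$ of the field sort defined by $v(x)=0$, which is legitimate in $\cL_{Denef\text{-}Pas}$ because $v$ is a function symbol from the field sort to the value group sort and $0$ is the identity of $\cL_{group}$. For every non-Archimedean $K_v$, its interpretation is precisely the group of units $\cO_v^{\times}$. Since $\cO_v = \cO_v^{\times} \sqcup \unif_v \cO_v$ and the additive Haar measure $dx_v$ of Subsection \ref{ssec-adeles} is normalized so that $\mu_v(\cO_v)=1$, the translation-invariance of $dx_v$ together with $\mu_v(\unif_v\cO_v)=q_v^{-1}$ yields $\mu_v(\varphi(K_v)) = 1-q_v^{-1}$, uniformly in $v$ and in $K$. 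Thus the single formula $\varphi$ produces the desired local convergence factor at every non-Archimedean place simultaneously, in the sense that $\lambda_v := \mu_v(\varphi(K_v))$ gives the required numbers.

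Next I would verify, case by case, that the $\lambda_v$ so obtained are convergence factors for the three Tamagawa measures. For $\G_m$ and $GL_m$ this is immediate from the formulas in (i): $\lambda_v^{-1}\mu_v(\G_m) = 1$ and $\lambda_v^{-1}\mu_v(GL_m) = \prod_{i=2}^{m}(1-q_v^{-i})$, and both Euler products converge absolutely because $\sum_v q_v^{-i}$ converges for $i\geq 2$ (a well-known consequence of convergence of the Dedekind zeta function of $K$ for $\Re(s)>1$). For $W_f$, Ono's theorem in \cite{ono-int} is exactly the statement that the $1-q_v^{-1}$ are convergence factors for $\omega$ on $W_f$. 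For Tate's measure on $\I_K$, the factor $(1-q_v^{-1})^{-1}d^{*}x_v$ coming from Subsection \ref{ssec-adeles} is the standard choice used in \cite{tate-thesis}, and the same $\lambda_v$ restores absolute convergence of the zeta integral in the region of convergence.

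The main subtlety, rather than a genuine obstacle, is the Archimedean convention: $\cL_{Denef\text{-}Pas}$ is designed for non-Archimedean valued fields, and in the interpretation of Subsection \ref{ssec-venrich} the valuation on an Archimedean $K_v$ is declared to be trivial, so that $\varphi(K_v) = K_v^{\times}$ has infinite measure there. This is harmless because $\mathrm{Arch}(K)$ is finite: we simply stipulate $\lambda_v = 1$ for $v\in \mathrm{Arch}(K)$, which is compatible with all three conventions in (i)--(iii) and does not affect the absolute convergence of the Euler product over $V_K^{fin}$. With this stipulation the theorem follows, and the $\cL_{Denef\text{-}Pas}$-formula $\varphi(x):= v(x)=0$ serves simultaneously for all three measures listed, independently of $p$ and of $K$.
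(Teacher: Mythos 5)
Your proof is correct and takes essentially the same route the paper has in mind: in all three cases the non-Archimedean convergence factors can be taken to be $1-q_v^{-1}$, and these are realised uniformly as the measures of the $\cL_{Denef\text{-}Pas}$-definable set $\{x : v(x)=0\}=\cO_v^{\times}$, with the finitely many Archimedean places handled by the trivial stipulation $\lambda_v=1$. The case-by-case verification using the explicit local volumes for $\Bbb G_m$ and $GL_m$, Ono's theorem for $W_f$, and Tate's normalization $(1-q_v^{-1})^{-1}d^{*}x_v$ matches the ingredients laid out in Subsection~\ref{ssec-tam}.
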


Uniform definability of the convergence factors enables the use of model-theoretic tools to evaluate the local $p$-adic integrals with respect to measures induced from differential forms, following Denef, Loeser, Cluckers, and others on motivic integration, cf. \cite{Denefrationality}, \cite{DL}, \cite{CL2}. Then the results of \cite{zeta1} which are stated in Section \ref{ssec-euler} yield analytic properties of the Euler product of the local integrals as global integrals. See Sections \ref{ssec-char} and \ref{ssec-lang} 
for examples of Euler products in connection with Tate's thesis and the Langlands program.

In the model-theoretic approach to $p$-adic and motivic integration one integrates functions of the form $|f(x)|^s$ over definable sets, where $f$ is a definable function from $\Q_p^n\rightarrow \Q_p$ (or a finite extension of $\Q_p$). 
It is important to try to integrate other functions. Some ideas and guiding themes for this can be found in Section \ref{ssec-lang} in the context of automorphic forms.


\begin{prob}\cite{DM-ad}\label{adelicKZ-prob} What can be said about the numbers that are Tamagawa measures of adelic spaces of varieties?
\end{prob}

The work of Kontsevich and Zagier in \cite{KZ} concerns periods which are complex numbers whose real and imaginary parts are absolutely convergent integrals, over real semi-algebraic subsets of $\R^n$, of rational functions with rational coefficients.

Problem \ref{adelicKZ-prob} can be regarded as an adelic version of some questions of Kontsevich-Zagier on numbers that arise as periods. One expects the Tamagawa measures of definable sets to be related to $L$-functions, cf. \cite{manin-book}.


\section{\bf Boolean Presburger predicates and Hilbert symbol}\label{sec-recip}

As in \ref{ssec-benrich} $\cL_{Boolean}^{fin,res}$ denotes the extension of $\cL_{Boolean}^{fin}$ got by adding the Presburger predicates $Res(n,r)(x)$ 
for all $n,r$. We consider $\A_K$ as a structure for the language $\cL^{fin,res}_{Boolean}(\cL_{rings})$. 

\begin{thm} [Derakhshan-Macintyre {\cite{DM-ad}}]\label{dec-pres} The $\cL^{fin,res}_{Boolean}(\cL_{rings})$-theory of $\A_K$ is decidable and has quantifier elimination.
\end{thm}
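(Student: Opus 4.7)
The plan is to stack three quantifier-elimination results and then run the decision procedure of Subsection \ref{ssec-dec} with one extra ingredient. First, view $\A_K$ as the restricted product $\prod^{(\Phi_{val})}_{v\in V_K} K_v$, where $\Phi_{val}$ is the uniform valuation-ring formula of Theorem \ref{CDLM-th} (trivially satisfied in the Archimedean $K_v$, or, alternatively, splitting the Archimedean finite factor off as in the remark after Corollary \ref{qe-fin}). Apply Theorem \ref{restricted-qe} with $L=\cL_{rings}$ and $\cL_{Boolean}^{+}=\cL_{Boolean}^{fin,res}$: every $\cL_{Boolean}^{fin,res}(\cL_{rings})$-formula $\Psi(x_1,\dots,x_n)$ is effectively equivalent to
$$\Theta\bigl([[\Psi_1(x_1,\dots,x_n)]],\dots,[[\Psi_m(x_1,\dots,x_n)]]\bigr),$$
where the $\Psi_i$ are $\cL_{rings}$-formulas in the stalks and $\Theta$ is an $\cL_{Boolean}^{fin,res}$-formula interpreted in $\mathcal{P}(V_K)$ enriched by $Fin$ and the $Res(n,r)$.

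For the quantifier elimination half, apply Theorem \ref{bool2} to replace $\Theta$ by a quantifier-free $\cL_{Boolean}^{fin,res}$-formula in its free variables, and apply uniform quantifier elimination for the $K_v$ in a suitable enrichment (Belair for $K=\Q$, Basarab or Pas in general) to eliminate field quantifiers inside the $\Psi_i$. Since the Boolean algebra $\B_K$ of idempotents, its ideal $Fin_K$, and the predicates $Res(n,r)$ are all $\cL_{rings}$-definable in $\A_K$ by Theorem \ref{defid} together with counting of finite idempotent-supports, the Boolean values $[[\cdot]]$ are internally realized in $\A_K$, and the composed reduction furnishes quantifier elimination for $\A_K$ in $\cL_{Boolean}^{fin,res}(\cL_{rings})$, uniformly in $K$.

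For decidability, reduce any $\cL_{Boolean}^{fin,res}(\cL_{rings})$-sentence, via the first step, to a Boolean combination of atomic conditions of the form $[[\sigma]]=0$, $[[\sigma]]=1$, $C_j([[\sigma]])$, $Fin([[\sigma]])$, $Res(n,r)([[\sigma]])$, where $\sigma$ ranges over $\cL_{rings}$-sentences. As in Subsection \ref{ssec-dec}, Ax-Kochen-Ershov (Theorem \ref{ake}) produces an effectively computable constant $C$ and a residue-field sentence $\tau$ such that for every non-Archimedean $v$ of residue characteristic $\geq C$ one has $K_v\models\sigma\iff k_v\models\tau$. By Ax's Theorem \ref{ax-almost-all} one decides whether $\tau$ holds in almost all $\F_p$ and lists the exceptional primes. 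Combining this with decidability of $\R$, $\C$ and each finite extension of $\Q_p$ for $p<C$ (individually, by Ax-Kochen-Ershov plus Presburger), the set $\{v\in V_K:K_v\models\sigma\}$ is either cofinite, or finite with an explicit, effectively computable list of members. All of $Fin$, $C_j$ and $Res(n,r)$ are then decidable from this list.

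The essentially new obstacle beyond Subsection \ref{ssec-dec} is the $Res(n,r)$ predicate: it forces us not merely to detect whether $[[\sigma]]$ is finite and of size $\geq j$, but to compute its exact cardinality modulo $n$ whenever it is finite. This demands effectively enumerating the finite set of exceptional valuations (handled by Ax) \emph{and} deciding $\sigma$ at each individual $K_v$ in that list; the latter is the delicate point, and relies on decidability of each fixed finite extension of $\Q_p$ together with uniform control over which primes are exceptional. Once this count is in hand, the Boolean-Presburger axioms of Theorem \ref{bool2} do the rest, and the quantifier-elimination half is a mechanical composition of Theorems \ref{restricted-qe} and \ref{bool2}.
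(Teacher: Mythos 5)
Your proposal is correct and follows essentially the same route the paper takes: reduce to the Boolean values via Theorem \ref{restricted-qe}, eliminate Boolean quantifiers via Theorem \ref{bool2}, and run the Ax--Kochen--Ershov plus Ax decision procedure of Subsection \ref{ssec-dec} to settle the Boolean atoms. One small remark: the $Res(n,r)$ predicate is not really a new obstacle beyond the $C_j$ case, since the paper's decision procedure for $C_j([[\phi]])$ already lists the exceptional valuations and computes the exact cardinality of $[[\phi]]$ when finite; reducing that cardinality modulo $n$ is then immediate, and for general $K$ one uses Ax's theorem for prime-power fields $\F_q$ rather than only $\F_p$.
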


This follows from the decidability of the theory of infinite atomic Boolean algebras $T^{fin,res}$ in the language $\cL_{Boolean}^{fin,res}$ proved by Macintyre and myself in \cite{DM-bool}, see also \cite{DM-ad} and Subsection \ref{ssec-benrich}. 

Even though the rational field $\Q$ is undecidable, it turns out that there is an $\cL^{fin,res}_{Boolean}(\cL_{rings})$-definable subset of $\A_{\Q}$ that contains $\Q^*$. 

\begin{prop} [Derakhshan-Macintyre {\cite{DM-ad}}]\label{hilbert1} Any $a\in \Q^*$ is a non-square in $\Q_p$ only for an even number of $p$'s.\end{prop}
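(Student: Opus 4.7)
The plan is to read the proposition as the parity assertion that for every $a\in\Q^*$ the cardinality of
$$S_a := \{p\in V_\Q : a\notin (\Q_p^*)^2\}$$
is even \emph{whenever it is finite}. Under this reading the proof reduces to showing that $S_a$ is either empty or infinite, so that the only finite value $|S_a|$ can attain is $0$.

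If $a=b^2$ with $b\in\Q^*$, then $a$ is a square in every completion $\Q_p$, so $S_a=\emptyset$ and $|S_a|=0$. Suppose instead that $a\in\Q^*\setminus(\Q^*)^2$, so that $L:=\Q(\sqrt{a})$ is a proper quadratic extension of $\Q$. Applying the Chebotarev density theorem to the Galois extension $L/\Q$, the set of rational primes that are inert in $L$ has Dirichlet density $\tfrac{1}{2}$; at each such unramified inert prime $p$, $a$ is a non-square unit modulo $p$, hence a non-square in $\Q_p$. Thus $S_a$ is infinite. A more elementary route avoids Chebotarev: writing the squarefree part of $a$ as $\pm 2^{\epsilon} p_1\cdots p_r$, quadratic reciprocity identifies the Kronecker symbol $\bigl(\tfrac{a}{\cdot}\bigr)$ with a nontrivial Dirichlet character modulo a suitable $N$, and Dirichlet's theorem on primes in arithmetic progressions produces infinitely many $p$ with $\bigl(\tfrac{a}{p}\bigr)=-1$.

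The number-theoretic content is classical; the main obstacle is really packaging it inside the first-order language $\cL^{fin,res}_{Boolean}(\cL_{rings})$ so as to exhibit the advertised definable set containing $\Q^*$. Using Theorem~\ref{bool2}, Theorem~\ref{defid}, and the $\cL_{rings}$-definability of the Boolean value $[[\cdot]]$ explained in Subsection~\ref{ssec-idem}, the set
$$\bigl\{\,a\in\A_\Q \;:\; Fin\bigl([[\neg\exists y\,(y^2=a)]]\bigr)\;\longrightarrow\;Res(2,0)\bigl([[\neg\exists y\,(y^2=a)]]\bigr)\,\bigr\}$$
is $\cL^{fin,res}_{Boolean}(\cL_{rings})$-definable. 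By the proposition it contains every $a\in\Q^*$: on $(\Q^*)^2$ the Boolean value is $0$, so $Res(2,0)$ holds; on $\Q^*\setminus(\Q^*)^2$ the Boolean value is supported on an infinite set of places by the step above, so $Fin(\cdot)$ fails and the implication is vacuous.
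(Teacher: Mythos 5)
Your reading of the proposition and your argument for it are both correct, but they take a genuinely different route from the paper. The paper's proof cites the Hilbert symbol product formula $\prod_{p}(a,b)_p=1$, which yields even parity for the finite set $\{p : (a,b)_p=-1\}$ with $(a,b)\in(\Q^*)^2$; this is the right input for Proposition \ref{hilb}(2) on the kernel $\mathcal{K}$, since the Hilbert symbol is a pairing and $(a,b)_p=1$ for all but finitely many $p$. It does not, however, directly yield anything about the set $S_a=\{p : a\notin(\Q_p^*)^2\}$ attached to a \emph{single} $a$. Indeed, as your argument makes precise via Chebotarev (or, more elementarily, quadratic reciprocity plus Dirichlet), $S_a$ is infinite whenever $a$ is not a perfect square in $\Q^*$, so the proposition's "even number of $p$'s" can only be read with the implicit convention that the count is taken when finite, in which case the only finite value is $0$. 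This observation also sharpens the discussion around Proposition \ref{hilb}(1): the set $\{a:\;Res(2,0)([[\neg P_2(a)]])\}$ does not contain the non-squares in $\Q^*$ (for those, $[[\neg P_2(a)]]$ has infinite support and $Res(2,0)$ fails outright), whereas the implication form $Fin([[\neg P_2(a)]])\Rightarrow Res(2,0)([[\neg P_2(a)]])$ that you propose is $\cL^{fin,res}_{Boolean}(\cL_{rings})$-definable and does contain all of $\Q^*$. So what you buy with the density argument is a clean proof of the repaired one-variable statement, plus a diagnosis of why the one-variable claim cannot be extracted from the Hilbert symbol story alone; what the paper's Hilbert-symbol route buys is the genuinely two-variable parity statement underlying $\mathcal{K}$, which is where the product formula really does its work.
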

This follows from basic properties of the Hilbert symbol.  
Let $p$ be a prime or $p=\infty$, where $\Q_{\infty}=\R$. 
The Hilbert symbol $(a,b)_p$, for $a,b \in \Q_p$ 
is defined as follows.
$$
(a,b)_p=\begin{cases}
 1 ~ ~ ~ \text{if $ax^2+by^2-z^2$ has a non-zero solution $(x,y,z)\neq (0,0,0)$}\\
 -1 ~ ~ \text{otherwise.}
\end{cases}
$$ 
If $p\neq 2$ and $|a|_p=|b|_p=1$ then $(a,b)_p=1$. If $a,b\in \Q$, then, $(a,b)_p=1$ for large $p$. 
The product formula for the Hilbert symbol states that for $a,b \in \Q^*$,
$$\prod_{p \in \{\mathrm{Primes}\}\cup \{\infty\}} (a,b)_p=1.$$
(see \cite{cassels-local}, pp. 46).

It further follows that 
$\prod_{p \in \{\mathrm{Primes}\}\cup \{\infty\}} (a,b)_p=1$ if and only if the number of $(a,b) \in (\Q^*)^2$ such that 
$(a,b)_p \neq 1$ is even. 

Now we can define the set of all adeles $a\in \A_{\Q}$ such that $a(p)$ is a square at an even number of $p$ by the formula
$$\B_{\Q}^+\models Res(2,0)([[\neg P_2(x)]]),$$
where $\B_{\Q}^+$ is the expansion of $\B_{\Q}$ to the language $\cL^{fin,res}_{Boolean}$ and 
$P_2(x)$ is the formula that $x$ is a square. 

Similarly, let $\theta(a,b)$ be the formula $$\exists x \exists y \exists z (ax^2+by^2-z=0).$$
Let 
$$\mathcal{K}=\{(a,b)\in \A_{\Q}: Res(2,0)([[\theta(a,b)]])\},$$
We call this set the adelic kernel of the Hilbert symbol. 

By the product formula for the Hilbert symbol
$$(\Q^*)^2 \subseteq \mathcal{K}$$
where the inclusion of $(\Q^*)^2 \subseteq (\A_{\Q})^2$ is induced from the diagonal inclusion of each factor. 

We have shown the following.
\begin{prop} [Derakhshan-Macintyre {\cite{DM-ad}}] \label{hilb}\noindent
\begin{enumerate}
\item The set of all adeles $a\in \A_{\Q}$ such that $a(p)$ is a non-square in $\Q_p$ for an even number of $p$ is an $\cL^{fin,res}_{Boolean}(\cL_{rings})$-definable subset of $\A_{\Q}$ containing $\Q^*$.
\item The adelic kernel of the Hilbert symbol 
$\mathcal{K}$ is an $\cL^{fin,res}_{Boolean}(\cL_{rings})$-definable subset of $\A_K^2$ containing $(\Q^*)^2$.\end{enumerate}\end{prop}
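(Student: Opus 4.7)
The plan is to assemble both clauses from three inputs already available in the excerpt: the uniform $\cL_{rings}$-definability of Boolean values established in Subsection \ref{ssec-idem}, the fact that $Res(2,0)$ is a primitive predicate of the enriched Boolean language $\cL^{fin,res}_{Boolean}$ (Theorem \ref{bool2}), and the classical product formula for the Hilbert symbol, of which Proposition \ref{hilbert1} is the relevant consequence.

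For the definability assertions I would proceed as follows. The predicate ``$x$ is a non-zero square'' is expressed in every completion $\Q_p$ by a single $\cL_{rings}$-formula $P_2(x)\equiv x\ne 0 \wedge \exists y\,(y^2=x)$; similarly, the property that $ax^2+by^2=z^2$ admits a non-trivial zero is an $\cL_{rings}$-formula $\theta(a,b)$, uniformly in $p$. By Subsection \ref{ssec-idem}, for any $\cL_{rings}$-formula $\varphi(\bar x)$ the assignment $\bar a \mapsto [[\varphi(\bar a)]]$ is an $\cL_{rings}$-definable function $\A_\Q^n \to \B_\Q$, independently of the field. Applied to $\neg P_2$ and to $\neg\theta$, this produces $\cL_{rings}$-definable maps into $\B_\Q$. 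Since $Res(2,0)$ belongs by construction to the enriched Boolean language $\cL^{fin,res}_{Boolean}$, the composite conditions $Res(2,0)([[\neg P_2(x)]])$ and $Res(2,0)([[\neg\theta(a,b)]])$ are $\cL^{fin,res}_{Boolean}(\cL_{rings})$-formulas defining the two subsets in the proposition.

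For the containment assertions I would invoke Proposition \ref{hilbert1} for part (1) and the product formula $\prod_v (a,b)_v = 1$ for part (2). In part (1), if $a \in \Q^*$ is viewed diagonally in $\A_\Q$, then by Proposition \ref{hilbert1} the set of places at which $a$ fails to be a square is finite and of even cardinality, so the idempotent $[[\neg P_2(a)]] \in \B_\Q$ has finite support with an even number of atoms below it, which is exactly $Res(2,0)([[\neg P_2(a)]])$. In part (2), if $(a,b)$ is the diagonal image of a pair in $(\Q^*)^2$, then $\neg \theta(a,b)$ holds in $\Q_v$ precisely at the places where $(a,b)_v = -1$; by the product formula this is a finite set of even cardinality, so $Res(2,0)([[\neg\theta(a,b)]])$ is satisfied and $(a,b) \in \mathcal{K}$.

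The serious work has been pushed outside this proposition. The definability half is routine given the framework of Subsection \ref{ssec-idem} and the enriched Boolean language; the arithmetic half rests entirely on Proposition \ref{hilbert1} and the Hilbert symbol product formula, both of which are invoked here as a black box from classical local class field theory. The one book-keeping subtlety, and the only place one can go wrong, is that $Res(2,0)$ forces its argument to lie in $\mathcal{F}in$, so one must work with the set of places where the Hilbert symbol equals $-1$ (that is, with $\neg\theta$ rather than $\theta$) in order that the product formula actually supplies the desired evenness.
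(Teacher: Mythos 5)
Your plan reproduces the paper's intended argument: the definability half rests on the uniform $\cL_{rings}$-definability of the Boolean value maps (Subsection \ref{ssec-idem}) composed with the $Res(2,0)$ predicate of $\cL^{fin,res}_{Boolean}$, and the containment half rests on Proposition \ref{hilbert1} together with the Hilbert product formula. Your observation that one must use $[[\neg\theta(a,b)]]$ rather than $[[\theta(a,b)]]$ (only the former has finite support for rational $a,b$) is correct and in fact silently repairs a misprint in the paper's own displayed formula for $\mathcal{K}$.

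There is, however, a step in your clause (1) argument that will not go through, and it is inherited from the paper rather than introduced by you. You take Proposition \ref{hilbert1} as a black box, but that proposition is false as stated: for a rational $a$ that is not a perfect square, the set $\{p : a \text{ is a non-square in } \Q_p\}$ has positive Dirichlet density (for $a=2$, for instance, it contains every $p\equiv 3,5 \pmod 8$), hence is infinite, so $Fin([[\neg P_2(a)]])$ fails and a fortiori $Res(2,0)([[\neg P_2(a)]])$ fails. The set defined in clause (1) therefore contains only the squares in $\Q^*$, not all of $\Q^*$. What the Hilbert product formula genuinely gives is the evenness of the finite set $\{v : (a,b)_v=-1\}$ for fixed $a,b\in\Q^*$, which is the correct engine behind clause (2); a repaired clause (1) should be phrased in terms of such a symbol condition (for instance $Res(2,0)([[\neg\theta(a,a)]])$), not in terms of $\neg P_2$. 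So while your proposal faithfully tracks the paper's argument, the containment assertion in part (1), and the supporting Proposition \ref{hilbert1}, do not hold as written, and the uncritical invocation of that proposition is where your proof breaks.
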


\begin{prob} Extend Proposition \ref{hilb} to general number fields.\end{prob}
This should be compared to the undefinability of $K$ in $\A_K$ (true by the undecidability of $K$ proved by Julia Robinson).

\section{\bf Imaginaries in the adeles and the quotient of the space of adele classes by maximal compact subgroup of idele class group}\label{ssec-adcl} 
Paul Cohen (unpublished notes) and Alain Connes (see for example \cite{connes-selecta}) defined the space of adele classes 
$\A_K/K^*$. It is the quotient of $\A_K$ by the action of $K^*$ by multiplication by right. Their motivation was the Riemann hypothesis, 

Connes and Consani (cf. \cite{connes-c-site}, \cite{CC2},\cite{CC}) studied the space $\hat{\Bbb Z}^*\setminus\A_{\Q} /\Q^{*}$ which is the quotient of $\A_{\Q}/{\Q}^*$ by the action of the maximal compact subgroup $\hat{\Bbb Z}^*$ of the idele class group $\Bbb I_{\Q}/\Q^*$ acting on $\A_{\Q}/{\Q}^*$ from the left. $\hat{\Bbb Z}^*\setminus\A_{\Q} /\Q^{*}$ is related to the arithmetic site topos in \cite{connes-c-site}.

Boris Zilber asked whether $\hat{\Bbb Z}^*\setminus\A_{\Q} /\Q^{*}$ is interpretable 
in $\A_{\Q}^{fin}$ (i.e. its elements are equivalence classes of a definable equivalence relation). This was proved in \cite{DM}.

\begin{thm} [Derakhshan-Macintyre {\cite{DM-ad}}]The set $\hat{\Bbb Z}^*\setminus \A_{\Q} / \Q^*$ is interpretable in $\A_{\Q}$.\end{thm}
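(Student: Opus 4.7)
The plan is to interpret $\hat{\Z}^*\backslash\A_{\Q}/\Q^*$ by producing an $\cL_{\rm rings}$-definable equivalence relation $E$ on $\A_{\Q}$ whose classes are exactly the double cosets $\hat{\Z}^*\cdot x\cdot\Q^*$; the interpretation will then be witnessed by $(\A_{\Q},E)$.

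First I would show that $\hat{\Z}^*$ is $\cL_{\rm rings}$-definable. Using the Archimedean idempotent $e_\infty$ from Subsection 2.5 and the uniform valuation-ring formula $\Phi_{val}(x)$ of Theorem 2.2, I can define $\hat{\Z}^*$ as the set of $u\in\A_{\Q}$ satisfying $e_\infty u=e_\infty$ (so $u_\infty=1$) and such that $u$ is a unit of the valuation ring at every non-Archimedean place; formally $[[\Psi_{unit}(u)]]^{na}=1$, where $\Psi_{unit}(x) := \exists y\,(xy=1 \wedge \Phi_{val}(x) \wedge \Phi_{val}(y))$. The support idempotent $\mathrm{supp}(x)=[[x\neq 0]]$ is also $\cL_{\rm rings}$-definable (cf.\ Subsection 2.4) and is constant on each double coset, so I would stratify $\A_{\Q}$ by $\mathrm{supp}$ and treat each stratum separately; the critical stratum is the idele case $\mathrm{supp}(x)=1$, the other strata being degenerate variants within the stalk-product $\mathrm{supp}(x)\A_{\Q}$.

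On ideles, $E(x,y)$ is equivalent to $y/x\in\hat{\Z}^*\cdot\Q^*$, and by the product formula this subgroup equals $\{z\in\I_{\Q}:|z|_{\A}=1\}$, i.e.\ $|z_\infty|=\prod_p p^{v_p(z_p)}$. The support $\{p:v_p(z_p)\neq 0\}$ is a finite-support idempotent (definable by Theorem 2.1), so the right-hand product is genuinely finite. To avoid naming the diagonal $\Q^*\subset\A_{\Q}$ --- which is not definable by Julia Robinson --- I would introduce an auxiliary adele $w$ existentially and impose: (a)~$v_p(w_p)=v_p(z_p)$ at every finite place, (b)~$w_\infty^2=z_\infty^2$, and (c)~a coherence condition pinning $w$ modulo $\hat{\Z}^*$ to the diagonal image of $\Q_{>0}\hookrightarrow\A_{\Q}$. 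The main obstacle is (c): since the definable quotient $\I_{\Q}^{fin}/\hat{\Z}^*$ is abstractly isomorphic to $\Q_{>0}$ as an abstract group, realizing its Archimedean embedding definably requires exploiting the enriched Boolean language $\cL_{Boolean}^{fin,res}(\cL_{\rm rings})$ of Section 3 --- in particular the Presburger predicates $Res(n,r)$ and the quantifier elimination of Theorem 3.3 --- to reconstruct the positive rational $\prod_p p^{v_p(z_p)}\in\R$ from the integer-valued data $(v_p(z_p))_p$ by a finite first-order combinatorial assembly indexed by the definable finite support. Once (c) is in place, the same analysis carried out inside $\mathrm{supp}(x)\A_{\Q}$ extends $E$ to all of $\A_{\Q}$.
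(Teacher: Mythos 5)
Your plan founders at step (c), and the obstruction is not a detail to be supplied later but a genuine impossibility. Conditions (a), (b), (c), together with the existential quantifier over $w$, are designed to pick out exactly the group $\hat{\Z}^*\Q^*=\{z\in\I_{\Q}:|z|_{\A_{\Q}}=1\}$ inside $\A_{\Q}$; this is the $E$-class of $1$, so it would have to be definable if your $E$ were. But $\hat{\Z}^*\Q^*$ is \emph{not} definable in $\A_{\Q}$, neither in $\cL_{\rm rings}$ nor in $\cL_{Boolean}^{fin,res}(\cL_{\rm rings})$. To see this, apply Corollary \ref{qe-fin} (and the analogous quantifier elimination in the enriched Boolean language, via Theorems \ref{restricted-qe} and \ref{bool2}): any candidate defining formula $\Theta(z)$, parameters allowed, is a Boolean combination of conditions $C_j([[\psi_k(z)]])$, $Fin([[\psi_k(z)]])$, $Res(n,r)([[\psi_k(z)]])$ built from a fixed finite list of one-variable $\cL_{\rm rings}$-formulas $\psi_1,\dots,\psi_l$. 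Fix a finite idele $a^{fin}=(a_p)_p$ and let $z_t:=(t,a^{fin})$ for $t\in\R^*$. Each Boolean value $[[\psi_k(z_t)]]$ depends on $t$ only through whether it contains the Archimedean place, i.e.\ only through the truth of $\R\models\psi_k(t)$. So $\Theta(z_t)$, as a function of $t$ with $a^{fin}$ fixed, factors through the fixed finite partition of $\R^*$ into semi-algebraic pieces cut out by $\psi_1(\R),\dots,\psi_l(\R)$. But $\{t\in\R^*:z_t\in\hat{\Z}^*\Q^*\}=\{\pm\prod_p p^{v_p(a_p)}\}$ is a two-element set that runs through infinitely many distinct values as $a^{fin}$ varies, and the fixed partition contributes only finitely many singletons --- a contradiction. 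In particular the Presburger predicates $Res(n,r)$ cannot rescue (c): they record the cardinality of a support modulo $n$, which has nothing to do with reconstructing the real number $\prod_p p^{v_p(a_p)}$.

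It follows that the double-coset equivalence relation on $\A_{\Q}$ is itself not definable, so your opening declaration --- ``producing an $\cL_{\rm rings}$-definable equivalence relation $E$ on $\A_{\Q}$ whose classes are exactly the double cosets'' --- describes an object that does not exist, and the interpretation cannot be witnessed by $(\A_{\Q},E)$. Any correct proof has to present $\hat{\Z}^*\setminus\A_{\Q}/\Q^*$ via a different definable set and a different definable equivalence relation, with the identifying bijection not induced by a definable map on $\A_{\Q}$. Your preparatory observations --- that $\hat{\Z}^*$ is definable, that $\mathrm{supp}$ is a coset invariant, and that on the ideles $\hat{\Z}^*\Q^*$ is the kernel of the idelic module --- are all correct, but they are precisely what make the impossibility at (c) inescapable.
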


This raises the question of describing the imaginaries in $\A_K$. 

\begin{prob}\cite{DM-ad} Describe the imaginaries in the theory of $\A_K^{fin}$.\end{prob}

Hrushovski-Martin-Rideau \cite{HMR} have proved that $\Q_p$ admits uniform elimination of imaginaries for all $p$ relative to the "geometric sorts". These sorts are the spaces of lattices $GL_n(\Q_p)/G_n(\Z_p)$, for all $n\geq 1$. 
\begin{prob} \cite{DM-ad} Prove an analogue of the Hrushovski-Martin-Rideau theorem for the finite adeles $\A_K^{fin}$.\end{prob}

\begin{prob} \cite{DM-ad} Study the model theory of the space of adele classes $\A_K/K^*$.\end{prob}
$\A_K/K^*$ is 
a hyperring in the sense of Krasner, i.e. a structure with multiplication and a multi-valued addition. See Connes \cite{CC},\cite{CC2} and their references for hyperrings.

In \cite{DM-supp} Macintyre and myself prove quantifier elimination and related model-theoretic results for the Krasner hyperrings $K_v/1+\mathcal{M}_v^n$ for given $n$, and their restricted products $\prod_{v\in V_K^{fin}}^{'} K_v/1+\cM_v^n$ in a language suitable for hyperrings. 
\begin{prob} Prove results analogous to those in \cite{DM-supp} on the restricted product of the Krasner hyperrings for the case of the adele class hyperring $\A_K/K^*$.\end{prob}

\section{\bf Artin reciprocity}\label{ssec-artin} 

We shall only state Artin reciprocity for a global field. There is also a version for local fields which does deserve model-theoretic analysis, but we will not deal with that here.

Let $K\subseteq F$ be an extension of number fields. For any valuation $v\in V_K$, consider $K_v$, and for a valuation $u$ of $F$ lying over $v$ consider $F_u$. We have the norm map $N_{F_u/K_v}:F_u\rightarrow K_v$. This defines the norm map 
on ideles $N_{F/K}: \I_F\rightarrow \I_K$ where $N_{F/K}(x(u))$ is the idele in $\I_K$ whose $v$th component is 
$\prod_{u|v}N_{K_u/F_v}(x(u))$. 

Let $C_K=\I_K/K^{\times}$ denote the idele class group of $K$.
\begin{thm}[Artin Reciprocity] Let $K$ be a global field.
\begin{itemize}
\item There exists a homomorphism called the Artin map $\theta_K: C_K \rightarrow Gal(\bar{K}/K)^{ab}$ such that 
for every finite abelian extension $F/K$, the composition $\theta_{K/F}$ of $\theta_K$ with the projection 
$Gal(\bar{K}/K)^{ab}\rightarrow Gal(F/K)$ is surjective with kernel equal to $N_{F/K}(C_F)$. Conversely, 
any open subgroup $N$ of $C_K$ of finite index has the form $Ker(\theta_{F/K})$ for some finite abelian extension 
$F/K$, and $C_K/N\cong Gal(F/K)$.
\item Let $F/K$ be a finite abelian extension. Let $\frak p$ be a prime in $K$ that is unramified in $F$. Let $x_{\frak p}$ denote the idele $(1,\dots,1,\pi_v,1,\dots,1)$, where $\pi_v$ is a uniformizing element of $K_v^{\times}$ and $v$ corresponds to 
$\frak p$. 
Then the map $\theta_{F/K}$ is induced (modulo $K^{\times}$) from a surjective group homomorphism $\theta_{F/K}:\I_K\rightarrow Gal(F/K)$ which sends $x_{\frak p}$ to $Frob_{\frak p}$. 
\end{itemize}
\end{thm}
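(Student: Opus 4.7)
The plan is to prove this via the standard local-to-global strategy of class field theory: first establish local reciprocity maps at each completion $K_v$, assemble them into a global idelic map, and then show that principal ideles lie in the kernel. The converse (the existence theorem) is handled separately via Kummer theory and an index comparison.

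First I would establish local class field theory for each completion $K_v$. For each $v$ one constructs a continuous reciprocity map $\theta_v\colon K_v^{\times} \rightarrow \mathrm{Gal}(K_v^{\mathrm{ab}}/K_v)$ which, at non-Archimedean $v$, sends a uniformizer $\pi_v$ to a lift of the Frobenius and sends $\mathcal{O}_v^{\times}$ onto the inertia quotient, and which induces isomorphisms $K_v^{\times}/N_{L/K_v}(L^{\times}) \isom \mathrm{Gal}(L/K_v)$ for every finite abelian $L/K_v$. The standard construction identifies the local Brauer group $H^2(\mathrm{Gal}(K_v^{\mathrm{sep}}/K_v), (K_v^{\mathrm{sep}})^{\times})$ with $\Q/\Z$ via a canonical invariant map, and defines $\theta_v$ through cup product with the resulting fundamental class (alternatively one can use Lubin--Tate formal groups for an explicit approach). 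At Archimedean $v$ the map is the obvious one from $K_v^{\times}/N_{\C/\R}(\C^{\times})$ to $\mathrm{Gal}(\C/\R)$.

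Second I would assemble these into the global map. For $x=(x_v) \in \I_K$, define $\theta_K(x) = \prod_v \theta_v(x_v)$, interpreted in $\mathrm{Gal}(\bar K/K)^{\mathrm{ab}}$. This infinite product makes sense when one restricts to any finite abelian extension $F/K$: almost all $v$ are unramified in $F$, and for such $v$ with $x_v \in \mathcal{O}_v^{\times}$ the factor $\theta_v(x_v)$ acts trivially on $F$. Thus $\theta_{F/K}\colon \I_K \rightarrow \mathrm{Gal}(F/K)$ is a well-defined continuous homomorphism, and the formula $\theta_{F/K}(x_{\mathfrak{p}})=\mathrm{Frob}_{\mathfrak{p}}$ at unramified $\mathfrak{p}$ is an immediate consequence of the local statement. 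The core analytical input enters next: one must prove that $\theta_K$ kills $K^{\times}$, i.e.\ the global reciprocity law $\prod_v \theta_v(a) = 1$ for every $a \in K^{\times}$. The standard cohomological route deduces this from the exactness of the Hasse--Brauer--Noether sequence
\[
0 \rightarrow \mathrm{Br}(K) \rightarrow \bigoplus_v \mathrm{Br}(K_v) \xrightarrow{\sum \mathrm{inv}_v} \Q/\Z \rightarrow 0,
\]
which is itself proved using the analytic behaviour of Hecke $L$-functions at $s=1$ (nonvanishing and simple pole of the Dedekind zeta function) together with the first and second inequalities $[C_K : N_{F/K}(C_F)] \geq [F:K]$ and $\leq [F:K]$ for cyclic $F/K$.

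Third, for the converse (the existence theorem) I would show that every open subgroup $N \leq C_K$ of finite index has the form $\ker(\theta_{F/K})$ for some finite abelian $F/K$. The idea is to reduce to the case where $K$ contains enough roots of unity by passing to $K(\mu_n)$, where one can construct abundant abelian extensions by Kummer theory, and then to descend using the norm compatibilities and the already established index formula $[C_K : N_{F/K}(C_F)] = [F:K]$. The isomorphism $C_K/N \isom \mathrm{Gal}(F/K)$ then follows from the surjectivity of $\theta_{F/K}$ (Chebotarev, since its image contains all Frobenius elements) combined with the index computation.

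The hard part will be the reciprocity law itself: establishing that the global product $\prod_v \theta_v(a) = 1$ for $a \in K^{\times}$. Every known proof requires either deep global cohomology (the full strength of the Brauer-group sequence above, whose surjectivity of the sum-of-invariants rests on a careful global computation), or direct analytic input through $L$-series. The two inequalities feeding into this are also substantial: the first inequality uses analytic properties of $L$-functions, while the second is proved by a cohomological computation on $C_F$ combined with the Herbrand quotient. Everything else in the theorem — the assembly into a global map, the Frobenius formula at unramified primes, and the existence theorem — is comparatively formal once the reciprocity law and local class field theory are in hand.
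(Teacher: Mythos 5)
The paper does not prove this theorem at all: it is quoted as a classical result of class field theory with the proof deferred to the cited reference \cite{ramak} (Ramakrishnan--Valenza). Your outline is exactly the standard architecture found there and in Artin--Tate or Cassels--Fr\"ohlich --- local reciprocity via the invariant map and fundamental class, assembly into an idelic map, the global reciprocity law $\prod_v \theta_v(a)=1$ via the Hasse--Brauer--Noether sequence and the two inequalities, and the existence theorem via Kummer theory and descent --- so you are taking the same route the paper implicitly relies on. Bear in mind that what you have written is a correct road map rather than a proof: all of the genuinely hard content (the first and second inequalities, the exactness of the Brauer-group sequence, and the Kummer-theoretic construction of enough abelian extensions) is named but not carried out, which is appropriate for a survey citation but would need to be supplied for a self-contained argument.
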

\begin{proof} See \cite{ramak}.
\end{proof}
\begin{prob} \noindent \begin{enumerate}
\item Give a model-theoretic analysis and interpretation of Artin reciprocity formulated for a restricted direct product with respect to a suitable formula (see Section \ref{sec-rest}) 
of suitable structures which are definable or interpretable in the non-Archimedean completions 
$K_v$ in a language with predicates for Artin symbols in the residue fields of $K_v$ for $v$ corresponding to an unramified prime in $K$. 
\item Use the methods of Galois stratification (see \cite{FJ}) and study functoriality and uniformity in the number field $K$. 
\item What generalizations of Artin reciprocity can be obtained this way?
\end{enumerate}
\end{prob}
 
The term functoriality in the problem refers to the functoriality in $K$ in Artin reciprocity (see \cite{ramak}). 

The following question was asked by Nicolas Templier after a talk I gave in Princeton.
\begin{prob}[Templier]\label{prob-temp} Let $F/K$ be a finite extension of number fields. Is 
the image of the norm map $N_{F/K}(\I_F)$ definable in $\I_K$ or $\A_K$ in some language? How does this definition depend on the number field $K$?\end{prob}

It seems plausible that $N_{F/K}(\I_F)$ is definable in $\A_K$ in the language of rings (using results of 
Section \ref{sec-rest} on $\cL_{rings}$-definability of Boolean values and $Fin$) because of the following.

\begin{ex} If $F$ is the extension
$\Q_p(\sqrt{2})$, then for any $p>2$, one has 
$$N_{F/\Q_p}(F^*)=\{z\in \Q_p^*: \exists x \exists y \in \Q_p ~(z=x^2-2y^2)\}.$$\end{ex}

In Problem \ref{prob-temp} one has to investigate what language to have for the $K_v$ or $K_v^*$ so that the induced restricted product language for $\A_K$ or $\I_K$ is 
suitable for the required definability.

The languages of Macintyre and Belair are well-suited to study the idelic norm groups since if $L$ is an extension of $\Q_p$ of degree $n$, then $N_{L/\Qp}(L^*)$ is an open subgroup of $\Q_p^*$ containing the group of nonzero $n$th powers 
$(\Q_p^*)^n$ and in $\cL_{Belair}$ there are constants for coset representatives for the groups $P_n$ of non-zero $n$th powers in $\Q_p^*$ for all $n$ and all $p$ (note: there is a bound independently of $p$ on the index of $P_n$). 

\begin{prob}\label{prob-norm} Study definability properties of the images of norm maps $N_{F/K}(\I_F)$ for all number fields $F$ and $K$, 
uniformly in the number field, in the language for restricted products (cf. \ref{sec-rest}) induced from the languages of Macintyre and Belair for the factors (cf. \ref{ssec-venrich}). 
\end{prob}
Templier suggested that Problem \ref{prob-temp} and Sarnak suggested that definability in adele rings may be useful in some problems on families of $L$-functions in work of Sarnak-Shin-Templier \cite{peter-nicolas}. 

Generalizations of Artin Reciprocity for non-abelian extensions is one of the aspects of the Langlands Program where the approach is via representations of adelic groups $G(\A_K)$, where $G$ is a suitable algebraic group. For more on this see Section 
\ref{ssec-lang}.

It would be interesting to have a model-theoretic approach to non-abelian extensions and Artin reciprocity. 

\begin{prob}\label{prob-artin}\noindent\begin{enumerate}
\item Let $\cL$ be any of the languages in Subsection \ref{ssec-venrich}. For a finite extension $F/K$ of number fields, is there an $\cL_{Boolean}^{fin,res}(L)$-definable subset $\S$ of $\A_K^n$, for some $n$, or an $\cL_{Boolean}^{fin,res}(L)$-definable subset $\S$ of a restricted product $\prod_{v\in V_K}^{\varphi} \cM_v$ for some $\cL$-structures $\cM_v$ and $\cL$-formula $\varphi(x)$, and a definable map $\hat{\sigma}_{F/K}$ from $\S$ to $Gal(F/K)$ generalizing the Artin map?
\item Let $Gal(\bar{K}/K)^m$ be the maximal $m$-step 
solvable quotient of $K$. Is there a homomorphism from $\S$ to $Gal(\bar{K}/K)^m$ that gives 
$\sigma_{F/K}$ by composing with the natural projection map? 
\item How much this would be true beyond the solvable case?
\end{enumerate}
\end{prob}

Note that $S$ is allowed to be definable by means of predicates related to Hilbert symbols as we have allowed $\cL_{Boolean}^{fin,res}$-definability. 

\medskip

\subsection{\bf Remarks on the idele class group of $\Q$}\label{ssec-idcl}

\

\medskip

One can relate the idele class group to definability in adeles.
\begin{thm} The idele class group $C_{\Q}$ is a definable subgroup of the adeles $\A_{\Q}$.\end{thm}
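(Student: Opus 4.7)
The plan is to exhibit $C_\Q$ concretely as an $\cL_{rings}$-definable subgroup of $\I_\Q \subseteq \A_\Q$ (under the multiplicative operation), using the classical decomposition from global class field theory
$$\I_\Q \;=\; \Q^{*} \cdot H, \qquad H \;:=\; \R_{>0} \times \hat{\Z}^{*} \;=\; \R_{>0} \times \prod_{p} \Z_{p}^{*},$$
where $H$ is viewed as the subgroup of $\I_\Q$ whose Archimedean component is a positive real and whose $p$-adic component is a unit for every finite prime $p$. A short classical calculation shows $H \cap \Q^{*} = \{1\}$ and $\I_\Q = \Q^{*} \cdot H$, so the inclusion $H \hookrightarrow \I_\Q$ descends to an isomorphism $H \xrightarrow{\sim} \I_\Q / \Q^{*} = C_\Q$. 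It therefore suffices to prove that $H$ is $\cL_{rings}$-definable in $\A_\Q$.

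First, $\I_\Q$ itself is the $\cL_{rings}$-definable unit group $\{a : \exists y\,(ay = 1)\}$ of $\A_\Q$. Next, recall from Subsection~\ref{ssec-arch} the $\cL_{rings}$-definable Archimedean idempotent $e_\infty$; for $K = \Q$ there is a unique Archimedean place, so $e_\infty$ is a minimal idempotent and $e_\infty \A_\Q \cong \R$. Set $e_{na} := 1 - e_\infty$. The subgroup $H$ is cut out in $\I_\Q$ by two clauses. The Archimedean positivity ``$a(\infty) > 0$'' is expressed in $\cL_{rings}$ by $\exists y\,(e_\infty y \neq 0 \wedge y^{2} = e_\infty a)$, using that positive reals are exactly the nonzero squares of $\R$. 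The finite-integrality ``$a(p) \in \Z_p^{*}$ for every finite prime $p$'' is expressed by combining the CDLM formula $\Phi_{val}(x)$ of Theorem~\ref{CDLM-th} with the internal Boolean-value operator $[[\cdot]]^{na}$ of Subsection~\ref{ssec-arch}, as
$$[[\,\Phi_{val}(x)\,\wedge\,\exists y\,(xy = 1 \wedge \Phi_{val}(y))\,]]^{na}(a) \;=\; e_{na},$$
which, by the $\cL_{rings}$-definability (uniform in $K$) of the Boolean-value operator, unfolds to a genuine $\cL_{rings}$-formula in the variable $a$. Closure of $H$ under the group law is immediate from the componentwise nature of multiplication in $\A_\Q$.

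The identification $H \cong C_\Q$ is standard: given $a \in \I_\Q$, take $q := \mathrm{sgn}(a(\infty)) \cdot \prod_{p} p^{v_{p}(a(p))} \in \Q^{*}$ (a finite product since $a$ is an idele); then $v_{p}(a/q) = 0$ at every finite $p$ and $\mathrm{sgn}((a/q)(\infty)) = 1$, so $a/q \in H$, giving $\I_\Q = \Q^{*} \cdot H$. Uniqueness reduces to the observation that the only positive rational with trivial $p$-adic valuation at every prime is $1$.

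The main obstacle is the finite-integrality clause: any naive formulation ``$v_p(a(p)) = 0$ for every $p$'' quantifies implicitly over infinitely many primes and is not first-order in $\cL_{rings}$. What rescues the argument is precisely the combination, made available in this paper, of Theorem~\ref{CDLM-th} (a single $\exists\forall$ ring-language formula uniformly defining all the $p$-adic valuation rings) with the internal Boolean-value machinery of Subsection~\ref{ssec-arch}, which together encode the infinitary ``at every non-Archimedean place'' as a single $\cL_{rings}$-condition on an idempotent. Everything else in the proof---Archimedean positivity, verification of the group law, and the decomposition $\I_\Q = \Q^{*} \cdot H$---is either routine first-order logic or classical number theory.
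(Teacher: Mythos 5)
Your proof is correct and takes essentially the same route as the paper's: both use the decomposition $\I_{\Q}=\Q^{\times}\times\R_{>0}\times\prod_p\Z_p^{\times}$ (the paper cites Proposition~6--12 of Ramakrishnan--Valenza; you re-derive it elementarily) and then observe that $\R_{>0}\times\prod_p\Z_p^{\times}$ is $\cL_{rings}$-definable via the Boolean-value machinery of Section~2. You have usefully spelled out the two details the paper compresses into ``clearly'': the square-test for Archimedean positivity, and the use of $\Phi_{val}$ under $[[\cdot]]^{na}=e_{na}$ for the finite-place unit condition.
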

\begin{proof} By Proposition 6-12 page 23 in \cite{ramak},
$$\I_{\Q}=\Q^{\times} \times \R_{+}^{\times} \times \prod_{p} \Z_p^{\times}.$$
(This is used in the adelic proof of the Kronecker-Weber theorem on the maximal abelian extension of $\Q$).
Thus $\I_{\Q}/\Q^{\times} \cong \R_{+}^{\times} \times \prod_{p} \Z_p^{\times}$. Clearly this is $\cL_{rings}$-definable in $\A_{\Q}$ using the $\cL_{rings}$-definability of Boolean values (cf. Section 2).\end{proof}

\begin{prob} Does a similar definability result hold for a general number field $K$? If so how does the definition depend on the number field?\end{prob}

\medskip 

\section{\bf Euler products of $p$-adic integrals}\label{ssec-euler}

\subsection{\bf Analytic properties of the Euler products}

\

\medskip

Let $K$ be a finite extension of $\Q_p$ with residue field of cardinality $q$. Let $dx$ be a normalized Haar measure on $K$ giving the valuation ring $\cO_K$ volume $1$. 
Let $\varphi(x_1,\dots,x_n)$ be a formula of the language of rings. Let $f:K^n\rightarrow K$ be an $\cL_{rings}$-definable function. 
Let 
$$X=\varphi(K)=\{(a_1,\dots,a_n)\in K^n: K\models \varphi(a_1,\dots,a_n)\}$$ be the set defined by $\varphi$ in $K$. 

In \cite{Denefrationality}, Denef initiated the study of $p$-adic integrals of the form 
$$Z(s,p)=\int_{X} |f(x_1,\dots,x_n)|^s dx$$ and proved they are rational functions of $q^{-s}$. This generalizes work of Igusa for the case when $f \in \Z[x_1,\dots,x_n]$ and $X$ is $\Z_p^n$ or a Zariski closed subset of it.

Denef's result gave a solution to a conjecture of Serre on rationality of $p$-adic Poincare series of the form $\sum_{n\geq 1} c_kT^k$ where $c_k$ is the number of roots of $f$ modulo $p^k$ that lift to a root in $\Z_p$. See \cite{Denefrationality}. 
We refer to $Z(s,p)$ as a definable $p$-adic integral.

 Pas \cite{pas} and Macintyre \cite{Macintyre} independently proved that there are uniformities in the shape of these rational functions as $p$ varies if $\phi$ and $f$ are over $\Q$. 
The subject of motivic integration extends this uniformity and gives it a geometric meaning. It has been developed by Denef-Loeser \cite{DL}, Cluckers-Loeser \cite{CL2}, and Hrushovski-Kazhdan \cite{HK}, and has had several applications to algebraic geometry, number theory and algebra.

In \cite{zeta1}, inspired by results and problems in group theory (on zeta functions counting subgroups of a group) and number theory (on height zeta functions counting rational points of algebraic varieties), I 
considered Euler products over all primes $p$ of such definable $p$-adic integrals. These Euler products are of a global nature and relate to arithmetical questions on number fields, while the $p$-adic integrals are of a local nature. But the uniformities that are true over all $p$ of the shape of the rational functions can be used together with some results on algebraic geometry and model theory of finite and $p$-adic fields, together with combinatorial arguments, to prove the following result. 

\begin{thm}[Derakhshan {\cite{zeta1}}]\label{Thm-zeta} Let $Z(s,p)$ be as above. 
Let $a_{p,0}$ be the constant coefficient of $Z(s,p)$ when expanded as a power series in $q^{-s}$. Then the Euler product over all primes $p$ 
$$\prod_p a_{p,0}^{-1} Z(s,p)$$
has rational abscissa of convergence $\alpha$ and meromorphic continuation to the half-plane $\{s: Re(s)>\alpha -\delta\}$ for some $\delta>0$.
The continued function is holomorphic on the line $Re(s)=\alpha$ except for a pole at $s=\alpha$.\end{thm}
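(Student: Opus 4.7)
The plan is to reduce $Z(s,p)$ via cell decomposition to a uniform-in-$p$ finite sum of geometric series in $p^{-s}$, then to factor out an Euler subproduct built from translates of the Riemann zeta function that accounts exactly for the rightmost singularity, leaving an analytic remainder to be controlled in a strip strictly past $\Re(s)=\alpha$.

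First I would apply Corollary \ref{pas} (Denef-Pas cell decomposition, uniformly in $p$ outside a finite set of small residue characteristics) to both the formula $\varphi$ defining $X$ and the $\cL_{rings}$-definable function $f$. For all but finitely many $p$ this yields a representation
$$Z(s,p)\;=\;\sum_{\ell=1}^{L} c_{\ell}(p)\,\prod_{j\in J_{\ell}}\frac{p^{-(a_{\ell,j}+b_{\ell,j}s)}}{1-p^{-(a_{\ell,j}+b_{\ell,j}s)}},$$
where the combinatorial data $(L, J_{\ell}, a_{\ell,j}, b_{\ell,j})$ is independent of $p$, and each coefficient $c_{\ell}(p)$ arises as a count of $\F_p$-points on a fixed $\Z$-constructible set (coming from the residue-sort part of cell decomposition). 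By Ax's theorem (Theorem \ref{ax-almost-all}) together with Lang-Weil estimates, $c_{\ell}(p)=n_{\ell}\,p^{d_{\ell}}+O(p^{d_{\ell}-1/2})$ with $n_{\ell},d_{\ell}\in\Q$ and $n_{\ell}\in\Z_{\geq 0}$ explicit. The finitely many exceptional primes contribute only finitely many rational functions of $p^{-s}$, which cannot affect the existence of meromorphic continuation or the location of singularities on any vertical line.

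Next I would order the leading monomials of the expansion by the rational numbers $(a_{\ell,j}-d_{\ell})/b_{\ell,j}$; their maximum is the abscissa $\alpha$, rational by construction. Dividing out $a_{p,0}$ and collecting the contributions at the extremal ratio packages the dominant part of the Euler product as a finite product $\prod_{i} \zeta(b_{i}s-c_{i})^{m_{i}}$ with explicit $(b_{i},c_{i},m_{i})$ extracted from the cell decomposition. This gives a tentative factorization
$$\prod_{p} a_{p,0}^{-1}Z(s,p)\;=\;\Bigl(\prod_{i}\zeta(b_{i}s-c_{i})^{m_{i}}\Bigr)\,H(s),$$
where $H(s)$ is the residual Euler product. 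The first factor immediately provides meromorphic continuation to all of $\C$ and, via the simple pole of $\zeta$ at $1$ together with its nonvanishing on $\Re(s)=1$, contributes a unique singularity on $\Re(s)=\alpha$, located at $s=\alpha$.

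The hard part will be to show that $H(s)$ converges absolutely, and is therefore holomorphic, on a strictly larger half-plane $\Re(s)>\alpha-\delta$ for an effective $\delta>0$. This demands a uniform-in-$p$ estimate of the shape
$$(\text{local factor of }H)(s)\;=\;1+O\bigl(p^{-(\alpha+\varepsilon)+\eta\,\Re(s)}\bigr),$$
with explicit positive $\varepsilon,\eta$, obtained by exploiting the Lang-Weil error term for $c_{\ell}(p)$ together with a strict gap between the leading exponent $\alpha$ and the next-leading exponent appearing in the cell decomposition; if the gap is not strict, one iterates the extraction of zeta factors. Once this gap estimate is secured, the factorization above gives meromorphic continuation to $\Re(s)>\alpha-\delta$ and pins down the singular set on $\Re(s)=\alpha$ to the single pole at $s=\alpha$. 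The two main obstacles are therefore (i) extracting a clean strict-inequality gap from the combinatorics of the cell decomposition, and (ii) verifying that the Lang-Weil savings compose correctly across the infinite Euler product to give uniform absolute convergence of $H$ on the enlarged half-plane.
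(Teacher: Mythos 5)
Your outline---uniform-in-$p$ cell decomposition (Corollary \ref{pas}) giving a uniform rational-function shape for $Z(s,p)$, Lang--Weil/Chatzidakis--van den Dries--Macintyre estimates for the residue-field counts $c_\ell(p)$, then extraction of translated $\zeta$-factors from the Euler product together with a gap estimate for the residual product $H(s)$---matches the strategy the survey indicates (uniformity of the rational functions over $p$, algebraic geometry and finite-field model theory, combinatorial arguments), and you have identified the right ingredients. Note, though, that the survey itself does not reproduce the proof from \cite{zeta1}; it only sketches it, so this is a comparison against that sketch rather than a full argument.

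There is a genuine gap in your final step. You assert that the extracted factor $\prod_i \zeta(b_i s - c_i)^{m_i}$ contributes a pole at $s=\alpha$, but nothing in your derivation forces the net multiplicities $m_i$ at the extremal exponent to be positive: they arise from summing, differencing, and normalizing by $a_{p,0}^{-1}$ across many cells, and the combinatorics alone does not exclude a zero or a removable singularity at $s=\alpha$. The repair---and the real reason the theorem concludes ``pole'' rather than merely ``singularity''---is Landau's theorem for Dirichlet series with non-negative coefficients. Since $Z(s,p)=\sum_{k\geq 0}\mu(\{x\in X : v(f(x))=k\})\,p^{-ks}$ has non-negative coefficients and $a_{p,0}>0$, the normalized Euler product is a Dirichlet series with non-negative coefficients, so its abscissa of convergence $\alpha$ is an actual singularity of the function; combined with the meromorphic continuation you produced, that singularity must be a pole. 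Without this your argument does not reach the last sentence of the theorem. A smaller issue: your dismissal of the finitely many exceptional small primes is too quick. A rational function of $p^{-s}$ certainly can have poles on the line $Re(s)=\alpha$; you need to observe that each local integral converges, and is therefore holomorphic, for $Re(s)$ above some explicit bound strictly below $\alpha$, which pushes the local poles to the left of the critical line. Finally, the remark ``if the gap is not strict, one iterates'' is confused: since cell decomposition produces only finitely many exponent data $(a_{\ell,j},b_{\ell,j},d_\ell)$ independent of $p$, the set of candidate exponents is finite and $\alpha$ is automatically isolated; the iteration you describe is about peeling off successive $\zeta$-factors to push the convergence of $H$ further left, and its termination should be justified by that finiteness rather than waved at.
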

Tauberian theorems of analytic number theory then yield.
\begin{cor}[Derakhshan {\cite{zeta1}}] 
Suppose that the Euler product $\prod_p a_{p,0}^{-1} Z(s,p)$ can be written as the Dirichlet series $\sum_{n\geq 1} a_n n^{-s}$, then for some real numbers $c,c'\in \R$, 
$$a_1+a_2+\dots+a_N \sim c N^{\alpha}(log N)^{w-1}$$ 
$$a_1+a_22^{-\alpha}+\dots + a_N N^{-\alpha} \sim c'(log N)^w$$
as $N \rightarrow \infty$, where $w$ is the order of the pole of $Z(s,p)$ at $\alpha$.\end{cor}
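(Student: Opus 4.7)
\medskip

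The plan is to derive the asymptotic formulas from Theorem \ref{Thm-zeta} by a direct application of a Wiener--Ikehara--Delange style Tauberian theorem, followed by partial summation for the second asymptotic. Write $F(s) = \prod_p a_{p,0}^{-1} Z(s,p) = \sum_{n \geq 1} a_n n^{-s}$. By Theorem \ref{Thm-zeta}, $F(s)$ has abscissa of convergence equal to the rational number $\alpha$, and admits meromorphic continuation to a strip $\{\mathrm{Re}(s) > \alpha - \delta\}$ whose only pole on the critical line $\mathrm{Re}(s) = \alpha$ is at $s = \alpha$, of order $w$. Thus we may write $F(s) = h(s)/(s-\alpha)^w + G(s)$ near $s = \alpha$, where $h$ is holomorphic with $h(\alpha) \neq 0$ and $G$ extends holomorphically across a neighborhood of the line $\mathrm{Re}(s) = \alpha$.

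First, I would verify that the coefficients $a_n$ are nonnegative. This is essentially built into the setup: each local integral $Z(s,p) = \int_X |f|^s \, dx$ is, as a series in $q^{-s}$, a positive combination of nonnegative quantities arising from integrating $|f|^s$ on cells of constant $|f|$-value, and the normalization $a_{p,0}^{-1}$ divides out the constant term and leaves a series in $q^{-s}$ with nonnegative coefficients. Taking the Euler product over $p$ then preserves nonnegativity of the resulting Dirichlet series coefficients $a_n$. With this in hand, one may invoke the Delange Tauberian theorem (see Delange's extension of the Wiener--Ikehara theorem to higher order poles, e.g.\ in the form used in analytic number theory for zeta functions with polar singularities of order $w \geq 1$): if $F$ has nonnegative coefficients, converges for $\mathrm{Re}(s) > \alpha$, and $F(s) - h(s)/(s-\alpha)^w$ extends continuously to $\mathrm{Re}(s) \geq \alpha$, then
\begin{equation*}
\sum_{n \leq N} a_n \sim \frac{h(\alpha)}{\alpha\,\Gamma(w)}\, N^{\alpha}(\log N)^{w-1},
\end{equation*}
which gives the first asymptotic with $c = h(\alpha)/(\alpha\,\Gamma(w))$.

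For the second asymptotic, I would apply Abel/partial summation. Setting $A(N) = \sum_{n \leq N} a_n \sim c N^{\alpha}(\log N)^{w-1}$,
\begin{equation*}
\sum_{n \leq N} a_n n^{-\alpha} = A(N) N^{-\alpha} + \alpha \int_{1}^{N} A(t)\, t^{-\alpha - 1}\, dt.
\end{equation*}
The boundary term contributes $c(\log N)^{w-1}$, which is negligible. Substituting the asymptotic for $A(t)$ into the integral yields $\alpha c \int_{1}^{N} (\log t)^{w-1}\, t^{-1}\, dt = (\alpha c / w)(\log N)^{w}(1+o(1))$, giving the second asymptotic with $c' = \alpha c / w = h(\alpha)/(w\,\Gamma(w)) = h(\alpha)/\Gamma(w+1)$.

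The main obstacle is not technical complexity but rather choosing, and justifying the hypotheses of, the correct Tauberian theorem: one needs the continuous boundary behavior (or a vertical-growth bound of the right type) of the analytically continued $F$ along $\mathrm{Re}(s) = \alpha$ away from $s = \alpha$, which is supplied by the holomorphy part of Theorem \ref{Thm-zeta}, together with nonnegativity of $a_n$. Given these, both formulas reduce to standard input. A secondary point to be careful about is that $\alpha$ may in principle be zero or negative in exotic cases; for the $p$-adic definable integrals arising in the applications discussed below (e.g., conjugacy class zeta functions) one has $\alpha > 0$, so the formulas above apply without modification.
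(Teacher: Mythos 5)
Your proposal is correct and follows essentially the same route the paper indicates when it attributes the corollary to ``Tauberian theorems of analytic number theory'': Delange's extension of the Wiener--Ikehara theorem to poles of order $w$ gives the first asymptotic, and Abel summation then yields the second, with the constant $c'=\alpha c/w$ as you compute. You are also right to flag the two hypotheses the paper leaves implicit --- nonnegativity of $a_n$ (which follows from the volume-theoretic expansion of $Z(s,p)$ as a power series in $q^{-s}$ with nonnegative coefficients, surviving the normalization by $a_{p,0}^{-1}>0$ and the Euler product) and $\alpha>0$ --- both of which hold in the intended applications and are needed for Delange's theorem to apply in the stated form.
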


\begin{prob}\label{prob-zeta} Formulate an adelic version of Theorem \ref{Thm-zeta}. For this, add "Archimedean factors" to the Euler products, and write the Euler product as an adelic integral of a suitable function over a definable subset of 
$\A_K^m$ for some $m\geq 1$. Once this is done, would the "completed 
Euler product" have meromorphic continuation to the whole complex plane? Would it satisfy a functional equation?
\end{prob}

Problem \ref{prob-zeta} relates to Definitions \ref{con-fin-comp}, \ref{def-sp-ad-con}, \ref{def-Whitt}, Remark \ref{rem-real}, Example \ref{ex-gamma}, and Problems \ref{zeta-arch} and \ref{forms}.  
(The conjectural connections to O-minimal structures and Hodge theory is challenging).

\medskip

\subsection{\bf Conjugacy class zeta functions in algebraic groups}\label{ssec-conj}

\

\medskip

Theorem \ref{Thm-zeta} applies to zeta functions counting conjugacy classes in Chevalley groups over a number field. An example of such a result is the following result from \cite{zeta1}. Let $c_m$ denote the number of conjugacy classes in 
$SL_n(\Z/m\Z)$. Then the global conjugacy class 
zeta function $\sum_{m\geq 1} c_m m^{-s}$ has rational abscissa of convergence $\alpha$ and 
meromorphic continuation the half-plane $\{s: Re(s)>\alpha -\delta\}$ for some $\delta>0$. It follows that
$$c_1+\dots+c_N \sim c N^{\alpha} (log N)^{w-1}$$
for some $c\in \R_{>0}$ as $N\rightarrow \infty$ ($w$ as in Theorem \ref{Thm-zeta}). 

To be able to apply \ref{Thm-zeta} one must show that the above zeta function is an Euler product of $p$-adic integrals of Denef-type over definable sets, uniformly in $p$. This is done in joint work with Mark Berman, Uri Onn, and Pirita Paajanen in 
\cite{BDOP}. There it is proved that the local factors of the Euler product, which are of the form $\sum_{m\geq 0} c_m q^{-ms}$ where 
$c_m$ denotes the number of conjugacy classes in the congruence quotient $SL_n(\Z_p/p^m \Z_p)$, are definable $p$-adic integrals and depend only on the residue field for large $p$. See \cite{BDOP} and the survey 
\cite{zeta-surv} for details.

\begin{prob} Formulate the results on global conjugacy class zeta functions adelically.\end{prob}

\subsection{\bf Adelic height zeta functions and rational points}

\

\medskip

The adelic height zeta function is a very useful guiding example for an approach to Problem \ref{prob-zeta}, particularly on its Archimedean factors and meromorphic properties. 

On a general variety $X$ over $\Q$, one can associate a height function $H_L$ to every line bundle $L$ on $X$ via 
Weil's height machine. On $\P^n$, the height $H=H_{\cO_{\P^n(1)}}$ associated to the line bundle 
$\cO_{\P^n(1)}$ of a hyperplane is defined by 
$$H(x)=\sqrt{x_0^2+\dots+x_n^2},$$
where $(x_0,\dots,x_n)$ is a primitive integral vector representing $x\in \P^n(\Q)$. Schanuel proved that as $T\rightarrow \infty$,
$$card(\{x\in \P^n(\Q): H_{\cO_{\P^n(\Q)}}(x)<T\}) \sim cT^{n-1}$$
for an explicit $c\in \R_{>o}$. See \cite{manin-book}.

For a general variety $X$ over $\Q$ and an ample line bundle $L$ on $X$, there is a height function on $X(\Q)$ defined
via an embedding of $X$ into $\P^n$ and pulling back the height function on $\P^n(\Q)$. For a subset $U\subset X$ let
$$N_U(T)=card(\{x\in X(\Q) \cap U: H_L(x)<T\}).$$
Manin has given a conjecture on the asymptotic of the numbers $N_U(T)$ as $T\rightarrow \infty$. See \cite{manin-book},\cite{GO}. In \cite{GO}, Gorodnik and Oh prove new cases of Manin's conjecture for orbits of group actions and for compactifications of affine homogeneous varieties using an ergodic theoretic approach of Duke-Rudnick-Sarnak. 

They consider an algebraic group $G$ over a number field $K$ with a representation $\rho:G \rightarrow GL_{n+1}$. Then $G$ acts on $\P^n$ via the canonical map $GL_{n+1} \rightarrow PGL_{n+1}$. 
Let $U=u_0G$, where $u_0\in \P^n(\Q)$. Let $X$ be the Zariski closure of $U$, and $H$ the height 
function on $X(\Q)$ obtained by the pull back of $H_{\cO_{\P^n(1)}}$. For simplicity we only consider the case $K=\Q$ of their results. 

Manin conjecture type estimate state that
$$N_T:=card(\{x\in U(\Q) H(x)<T\}) \sim cT^a.(\mathrm{log} T)^{b-1}$$
for $c\in \R_{>0}$ and $a,b\in \Z, a>0, b\geq 1$.

In \cite{GO}, Gorodnik-Oh prove this under conditions on $G$ and the stabilizer of $u_0$, and other conditions. 
Their beautiful approach is 
to consider $U(\Q)$ as a discrete subset of the adelic space $U(\A_{\Q})$ (defined as a restricted product as in Section 10). 

The height function $H(x)$ can be extended to a hight function on $U(\A_{\Q})$, denoted by $H_{\A_{\Q}}(x)$, so that
$$B_T:=\{x\in U(\A_{\Q}): H_{\A_{\Q}}(x)<T\}$$
is compact. We have that
$$\{x\in U(\Q): H(x)<T\}=U(\Q)\cap B_T.$$
Then under certain conditions the asymptotic of the numbers $N_T$ follows from an asymptotic for the volumes of $B_T$, and for this they can 
make use of the ergodic-theoretic work of Duke-Rudnick-Sarnak (cf. \cite{GO}).

One can ask the following question, whose positive solution would extend some results 
in \cite{GO}.

\begin{prob}\label{height} Prove an analogue of Theorem \ref{Thm-zeta} on meromorphic continuation beyond abscissa of convergence for the integrals
$$\int_{U(\A_K)} H_{\A_{\Q}}(x)^{-s} \tau,$$
where $\tau$ is a suitable measure on $U(\A_{\Q})$.
\end{prob}
\begin{note} Note that $U(\A_{\Q})$ can be partitioned into finitely many pieces each of which is a definable subset of Type I of 
$\A_{\Q}^m$ for some $m$ (using the usual covering of projective space by affine pieces). The adelic integral factors as an Euler product by standard properties of the 
adelic height. \cite{zeta2} contains work in progress on Problem \ref{height}. 
\end{note}

\section{\bf Finite fields with additive characters and continuous logic}\label{fin-char}

Ax's results in \cite{ax} on decidability of the theory of $\F_p$, for all (and all but finitely many) $p$, and $\F_q$, for all (and all but finitely many) prime powers (both for single $p$ and all but finitely many $p$) were proved as a result of the 
decidability of the theory of pseudo-finite fields. These are defined as perfect pseudo-algebraically closed fields that have exactly one extension of each degree inside their algebraic closure. 

It is easy to see that infinite models of the theory of finite fields are 
pseudo-finite. Work of Ax \cite{ax} implies the converse statement. Kiefe \cite{Kiefe} gave a quantifier elimination for the theory of pseudo-finite fields in an expansion of the ring language by the solvability predicates stated in Section \ref{ssec-venrich}.

In \cite{CDM}, Chatzidakis, van den Dries, and Macintyre revisited the model theory of finite fields, and proved, among other results, generalizations of the 
Lang-Weil estimates for the number of $\F_q$-points of an absolutely irreducible variety defined over $\Q$ to definable sets. They also introduced a pseudo-finite measure. 

In \cite{udi-char}, Hrushovski added additive characters to the language, and studied the continuous logic theory of 
pseudo-finite fields with an additive character. Firstly, he proves that the usual first-order theory is undecidable.

In continuous logic, a structure $\cM$ is a set together with a function $R^{\cM}: \cM^n \rightarrow V_R$
for each $n$-ary relation $R$, where $V_R\subseteq \C$ is a compact set called the set of values of $\phi$. This gives an interpretation $\phi^{\cM}$ of a formula $\phi$ within its set $V_{\phi}$ of values as a function $\phi^{\cM}:\cM^n\rightarrow \C$ with compact image. See \cite{udi-char} for decidability, quantifier elimination, and related notions in continuous logic.

If the image of $\phi^{\cM}$ is the set $\{0,1\}$, then the pullback of $1$ is called a discretely definable set. For example the graphs of addition and multiplication are discretely definable. 

Let $\Psi_p(n+p\Z)=exp^{2\pi i n/p}$ be an additive character on the field with $p$ elements $\F_p$, and $\Psi_q(x)=\Psi_p(tr_{\F_q/\F_p}(x))$, where $tr_{\F_q/\F_p}(x)$ is the trace map from $\F_q$ to $\F_p$, 
an additive character on the finite field $\F_q$. 
Add a unary function symbol to $\cL_{rings}$ to be interpreted as the additive character in the standard models $\F_q$, and let $\F_q^+=(\F_q,+,.,\Psi_q)$ be the finite field with $\Psi_q$ in continuous logic. We note that any other additive character on $\F_q$ has the form $\Psi_q(ax)$ for a unique $a\in \F_q^*$, thus the additive characters are all uniformly definable.

Let $T=Th(\{\F_q^+, q~\text{prime ~power}\})$, the theory  of all finite fields with additive character. In \cite{udi-char} Hrushovski proves that $T$ is decidable, admits quantifier elimination to the level of algebraically bounded quantifiers, and is simple. He also proves that the pseudo-finite measure, 
introduced by Chatzidakis-van den Dries-Macintyre \cite{CDM} is definable in this setting and its Fourier transform is also definable, and the discretely definable sets are exactly the sets definable in Ax's theory. 
He also proves that the asymptotic first-order theory of $\F_q^+$ with an additive character $\Psi_q$ where the characteristic is unbounded, is undecidable. 

These results generalize the results of Ax \cite{ax} and 
Chatzidakis-van den Dries-Macintyre on pseudo-finite fields to $T$. We remark that \cite{udi-char} contains applications to exponential sums over definable sets in finite fields.

\section{\bf $p$-adic fields with additive characters and continuous logic}\label{ad-char}

Given a $p$-adic number $x=\sum_{j\geq -N} c_j p^j$, where $-N=v_p(x)$, 
the $p$-adic fractional part of $x$ is defined by $$\{x\}=\sum_{-N\leq j\leq -1} c_jp^j.$$
The map 
$\psi_p(x)=e^{2\pi i \{x\}}$ is an additive character on $\Q_p$ that is trivial on $\Z_p$. Given a finite extension 
$K$ of $\Q_p$, the map $\psi_p(tr_{K/\Q_p}(x))$ is an additive character on $K$ that is trivial on the ring of integers 
$\cO_K$. 

Enrich the language of rings by a 1-place predicate to be interpreted as the character. Again the additive characters on $K$ are uniformly definable.

Hrushovski \cite{udi-char} proves that the first-order theory of $(\Q_p,\psi_p)$ and the asymptotic first-order theory of $(\Q_p,\psi_p, p~\text{prime})$ are undecidable. 

In analogy to \cite{udi-char}, one can ask.

\begin{prob}[Hrushovski {\cite{udi-char}}] \label{udi-prob} Is the continuous logic theory of $(\Q_p,\psi_p)$ decidable?
Is integration with respect to $p$-adic measure definable, both for single $(\Q_p,\psi_p)$ and asymptotically?
\end{prob}

Problem \ref{udi-prob} is related to defining suitable Fourier transform operators on definable sets in $p$-adic fields and adeles which is related to the problems in Section \ref{ssec-lang}, especially Problems \ref{closed} and \ref{prob-poisson}.

In \cite{udi-char}, Hrushovski gives axioms for the theory of pseudo-finite fields with an additive character. In conversations with him, the first author learned of the following question.
\begin{prob}[Hrushovski] What are axioms for the continuous logic theory of $(\Q_p,\psi_p)$ both for single $p$ and 
asymptotically?
\end{prob}

Set $\psi_{\infty}(x)=e^{-2\pi ix}$ for $x\in \R$. Then the map 
$$\psi(x)=\prod_{p\leq \infty} \psi_v(x(v)),$$
where $x\in \A_{\Q}$, is an additive character on $\A_{\Q}$ that is trivial on $\Q$. Let $tr$ denote the trace map from 
$\A_K$ of a number field $K$ to $\A_{\Q}$, then $\psi_K(x)=\psi(tr(x))$ is an additive character on $\A_K$.

\begin{thm}\label{ad-char} The first-order theory of $(\A_{\Q},\psi)$, where $\psi(x)$ is an additive character, is undecidable.\end{thm}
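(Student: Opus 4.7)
The plan is to interpret the undecidable ring $(\Z,+,\cdot)$ inside $(\A_\Q,\psi)$ without parameters, exploiting the Archimedean factor of the adele ring together with the fact that $\psi_\infty(x)=e^{-2\pi i x}$ has kernel $\Z\subset\R$. Since by Subsection 2.5 the Archimedean idempotent $e_\infty$ is $\cL_{rings}$-definable without parameters (as the supremum of minimal idempotents $e$ with $e\A_\Q\models\Psi^{\mathrm{Arch}}$), and since $\Q$ has a unique Archimedean place, $e_\infty$ is itself a single minimal idempotent and the ideal $e_\infty\A_\Q$, definable by $x=e_\infty\cdot x$, is canonically isomorphic as a ring to $\R$.

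Next I would observe how the character restricts to this ideal. Reading ``$\psi$ in first-order logic'' as the unary predicate $P(x)\equiv(\psi(x)=1)$ for the kernel of $\psi$---any other reasonable first-order encoding, such as the binary equivalence $\psi(x)=\psi(y)$, defines the same level set---we have for each $x\in e_\infty\A_\Q$ that
$$\psi(x)\;=\;\psi_\infty(x(\infty))\prod_{p}\psi_p(0)\;=\;e^{-2\pi i\, x(\infty)},$$
so $x=e_\infty x\wedge P(x)$ holds precisely when $x(\infty)\in\Z$. Hence
$$\Z\;\cong\;\{x\in\A_\Q:\;x=e_\infty x\;\wedge\;P(x)\},$$
with ring operations inherited from $\A_\Q$ matching those of $\Z$. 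This yields an $\emptyset$-interpretation of $(\Z,+,\cdot)$ in $(\A_\Q,\psi)$, whence undecidability of $\mathrm{Th}(\A_\Q,\psi)$ follows from the undecidability of $(\Z,+,\cdot)$ by G\"odel's first incompleteness theorem.

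The main obstacle is essentially notational: because $\psi$ takes values in $S^1$, one must fix a first-order language in which to interpret it. But any reasonable choice---the kernel predicate, the equivalence relation $\psi(x)=\psi(y)$, or an auxiliary sort carrying a copy of $S^1$---renders the level set $\{\psi=1\}$ definable, and the argument above then goes through unchanged. The same strategy extends to any number field $K$ with at least one real place by replacing $e_\infty$ with a minimal real idempotent below $e_\R$, which is definable without parameters via the sentence $\Psi_\R$ of Subsection 2.5. By contrast, the non-Archimedean factors alone do not suffice for this shortcut, since $\psi_p$ is trivial on the whole of $\Z_p$; to handle a totally complex field, or to sharpen the result to ``asymptotic'' first-order theories, one would instead have to invoke or adapt Hrushovski's deeper argument for $(\Q_p,\psi_p)$.
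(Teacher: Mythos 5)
Your proof is correct, but it takes a genuinely different route from the paper. The paper fixes a prime $p$, observes that the stalk $e_p\A_{\Q}\cong \Q_p$ together with the restricted character is definable in $(\A_{\Q},\psi)$, and then invokes Hrushovski's theorem that the first-order theory of $(\Q_p,\psi_p)$ is undecidable. You instead work at the Archimedean stalk: since $e_\infty$ is $\emptyset$-definable and $\psi$ restricted to $e_\infty\A_{\Q}\cong\R$ is $x\mapsto e^{-2\pi i x(\infty)}$ (the finite components contributing $\psi_p(0)=1$), the kernel predicate cuts out exactly $\{x: x=e_\infty x,\ x(\infty)\in\Z\}$, a definable subring isomorphic to $(\Z,+,\cdot)$, and undecidability follows from Church's theorem. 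Your argument is self-contained and more elementary --- it does not need Hrushovski's (deep) undecidability result at all --- and it actually yields slightly more, namely a parameter-free interpretation of full arithmetic in $(\A_{\Q},\psi)$. The trade-off, which you correctly identify, is scope: the paper's reduction to the non-Archimedean stalks works verbatim for the finite adeles $\A_{\Q}^{fin}$ and for totally complex situations where no real place is available, and it is the route one must take to transfer Hrushovski's finer ``asymptotic'' undecidability statements. Your caveat about the formalization of $\psi$ is handled adequately: in any reasonable first-order signature for the character, the level set $\{\psi=1\}$ is definable, which is all the argument uses.
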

\begin{proof} Fix a prime $p$. Let $e_p$ denote the supremum of all the minimal idempotents $e$ such that $e\A_{\Q}$ has residue field 
equal to $\F_p$. Then $e\A_{\Q}$ is isomorphic to $\Q_p$ and the $p$-adic additive character on $\Q_p$ is the $p$th component of $\psi(x)$. This 
gives an additive character $\psi_e$ on $e\A_{\Q}$ such that the theory of $(e\A_{\Q},\psi_e)$ is undecidable by Hrushovski's theorem \cite{udi-char} 
on the undecidability of $(\Q_p,\psi_p(x))$. Since $(e\A_{\Q},\psi_e)$ is definable in $(\A_{\Q},\psi)$, we are done.
\end{proof}

\begin{prob}\noindent\begin{enumerate}
\item What can one say about the continuous logic theory of $\A_K$ with an additive character $\psi$?
\item What are axioms for this theory? Is it decidable? 
\item What can one say about definability of integration and Fourier transform on $\A_K$ in continuous logic?
\end{enumerate}
\end{prob}

\section{\bf $p$-adic and adelic multiplicative characters and $L$-functions }\label{ssec-char}

An $L$-function is generally defined as a Dirichlet series $\sum_{n\geq 1} a_n n^{-s}$ that can be written as an Euler product $\prod_{p} P_p(s)$ over primes $p$, where $P_p(t)$ is a rational function. They are initially defined in a right half-plane, but admit meromorphic continuation beyond their half plane of convergence. Examples are the Riemann zeta function $\zeta(s)$, where $a_n=1$ for all $n$, the Dedekind zeta function $\zeta_K(s)$ of a number field $K$, and $L$-functions associated to algebraic varieties 
and Galois representations. See Serre's book \cite{serre-book}.

Suppose $G=\prod_{i\in I} G_i$ is a restricted product of locally compact abelian groups with respect to open subgroups $H_i$. If $\chi$ is a character on $G$, then it follows that 
for any $x$ in $G$, $\chi(x(i))=1$ for all but finitely many $i\in I$, and $\chi(x)=\prod_i \chi(x(i))$ (see \cite{ramak}).

Let $K$ be a number field. A multiplicative character of $K_v^*$ is called unramified if $\chi\vert_{\cO_v^*}=1$. A multiplicative character $\chi$ on the idele class group $\I_K/K^*$ is called an idele class character. By the above, $\chi=\prod_v \chi_v$, where $\chi_v$ is a character of $K_v^*$ that is unramified for all but finitely many $v$. It can be shown that if $\chi$ is character of $K_v^*$, then 
$\chi=\mu ||.||^s$, where $\mu$ is a character of $\cO_v^{*}$ and $Re(s)$ is uniquely determined. See \cite{ramak}. 

Given an idele class character $\chi$, write it as $\mu |.|^s$, where $\mu$ is unitary. For each $v$ we get character of 
$K_v^*$ defined by 
$$\chi_v(t)=\chi(1,\dots,1,t,1,\dots,1),$$
where $t$ is in the $v$th component, hence 
$\chi(x)=\prod_v \chi_v(x)$, a product that makes sense as the restriction of $\chi_v$ to the the units $\cO_v^{*}$ is trivial for all but finitely many $v$. 

The Hecke $L$-function attached to $\chi$ is the Euler product 
$$L(s,\chi)=\prod_{v\in V_K} L(s,\chi_v),$$
where for Archimedean $v$, $L(s,\chi_v)$ is defined using $\Gamma$-functions (cf. \cite{ramak}). If $v$ is non-Archimedean, corresponding to a prime $\frak p$, and $\chi_v$ is trivial on the units $\cO_v^*$ (which holds for all but finitely many $v$), then one defines  
$$L(s,\chi_v)=\frac{1}{1-\chi_v(\pi_v)},$$
where $\pi_v$ is a uniformizing element of $K_v$. The function $L(s,\chi)$ can be analytically continued to all of $\C$ and has a functional equation, as proved by Tate \cite{tate-thesis}. These $L$-functions generalize zeta functions of number fields and $L$-functions of Dirichlet characters \cite{ramak}.

On the other hand, an Artin $L$-function is an $L$-function that is associated to a finite-dimensional representation $\rho$ of the Galois group 
$Gal(F/K)$, where $F/K$ is a finite extension. It is defined by
$$L(s,\rho)=\prod_{v\in V_K} L(s,\rho_v)$$
where $\rho_v$ is the restriction of $\rho$ to the decomposition group (cf. \cite{ramak}).

For the $v$ that are associated to a prime $\frak p$ that 
is unramified in $F$ (which is for all but finitely many $v$), one has the Frobenius conjugacy class $Frob_{\frak p}$ in $Gal(F/K)$, and
$$L(s,\rho_v)=\frac{1}{det(I-\rho(Frob_{\frak p}N\frak p^{-s}))}=\prod_{1\leq i\leq d} \frac{1}{1-\beta_i(\frak p)N\frak p^{-s},}$$
where $\beta_1(\frak p),\dots,\beta_d(\frak p)$ are the eigenvalues of $\rho(Frob_{\frak p})$.

$L(s,\rho)$ has meromorphic continuation to all of $\C$. Artin conjectured that it is an entire function if $\rho$ is irreducible and non-trivial, and proved this for one-dimensional $\rho$ via proving the following 

\

{\it Correspondence between Artin $L$-functions and Hecke $L$-functions}: An $L(s,\rho)$ attached to a one-dimensional $\rho$ has the form $L(s,\chi)$ for a character $\chi=\chi(\rho)$ of $\I_K$ vanishing on $K^*$. This follows from Artin's reciprocity law. See \cite{ramak} and Section \ref{ssec-artin}. Other cases of Artin's conjecture have been proved by Langlands and others (see \cite{lang-prob},\cite{lang-icm}).

\begin{prob} Is there a language (extending an appropriate language for restricted products) where one can express or give a model-theoretic interpretation of the {\it correspondence between Artin $L$-functions and Hecke $L$-functions}? \end{prob}

These questions would have implications for a model-theoretic understanding of {\it automorphic representations} and {\it automorphic forms on adele groups}, which would be related to a model theory for {\it classical modular forms}. See Section \ref{ssec-lang}

\begin{prob}\label{prob-idele-char} Let $\cL$ be a language for the adeles $\A_K$ augmented by a unary predicate for a multiplicative character $\chi$ defined on the ideles and trivial on $K^*$. 
What can be said about the $\cL$-theory of $(\A_K,\chi)$?\end{prob}

\section{\bf Tate's thesis and zeta integrals for $GL_1$}\label{ssec-tate}
Let $K$ be a local field. A $\C$-valued function $f$ on $K$ is called smooth if it is $C^{\infty}$ when $K$ is 
Archimedean, and locally constant when $K$ is non-Archimedean. A smooth function on $K$ is a Schwartz-Bruhat function if it goes to zero rapidly at infinity if $K$ is Archimedean and if it has compact support if $K$ is non-Archimedean.  $S(F)$ denotes the class of Schwartz-Bruhat functions on $K$. See \cite{ramak}.

Tate's thesis \cite{tate-thesis} was a beautiful and fundamental work that influenced a wide range of topics in modern number theory and arithmetic geometry. The main goal was a generalization, via different proofs, of 
Hecke's results on meromorphic continuation and functional equation for Hecke $L$-functions for number fields. However the theory has had far reaching influence and applications. 

Concerning the Hecke $L$-functions, it gave more information on the 
functional equation and so-called epsilon factors, gamma factors, and root numbers, and various quantities acquire an interpretation in terms of volumes of subsets in the adeles or ideles. 
For example the class number formula can be given a new volume-theoretic proof, see \cite{ramak}. 

In a similar vein, an important formula of Siegel on quadratic forms was given an interpretation by Weil in terms of volumes of adelic spaces, leading to Weil's conjecture that the Tamagawa volume of $G(\A_K)/G(K)$ is equal to $1$ for a semi-simple simply connected algebraic group $G$ over a number field $K$. 
This has been a great influence in the conjecture of Birch and Swinnerton Dyer. 

In a work, started by Paul Cohen (unpublished) and pursued by Alain Connes and others (see \cite{connes-selecta}), proving an analogue of Tate's thesis for the space of adele classes $\A_K/K^*$ is considered a key step for a proof of the Riemann hypothesis.

Tate's thesis has also played a central role in the development of the Langlands program and in the Langlands conjectures. It has been a starting point for this theory. See Section \ref{ssec-lang} for more on these and a suggested model-theoretic framework and questions.

Tate's results on $L$-functions are deduced from results on local zeta functions attached to characters. Given a Schwartz-Bruhat function $f\in S(K)$ on a local field $K$ with residue field of cardinality $q$, 
and multiplicative character $\chi\in X(K^*)$, the local zeta function is defined by 
$$Z(f,\chi)=\int_{K^*}f(x)\chi(x)d^*x,$$
where $dx^*$ is the measure $(1-q^{-1})^{-1}dx/|x|$, where $dx$ is an additive Haar measure on $K$. These satisfy functional equations relating $Z(f,\chi)$ with $Z(f,\hat{\chi})$, where $\hat{\chi}$ is the dual character, cf. \cite{tate-thesis}.

Let $K$ be a number field. The class of adelic Schwartz-Bruhat functions on $\A_K$ is defined as the restricted tensor product
$$S(\A_K)=\otimes'_v S(K_v)$$
consisting of functions of the form $f=\otimes f_{v\in V_K}$, where $f_v\in S(K_v)$ and $f_v=1$ for all but finitely many $v$. 

Let $\chi$ be a character on $\I_K$ that is trivial on $K^*$ (i.e. an idele class character). Let $f\in S(\A_K)$ be an adelic Schwartz-Bruhat function. The global zeta function of Tate is defined as
$$Z(f,\chi)=\int_{\I_k} f(x)\chi(x) d^*x.$$
Tate proves meromorphic continuation of $Z(f,\chi)$ and a functional equation relating $Z(f,\chi)$ with
$Z(\hat{f},\check{\chi})$, where $\hat{f}$ is the adelic Fourier transform of $f$ and $\check{\chi}$ the dual character. cf \cite{tate-thesis}. 

There is an Euler product factorization 
$$Z(f.\chi)=\prod_{v\in V_K} \int_{K_v^*} f_v(x)\chi_v(x)d^*x$$
where $\chi=\prod_v \chi_v$. As stated in Section \ref{ssec-char}, for all but finitely many $v$, $\chi$ is unramified, so has the form $|.|^s$, and integration of this function is well-understood over $\Q_p^*$ and $\I_K$. 
For the finitely many $v$ where ramification occurs, the integrals are evaluated via the properties of the characters (see \cite{tate-thesis}).

The local factors of these integrals are special cases of motivic integrals of Denef, Cluckers, Loeser and Hrushovski-Kazhdan (see \cite{Denefrationality}, \cite{DL}, \cite{CL1}, \cite{CL2}, \cite{HK}).

This raises the general question what would be a generalization of Tate's thesis where the local zeta integrals are replaced with the Denef-Loeser type motivic integrals. We shall formulate several question in this regard. 
As a first step one can ask.

\begin{prob}Use model theory to generalize Tate's global zeta integrals to integrals of definable functions over definable subsets of $\A_K^m$ for $m\geq 1$, and prove meromorphic continuation results.\end{prob}
We propose a form for these "definable integrals" in the Section \ref{ssec-lang} for the case of $GL_2$ and $G_n$.
We shall propose a generalization of $p$-adic specialization of motivic integrals and study their Euler products. One can then apply Theorem \ref{Thm-zeta} to study these Euler products.

\section{\bf Automorphic representations and zeta integrals for $GL_n$}\label{ssec-lang}

Tate's thesis naturally lead to various questions beyond $GL_1$. These relate to a wide range of problems and topics including class field theory, non-abelian extensions of $\Q$, non-abelian generalization of 
Artin reciprocity, mysteries of zeta and $L$-functions, Langlands' conjectures, problems in Diophantine geometry of integer and rational points on varieties and homogeneous spaces, and arithmetic aspects of algebraic groups. 

\medskip

\subsection{\bf Langlands program and Jacquet-Langlands theory}\label{ssec-jl}

\

\medskip

A fascinating program and set of conjectures were given by Langlands which turned out to be related to several of the above topics at the interactions of algebra, geometry, analysis, and representation theory. Here one works with reductive algebraic groups and the Langlands functoriality conjecture is one of the strongest of the conjectures (see \cite{lang-prob}). 
For an introduction to the Langlands conjectures and program see \cite{lang-prob}, \cite{lang-icm}, \cite{lang-jer}, \cite{bump}. 

It had been known that an approach to the Langlands program is to start by generalizing Tate's thesis to more general groups. Jacquet and Langlands \cite{jac-lang} did this for $GL_2$. Godement and Jacquet \cite{jaq-good} did it for $GL_n$. 
The $GL_2$-theory captures results on modular and Maass forms originated by Hecke and Maass (see \cite{lang-jer}). The general case beyond $GL_n$ concerns a reductive group and its Langlands dual group, which we will not consider in this paper, and leave for a future work.

In this Subsection I shall propose a model-theoretic framework and pose some questions on the 
Jacquet-Langlands theory. This is a first attempt to develop a model-theoretic study in the Langlands program. At the end I will comment on more general situations.

I formulate the basic definitions in the case of $GL_n$. Let $\chi$ be a unitary character of $\I_K/K^*$. Let $L^2(GL_n(K)\backslash GL_n(\A_K),\chi)$ be 
the space of $\Bbb C$-valued measurable functions $f$ on $GL_n(\A_K)$ such that for all $z\in I_K$
$$f((zI)g)=\chi(z)f(g)$$
and 
$$\int_{Z_A GL_n(K)\backslash GL_n(\A_K)} |f(g)|^2dg<\infty,$$
where $dg$ is a Haar measure on $GL_n(\A_K)$, and $Z_A$ the group of scalar matrices with entries in $\I_K$. 

Let $L^2_0(GL_n(K)\backslash GL_n(\A_K),\chi)$ be the subspace $L^2(GL_n(K)\backslash GL_n(\A_K),\chi)$ consisting of functions that satisfy the condition 
$$\int_{(\A_K/K)^{r(n-r)} }f(\begin{pmatrix}
I_r & x\\
0 & I_{n-r}
\end{pmatrix}g) ~ dx=0$$
for almost all $g\in GL_n(\A_K)$ and all $1\leq r<n$, where $x$ is an $r\times(n-1)$ block matrix. These are called {\it cusp forms}. 

Consider the right regular representation 
$$\rho: GL_n(\A_K) \rightarrow End(L^2(GL_n(K)\backslash GL_n(\A_K),\chi))$$ defined by $$(\rho(g)f)(x)=f(xg).$$
Automorphic representations are irreducible representations of $GL_n(\A_K)$ that occur in $L^2(GL_n(K)\backslash GL_n(\A_K),\chi)$. 

The space of cusp forms is invariant under this representation and has the convenient property that it decomposes as an infinite direct sum of irreducible invariant subspaces where each factor appears at most once. This "multiplicity one theorem" was proved by Jacquet-Langlands for $n=2$, and Piatetski-Shapiro and Shalika independently for $n>2$ (see \cite{bump} and the references there).

If $\pi$ is a representation of $GL_n(\A_K)$ that is isomorphic to the representation on one of these invariant subspaces (for some $\chi$), then $\pi$ is called an automorphic cuspidal representation with central character $\chi$.
When $n=1$, an automorphic cuspidal representation is nothing but an idele class character of $\I_K/K^*$. For $n=2$, they arise from modular forms and Maass forms. See \cite{bump}. 

An instance of the functoriality conjecture of Langlands is the following. 

\begin{Conjecture}[Langlands] Let $E/F$ be a finite extension and $\rho:Gal(E/F)\rightarrow GL_n(\C)$ be a representation. 
Then there is an automorphic representation $\pi$ corresponding to $\rho$ such that $L(\rho,s)=L(\pi,s)$.
\end{Conjecture}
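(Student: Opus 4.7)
The stated conjecture is a special case of Langlands reciprocity and is far beyond current techniques for general $n$; what I can realistically propose is a model-theoretic attack consistent with the framework of Sections 18--20. The general plan is to interpret both $L(\rho,s)$ and $L(\pi,s)$ as Euler products of $\cL$-constructible adelic integrals, and then to establish the identity locally, place by place, using adelic transfer principles that reduce global matching to an identity of $p$-adic (and in the limit motivic) integrals across the family of completions $K_v$.

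The first step would settle the base case $n=1$ directly: by Section 13, Artin reciprocity gives a bijection between one-dimensional $\rho$ and idele class characters $\chi$, and by Tate's thesis (Section 18) the corresponding $L$-functions agree as Euler products of local Tate integrals, which in the framework of Section 19 are $\cL$-constructible uniformly in $v$. The second step would represent $L(\pi,s)$ via the Godement--Jacquet (respectively Jacquet--Langlands for $n=2$) integral, which by the statement cited in Section 19 is an $\cL$-constructible adelic integral of Whittaker type, identifying that side as an object inside the proposed constructible class. Third, $L(\rho,s)$ is encoded by the characteristic polynomials of $\rho(\mathrm{Frob}_{\mathfrak{p}})$ at unramified primes, and by Galois stratification in the style of \cite{FJ} combined with the uniform definability of valuation rings (Theorem \ref{CDLM-th}) these local Euler factors are uniformly $\cL$-constructible in the residue fields of $K_v$.

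With both sides realised as $\cL$-constructible Euler products, the matching would be attempted via the adelic transfer principles formulated in Section 20, reducing $L(\rho,s) = L(\pi,s)$ to an identity of local integrals on $\GL_n(K_v)$ at almost all $v$; the finitely many ramified places can then be treated individually using the enriched valued field languages of Subsection 3.2. Chebotarev density and the rigidity results of Section 7 (Theorem \ref{isom}) suggest that $\pi$ is determined up to isomorphism by its unramified Satake parameters, so that matching local factors at a density-one set of primes would force global agreement, cleanly separating the analytic content from the existence problem.

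The main obstacle, which is the heart of Langlands functoriality, is the \emph{existence} of $\pi$: the constructible framework provides a natural language in which to phrase and compare $L(\rho,s)$ and $L(\pi,s)$, but cannot by itself produce an automorphic $\pi$ from a Galois $\rho$. A long-term model-theoretic attack would try to axiomatise the image of the Jacquet--Langlands (or Godement--Jacquet) global zeta map inside the class of $\cL$-constructible integrals of Whittaker type, and to characterise Galois-theoretic Euler products arising from $\rho$ as a definable subclass; matching these two subclasses is then exactly the functoriality conjecture, and any progress beyond $n=1$ (Artin reciprocity) or the known modularity cases for $n=2$ would require substantial new global input well beyond the reach of present adelic model theory.
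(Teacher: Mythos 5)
This statement is an open conjecture (an instance of Langlands functoriality); the paper offers no proof of it, stating it only as motivation for the model-theoretic framework of Sections 19--20. Your proposal correctly recognises this and does not claim a proof, so there is no gap in the usual sense --- but there is also nothing in the paper to check you against.

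That said, it is worth noting where your suggested programme diverges in emphasis from the paper's own. You centre the attack on the adelic transfer principles of Subsection 20.2, hoping to reduce $L(\rho,s)=L(\pi,s)$ to place-by-place identities of constructible local integrals plus a Chebotarev/rigidity argument; as you yourself observe, this addresses only the comparison problem and cannot manufacture the automorphic $\pi$. The paper instead points at Lafforgue's result (Subsection 20.1 and the references \cite{laff-nott}, \cite{laff-intro}) that full functoriality is \emph{equivalent} to a non-abelian generalisation of Tate's adelic Poisson summation formula --- an identity purely between adelic zeta integrals, with no Galois group present. From the paper's standpoint the natural model-theoretic target is therefore Problem \ref{prob-poisson} (interpreting Poisson summation as an identity of $\cL$-constructible functions) and the completeness problem of Subsection 20.3, rather than a factor-by-factor matching of the two Euler products. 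Your base case $n=1$ via Artin reciprocity and Tate's thesis, and your identification of the Jacquet--Langlands side as constructible of Whittaker type via Theorem \ref{ex-jl}, are both consistent with the paper. One small caution: Theorem \ref{isom} concerns elementary equivalence and isomorphism of adele \emph{rings}, not strong multiplicity one for automorphic representations, so it does not by itself justify the claim that matching Satake parameters at a density-one set of primes determines $\pi$; that step needs the actual multiplicity-one and strong multiplicity-one theorems for $GL_n$, which the paper cites only in the weaker form of the decomposition of the cuspidal spectrum.
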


This gives a non-abelian extension of Artin reciprocity. See \cite{lang-prob},\cite{bump},\cite{lang-icm}, and impies Artin's conjecture that $L(\rho,s)$ is entire as one knows the poles of automorphic $L$-functions.

Interestingly, this conjecture (and more general functoriality conjectures of Langlands) follow from properties of 
automorphic representations (involving no Galois group at all!). The strongest result in this direction is due to 
L. Lafforgue (see \cite{laff-nott}, \cite{laff-intro}), where the full Langlands functoriality conjecture is proved to be equivalent to a 
non-abelian generalization of the adelic Poisson summation formula of Tate in \cite{tate-thesis}. These involve only adelic zeta integrals. See Subection \ref{ssec-poisson}.

The $L$-functions are defined for $GL_n$ generalizing Tate's method. Their analytic properties are proved via global zeta integrals defined by Jacquet-Langlands for $GL_2$ in \cite{jac-lang} and Godement-Jacquet in \cite{jaq-good}. For simplicity, we restrict ourselves to the case $K=\Q$. 

For $GL_2(\A_{\Q})$ the notion of an automorphic cuspidal representation is a natural generalization of the notion of a cusp form 
$$f(z)=\sum_{i=1}^{\infty}a_ne^{2\pi inz}$$
on $SL_2(\Z)$, and the work of Jacquet-Langlands leads to a new point of view on the $L$-functions 
$$L(s,f)=(2\pi)^{-s}\Gamma(s)(\sum_{i=1}^{\infty} a_nn^{-s})=\int_{0}^{\infty}f(iy)y^{s-1}dy$$
attached to $f$ by Hecke. See \cite{bump},\cite{GS-book}.

Similarly, the $L$-functions attached to a Dirichlet character $L(s,\chi)$ (defined by Dirichlet in the proof of his celebrated theorem on arithmetic progressions) can be written as a Mellin transform 
$$\int_{0}^{\infty} \varphi_{\chi}(t)t^{s/2} dt/t$$
of a so-called $\theta$-series $\varphi_{\chi}$, and can be generalized adelically. See \cite{bump}.

The study of the $L$-functions proceeds via analytic properties of zeta integrals following Tate's thesis.  
The Jacquet-Langlands global zeta integrals have the form
$$\int_{\I_{\Q}/\Q^*} \varphi(\begin{pmatrix} a & 0\\ 0 & 1\end{pmatrix})|a|^{s-1/2}_{\A_{\Q}} da,$$
where $\varphi$ is any function in the subspace $H_{\pi}$ of $L^2(GL_n(K)\backslash GL_n(\A_K),\chi)$ realizing $\pi$, and $|.|_{\A_{\Q}}^s$ is the adelic absolute value 
$$|x|_{\A_{\Q}}=\prod_{p\in \text{Primes}\cup \{\infty\}} |x(p)|_p,$$
for $x\in \A_K$. One assumes that $\varphi(x)$ is $K$-finite where $K=\prod_{p\in \text{Primes}\cup \{\infty\}} S_p$, and $S_p$ is the maximal compact subgroup of $GL_n(\Q_p)$, in the sense that the right-translates 
of $\varphi$ by elements $k$ in $K$ span a finite-dimensional space of functions.


\medskip

\subsection{\bf Adelic constructible integrals}\label{ssec-cons}

\

\medskip

To capture the essential required model-theoretic properties of these global zeta integrals and propose a framework to study them and pose questions, we define the following integrals. As before 
$K$ is a number field with completions $K_v$.

\begin{Def}\label{con-gen} Let $\cL$ be a language extending $\cL_{rings}$ containing a unary predicate for a multiplicative character $\chi$. 
Let $x$ be an $n$-tuple of variables and 
$$\psi_1(x),\psi_2(x),\dots,\psi_k(x)$$ $\cL$-formulas which give definable functions from $K_v^n$ into $K_v$ for all $v\in V_K^{fin}$. 
Let $\varphi(x)$ be an $\cL$-formula.

An $\cL$-constructible integral of product type on the finite adeles $\A_K^{fin}$ is an Euler product of the form
$$\prod_{v\in V_K^{fin}} \int_{\varphi(K_v)}\Psi(x)_v \Phi_v(x) |\psi_1(x)|_v^s |\psi_2(x)|_v\dots |\psi_k(x)|_v dx$$
where
\begin{enumerate}
\item dx is the normalized Haar measure on $K_v^n$ such that $\int_{\cO_v^n}dx=1$,
\item$\Phi_v$ is a Schwartz-Bruhat function on $\varphi(K_v^n)$ such that $\Phi_v=1$ for all but finitely many $v$. 
\item $\Psi(x_1,\dots,x_n)$ is a function from $\varphi(\A_K)$ into $\C$ that is an Euler product of $\cL$-definable
functions $\Psi_v$ from $\varphi(K_v)$ in $\C$,
\item For every $a\in \varphi(\A_K)$ one has $\Psi_v(a(v))=1$ for all but finitely many $v$,
\item $\chi$ is interpreted as a multiplicative character on $K_v^*$ or in case $v$ is non-Archimedean also as a multiplicative character on the residue field $k_v^*$.
\end{enumerate}
 \end{Def}

{\bf Note.} When $\Psi_v$ is the term $\chi(x)$, we see that Tate's zeta integrals are $\cL$-constructible.

\begin{Def}\label{con-fin} Let $x$ be an $n$-tuple of variables and $\varphi(x)$ an $\cL$-formula. 
Let $\cL$ be a language extending $\cL_{rings}$ containing a unary predicate for a multiplicative character $\chi$. 
Let $\psi_1(x),\psi_2(x),\dots,\psi_k(x)$ be $\cL$-definable functions from $\varphi(\A_K^n)$ into the finite ideles $\I_K^{fin}$. Let $\Phi$ be a 
Schwartz-Bruhat function as in Def \ref{con-gen}.

An $\cL$-constructible integral on the finite adeles $\A_K^{fin}$ is a function of the form
$$\int_{\varphi(\A_K^{fin})}\Psi(x) |\psi_1(x)|_{\A_K^{fin}}^s |\psi_2(x)|_{\A_K^{fin}}\dots |\psi_k(x)|_{\A_K^{fin}} dx,$$
where
\begin{enumerate}
\item $|.|_{\A_K^{fin}}=\prod_{v\in V_K^{fin}} |.|_v$,
\item $dx$ is a normalized Haar measure on $(\A_K^{fin})^n$,
\item $\Psi(x_1,\dots,x_n)$ is a function from $\varphi(\A_K^{fin})$ into $\C$ that is an Euler product of $\cL$-definable
functions $\Psi_v$ from $\varphi(K_v)$ into $\C$,
\item For every $a\in \varphi(\A_K^{fin})$, for all but finitely many $v$, $\Psi_v(a(v))=1$.
\end{enumerate}
If $\varphi(\A_K)$ is a definable set of Type I in the sense of Remark \ref{typeI}, then we add Schwartz-Bruhat functions to the integrals, to get 
$$\int_{\varphi(\A_K^{fin})}\Phi(x) \Psi(x) |\psi_1(x)|_{\A_K^{fin}}^s |\psi_2(x)|_{\A_K^{fin}}\dots |\psi_k(x)|_{\A_K^{fin}} dx,$$
where $\Phi=\otimes_{v\in V_K^{fin}} \Phi_v$ where $\Phi_v$ is a Schwartz-Bruhat function on $\varphi(K_v)$ and $\Phi_v=1$ for all but finitely many $v\in V_K^{fin}$.
 \end{Def}
Recall that definable sets of Type I in the adeles are sets of the form 
$$X=\{a\in \A_K^n: [[\psi(a)]]=1\}$$ for an $L$-formula $\psi$, where $L$ is a language for all the $K_v$. 

A definable set $X$ of Type I can be written as a restricted product of definable sets in $K_v$ with respect to $\varphi(\cO_v)$. In this case an integral (resp. Schwartz-Bruhat function) on $\A_K^m$  
decomposes as an Euler product of integrals (resp. Schwartz-Bruhat functions) over $K_v^m$. 

This is the reason that to add Schwartz-Bruhat functions in Definition \ref{con-fin} we assume $\varphi(\A_K)$ is of Type I 
since for non-Archimedean $K_v$ a construction of Schwartz-Bruhat functions for definable sets can be given using quantifier elimination (a construction is given by Cluckers-Loeser in \cite{CL2}).
\begin{prob}\label{real-sb} Define Schwartz-Bruhat functions for definable sets in $\R^n$.\end{prob} 
By quantifier elimination this reduces to real semi-algebraic sets. One can use real cell decomposition to reduce it to cells.

\begin{note} Solving Problem \ref{real-sb} would extend Definition \ref{con-fin} to the adeles $\A_K$.\end{note}

\begin{prob} Define Schwartz-Bruhat functions for definable sets in $\A_K^m$ for $m\geq$ which are not of Type I.
\end{prob}
\begin{remark} The problem here is that when the definable set 
is defined by conditions of the from $Fin([[...]])$ or $C_j([[...]])$, we do not know that it is a restricted product and we do not know if the Schwartz-Bruhat functions are Euler products of local functions.\end{remark}

\begin{thm}[Berman-Derakhshan-Onn-Paajanen {\cite[Theorem A]{BDOP}}]\label{change-of-meas} Suppose $\varphi$ defines a Chevalley group $G$. Suppose $\mu$ is a Haar measure on $G(F)$ for a non-Archimedean local field $F$. Then there is a finite partition of $G(F)$ into 
$\cL_{rings}$-definable sets such that on a given piece an integral with respect to 
$\mu$ can be written an integral with respect to $|\phi(x)|dx$, where $\phi$ is an $\cL_{rings}$-definable function and $dx$ an 
additive Haar measure on $F^{dim(G)}$. 
If $F$ vary over the non-Archimedean local fields $K_v$, then this "change of measure" is uniform in $v$, i.e. the definable partition of the domain and the definable functions $\phi(x)$ can be chosen independently of $v$.
\end{thm}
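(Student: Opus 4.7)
The plan is to exploit the algebro-geometric structure of a Chevalley group $G$ together with Weil's construction of $p$-adic measures from top-degree differential forms, and to observe that every ingredient is defined over $\Z$ so that uniformity in $v$ comes for free. Concretely, fix $G$ as a smooth affine group scheme over $\Z$ of dimension $n=\dim G$, equipped with the canonical left-invariant top-degree differential form $\omega_G$ defined over $\Z$ (Chevalley's construction from a Chevalley basis of the Lie algebra). For any non-Archimedean local field $F$, the measure $|\omega_G|_F$ on $G(F)$ in Weil's sense is, up to a scalar, a Haar measure $\mu$ on $G(F)$; so after rescaling we may replace $\mu$ by $|\omega_G|_F$.

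First I would partition $G$ by Weyl translates of the big Bruhat cell. The Bruhat decomposition $G=\bigsqcup_{w\in W}BwB$ is defined over $\Z$ and each cell $C_w=U^-_w\cdot T\cdot U^+$ is isomorphic, as a variety, to $\mathbb{A}^{\ell(w_0)-\ell(w)}\times T\times \mathbb{A}^{\ell(w)}$ via the product map of root subgroups $x_\alpha\colon\mathbb{G}_a\to U_\alpha$ coming from the Chevalley basis. Since $T$ is a split torus $\mathbb{G}_m^r$, the composite map $\varphi_w\colon\mathbb{A}^{n-r}\times \mathbb{G}_m^r\to C_w$ is an isomorphism of $\Z$-schemes, and both its domain and its image are $\cL_{\mathrm{rings}}$-definable uniformly in $F=K_v$. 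The lower-dimensional Bruhat strata are $\mu$-null and can be absorbed into any convenient piece.

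Next I would compute $\omega_G$ in these coordinates. Since $\omega_G$ is a regular algebraic top form and $\varphi_w$ is a morphism of smooth $\Z$-schemes of the same dimension, there is a rational function $J_w\in\Q(x_1,\dots,x_n)$, with coefficients in $\Z$ and denominator a monomial in the torus coordinates, such that
\[
\varphi_w^{\ast}\omega_G \;=\; J_w(x_1,\dots,x_n)\,dx_1\wedge\cdots\wedge dx_n .
\]
Combining with the fact that for $\mathbb{G}_m(F)$ the multiplicative Haar measure is $dt/|t|_F$, Weil's formula yields, on the piece $C_w(F)$,
\[
\int_{C_w(F)}f\,d\mu \;=\; \int_{F^{n-r}\times (F^{\times})^{r}} (f\circ\varphi_w)(x)\,\bigl|\phi_w(x)\bigr|_F\,dx_1\cdots dx_n,
\]
where $\phi_w(x)=J_w(x)\cdot\prod_{i=1}^{r}t_i^{-1}$ is an $\cL_{\mathrm{rings}}$-definable (in fact rational) function in the coordinates. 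Since $J_w$ and the torus coordinates are defined over $\Z$ and do not depend on $F$, the same formula and the same $\phi_w$ work uniformly in $v$; this gives the required uniformity.

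The main obstacle I anticipate is the bookkeeping at the boundary of Bruhat cells and the fact that on the torus factor $\mathbb{G}_m^r$ the multiplicative Haar measure differs from the additive one by $\prod_i|t_i|^{-1}$, so the function $\phi_w$ is not regular but only rational; one has to refine the partition so that the torus coordinates stay nonzero on each piece, and verify that the refinement is still $\cL_{\mathrm{rings}}$-definable uniformly in $v$. Once this is in place, the quantifier-free definability of the root subgroup parametrization, together with the fact that the formula for $\omega_G$ in a Chevalley basis has $\Z$-coefficients, gives both the partition and the change of measure uniformly for all $K_v$, establishing the theorem.
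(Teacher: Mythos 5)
The paper only states this result, citing \cite{BDOP} for the proof, so there is no displayed argument in the text to match line by line. Your plan is the natural one for a Chevalley group and is what the citation and surrounding discussion point to: fix the Chevalley $\Z$-form and its invariant top form, restrict to the big Bruhat cell $U^-\,T\,U^+$ (an open dense $\Z$-subscheme whose complement is $\mu$-null and can be dumped into a final negligible piece of the partition), pull back the form along the root-group parametrisation $\mathbb{A}^{n-r}\times\mathbb{G}_m^r\to U^-TU^+$, and observe that the resulting Jacobian, as a nowhere-vanishing element of $\Z[x][t^{\pm 1}]$, is $\pm$ a Laurent monomial in the torus coordinates, hence $\cL_{rings}$-definable with $\Z$-coefficients and so uniform over all $K_v$. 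That captures both the conversion and the uniformity the theorem asks for.

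One concrete slip should be repaired. You define $J_w$ by $\varphi_w^{\ast}\omega_G = J_w\,dx_1\wedge\cdots\wedge dx_n$ and then set $\phi_w = J_w\cdot\prod_{i}t_i^{-1}$. This double-counts the torus contribution: once $\varphi_w^{\ast}\omega_G$ has been written against the additive volume form $dx_1\wedge\cdots\wedge dx_n$, Weil's construction of $|\omega_G|_F$ already gives $d\mu = |J_w|_F\,dx_1\cdots dx_n$ on the cell, and no further $dt/|t|$ correction should be applied, because $J_w$ was computed relative to $dt$, not $d^{\ast}t$. A check on $SL_2$ with $(c,a,b)\mapsto u^-(c)\,\mathrm{diag}(a,a^{-1})\,u^+(b)$ gives $J=a$ and $d\mu = |a|\,dc\,da\,db$, whereas your $\phi = J\cdot a^{-1}=1$ would lose the $|a|$. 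The slip does not threaten the theorem, since the correct density $J_w$ is still a $\Z$-Laurent monomial, but the displayed change-of-measure identity is wrong as written. Two smaller points: your dimension count $C_w\cong\mathbb{A}^{\ell(w_0)-\ell(w)}\times T\times\mathbb{A}^{\ell(w)}$ makes every Bruhat cell have the same dimension $r+\ell(w_0)$, which cannot be right (the big cell carries two copies of $\mathbb{A}^{\ell(w_0)}$ and has full dimension $n$); and ``after rescaling we may replace $\mu$ by $|\omega_G|_F$'' presumes the scalar relating the two lies in $|F^\times|_F = q^{\Z}$, which is true for the normalisations actually used (e.g.\ $\mu(G(\cO_v))=1$) but not for an arbitrary Haar measure, so the normalisation should be fixed at the outset.
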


We would need to add Archimedean factors to the constructible integrals so that they become related to $L$-functions and have good analytic properties (meromorphic property, Fourier transform). 

\begin{prob}\label{con-fin-comp} Complete the constructible integrals in Definitions \ref{con-gen} and \ref{con-fin} by adding factors for the Archimedean places $v$.
\end{prob}

We propose such a definition for the case $K=\Q$. We let $\Q_{\infty}=\R$. 

\begin{Def}\label{def-sp-ad-con} Let $\cL=(\cL_{real},\cL_{na},\psi(x))$ be a language were $\psi(x)$ is a unary predicate interpreted as a multiplicative character on $\Q_p^*$ for $p\leq \infty$, and 
$\cL_{real}$ and $\cL_{na}$ extend $\cL_{rings}$. Let $x$ be an $n$-tuple of variables. Let $\varphi(x)$ be an $\cL$-formula. 
Let $\phi_1^i(x),\phi_2^i(x),\dots,\phi_k^i(x)$, for $i\in \{0,1\}$, be $\cL$-formulas which when $i=0$ are $\cL_{real}$-formulas and give $\cL_{real}$-definable functions from $\varphi(\R)$ into $\R$, and 
when $i=1$ are $\cL_{na}$-formulas and give $\cL_{na}$-definable functions from $\varphi(\Q_p)$ into $\Q_p$ for $p<\infty$.

A special adelic $\cL$-constructible integral is an Euler product of the form
$$\prod_{p\in \{ \mathrm{Primes}\}\cup \{\infty\}} \lambda_p^{-1}\int_{\varphi(\Q_p)} \Psi_p(x)|\phi_1(x)|^s_p |\phi_2(x)|_p\dots |\phi_k(x)|_p dx$$
where
\begin{enumerate}
\item $dx$ is an additive Haar measure on $\Q_p^n$ normalized such that $\int_{\Z_p^n}dx=1$ for $p<\infty$, 
\item $\lambda_p\in \C$ have the form of a product of convergence factors (to make the Euler product converge) and normalizing factors (to give it a special intended value),
\item For all $p\leq \infty$ the following hold,

(3.1). $\Psi_p$ are functions from $\varphi(\Q_p)$ into $\C$,

(3.2). There are $\cL_{rings}$-formulas $\theta_i^t(x)$, for $i\in \{1,\dots,n\}$ and $t\in \{1,2,3\}$
that give definable functions from $\varphi(\Q_p)$ into $\Q_p$ such that $\theta_i^3(a)\neq 0$ for all $a\in \varphi(\Q_p)$ and all $i$,

(3.3). There are multiplicative characters $\chi_1,\dots,\chi_N$ on $\Q_p^*$ and Schwartz-Bruhat functions $\Phi_1,\dots,\Phi_N$ on $\Q_p$ such that 

(i) For $p<\infty$ and $a\in \varphi(\Q_p)$, 
$$\Psi_p(a)=\sum_{1\leq i\leq N} \ulcorner \Phi_i(\theta_i^1(a)) v(\theta_i^2(a))\chi_i(\theta_i^3(a))\urcorner,$$ 
where the notation $\ulcorner \dots \urcorner$ means that in the summand, one of more of the terms $\Phi_i(\theta_i^1(a))$, $v(\theta_i^2(a))$, or $\chi_i(\theta_i^3(a))$ may not be present.

(ii) For $p=\infty$ and $a\in \varphi(\R)$, there is some element $\gamma \in \R$ (possibly $\gamma=0$) such that as $\theta(a) \rightarrow \gamma$, one has
$$\Psi_p(a)\sim \sum_{1\leq i\leq N} \ulcorner \Phi_i(\theta_i^1(a)) log |\theta_i^2(a)| \chi_i(\theta_i^3(a)).\urcorner$$
The notation $\ulcorner \dots \urcorner$ has the same meaning as in part $(i)$.

\end{enumerate}
\end{Def}
\begin{note} Since $|.|^s$ is a character of $\R^*$, the Archimedean factor of a special adelic constructible function can include terms of the form $|\theta(x)|^s$ where $\theta$ is a non-vanishing 
definable function on a definable set. This is a real analogue of the integrals of Denef and Loeser (see Section \ref{ssec-euler})\end{note}

\begin{Def}\label{def-Whitt} A special adelic $\cL$-constructible function is of Whittaker type if $\Psi_v(a)=0$ when $|\theta(a)|_{\infty}$ is large, and one can choose $\gamma=0$ in $(ii)$.
\end{Def}
\begin{remark} Definitions \ref{con-gen} and \ref{def-sp-ad-con} give a family of definitions, one for each choice for the language $\cL$. An important choice is when $\cL_{real}$ is the language 
of restricted analytic functions with exponentiation defined by van den Dries-Macintyre-Marker \cite{VMM} and $\cL_{na}$ is any of the languages of Belair \cite{Belair}, Basarab \cite{basarab}, or Denef-Pas \cite{pas} from Subsection \ref{ssec-venrich}.
\end{remark}
\begin{remark}\label{rem-real} 
One could also formulate the integrals in Definitions \ref{con-fin} in terms of a volume form $\omega$ on $\varphi(\A_K)$. 
For this one starts with a volume form on $\varphi(K_v)$ which can be constructed as in \cite{CL1} and 
\cite{CL2}. This gives a volume form on $\varphi(\A_K)$ as in Section \ref{ssec-tam}.

If $\varphi(x)$ defines an algebraic group $G(K_v)$ for each $v$, then one can use Theorem \ref{change-of-meas} to reduce integration with respect to a Haar measure on $G(K_v)$ to integration with respect to a suitable 
measure for integrating definable functions. This enables us to reduce integrals of definable functions on $G(\A_K)$ with respect to a Haar measure on $G(\A_K)$ (e.g. Tamagawa measure) to an Euler product of local components which are integrals of function of the form 
$|\psi(x)|^s$, where $\psi$ is a definable function from $K_v^m$ into $K_v$. 

For non-Archimedean $K_v$, these local factors can be evaluated using methods of Denef (see Section \ref{ssec-euler}) and the Euler product an be understood by applying 
Theorem \ref{Thm-zeta}. 

Note that $G$ is a definable set of Type I and functions that are $1$ at almost all places factorize into local functions.
\end{remark}

\begin{note} In Definitions \ref{con-gen}, \ref{con-fin}, and \ref{def-sp-ad-con}, we can require that $\Psi$ is an $\cL_{Boolean}^{fin,res}$-definable function. This is more general than being an Euler product of "definable factors".\end{note}

\begin{note} We can generalize the constructible integrals by replacing $\cL_{rings}$ by the restricted product language $\cL_{Boolean}^{fin,res}(L)$ where $L$ is any of the languages for the factors. This works over definable sets of Type I which are restricted products.\end{note}

\begin{ex}\label{ex-gamma} The characters of the group $\R^{*}$ are of the form $|.|^s$ or $sgn |.|^s$, where $s\in \C^*$ and 
$sgn$ is the sign character $x\rightarrow x/|x|$. Let $\chi=|.|^s$. Let $f(x)=e^{-\pi x^2}$. $f$ is a Schwartz-Bruhat function on $\R$. The function $f(x)\chi(x)$ is part of a 
special adelic constructible integral. Integrating this function gives the $\Gamma$-function.\end{ex}
\begin{proof} The local zeta function in this case becomes 
$$Z(f,\chi)=\int_{\R^*} e^{-\pi x^2} |x|^s d^*x=2\int_{0}^{\infty} e^{-\pi x^2}x^{s-1} dx$$
$$=\pi^{-s/2}\int_{0}^{\infty} e^{-u}u^{s/2-1}du=\pi^{-s/2}\Gamma(s/2),$$
where in the second line we have put $u=\pi x^2$. This calculation is in Tate's thesis \cite{tate-thesis}.
\end{proof}

\begin{thm}\label{ex-jl} Let $\pi$ be an automorphic cuspidal representation with central character $\chi$ Let $\varphi$ be any function in the subspace $H_{\pi}$ of $L^2(GL_n(K)\backslash GL_n(\A_K),\chi)$ realizing $\pi$. 
The Jacquet-Langlands integrals 
$$Z_{JL}(\varphi,s)=\int_{\I_{\Q}/\Q^*} \varphi(\begin{pmatrix} x & 0\\ 0 & 1\end{pmatrix})|x|^{s-1/2}_{\A_{\Q}} d^*x$$
are adelic $\cL_{rings}^{\Psi}$-constructible of Whittaker type.
\end{thm}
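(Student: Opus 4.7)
The plan is to first unfold $Z_{JL}(\varphi,s)$ via the Whittaker/Kirillov model of $\pi$, then use the tensor-product decomposition of $\pi$ to express the integral as an Euler product of local zeta integrals, and finally to check that each local factor fits the template of Definition \ref{def-sp-ad-con} with $\Psi$ interpreted as the relevant multiplicative character. Concretely, I would first invoke the standard fact from Jacquet--Langlands that, because $\pi$ is cuspidal, every $\varphi \in H_\pi$ admits a Whittaker expansion
\begin{equation*}
\varphi\!\left(\begin{pmatrix} x & 0\\ 0 & 1\end{pmatrix}\right) = \sum_{\gamma \in \Q^*} W_\varphi\!\left(\begin{pmatrix} \gamma x & 0\\ 0 & 1\end{pmatrix}\right),
\end{equation*}
where $W_\varphi$ is the global Whittaker function attached to $\varphi$ and a fixed nontrivial additive character $\psi$ of $\A_\Q/\Q$. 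Combining this with the product formula $|\gamma|_{\A_\Q}=1$ for $\gamma \in \Q^*$ and an elementary change of variables, the integral over $\I_\Q/\Q^*$ unfolds into
\begin{equation*}
Z_{JL}(\varphi,s) = \int_{\I_\Q} W_\varphi\!\left(\begin{pmatrix} x & 0\\ 0 & 1\end{pmatrix}\right)|x|_{\A_\Q}^{s-1/2}\, d^*x.
\end{equation*}

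Next I would invoke the local-global tensor decomposition $\pi \cong \otimes'_v \pi_v$ and the multiplicity one theorem for $GL_2$, which gives a canonical factorization of the global Whittaker functional. For a pure tensor $\varphi = \otimes_v \varphi_v$ this yields $W_\varphi = \prod_v W_{\varphi_v}$, with $W_{\varphi_v}$ the local Whittaker function, equal to the normalized spherical Whittaker function at almost all $v$. Linearity reduces the general case to this one, so that
\begin{equation*}
Z_{JL}(\varphi,s) = \prod_{v \le \infty} Z_v(s), \qquad Z_v(s) = \int_{\Q_v^*} W_{\varphi_v}\!\left(\begin{pmatrix} x & 0\\ 0 & 1\end{pmatrix}\right)|x|_v^{s-1/2}\, d^*x.
\end{equation*}
The remaining task is to match each $Z_v$ to a factor of a special adelic $\cL_{rings}^{\Psi}$-constructible integral.

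At each non-Archimedean $v$, the Kirillov-model description of $\pi_v$ shows that $x \mapsto W_{\varphi_v}(\mathrm{diag}(x,1))$ is a finite $\C$-linear combination of functions of the form $\Phi(x)\chi_i(x)v(x)^{k_i}$, where $\Phi$ is a Schwartz--Bruhat function on $\Q_v$, $\chi_i$ is a multiplicative character of $\Q_v^*$, and $k_i\in \Z_{\ge 0}$; for almost all $v$ only the unramified spherical term is present. Writing $d^*x = (1-q_v^{-1})^{-1}\,dx/|x|$ and absorbing $|x|_v^{s-1/2}/|x|$ into the slots $|\psi_j|_v^s$, $|\psi_j|_v$ of Definition \ref{def-sp-ad-con} produces exactly the $p<\infty$ clause of that definition, with the convergence factors $\lambda_v = 1-q_v^{-1}$ already identified in Theorem \ref{conv-fac}. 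At $v=\infty$, the $K_\infty$-finiteness of $\varphi_\infty$ forces $W_{\varphi_\infty}(\mathrm{diag}(x,1))$ to lie in the finite-dimensional space of classical Whittaker functions attached to $\pi_\infty$; these are expressible through the classical Whittaker function $W_{\kappa,\mu}(4\pi|x|)$ times a sign character, and in particular decay like $e^{-2\pi|x|}$ as $|x|\to\infty$. Their small-$|x|$ expansion is $\sum_i c_i\,|x|^{a_i}(\log|x|)^{b_i}(\mathrm{sgn}\,x)^{\epsilon_i}$, matching clause $(3.3)(ii)$ with $\gamma=0$. The rapid Archimedean decay then yields the Whittaker-type condition of Definition \ref{def-Whitt}.

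The main obstacle will be the Archimedean matching: one must check that for every $K_\infty$-finite vector $\varphi_\infty$, the restriction $W_{\varphi_\infty}(\mathrm{diag}(x,1))$ is precisely of the asymptotic shape listed in $(3.3)(ii)$, and that the terms genuinely arise from $\cL_{rings}$-definable functions $\theta_i^t(x)$ (one may take $\theta_i^1, \theta_i^2, \theta_i^3$ to be $x$, $x$, $x$ or constant multiples), Schwartz--Bruhat functions $\Phi_i$, and characters $\chi_i$ of $\R^*$. This reduces to the classical asymptotic expansion of the confluent hypergeometric function at $0$, which has exactly the required form dictated by the two Langlands parameters of $\pi_\infty$; the log terms occur precisely when the two parameters coincide. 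Once this Archimedean identification is carried out, the non-Archimedean analysis is an easy consequence of the Kirillov model, and the Euler product representation from the unfolding step completes the verification of Definitions \ref{def-sp-ad-con} and \ref{def-Whitt}.
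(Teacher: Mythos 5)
Your proposal takes essentially the same route as the paper's: unfold via the Whittaker expansion, factor into an Euler product of local Whittaker functions, and then use the Kirillov-model description at finite places together with the asymptotics of the Archimedean Whittaker function to match the shape required by Definitions \ref{def-sp-ad-con} and \ref{def-Whitt}. The paper's sketch cites these local facts (properties (i)--(iv)) directly from Jacquet--Langlands, Godement, and Gelbart--Shahidi, whereas you spell out the Archimedean matching (confluent hypergeometric expansion, $e^{-2\pi|x|}$ decay) in more detail, but the argument is the same.
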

\begin{proof} This follows from formulas and calculations of Jacquet and Langlands \cite{jac-lang}. 
see \cite{jac-lang} and \cite{godem-book}. We sketch it below to give a slight model-theoretic interpretation.

Write the Fourier expansion of $\varphi$:
$$\varphi(g)=\sum_{\xi \in \Q^*} W_{\varphi}^{\psi}(\begin{pmatrix} \xi & 0\\ 0 & 1\end{pmatrix}g).$$
Here $\psi$ is a fixed additive character on the adeles $\A_{\Q}$ that is trivial on the global field $\Q$ and 
$W_{\varphi}^{\psi}$ is the $\psi$th-Fourier coefficient of $\varphi$ given by 
$$W_{\varphi}^{\psi}(g)=\int_{\A_{\Q}/\Q}\varphi(\begin{pmatrix} 1 & x\\ 0 & 1\end{pmatrix}g)\overline{\psi(x)}dx.$$
It follows that
$$Z_{JL}(\varphi,s)=\int_{\I_{\Q}}W_{\varphi}^{\psi}(\begin{pmatrix} 1 & x\\ 0 & 1\end{pmatrix})|x|^{s-1/2}_{\A_{\Q}} d^*x.$$
Now 
$$X=\{(x,0,0,1): x\in \I_{\Q}\}$$
is a definable subset of $\A_{\Q}^4$ isomorphic to $\I_{\Q}$. There is a volume form on it which is the pull-back of the volume form $dx^*=\prod_v d^*x_v$ on $\I_{\Q}$. One has the following properties. For
proofs of these see \cite{jac-lang}, \cite{godem-book}, and \cite[pp.11]{GS-book}. 

$(i).$ The function $W_{\varphi}^{\psi}$ (a so-called Whittaker function) is an Euler product
$$W_{\varphi}^{\psi}(g)=\prod_{p\leq \infty} W_p(g(p))$$
where $W_p(g(p))$ are local Whittaker functions and $W_p(x)=1$ for $x\in GL_2(\Z_p)$. 

$(ii).$ The local integrals 
$$\int_{\Q_p^*}W_p(\begin{pmatrix} 1 & x\\ 0 & 1\end{pmatrix})|x|^{s-1/2}_{p} d^* x$$
are absolutely convergent for $\mathrm{Re}(s)$ large, and one has 
$$Z(\varphi,s)=\prod_{p\leq \infty} Z(W_p,s).$$

As stated in \cite[page 11]{GS-book},
 
$(iii).$ For a given $p$, and any $x\in X$, if $|\mathrm{det}(x)|$ is large, then $W_p(x)=0$,

$(iv).$ There are finitely many Schwartz-Bruhat functions $\Phi_1,\dots,\Phi_N$ and functions $c_1,\dots,c_N$ on $\Q_p^*$ 
such that for any $a\in X$
$$W_p(x)=\sum_{1\leq i\leq N} c_i(\mathrm{det}(x))\Phi_i(\mathrm{det}(x)),$$
where $c_i$ are defined to be $\Q$-linear combinations of products of characters $\chi(x)$ with functions of the form 
$v(x)$, where $m\in \Z$, when $p<\infty$, and $\mathrm{log}|x|$ when $p=\infty$.

Since $\mathrm{det}$ is a definable function from $X$ into $\I_{\Q}$, the proof is complete.
\end{proof}

\begin{prob} The proof of Theorem \ref{ex-jl} suggests constructibility of Fourier coefficients of automorphic cusp form. 
Explore this phenomenon.\end{prob}

\begin{prob} Generalize special adelic constructible functions to $\A_K$ for a general number field $K$.\end{prob}

\begin{prob}\label{zeta-ex} Give examples of zeta integrals for $GL_n(\A_K)$ from the work of Godement-Jacquet \cite{jaq-good} that are $\cL$-constructible or special adelic constructible (resp. of Whittaker type) for some $\cL$. What if the group $GL_n$ is replaced by a reductive algebraic group? Can we characterize such $\pi$?\end{prob}

\begin{prob}\label{def-aut} Formulate a notion of automorphic representation for a definable subgroup of $\A_K^m$, $m\geq 1$, generalizing the case of algebraic groups. Define corresponding zeta integrals in analogy with the case of $GL_n$ or 
a reductive group, and explore whether they are $\cL$-constructible for some $\cL$. Is there any notion similar to that of an automorphic representation for a definable set in $\A_K^m$?
\end{prob}
We would like the zeta integrals of an automorphic representation for a definable group $G=\varphi(\A_K)$ to have the form
$$Z(\Phi,s,\psi,\phi_1,\dots,\phi_k,\pi)=
\int_{\varphi(\A_K)} \Phi(x) \pi(x) |\psi(x)|_{\A_K}^s|\phi_1(x)|_{\A_K}\dots |\phi_k(x)|_{\A_K} dx$$
where $\varphi$ is an $\cL$-formula, $\psi$ is an $\cL$-definable function from $\A_K^m$ into 
$\I_K$, $\Phi(x)$ is a Schwartz-Bruhat function, and $dx$ is a Haar measure on $\A_K^m$. 

If $G$ is Type I definable, then $G(\A_K)$ is a restricted product of definable groups over $K_v$ and 
$Z(\Phi,s,\psi,\phi_1,\dots,\phi_k,\pi)$ has an Euler product factorization into local "definable integrals" 
$$Z(\Phi_v,\psi,\phi_1,\dots,\phi_k, \pi_v)=\prod_v \int_{\varphi(K_v)} \Phi_v(x)\pi(x)|\psi(x)|_v^s|\phi_1(x)|_v\dots |\phi_k(x)|_v dx$$ 
where $\Phi_v(x)$ is a local Schwartz-Bruhat function on $\varphi(K_v)$ 
and $\pi_v(x)$ is a local factor of $\pi$ (that should also be defined together with $\pi$). 

\begin{prob}\label{aut-eval} Suppose $\pi$ is defined for a definable set of Type I. Use methods of model theory including cell decomposition, p-adic integration, and resolution of singularities, following Denef-Cluckers-Loeser \cite{DL},\cite{CL1},\cite{CL2} to 
evaluate the local zeta integrals $Z(\Phi_v,\psi,\phi_1,\dots,\phi_k, \pi_v)$. This would give meromorphic continuation beyond abscissa of convergence 
by Theorem \ref{Thm-zeta}.\end{prob}
Note that we assume the set is of Type I to get an Euler product factorization of the zeta integral.
It is an open problem if zeta integrals of general reductive groups have meromorphic continuation (cf. \cite{lang-prob}). Problem \ref{aut-eval} would give a partial solution using Theorem \ref{Thm-zeta}.

\begin{prob}\label{zeta-arch}\noindent\begin{itemize} 
\item Define $L$-functions corresponding to the "definable zeta integrals" above. Define local factors at the real places using $\Gamma$-functions. We would like the Archimedean local factors to be definable in some tame extension of $\R$ possibly related to issues in Hodge theory and the theory of O-minimality related to the work of Bakker-Klingler-Tsimerman \cite{BKT}. 
\item Find a product formula connecting the Archimedean and non-Archimedean local factors.
\end{itemize}\end{prob}

\begin{prob}\label{forms}\noindent\begin{itemize}
\item Use constructible integrals to develop a model theory for modular forms, Maass forms and automorphic forms via adele groups (or more general definable sets) over $\A_K$. 
\item In the case $K=\Q$, we would like to have a $2$-sorted language which has the language of 
van den Dries-Macintyre-Marker \cite{VMM} for restricted analytic functions with exponentiation for the real sort, and a suitable extension on the language of rings or one of the languages in Subsection \ref{ssec-venrich} for the non-Archimedean sort. 
\item Explore connections to $O$-minimality and Hodge theory, via \cite{BKT} by considering adelic versions of the real manifolds and homogeneous space in \cite{BKT} using results on definability of fundamental domains. 
\item If $K$ is a general number field, explore the factors which are $\C$. 
\end{itemize}
\end{prob}


We can also ask a basic question.
\begin{prob}\label{basic} Let $\pi$ be an automorphic cuspidal representation of $GL_n(\A_K)$ and $\Phi(x)$ a Schwartz-Bruhat function for $GL_n(\A_K)$. 
Is there an extension $\cL$ of $\cL_{rings}$ and $\cL$-formulas $\varphi(x)$ and $\psi(x),\phi_1(x),\dots,\phi_k(x)$ which give definable function from $\varphi(\A_K)$ into $\I_K$ 
such that 
$$\int_{\varphi(\A_K)} \pi(x) \Phi(x) |\psi(x)|_{\A_K}^s|\phi_1(x)|_{\A_K}\dots |\phi_k(x)|_{\A_K} dx$$
is $\cL$-adelic constructible?
\end{prob}
\begin{ex} The main example for which Problem \ref{basic} has a positive solution is $GL_2$. The proof of Theorem \ref{ex-jl} gives the required definable sets.
\end{ex}
We define adelic constructible functions.
\begin{Def}\label{con-fn} A function of the form $\pi(x) \Phi(x) |\psi(x)|_{\A_K}^s|\phi_1(x)|_{\A_K}\dots |\phi_k(x)|_{\A_K}$ in Problem \ref{basic} is called adelic $\cL$-constructible.\end{Def}

One the main results in the theory of motivic integration is that the class of motivic constructible functions is closed under integration and Fourier transform, see \cite{CL2}. Our class of adelic constructible functions are closely related. At least over 
Type I definable sets, they are Euler products whose Euler factors are an extension of the $p$-adic specializations of motivic constructible functions. We can thus ask.

\begin{prob}\label{closed} Is the class of adelic constructible functions (resp. special/of Whittaker type) closed under adelic integration and adelic Fourier transform?
\end{prob}

\begin{note} Note that in Definition \ref{con-fn} and Problem \ref{closed}, a special case is when 
we replace $\A_K$ by $\A_K^{fin}$. This is seen by taking $\pi,\Phi,\psi,\phi_j$ to be trivial at the 
Archimedean factors.\end{note}

Once one has defined constructible functions for every number field, from a model-theoretic prospective, it is natural to ask.

\begin{prob} Given an automorphic representation, to what extent its adelic constructible integral representation given by positive solution of Problem \ref{basic} depends on the number field? (e.g. on its degree?). 
If one varies the number field how do they (i.e. their definable functions and formulas) vary?\end{prob}

\section{\bf On identities between adelic integrals}

\medskip

\subsection{\bf Adelic Poisson summation}\label{ssec-poisson}

\

\medskip

In this section as before $K$ is a number field. 
Recall that the space of adelic Schwartz-Bruhat functions $S(\A_K)$ is defined as the restricted product of the Schwartz-Bruhat spaces $S(K_v)$ over all $v\in V_K$. Given $f\in S(\A_K)$ one writes $f(x)=\prod_{v\in V_K} f_v(x(v))$ for $x\in \A_K$. 

Fix a non-trivial additive character $\Psi$ on $\A_K$ such that $\Psi|_{K}=1$. See \cite[Section 7]{ramak} for existence of this. Then the adelic Fourier transform of a function $f\in S(\A_K)$ is defined by 
$$\hat{f}(y)=\int_{\A_K} f(x)\Psi(xy)dx.$$
(usually one has a certain normalization of $dx$, see \cite{ramak}). 

For any $\phi \in S(\A_K)$, to get a function that is invariant under translation by elements of $K$, 
define 
$$\tilde{\phi}(x)=\sum_{\gamma\in K}\phi(\gamma+x).$$ 
If this is convergent for all $x$, then $\tilde{\phi}(x)=\tilde{\phi}(x+\delta)$ for all $\delta \in K$. 

A $\C$-valued function on $\A_K$ is called admissible if $\tilde{f}$ and $\tilde{\hat{f}}$ are both normally convergent on compact sets. Every adelic Schwartz-Bruhat function is admissible (see \cite[Lemma 7-6]{ramak}). 

The adelic Poisson summation formula of 
Tate (see \cite{tate-thesis}, \cite[Theorem 7-7]{ramak}) states that for any $f\in S(\A_K)$, one has $\tilde{f}=\tilde{\hat{f}}$, i.e.
$$\sum_{\gamma \in K} f(\gamma+x)=\sum_{\gamma\in K} \hat{f}(\gamma+x)$$ 
for every $x\in \A_K$.

\begin{prob} Give examples of definable admissible functions in a natural language. Define the notion of admissible for a definable function from $\A_K^m$ into $\C$).\end{prob}
From the Poisson summation formula, Tate derives a Riemann-Roch theorem which states that for an idele $x$ and $f\in S(\A_K)$
$$\sum_{\gamma \in K} f(\gamma x)=\frac{1}{|x|} \sum_{\gamma \in K} \hat{f}(\gamma x^{-1}).$$
See \cite{tate-thesis} and \cite[Theorem 7-10]{ramak}. Tate proves both the Poisson summation formula and the Riemann-Roch theorem for global fields of positive characteristic as well. In this case the adelic Riemann-Roch theorem  implies the usual Riemann-Roch theorem of algebraic geometry. See \cite[Section 7]{ramak}.

To get a model-theoretic hold on the adelic Poisson summation, one would need to know the following which seems plausible.
\begin{prob}\label{prob-poisson} Can the adelic Poisson summation formula or the adelic Riemann-Roch theorem of Tate be interpreted as an identity between adelic $\cL$-constructible functions for some $\cL$? If so, what would be a model-theoretic generalization?
\end{prob}

Laurent Lafforgue has formulated a conjectural non-abelian generalization of Tate's Poisson summation formula on $G(\A_K)$, where $G$ is a reductive algebraic group, and has proved that it is equivalent to the 
Langlands functoriality conjecture (which implies that above conjecture of Langlands). See \cite{laff-nott} and \cite{laff-intro}. 
It would be interesting to study connections to this work.

\medskip

\subsection{\bf Adelic transfer principles}\label{ssec-transfer}

\

\medskip

Model-theoretic transfer principles go back to Tarski who proved that a sentence of the language of rings holds in an algebraically closed field of characteristic zero if and only if it holds in every algebraically closed field of characteristic $p$, for large $p$. See \cite{cherlin}, \cite{KK}. Ax and Kochen gave axiomatization and completeness for theories of $p$-adic fields and Henselian valued fields with characteristic zero residue field \cite{AK1},\cite{AK2},\cite{AK3},\cite{cherlin}. 
From this they deduced that a sentence holds in $\Q_p$ if and only if it holds in $\F_p((t))$, for large $p$. There are also such results for finite extensions of these fields. 

After Denef's work on rationality of "definable" $p$-adic integrals in \cite{Denefrationality}, uniform (in $p$) rationality results were proved by Pas \cite{pas} and Macintyre \cite{Macintyre2}. In \cite{Macintyre2} Macintyre, as a result of his uniform rationality theorem, proved that an identity of "definable integrals" 
$$\int_{\varphi(K)} |f_1(x)|^s|g_1(x)| dx=\int_{\varphi(K)}|f_2(x)|^s|g_2(x)| dx$$
holds for $K=\Q_p$ when $f_i,g_i$ are interpreted in $\Q_p$ if and only if
it holds for $K=\F_p((t))$ when $f_i,g_i$ are interpreted in $\F_p((t))$, for large $p$ (larger than some function of $\varphi,f_i,g_i$).

This was generalized by Denef-Loeser \cite{DL} and Cluckers-Loeser \cite{CL2} to motivic integrals. In \cite{CL2} the motivic integrals are extended to have Schwartz-Bruhat functions and additive characters, including integrals of the form 
$$\int_{\Q_p^n} f(x) \Psi(g(x)) dx,$$
where $f(x)$ is a $p$-adic constructible function in the sense of \cite{CL1},\cite{CL2}, $g(x)$ is a $\Q_p$-valued definable function on $\Q_p^n$, and $\Psi$ is a non-trivial additive character on $\Q_p$, and it is proved that an identity of two such integrals  has a transfer principle between $\Q_p$ and $\F_p((t))$ for large $p$. We note that a $p$-adic constructible function is constructed from functions of the form $v(h(x)),|s(x)|^s$ (cf. \cite{CL2} for details).

The question arises as to whether such a phenomenon is true for the adeles. One way to formulate a question in this connection is the following.

\begin{prob} How does the truth of statements or identities between adelic constructible integrals transfer between different models of the theory of $\A_K$? or between different adele rings $\A_K$ and their ultraproducts 
(what we call pseudo-adelic rings)? Or between adeles of number fields and adeles of function fields?\end{prob}

\medskip

\subsection{\bf A completeness problem}

\

\medskip

The following problem relates to the completeness of the axioms for adeles in \cite{DM-axioms} and in Section \ref{sec-axioms}. It would be interesting to investigate if it follows from completeness of a theory related to that of $\A_K$. 

From another perspective, it can be regarded as an adelic version of a question of Kontsevich and Zagier in \cite{KZ} on periods. A period is a complex number whose real and imaginary parts are values of absolutely convergent integrals of 
rational functions with rational coefficients over semi-algebraic subsets of $\R^n$. The Kontsevich-Zagier question asks assuming two periods are equal, whether the identity of the periods follows from the rules of additivity, change of variables, and 
Newton-Leibniz formula of integrals.
  
The problem we pose is of a model-theoretic nature. I believe that it has implications for some number-theoretic problems formulated in this paper.

\begin{prob} If a statement or an identity of constructible integrals is true in the adeles $\A_K$, is there a proof of it using the Axioms in Section \ref{sec-axioms}?
\end{prob}

\bibliographystyle{acm}
\bibliography{bibadeles}

\begin{thebibliography}{10}

\bibitem{ax}
{\sc Ax, J.}
\newblock The elementary theory of finite fields.
\newblock {\em Ann. of Math. (2) 88\/} (1968), 239--271.

\bibitem{AK1}
{\sc Ax, J., and Kochen, S.}
\newblock Diophantine problems over local fields. {I}.
\newblock {\em Amer. J. Math. 87\/} (1965), 605--630.

\bibitem{AK2}
{\sc Ax, J., and Kochen, S.}
\newblock Diophantine problems over local fields. {II}. {A} complete set of
  axioms for {$p$}-adic number theory.
\newblock {\em Amer. J. Math. 87\/} (1965), 631--648.

\bibitem{AK3}
{\sc Ax, J., and Kochen, S.}
\newblock Diophantine problems over local fields. {III}. {D}ecidable fields.
\newblock {\em Ann. of Math. (2) 83\/} (1966), 437--456.

\bibitem{BKT}
{\sc Bakker, Benjamin, K.~B., and Tsimerman, J.}
\newblock Tame topology of arithmetic quotients and algebraicity of hodge loci.
\newblock {\em arXiv:1810.04801\/} (2018).

\bibitem{basarab}
{\sc Basarab, {\c{S}}.~A.}
\newblock Relative elimination of quantifiers for {H}enselian valued fields.
\newblock {\em Ann. Pure Appl. Logic 53}, 1 (1991), 51--74.

\bibitem{Belair}
{\sc B{\'e}lair, L.}
\newblock Substructures and uniform elimination for {$p$}-adic fields.
\newblock {\em Ann. Pure Appl. Logic 39}, 1 (1988), 1--17.

\bibitem{BDOP}
{\sc Berman, M., Derakhshan, J., Onn, U., and Paajanen, P.}
\newblock Uniform cell decomposition and applications to {C}hevalley groups.
\newblock {\em Journal of London Mathematical Society, 87}, 2 (2013), 586--606.

\bibitem{bump}
{\sc Bump, D.}
\newblock {\em Automorphic forms and representations}, vol.~55 of {\em
  Cambridge Studies in Advanced Mathematics}.
\newblock Cambridge University Press, Cambridge, 1997.

\bibitem{lang-jer}
{\sc Bump, D., Cogdell, J.~W., de~Shalit, E., Gaitsgory, D., Kowalski, E., and
  Kudla, S.~S.}
\newblock {\em An introduction to the {L}anglands program}.
\newblock Birkh\"{a}user Boston, Inc., Boston, MA, 2003.
\newblock Lectures presented at the Hebrew University of Jerusalem, Jerusalem,
  March 12--16, 2001, Edited by Joseph Bernstein and Stephen Gelbart.

\bibitem{Cassels}
{\sc Cassels, J. W.~S.}
\newblock Global fields.
\newblock In {\em Algebraic {N}umber {T}heory ({P}roc. {I}nstructional {C}onf.,
  {B}righton, 1965)}. Thompson, Washington, D.C., 1967, pp.~42--84.

\bibitem{cassels-local}
{\sc Cassels, J. W.~S.}
\newblock {\em Local fields}, vol.~3 of {\em London Mathematical Society
  Student Texts}.
\newblock Cambridge University Press, Cambridge, 1986.

\bibitem{CF}
{\sc Cassels, J. W.~S., and Fr{\"o}hlich, A.}, Eds.
\newblock {\em Algebraic number theory\/} (London, 1986), Academic Press Inc.
  [Harcourt Brace Jovanovich Publishers].
\newblock Reprint of the 1967 original.

\bibitem{CDM}
{\sc Chatzidakis, Z., van~den Dries, L., and Macintyre, A.}
\newblock Definable sets over finite fields.
\newblock {\em J. Reine Angew. Math. 427\/} (1992), 107--135.

\bibitem{cherlin}
{\sc Cherlin, G.}
\newblock {\em Model-Theoretic Algebra}, vol.~521 of {\em Lecture Notes in
  Mathematics}.
\newblock Springer-Verlag, 1976.

\bibitem{cherlin-book}
{\sc Cherlin, G.}
\newblock {\em Model theoretic algebra---selected topics}.
\newblock Lecture Notes in Mathematics, Vol. 521. Springer-Verlag, Berlin-New
  York, 1976.

\bibitem{CH}
{\sc Chernikov, A., and Hils, M.}
\newblock Valued difference fields and ${N}{T}{P}_{2}$.
\newblock arXiv:12081341.

\bibitem{CDLM}
{\sc Cluckers, R., Derakhshan, J., Leenknegt, E., and Macintyre, A.}
\newblock Uniformly defining valuation rings in {H}enselian valued fields with
  finite or pseudo-finite residue fields.
\newblock {\em Ann. Pure Appl. Logic 164}, 12 (2013), 1236--1246.

\bibitem{CL1}
{\sc Cluckers, R., and Loeser, F.}
\newblock Constructible motivic functions and motivic integration.
\newblock {\em Invent. Math. 173\/} (2008), 23--121.

\bibitem{CL2}
{\sc Cluckers, R., and Loeser, F.}
\newblock Constructible exponential functions, motivic fourier transform, and
  transfer princple.
\newblock {\em Ann. Math 171}, 2 (2010), 1011--1065.

\bibitem{connes-selecta}
{\sc Connes, A.}
\newblock Trace formula in noncommutative geometry and the zeros of the
  {R}iemann zeta function.
\newblock {\em Selecta Math. (N.S.) 5}, 1 (1999), 29--106.

\bibitem{CC}
{\sc Connes, A., and Consani, C.}
\newblock The hyperring of ad\`ele classes.
\newblock {\em J. Number Theory 131}, 2 (2011), 159--194.

\bibitem{connes-c-site}
{\sc Connes, A., and Consani, C.}
\newblock The arithmetic site.
\newblock {\em C. R. Math. Acad. Sci. Paris 352}, 12 (2014), 971--975.

\bibitem{CC2}
{\sc Connes, A., and Consani, C.}
\newblock Geometry of the arithmetic site.
\newblock {\em Adv. Math. 291\/} (2016), 274--329.

\bibitem{elem-prod}
{\sc D'Aquino, P., and Macintyre, A.}
\newblock Commutative unital rings elementarily equivalent to prescribed
  product products.
\newblock {\em Preprint\/} (2019).

\bibitem{PDAJM}
{\sc D'Aquino, P., and Macintyre, A.}
\newblock Zilber problem on residue rings of models of arithemtic, part 1: the
  prime power case.
\newblock {\em Preprint\/} (2019).

\bibitem{dpm}
{\sc D'Aquino~Paola, Derakhshan, J., and Macintyre, A.}
\newblock Truncations of ordered abelian groups.
\newblock {\em Preprint\/} (arXiv: 1910/14517).

\bibitem{Denefrationality}
{\sc Denef, J.}
\newblock The rationality of the {P}oincar\'e series associated to the $p$-adic
  points on a variety.
\newblock {\em Invent. Math. 77\/} (1984), 1 -- 23.

\bibitem{DL}
{\sc Denef, J., and Loeser, F.}
\newblock Definable sets, motives, and $p$-adic integrals.
\newblock {\em J. Amer. Math. Soc. 14}, 2 (2001), 429--469.

\bibitem{zeta1}
{\sc Derakhshan, J.}
\newblock Euler products of $p$-adic integrals and definable sets {I}:
  meromorphic continuation and zeta functions of groups.
\newblock {\em In Preparation\/}.

\bibitem{zeta2}
{\sc Derakhshan, J.}
\newblock Euler products of $p$-adic integrals and definable sets {II}: height
  zeta functions and equidistribution of rational points.
\newblock {\em In Preparation\/}.

\bibitem{zeta-surv}
{\sc Derakhshan, J.}
\newblock $p$-adic model theory, $p$-adic integration, {E}uler products, and
  zeta functions of groups.
\newblock {\em this volume\/}.

\bibitem{DM-supp}
{\sc Derakhshan, J., and Macintyre, A.}
\newblock Some supplements to {F}eferman-{V}aught related to the model theory
  of adeles.
\newblock {\em Ann. Pure Appl. Logic 165}, 11 (2014), 1639--1679.

\bibitem{DM-bool}
{\sc Derakhshan, J., and Macintyre, A.}
\newblock Enrichments of {B}oolean algebras by {P}resburger predicates.
\newblock {\em Fund. Math. 239}, 1 (2017), 1--17.

\bibitem{DM-axioms}
{\sc Derakhshan, J., and Macintyre, A.}
\newblock Axioms for {C}ommutative {U}nital {R}ings {E}lementarily {E}quivalent
  to {R}estricted {P}roducts of {C}onnected {R}ings.
\newblock {\em Preprint\/} (2020).

\bibitem{DM-MC}
{\sc Derakhshan, J., and Macintyre, A.}
\newblock Model completeness for {H}enselian valued fields with finite
  ramification valued in a {Z}-group.
\newblock {\em Preprint\/} (arXiv: 1603.08598).

\bibitem{DM-ad}
{\sc Derakhshan, J., and Macintyre, A.}
\newblock Model theory of adeles {I}.

\bibitem{DM-ad2}
{\sc Derakhshan, J., and Macintyre, A.}
\newblock Decidability problems for adele rings and related restricted
  products.
\newblock {\em Preprint\/} (arXiv: 1910.14471).

\bibitem{DM}
{\sc Dougherty, R., and Miller, C.}
\newblock Definable {B}oolean combinations of open sets are {B}oolean
  combinations of open definable sets.
\newblock {\em Illinois J. Math. 45}, 4 (2001), 1347--1350.

\bibitem{Duret}
{\sc Duret, J.-L.}
\newblock Les corps faiblement alg\'ebriquement clos non s\'eparablement clos
  ont la propri\'et\'e d'ind\'ependence.
\newblock In {\em Model theory of algebra and arithmetic ({P}roc. {C}onf.,
  {K}arpacz, 1979)}, vol.~834 of {\em Lecture Notes in Math.} Springer, Berlin,
  1980, pp.~136--162.

\bibitem{enderton-book}
{\sc Enderton, H.~B.}
\newblock {\em A mathematical introduction to logic}.
\newblock Academic Press, New York-London, 1972.

\bibitem{enderton}
{\sc Enderton, H.~B.}
\newblock {\em A mathematical introduction to logic}, second~ed.
\newblock Harcourt/Academic Press, Burlington, MA, 2001.

\bibitem{FV}
{\sc Feferman, S., and Vaught, R.~L.}
\newblock The first order properties of products of algebraic systems.
\newblock {\em Fund. Math. 47\/} (1959), 57--103.

\bibitem{FJ}
{\sc Fried, M., and Jarden, M.}
\newblock {\em Field arithmetic}, vol.~3 of {\em Ergebnisse der Mathematik und
  ihrer Grenzgebiete}.
\newblock Springer, 1986.

\bibitem{fs}
{\sc Fried, M., and Sacerdote, G.}
\newblock Solving {D}iophantine problems over all residue class fields of a
  number field and all finite fields.
\newblock {\em Ann. of Math. (2) 104}, 2 (1976), 203--233.

\bibitem{GS-book}
{\sc Gelbart, S., and Shahidi, F.}
\newblock {\em Analytic properties of automorphic {$L$}-functions}, vol.~6 of
  {\em Perspectives in Mathematics}.
\newblock Academic Press, Inc., Boston, MA, 1988.

\bibitem{godem-book}
{\sc Godement, R.}
\newblock {\em Notes on {J}acquet-{L}anglands' theory}, vol.~8 of {\em CTM.
  Classical Topics in Mathematics}.
\newblock Higher Education Press, Beijing, 2018.
\newblock With commentaries by Robert Langlands and Herve Jacquet.

\bibitem{jaq-good}
{\sc Godement, R., and Jacquet, H.}
\newblock {\em Zeta functions of simple algebras}.
\newblock Lecture Notes in Mathematics, Vol. 260. Springer-Verlag, Berlin-New
  York, 1972.

\bibitem{GO}
{\sc Gorodnik, A., and Oh, H.}
\newblock Rational points on homogeneous varieties and equidistribution of
  adelic periods.
\newblock {\em Geom. Funct. Anal. 21}, 2 (2011), 319--392.
\newblock With an appendix by Mikhail Borovoi.

\bibitem{HW}
{\sc Hardy, G.~H., and Wright, E.~M.}
\newblock {\em An introduction to the theory of numbers}, sixth~ed.
\newblock Oxford University Press, Oxford, 2008.
\newblock Revised by D. R. Heath-Brown and J. H. Silverman, With a foreword by
  Andrew Wiles.

\bibitem{HHM}
{\sc Haskell, D., Hrushovski, E., and Macpherson, D.}
\newblock {\em Stable domination and independence in algebraically closed
  valued fields}, vol.~30 of {\em Lecture Notes in Logic}.
\newblock Association for Symbolic Logic, Chicago, IL, 2008.

\bibitem{udi-char}
{\sc Hrushovski, E.}
\newblock Ax's theorem with an additive character.
\newblock {\em Preprint\/} (arXiv:1911.01096).

\bibitem{HK}
{\sc Hrushovski, E., and Kazhdan, D.}
\newblock Integration in valued fields.
\newblock In {\em Algebraic geometry and number theory}, vol.~253 of {\em
  Progr. Math.} Birkh\"auser Boston, Boston, MA, 2006, pp.~261--405.

\bibitem{HMR}
{\sc Hrushovski, E., Martin, B., and Rideau, S.}
\newblock Definable equivalence relations and zeta functions of groups.
\newblock {\em J. Eur. Math. Soc. (JEMS) 20}, 10 (2018), 2467--2537.
\newblock With an appendix by Raf Cluckers.

\bibitem{udi-pillay-groups}
{\sc Hrushovski, E., and Pillay, A.}
\newblock Groups definable in local fields and pseudo-finite fields.
\newblock {\em Israel J. Math. 85}, 1-3 (1994), 203--262.

\bibitem{iwasawa}
{\sc Iwasawa, K.}
\newblock On the rings of valuation vectors.
\newblock {\em Ann. of Math. (2) 57\/} (1953), 331--356.

\bibitem{jac-lang}
{\sc Jacquet, H., and Langlands, R.~P.}
\newblock {\em Automorphic forms on {${\rm GL}(2)$}}.
\newblock Lecture Notes in Mathematics, Vol. 114. Springer-Verlag, Berlin-New
  York, 1970.

\bibitem{kiefe}
{\sc Kiefe, C.}
\newblock Sets definable over finite fields: their zeta-functions.
\newblock {\em Trans. Amer. Math. Soc. 223\/} (1976), 45--59.

\bibitem{kochen-local}
{\sc Kochen, S.}
\newblock The model theory of local fields.
\newblock In {\em {$1xsy) $} {ISILC} {L}ogic {C}onference ({P}roc. {I}nternat.
  {S}ummer {I}nst. and {L}ogic {C}olloq., {K}iel, 1974)}. Springer, Berlin,
  1975, pp.~384--425. Lecture Notes in Math., Vol. 499.

\bibitem{Koenigsmann}
{\sc Koenigsmann, J.}
\newblock Defining {$\Bbb Z$} in {$\Bbb Q$}.
\newblock {\em Ann. of Math. (2) 183}, 1 (2016), 73--93.

\bibitem{KZ}
{\sc Kontsevich, M., and Zagier, D.}
\newblock Periods.
\newblock In {\em Mathematics unlimited---2001 and beyond}. Springer, Berlin,
  2001, pp.~771--808.

\bibitem{KK}
{\sc Kreisel, G., and Krivine, J.-L.}
\newblock {\em Elements of mathematical logic. {M}odel theory}.
\newblock Studies in Logic and the Foundations of Mathematics. North-Holland
  Publishing Co., Amsterdam, 1967.

\bibitem{kuhlmann}
{\sc Kuhlmann, F.-V.}
\newblock Quantifier elimination for {H}enselian fields relative to additive
  and multiplicative congruences.
\newblock {\em Israel J. Math. 85}, 1-3 (1994), 277--306.

\bibitem{laff-nott}
{\sc Lafforgue, L.}
\newblock Le principe de fonctorialite de langlands comme un probleme de
  generalisation de la loi d addition.

\bibitem{laff-intro}
{\sc Lafforgue, L.}
\newblock Formules de {P}oisson non lin\'{e}aires et principe de
  fonctorialit\'{e} de {L}anglands.
\newblock In {\em Introduction to modern mathematics}, vol.~33 of {\em Adv.
  Lect. Math. (ALM)}. Int. Press, Somerville, MA, 2015, pp.~323--347.

\bibitem{lang-prob}
{\sc Langlands, R.~P.}
\newblock Problems in the theory of automorphic forms.
\newblock In {\em Lectures in modern analysis and applications, {III}}. 1970,
  pp.~18--61. Lecture Notes in Math., Vol. 170.

\bibitem{lang-icm}
{\sc Langlands, R.~P.}
\newblock {$L$}-functions and automorphic representations.
\newblock In {\em Proceedings of the {I}nternational {C}ongress of
  {M}athematicians ({H}elsinki, 1978)\/} (1980), Acad. Sci. Fennica, Helsinki,
  pp.~165--175.

\bibitem{Macintyre1}
{\sc Macintyre, A.}
\newblock On definable subsets of {$p$}-adic fields.
\newblock {\em J. Symbolic Logic 41}, 3 (1976), 605--610.

\bibitem{Macintyre2}
{\sc Macintyre, A.}
\newblock Rationality of {$p$}-adic {P}oincar\'e series: uniformity in {$p$}.
\newblock {\em Ann. Pure Appl. Logic 49}, 1 (1990), 31--74.

\bibitem{manin-book}
{\sc Manin, Y.~I., and Panchishkin, A.~A.}
\newblock {\em Introduction to modern number theory}, second~ed., vol.~49 of
  {\em Encyclopaedia of Mathematical Sciences}.
\newblock Springer-Verlag, Berlin, 2005.
\newblock Fundamental problems, ideas and theories, Translated from the
  Russian.

\bibitem{nkrch}
{\sc Neukirch, J.}
\newblock {\em Algebraic number theory}, vol.~322 of {\em Grundlehren der
  Mathematischen Wissenschaften [Fundamental Principles of Mathematical
  Sciences]}.
\newblock Springer-Verlag, Berlin, 1999.
\newblock Translated from the 1992 German original and with a note by Norbert
  Schappacher, With a foreword by G. Harder.

\bibitem{ono-int}
{\sc Ono, T.}
\newblock An integral attached to a hypersurface.
\newblock {\em Amer. J. Math. 90\/} (1968), 1224--1236.

\bibitem{pas}
{\sc Pas, J.}
\newblock Uniform {$p$}-adic cell decomposition and local zeta functions.
\newblock {\em J. Reine Angew. Math. 399\/} (1989), 137--172.

\bibitem{pas2}
{\sc Pas, J.}
\newblock Cell decomposition and local zeta functions in a tower of unramified
  extensions of a {$p$}-adic field.
\newblock {\em Proc. London Math. Soc. (3) 60}, 1 (1990), 37--67.

\bibitem{perlis}
{\sc Perlis, R.}
\newblock On the equation {$\zeta _{K}(s)=\zeta _{K'}(s)$}.
\newblock {\em J. Number Theory 9}, 3 (1977), 342--360.

\bibitem{Platonov-R-book}
{\sc Platonov, V., and Rapinchuk, A.}
\newblock {\em Algebraic groups and number theory}, vol.~139 of {\em Pure and
  Applied Mathematics}.
\newblock Academic Press, Inc., Boston, MA, 1994.
\newblock Translated from the 1991 Russian original by Rachel Rowen.

\bibitem{PR-book}
{\sc Prestel, A., and Roquette, P.}
\newblock {\em Formally {$p$}-adic fields}, vol.~1050 of {\em Lecture Notes in
  Mathematics}.
\newblock Springer-Verlag, Berlin, 1984.

\bibitem{ramak}
{\sc Ramakrishnan, D., and Valenza, R.~J.}
\newblock {\em Fourier analysis on number fields}, vol.~186 of {\em Graduate
  Texts in Mathematics}.
\newblock Springer-Verlag, New York, 1999.

\bibitem{roquette-book}
{\sc Roquette, P.}
\newblock {\em The {R}iemann hypothesis in characteristic {$p$} in historical
  perspective}, vol.~2222 of {\em Lecture Notes in Mathematics}.
\newblock Springer, Cham, 2018.
\newblock History of Mathematics Subseries.

\bibitem{peter-nicolas}
{\sc Sarnak, P., Shin, S.-W., and Templier, N.}
\newblock Families of l-functions and their symmetry.
\newblock {\em Preprint\/} (aXiv:1401.5507).

\bibitem{serre-book}
{\sc Serre, J.-P.}
\newblock {\em Lectures on {$N_X (p)$}}, vol.~11 of {\em Chapman \& Hall/CRC
  Research Notes in Mathematics}.
\newblock CRC Press, Boca Raton, FL, 2012.

\bibitem{tate-thesis}
{\sc Tate, J.~T.}
\newblock Fourier analysis in number fields, and {H}ecke's zeta-functions.
\newblock In {\em Algebraic {N}umber {T}heory ({P}roc. {I}nstructional {C}onf.,
  {B}righton, 1965)}. Thompson, Washington, D.C., 1967, pp.~305--347.

\bibitem{uchida}
{\sc Uchida, K.}
\newblock Isomorphisms of {G}alois groups.
\newblock {\em J. Math. Soc. Japan 28}, 4 (1976), 617--620.

\bibitem{VMM}
{\sc van~den Dries, L., Macintyre, A., and Marker, D.}
\newblock The elementary theory of restricted analytic fields with
  exponentiation.
\newblock {\em Ann. of Math. (2) 140}, 1 (1994), 183--205.

\bibitem{weil-adeles-gps}
{\sc Weil, A.}
\newblock {\em Adeles and algebraic groups}, vol.~23 of {\em Progress in
  Mathematics}.
\newblock Birkh\"{a}user, Boston, Mass., 1982.
\newblock With appendices by M. Demazure and Takashi Ono.

\bibitem{weisp-hab}
{\sc Weispfenning, V.}
\newblock Model theory of lattice products.
\newblock {\em Habilitation, Universitat Heidelberg\/} (1978).

\bibitem{Weisp2}
{\sc Weispfenning, V.}
\newblock Quantifier elimination and decision procedures for valued fields.
\newblock In {\em Models and sets ({A}achen, 1983)}, vol.~1103 of {\em Lecture
  Notes in Math.} Springer, Berlin, 1984, pp.~419--472.

\end{thebibliography}

\end{document}